\documentclass[11pt]{article}
\usepackage[utf8]{inputenc} % allow utf-8 input
\usepackage[T1]{fontenc}    % use 8-bit T1 fonts

\usepackage{geometry}
\usepackage{setspace}
\usepackage{amsmath, amssymb, amsfonts, bm, mathtools, mathrsfs}
\usepackage{amsthm}
\usepackage[dvipsnames]{xcolor}
\definecolor{darkblue}{rgb}{0,0,.5}
\usepackage{graphicx}
\usepackage{subfigure}
\usepackage[colorlinks=true,allcolors=darkblue]{hyperref}       % hyperlinks
\usepackage{url}            % simple URL typesetting
\usepackage{booktabs}       % professional-quality tables
\usepackage{amsfonts}       % blackboard math symbols
\usepackage{nicefrac}       % compact symbols for 1/2, etc.
\usepackage{microtype}      % microtypography

\allowdisplaybreaks

\usepackage{float}
\usepackage{multirow}
\usepackage{footnote}
\usepackage{dsfont}
\usepackage{mathabx}

\usepackage{algorithm}
\usepackage{algorithmic}
\usepackage{nicefrac}
\usepackage{tikz}
\usepackage{overpic}

\usepackage{dsfont}
\usepackage{hyperref}
\usepackage[capitalize]{cleveref}
\usepackage{crossreftools}

\makeatletter
\newcommand*{\rom}[1]{\expandafter\@slowromancap\romannumeral #1@}
\makeatother

\DeclareMathOperator*{\argmin}{argmin}

\makeatletter
\long\def\@makecaption#1#2{
  \vskip 0.8ex
  \setbox\@tempboxa\hbox{\small {\bf #1:} #2}
  \parindent 1.5em  %% How can we use the global value of this???
  \dimen0=\hsize
  \advance\dimen0 by -3em
  \ifdim \wd\@tempboxa >\dimen0
  \hbox to \hsize{
    \parindent 0em
    \hfil 
    \parbox{\dimen0}{\def\baselinestretch{0.96}\small
      {\bf #1.} #2
    } 
    \hfil}
  \else \hbox to \hsize{\hfil \box\@tempboxa \hfil}
  \fi
}
\makeatother

\newcommand{\tr}{\operatorname{Tr}}

\usepackage{enumerate}
\usepackage{natbib}

\newtheorem{claim}{Claim}[section]
\newtheorem{lemma}[claim]{Lemma}
\newtheorem{assumption}{Assumption}%[section]
\newtheorem{definition}{Definition} %[section]
\newtheorem{theorem}{Theorem}[section]

\newtheorem{proposition}{Proposition}[section]
\newtheorem{remark}{Remark}
\newtheorem{corollary}{Corollary}[section]
\DeclareMathOperator{\Var}{Var}

\DeclareMathOperator{\Tr}{Tr}
\makeatletter
\renewcommand{\tableofcontents}{%
  \section*{\centering\contentsname}%
  \@starttoc{toc}%
}
\makeatother

\usepackage{xr}

\title{Universality of General Spiked Tensor Models}

\author{
Yanjin Xiang \thanks{School of Mathematical Sciences, Peking University; email: \texttt{2401110086@stu.pku.edu.cn}.}\and
Zhihua Zhang\thanks{School of Mathematical Sciences, Peking University; email: \texttt{zhzhang@math.pku.edu.cn}.}
}

\begin{document}

\maketitle

\begin{abstract}
We study asymmetric rank-one spiked tensor models in the high-dimensional regime, where the noise entries are independent and identically distributed with zero mean, unit variance, and finite fourth moment. This extends the classical Gaussian framework to a substantially broader class of noise distributions. We analyze the maximum-likelihood estimator associated with the best rank-one approximation of an order-$d$ tensor, for $d\ge 3$.

Our approach is formulated along an informative, spectrally separated branch of stationary points of the non-convex maximum-likelihood landscape. In the core order-three asymmetric model, we verify locally in the high-signal regime that such an informative branch exists and remains separated from the bulk. Under this branch-selection framework, we show that the empirical spectral distribution of a suitable block-wise tensor contraction converges almost surely to the same deterministic limit as in the Gaussian case. As a consequence, the asymptotic singular value and the mode-wise alignments between the estimated and planted spike directions admit the same explicit characterizations as under Gaussian noise.

These results establish a universality principle for asymmetric spiked tensor models: the high-dimensional spectral behavior and statistical limits of the selected maximum-likelihood stationary point are robust beyond the Gaussian setting. Our proof combines resolvent methods from random matrix theory, cumulant expansions under finite fourth-moment assumptions, and Efron--Stein-type variance bounds. A main technical difficulty is to control the statistical dependence between the estimator and the noise, including the associated cross terms in the non-Gaussian setting.
\end{abstract}

\newpage
\tableofcontents
\newpage
\section{Introduction}

High-dimensional tensor models have become central in modern statistics, signal processing,
and machine learning, where latent low-rank structures are to be inferred from noisy
multi-way observations \cite{Kolda_Bader2009,AnandkumarJMLR2014,MontanariRichard2014tensorPCA,Jagannath_2020}. Recently, Goulart et al. \cite{goulart2022randommatrixperspectiverandom}  gave an asymptotic characterization of the maximum likelihood estimation performance for a symmetric order-$d$ rank-one model
with Gaussian noise. Seddik et al.
\cite{seddik2024whentrandomtensorsmeet} studied asymptotic properties of asymmetric order-$d$ spiked tensor models with Gaussian noise. The key ideal in the proof of these works leverages random matrix theory tools for tensor models and then uses Stein's lemma due to the Gaussian noise setting.

In non-Gaussian noise cases, however,  Stein's lemma is no longer applicable. 
A fundamental question in  context is whether the sharp asymptotic behaviors derived
under idealized Gaussian noise assumptions remain valid for more general noise distributions.
Understanding such robustness properties is essential, as real-world data is rarely Gaussian.
This work aims to address this issue. 

Given a positive integer $d \ge 3$, let $n_1,\ldots,n_d \in \mathbb{N}^+$ denote the dimensions of  $d$-way tensor and let
 $N = n_1 + \cdots + n_d$.
We consider a $d$-fold rank-$R$ spiked tensor model
\begin{equation}
    \textbf{T}
    =
    \sum_{r=1}^R \beta_r\, \bm{x}_{r}^{(1)} \otimes \cdots \otimes \bm{x}_{r}^{(d)}
    + \frac{1}{\sqrt{N}}\, \textbf{W},
    \label{eq1.1}
\end{equation}
where $\beta_1 \ge \cdots \ge \beta_R > 0$ are signal-to-noise ratios (SNRs),
$\{\bm{x}_{r}^{(l)}\}_{r=1}^R$ are mutually orthogonal unit vectors in $\mathbb{R}^{n_l}$ for each
mode $l \in [d]$, and $\bf{W}$ is a noise tensor with independent and identically distributed (i.i.d.)
entries satisfying
\begin{equation} \label{eq:moment-assumption}
\mathbb{E}[W_{i_1\cdots i_d}] = 0, \qquad
\mathbb{E}[W_{i_1\cdots i_d}^2] = 1, \qquad
\mathbb{E}|W_{i_1\cdots i_d}|^4 < \infty.
\end{equation}
%for some $\delta > 0$.
In the paper, we mainly focus on the rank-1 case
\begin{equation}
    \textbf{T}
    =
    \beta\, \bm{x}^{(1)} \otimes \cdots \otimes \bm{x}^{(d)}
    + \frac{1}{\sqrt{N}}\, \textbf{W},
    \label{eq1.2}
\end{equation}
which already captures the essential phenomena.

We would retrieve the spike $ \beta\, \bm{x}^{(1)} \otimes \cdots \otimes \bm{x}^{(d)}$  from the noise tensor $ \textbf{T}$.
A natural estimator of the latent rank-one structure is obtained by solving the
maximum likelihood (ML) problem
\begin{equation}
    (\lambda_*, \bm{u}_*^{(1)}, \ldots, \bm{u}_*^{(d)})
    =
    \argmin_{\lambda > 0,\ \bm{u}^{(i)} \in \mathbb{S}^{n_i-1}}
    \left\|
    \textbf{T}
    - \lambda\, \bm{u}^{(1)} \otimes \cdots \otimes \bm{u}^{(d)}
    \right\|_{\bm{F}}^2,
    \label{eq1.3}
\end{equation}
where $\mathbb{S}^{n_i-1}$ denotes the unit sphere in $\mathbb{R}^{n_i}$.
The vectors $\bm{u}_*^{(i)}$ play the role of dominant singular vectors generalized to the
tensor setting, while $\lambda_*$ represents the leading singular value.

Let $\textbf{T}(\bm{u}^{(1)}, \ldots, \bm{u}^{(d)}):=\sum_{i_1, \ldots, i_d} u^{(1)}_{i_1} \cdots u_{i_d}^{(d)} T_{i_1, \ldots, i_d}$ represent the contraction of tensor $ \bf{T}$ on the vectors $\bm{u}^{(i)}$ and $\textbf{T}(\bm{u}^{(1)}, \ldots, \bm{u}^{(k)}, \cdot, \ldots, \cdot)$ for $k<d$ is the  $(d{-}k)$-th order tensor obtained by contracting $\textbf{T}$
along the first $k$ modes.
With these notions and by standard variational arguments \cite{lim2005singularvaluestensors}, the ML problem~(\ref{eq1.3}) is equivalent to
\begin{equation}
    \max_{\substack{\bm{u}^{(i)} \in \mathbb{R}^{n_i} \\ \|\bm{u}^{(i)}\|=1,\ i\in[d]}}
    \left|
    \textbf{T}(\bm{u}^{(1)}, \ldots, \bm{u}^{(d)})
    \right|.
    \label{eq1.4}
\end{equation}
The corresponding Lagrangian function is defined as 
\begin{equation}
    \mathcal{L}(\bm{u}^{(1)}, \ldots, \bm{u}^{(d)}, \lambda)
    :=
    \textbf{T}(\bm{u}^{(1)}, \ldots, \bm{u}^{(d)})
    - \lambda\Big(\prod_{i=1}^d \|\bm{u}^{(i)}\| - 1\Big),
    \qquad \lambda > 0.
    \label{eq1.5}
\end{equation}
Therefore, any stationary point $(\lambda, \bm{u}^{(1)}, \ldots, \bm{u}^{(d)})$ with unit-norm vectors
must satisfy the Karush--Kuhn--Tucker (KKT) conditions
\begin{equation}
\begin{cases}
    \textbf{T}(\bm{u}^{(1)}, \ldots, \bm{u}^{(i-1)}, \cdot,
    \bm{u}^{(i+1)}, \ldots, \bm{u}^{(d)})
    = \lambda\, \bm{u}^{(i)}, & i \in [d], \\
    \lambda = \textbf{T}(\bm{u}^{(1)}, \ldots, \bm{u}^{(d)}),
\end{cases}
\label{eq1.6}
\end{equation}
where the dot denotes the contraction of $\textbf{T}$ along all modes except $i$.

\subsection{Main Contributions}

We investigate the asymptotic behavior of the ML estimator associated with
the stationary conditions in~\eqref{eq1.6} in the high-dimensional regime.
Following the setting in \cite{seddik2024whentrandomtensorsmeet}, we assume that
the tensor dimensions satisfy
\[
n_i \to \infty,
\qquad
\frac{n_i}{\sum_{j=1}^d n_j} \to c_i \in (0,1).
\]
Our analysis is carried out under the branch-selection framework formalized in
Assumptions~\ref{assumption3.1} and~\ref{assumption3.2}.
It is worth pointing out that \cite{seddik2024whentrandomtensorsmeet,liu2025alignmentmatchingtestshighdimensional} also works
under an informative stationary branch assumption.
In the order-three asymmetric model, we further show in
Proposition~\ref{prop:high_signal_order3} that the qualitative content of
Assumption~\ref{assumption3.1} can be verified locally in the high-signal regime.

Under this framework, we derive explicit asymptotic characterizations of the
leading singular value and of the mode-wise alignments
$\langle \bm{x}^{(i)}, \bm{u}^{(i)}_* \rangle$, $i=1,\ldots,d$.
Our results apply to selected stationary points of the ML problem satisfying the
regularity assumptions introduced in Section~\ref{sec:assumption}.
Rather than attempting a complete description of the non-convex optimization
landscape, our goal is to identify the asymptotic spectral and statistical
behavior along an informative stationary branch that remains separated from the
bulk.

From a methodological perspective, our analysis is based on a structured matrix
representation naturally induced by the first-order optimality condition
\cite{lim2005singularvaluestensors}.
This representation allows us to exploit tools from random matrix theory to
characterize both the limiting spectral behavior and the associated eigenvector
statistics.
More specifically, our approach combines resolvent methods, cumulant expansions
under finite fourth-moment assumptions, and Efron--Stein-type variance bounds.
A central technical difficulty is to control the statistical dependence between
the selected stationary point and the noise, including delicate cross terms that
arise in the non-Gaussian setting.

More precisely, for $d \ge 3$, we show that there exists a threshold
$\beta_s > 0$ such that, for all $\beta > \beta_s$, the stationary point
$(\lambda, \bm{u}^{(1)}_*, \ldots, \bm{u}^{(d)}_*)$ satisfies
\begin{equation}
\begin{cases}
\lambda \xrightarrow{\text{a.s.}} \lambda^\infty(\beta), \\[0.5ex]
\big|\langle \bm{x}^{(i)}, \bm{u}^{(i)}_* \rangle\big|
\xrightarrow{\text{a.s.}} q_i\big(\lambda^\infty(\beta)\big),
\qquad i=1,\ldots,d,
\end{cases}
\label{eq1.7}
\end{equation}
where $\lambda^\infty(\beta)$ is the unique solution of
$f(\lambda^\infty(\beta),\beta)=0$, with
\[
f(z,\beta) = z + g(z) - \beta \prod_{i=1}^{d} q_i(z),
\]
and
\[
q_i(z) = \sqrt{1 - \frac{g_i^2(z)}{c_i}}, \qquad
g_i(z) = \frac{g(z) + z}{2}
- \frac{\sqrt{4c_i + (g(z)+z)^2}}{2}.
\]
Here, the function $g(z)$ is defined as the solution to the fixed-point equation
\[
g(z) = \sum_{i=1}^{d} g_i(z),
\qquad \text{for $z$ sufficiently large}.
\]

In addition, in the balanced setting $c_i = 1/d$ for all $i \in [d]$, we show that
for $\beta \in [0,\beta_s]$,
\begin{equation}
\begin{cases}
\lambda \xrightarrow{\text{a.s.}} \lambda^\infty
\le 2\sqrt{\frac{d-1}{d}}, \\[0.5ex]
\big|\langle \bm{x}^{(i)}, \bm{u}^{(i)}_* \rangle\big|
\xrightarrow{\text{a.s.}} 0,
\end{cases}
\label{eq:cease_correlation}
\end{equation}
indicating that the selected stationary points of the ML problem become
asymptotically uninformative below the threshold $\beta_s$.

\begin{remark}
The quantities $q_i(z)$ admit an equivalent representation given by
\begin{equation}
    q_i(z)
    =
    \left(
    \frac{\alpha_i(z)^{\,d-3}}{\prod_{j\neq i}\alpha_j(z)}
    \right)^{\frac{1}{2d-4}},
\end{equation}
where
\begin{equation}
    \alpha_i(z)
    =
    \frac{\beta}{z + g(z) - g_i(z)}.
\end{equation}
This expression is well-defined for $c_i \in [0,1]$ and $d \ge 3$.
\end{remark}

\subsection{Related work}

In a seminal work, Seddik et al.~\cite{seddik2024whentrandomtensorsmeet}
characterized the limiting empirical eigenvalue distribution associated with the
tensor contraction operator as a function of the signal-to-noise ratio in the
case where the noise entries are i.i.d.\ standard Gaussian.
Very recently, Liu et al.~\cite{liu2025alignmentmatchingtestshighdimensional}
remarked that their technique can be employed to extend the same limiting result
to the case where the noise entries are i.i.d.\ sub-exponential.
We also work with the tensor contraction operator $\Phi_d$ introduced in
\cite{seddik2024whentrandomtensorsmeet}.

At the same time, both of the above works treat the informative branch and its
deterministic asymptotic location essentially as hypotheses, and do not study in
detail the theoretical role of the analogue of our Assumption~\ref{assumption3.2}.
By contrast, Section~\ref{sec:assumption} of the present paper makes this point
more explicit: Assumption~\ref{assumption3.1} serves only as a branch-selection
hypothesis, whereas Assumption~\ref{assumption3.2} is used to encode
deterministic asymptotic locations for the selected branch and already implies
useful finite-$N$ consequences, such as high-probability control of the outlier
position and of the mode-wise alignments.
Moreover, in the core order-three asymmetric model, we show that the qualitative
content of Assumption~\ref{assumption3.1} can be verified locally in the
high-signal regime.

However, the main challenge in the spiked tensor model lies in the fact that the
vectors selected by the ML landscape are statistically correlated with the
noise, which necessitates a careful analysis of whether the associated cross
terms vanish asymptotically.
Unfortunately, the argument in \cite{seddik2024whentrandomtensorsmeet} relies on
an order estimate of
\(
\Phi_3({\bf W}, \partial \bm{u}, \partial \bm{v}, \partial \bm{w})
\)
that does not hold even under standard Gaussian noise distributions; see
Remark~\ref{remark4} following Lemma~\ref{lemA.1}.
On the other hand, the technique in
\cite{liu2025alignmentmatchingtestshighdimensional} is designed to establish the
empirical eigenvalue distribution of the noise tensor
\(
\frac{1}{\sqrt{N}}{\bf W}
\)
itself, and does not address the effect of the cross terms arising from the
dependence between the selected stationary point and the noise.

In our work, we complete the analysis of the cross terms in
Lemma~\ref{lemA.1}, thereby correcting and strengthening the results of
\cite{seddik2024whentrandomtensorsmeet}, and further extending them to a more
general setting requiring only independence, centering, unit variance, and a
finite fourth-moment assumption on the noise variables.

The spiked tensor model extends the classical spiked random matrix framework to
higher-order data.
In the matrix case ($d=2$), the model reduces to the well-studied spiked matrix
model, for which it is known that, in the large-dimensional regime, there exists
an order-one critical signal-to-noise ratio $\beta_c$.
Below $\beta_c$, it is information-theoretically impossible to detect or recover
the spike, whereas above $\beta_c$ the spike can be detected and recovered in
polynomial time using singular value decomposition.
This phenomenon is commonly referred to as the BBP (Baik--Ben Arous--Péché)
phase transition~\cite{baik2005phasetransition, benaychgeorges2011eigenvalues, Capitaine2009largest, Peche2006largesteigenvalue}.

In the (symmetric) spiked tensor model for $d \ge 3$, an order-one critical value
$\beta_c(d)$ also exists in the high-dimensional asymptotic regime, below which
it is information-theoretically impossible to detect or recover the spike, while
above $\beta_c(d)$ recovery is theoretically possible via the 
ML estimator.
However, unlike the matrix case where ML can be computed in polynomial time,
computing the ML estimator for tensors of order $d \ge 3$ is NP-hard~\cite{MontanariRichard2014tensorPCA, Biroli_2020}.
This motivates the study of a more practical phase transition characterized by
an algorithmic critical value $\beta_a(d,n)$, above which spike recovery becomes
possible using polynomial-time algorithms.

Montanari and
Richard~\cite{MontanariRichard2014tensorPCA} first introduced the symmetric spiked tensor model
(of the form $\bm{Y} =\mu \bm{x}^{\otimes d}+ \bm{W}$ with symmetric noise $\bm{W}$) and investigated its
algorithmic aspects.
Using heuristic arguments, they suggested that spike recovery is achievable in
polynomial time via approximate message passing (AMP) or tensor power iteration
when $\mu \gtrsim n^{\frac{d-2}{2}}$.
This phase transition was later established rigorously for AMP
in~\cite{Lesieur_2017, Jagannath_2020} and more recently for tensor power iteration in~\cite{Huang2022poweriteration}.

Montanari and
Richard~\cite{MontanariRichard2014tensorPCA} further proposed a tensor unfolding approach, which
consists in reshaping the tensor into a matrix $\mathrm{Mat}(\bm{Y})=\mu \bm{x} \bm{y}^\top+ \bm{Z}$
of size $n^q \times n^{d-q}$ for $q\in[d-1]$, followed by a singular value
decomposition.
For $q=1$, they predicted that successful recovery occurs when
$\mu \gtrsim n^{\frac{d-2}{4}}$.
In a recent work, Ben Arous \emph{et al.}~\cite{BenArousHuangHuang2021long} studied spiked long rectangular
random matrices under fairly general noise assumptions (bounded fourth-order
moments) and proved the existence of a BBP-type phase transition for the extreme
singular value and singular vectors.
Applying their result to the asymmetric rank-one spiked tensor model in~(1) with
equal dimensions via tensor unfolding, they obtained the exact threshold predicted
in~\cite{MontanariRichard2014tensorPCA}, namely $\beta \gtrsim n^{\frac{d-2}{4}}$, for successful signal
recovery.

The structure of the optimization landscape associated with~(\ref{eq1.6})
raises natural questions about the number and nature of its stationary points,
including both local optima and saddle points.
For symmetric spiked tensor models, a detailed landscape analysis was carried out
in~\cite{arous2019landscapespikedtensormodel}, where it was shown that the behavior of
local maxima undergoes a sharp transition as the signal-to-noise ratio varies.
Specifically, when $\beta < \beta_c$, the objective values attained at all local maxima
(including the global one) are concentrated in a narrow interval, whereas for $\beta > \beta_c$
the global maximum separates from this band and grows with $\beta$.

Complementary insights were obtained in~\cite{goulart2022randommatrixperspectiverandom},
where an order-$3$ symmetric spiked tensor model was investigated from a random matrix
theory perspective.
In that work, it was argued that there exists an intermediate threshold
$0 < \beta_s < \beta_c$ such that, for $\beta \in [\beta_s,\beta_c]$, the ML
problem admits a local optimum that is already correlated with the planted spike,
while this spike-aligned solution becomes globally optimal once $\beta > \beta_c$.

Motivated by these results, we conjecture that analogous phenomena persist for asymmetric
spiked tensor models.
In particular, we expect the existence of an order-one critical value $\beta_c$ above which
the ML problem~(\ref{eq1.4}) admits a global maximum aligned with the true
signal.
As in~\cite{goulart2022randommatrixperspectiverandom}, our analysis does not yield an explicit
characterization of $\beta_c$, and determining its precise value is left for future work.
Nevertheless, for asymmetric spiked random tensors, we identify a threshold $\beta_s$ such
that for $\beta > \beta_s$ there exists at least one stationary point of the ML objective
exhibiting nontrivial correlation with the planted spike.

\subsection{Organization}

The remainder of this paper is organized as follows.
In Section~\ref{sec:preliminaries}, we give some notation and notions, and collect several probabilistic 
and random matrix theoretic tools that will be used in the paper. 
%including cumulant expansions under finite moment assumptions and basic spectral norm bounds.
In Section~\ref{sec:assumption}, we discuss the assumption of regular informative stationary branches 
on stationary points and show that it should be satisfied locally in the high-signal regime.
In Section~\ref{sec:order-3}, we present a detailed analysis of the asymmetric rank-one spiked
tensor model of order $d=3$.
%We characterize the limiting spectral distribution of the associated block-wise
%tensor contraction, establish concentration results for the singular value and
%the mode-wise alignments, and derive explicit asymptotic formulas in both the
%informative and uninformative regimes.
In Section~\ref{sec:order-d} we extend the analysis to general asymmetric spiked tensor models of
arbitrary order $d \ge 3$.
%We show that the limiting spectral behavior and the asymptotic characterizations
%of the singular value and alignments persist in the general setting, thereby
%establishing universality beyond the order-$3$ case.
In Section~\ref{sec:rank-r}, we discuss extensions of our results to rank-$r$ spiked tensor models
with orthogonal signal components and show that the asymptotic behavior decouples
across different spikes. In Section~\ref{sec:proof-sketch} we present a proof sketch for our main theorems. 
Finally, in Section~\ref{sec:conclusion} we conclude the work with a summary of the main findings and a
discussion of several open problems and directions for future research.
All the proof details are given in the appendices.  

% \section{Preliminaries}

% We need a tool in random matrix theory(RMT):
% \begin{lemma}\cite{Khorunzhy_1996}
%     If $\xi$ is a real-valued random variable such that
% $\mathbb{E}\{|\xi|^{p+2}\}<\infty$ and if $f(t)$ is a complex-valued function
% of a real variable such that its first $p+1$ derivatives are continuous and
% bounded, then
% \begin{equation}\label{II16}
% \mathbb{E}\{\xi f(\xi)\}
% =
% \sum_{a=0}^{p}\frac{\kappa_{a+1}}{a!}\,
% \mathbb{E}\!\left\{f^{(a)}(\xi)\right\}
% +\varepsilon,
% \end{equation}
% where $\kappa_a$ are the semi-invariants (cumulants) of $\xi$,
% \[
% |\varepsilon|
% \le
% C\,\sup_t |f^{(p+1)}(t)|\,\mathbb{E}\{|\xi|^{p+2}\},
% \]
% and the constant $C$ depends on $p$ only.
% \label{lem2.3}
% \end{lemma}

% \begin{lemma}[Operator norm bound]\label{fact:opnorm}
% Under Assumption~\ref{assumption2}, let $W=(W_{ij})_{1\le i,j\le N}$ be an $N\times N$ random matrix
% with i.i.d.\ entries distributed as $W$. Then
% \begin{equation}\label{eq:opnorm}
% \|W\| \lesssim \sqrt N \quad a.s.
% \end{equation}
% \end{lemma}

% Finally we denote 
% \begin{equation}\label{ccc}
%     \mathbb C^+_{\eta_0} = \{z\in \mathbb{C}|\Im(z) \ge\eta_0\}.
% \end{equation}
\section{Preliminaries}\label{sec:preliminaries}

This section collects several probabilistic and random matrix theoretic tools
that will be used throughout the paper.
All results stated below are standard, and are recalled here for completeness
and to fix notation.

\subsection{Notation and Notions}

For a positive integer $n$, we denote $[n] := \{1,\ldots,n\}$.
Typically, 
vectors are denoted by bold lowercase letters such as $\bm{a},\bm{b},\bm{c}$,
and matrices by bold uppercase letters such as $\bm{A},\bm{B},\bm{C}$.
Tensors are also denoted by bold uppercase letters, such as $\textbf{T},\textbf{W}$.
The set of real $m\times n$ matrices is denoted by $\mathbb{M}_{m,n}$, and
$\mathbb{M}_n := \mathbb{M}_{n,n}$ denotes the set of real square matrices of size $n$.
The set of real tensors of order $d$ and dimensions $n_1,\ldots,n_d$ is denoted by
$\textbf{T}_{n_1,\ldots,n_d}$, while $\textbf{T}_n^{(d)}$ denotes the set of
$d$-th order hypercubic tensors of size $n$.
For a tensor $\textbf{T}\in\textbf{T}_{n_1,\ldots,n_d}$, the entry indexed by
$(i_1, \ldots, i_d)\in[n_1]\times\cdots\times[n_d]$ is written as
$T_{i_1, \ldots, i_d}$.

%Given a tensor $\textbf{W} \in \textbf{T}_{n_1, \ldots, n_d}$, the notation
%\[
%\textbf{W} \sim \textbf{T}_{n_1,\ldots,n_d}
%\]
%means that $\bf{W}$ has i.i.d. entries with distribution $W_{_{i_1,\cdots ,i_d}}$ satisfying %Assumption~\ref{assumption2}.

%Scalars are denoted by (lowercase) letters $a,b,c$.

For a vector $\bm{u}\in\mathbb{R}^n$, we denote by $\|\bm{u}\|_2$ its Euclidean norm,
and by $\langle \bm{u},\bm{v}\rangle := \sum_i u_i v_i$ the standard inner product.
The canonical basis vector in $\mathbb{R}^d$ is denoted by $\bm{e}_i^{(d)}$,
whose $j$-th coordinate is $(\bm{e}_i^{(d)})_j = \delta_{ij}$ (Dirac delta).
For matrices and tensors, $\|\cdot\|$ denotes the operator (or spectral) norm and $\|\cdot\|_{F}$ denotes the Frobenius norm.

%Given a tensor $\textbf{T}\in\textbf{T}_{n_1,\ldots,n_d}$ and vectors
%$\bm{u}^{(1)}\in\mathbb{R}^{n_1},\ldots,\bm{u}^{(d)}\in\mathbb{R}^{n_d}$,
%we define the full contraction
%\[
%\textbf{T}(\bm{u}^{(1)},\ldots,\bm{u}^{(d)}):=\sum_{i_1,\ldots,i_d}T_{i_1,\ldots,i_d}
%\prod_{k=1}^d u^{(k)}_{i_k}.
%\]
%More generally, for vectors $\bm{u}^{(1)},\ldots,\bm{u}^{(k)}$ with $k<d$,
%the partial contraction
%\[
%\textbf{T}(\bm{u}^{(1)},\ldots,\bm{u}^{(k)},\cdot,\ldots,\cdot)
%\]
%denotes the resulting $(d-k)$-th order tensor obtained by contracting $\textbf{T}$
%along the first $k$ modes.

%Finally, we denote by $\mathbb{S}^{N-1}$ the unit sphere in $\mathbb{R}^N$.

Given a $z \in \mathbb{C}$,  let $\Re z$ and $\Im z$ be the real and imaginary parts of $z$, respectively.
For an $\eta_0>0$, we denote by
\begin{equation}\label{ccc}
\mathbb{C}^+_{\eta_0}
=
\{\,z\in\mathbb{C}:\ \Im z \ge \eta_0\,\}
\end{equation}
the closed upper half-plane at distance $\eta_0$ from the real axis.

For a symmetric matrix $\bm{S} \in \mathbb{M}_n$, its resolvent is defined as
\[
\bm{R}_{\bm{S}}(z)\equiv (\bm{S}- z \bm{I})^{-1}, \; z \in \mathbb{C}\setminus S(\bm{S}),
\]
where $S(\bm{S})=\{\lambda_1, \ldots, \lambda_n\}$ is the spectrum of $\bm{S}$. 
Throughout the paper, resolvents will be considered for spectral parameters
belonging to $\mathbb{C}^+_{\eta_0}$. The resolvent provides rich information on the behaviors of the eigenvalues of $\bm{S}$ through the novel Stieltjes transform. 

\begin{definition}[Stieltjes transform] Given a probability measure $\nu$, the Stieltjes transform of $\nu$ is defined by 
    \[
    g_{\nu}(z) \equiv \int{\frac{d \nu(\lambda)}{\lambda-z}}, \quad \mbox{for }  z \in \mathbb{C} \setminus S(\nu),
    \]
    where $S(\nu)$ represents the support of $\nu$.
\end{definition}

Consider that the empirical spectral measure of $\bm{S}$ is defined as
\[
\nu_{\bm{S}} \equiv \frac{1}{n} \sum_{i=1}^n \delta_{\lambda_i(\bm{S})}.
\]
Thus, we can compute the Stieltjes transform of $\nu_{\bm{S}}$ by
\[
g_{\nu_{\bm{S}}} (z) = \frac{1}{n} \sum_{i=1}^n \int{\frac{\delta_{\lambda_i(\bm{S})} (d \lambda)}{\lambda-z}} = \frac{1}{n} \sum_{i=1}^n \frac{1}{\lambda_i(\bm{S)} - z} = \frac{1}{n} \tr \bm{R}_{\bm{S}}(z),
\]
which associates the Stieltjes transform with the resolvent.  
%\subsection*{Cumulant expansion}

\subsection{Tensor contraction operator $\Phi_d$} 
%Compared with random matrix models, 
Unfortunately, the resolvent notion does not generalize to tensors. In this paper we leverage the tensor contraction operator $\Phi_d$ \cite{seddik2024whentrandomtensorsmeet}.
In particular, it maps a tensor $\textbf{W}$ and the unit vectors $\bm a^{(i)}$ to a matrix. That is, 
\begin{equation*}
\Phi_d\colon \ 
\textbf{T}_{n_1,\ldots,n_d}\times
\mathbb S^{n_1-1}\times\cdots\times\mathbb S^{n_d-1}
\longrightarrow
\mathbb M_{\sum_i n_i},
\end{equation*}
defined by
\[
(\textbf{W},\bm a^{(1)},\ldots,\bm a^{(d)})
\longmapsto
\begin{bmatrix}
\bm 0_{n_1\times n_1} & \textbf{W}^{12} & \textbf{W}^{13} & \cdots & \textbf{W}^{1d}\\
(\textbf{W}^{12})^\top & \bm 0_{n_2\times n_2} & \textbf{W}^{23} & \cdots & \textbf{W}^{2d}\\
(\textbf{W}^{13})^\top & (\textbf{W}^{23})^\top & \bm 0_{n_3\times n_3} & \cdots & \textbf{W}^{3d}\\
\vdots & \vdots & \vdots & \ddots & \vdots\\
(\textbf{W}^{1d})^\top & (\textbf{W}^{2d})^\top & (\textbf{W}^{3d})^\top & \cdots & \bm 0_{n_d\times n_d}
\end{bmatrix},
\] 
with
\[
\textbf{W}^{ij}
=
\textbf{W}\big(\bm a^{(1)},\ldots,\bm a^{(i-1)},\,\cdot,\,
\bm a^{(i+1)},\ldots,\bm a^{(j-1)},\,\cdot,\,
\bm a^{(j+1)},\ldots,\bm a^{(d)}\big)
\in\mathbb M_{n_i,n_j}.
\]
It is easily seen that the operator  $\Phi_d$ is linear in $\bf{W}$.  This operator provides an approach for dealing with tensors using random matrix tools.    
Specifically, we can resort to the resolvent method.

\subsection{Cumulant expansion}
A key ingredient in our analysis is a cumulant expansion formula that allows us
to replace Gaussian integration by parts when the noise distribution has only
finite moments.
We will repeatedly apply the following lemma, which goes back to
Khorunzhy et al.~\cite{Khorunzhy_1996}.

\begin{lemma}\cite{Khorunzhy_1996}\label{lem2.3}
Let $\xi$ be a real-valued random variable such that
$\mathbb{E}|\xi|^{p+2}<\infty$ for some integer $p\ge 0$, and let
$f\colon \mathbb{R}\to\mathbb{C}$ be a function whose first $p+1$ derivatives are
continuous and bounded.
Then
\begin{equation}\label{II16}
\mathbb{E}\{\xi f(\xi)\}
=
\sum_{a=0}^{p}\frac{\kappa_{a+1}}{a!}\,
\mathbb{E}\!\left\{f^{(a)}(\xi)\right\}
+\varepsilon,
\end{equation}
where $\kappa_a$ denotes the $a$-th cumulant of $\xi$, and the remaining term
satisfies
\[
|\varepsilon|
\le
C\,\sup_{t\in\mathbb{R}} |f^{(p+1)}(t)|\,\mathbb{E}\{|\xi|^{p+2}\},
\]
with a constant $C$ depending only on $p$.
\end{lemma}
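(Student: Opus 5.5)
The plan is a Taylor expansion of $f$ around $0$ combined with the moment–cumulant relations; the content of the lemma is that the polynomial part of the expansion, paired with $\xi$, produces exactly the cumulant sum. By linearity we may treat real and imaginary parts of $f$ separately, so assume $f$ real-valued.

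First we handle polynomials. For $f(t)=t^m$ with $m\le p$, the identity $\mathbb{E}[\xi^{m+1}]=\sum_{a=0}^{m}\binom{m}{a}\kappa_{a+1}\mathbb{E}[\xi^{m-a}]$ is the standard recursion relating moments and cumulants. It follows from differentiating $M(s)=\exp(K(s))$, where $M,K$ are the moment and cumulant generating functions (formal power series suffice, since only moments up to order $p+1$ enter), which gives $M'=K'M$. Since $f^{(a)}(\xi)=\frac{m!}{(m-a)!}\xi^{m-a}$, the right-hand side equals $\sum_{a=0}^{m}\frac{\kappa_{a+1}}{a!}\mathbb{E}f^{(a)}(\xi)$. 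Hence (\ref{II16}) holds with $\varepsilon=0$ for every polynomial of degree at most $p$. Terms with $a>m$ vanish, so the sum may run to $p$.

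For general $f$, write $f=P+r$, where $P$ is the degree-$p$ Taylor polynomial at $0$ and $r(t)=\frac{1}{p!}\int_0^t (t-s)^p f^{(p+1)}(s)\,ds$. By the polynomial case,
\[
\varepsilon=\mathbb{E}[\xi r(\xi)]-\sum_{a=0}^{p}\frac{\kappa_{a+1}}{a!}\mathbb{E}[r^{(a)}(\xi)].
\]
With $M:=\sup|f^{(p+1)}|$ we have $|r^{(a)}(t)|\le M|t|^{p+1-a}/(p+1-a)!$. The first term is therefore bounded by $M\,\mathbb{E}|\xi|^{p+2}/(p+1)!$. Each remaining term is bounded by $\frac{|\kappa_{a+1}|}{a!}\,M\,\mathbb{E}|\xi|^{p+1-a}$.

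The main obstacle is bounding $|\kappa_{a+1}|\,\mathbb{E}|\xi|^{p+1-a}$ by $C_p\,\mathbb{E}|\xi|^{p+2}$ without scale-dependent constants. Since $\kappa_{a+1}$ is a polynomial in moments that is homogeneous of degree $a+1$, Hölder's (Lyapunov's) inequality $\mathbb{E}|\xi|^k\le(\mathbb{E}|\xi|^{p+2})^{k/(p+2)}$ gives $|\kappa_{a+1}|\le C_a(\mathbb{E}|\xi|^{p+2})^{(a+1)/(p+2)}$. Multiplying by $(\mathbb{E}|\xi|^{p+2})^{(p+1-a)/(p+2)}$ gives exactly $C\,\mathbb{E}|\xi|^{p+2}$, with $C$ depending only on $p$. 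Summing the finitely many terms yields the stated bound on $|\varepsilon|$.

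Boundedness and continuity of $f^{(a)}$ for $a\le p+1$, together with the finite moments, justify all expectations. Differentiating the integral formula for $r$ is legitimate because $f^{(p+1)}$ is continuous.
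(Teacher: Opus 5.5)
Your argument is correct and complete: exactness on polynomials of degree at most $p$ via the recursion $\mathbb{E}\xi^{m+1}=\sum_{a=0}^{m}\binom{m}{a}\kappa_{a+1}\mathbb{E}\xi^{m-a}$, the integral Taylor remainder, and the scaling bound $|\kappa_{a+1}|\,\mathbb{E}|\xi|^{p+1-a}\le C_p\,\mathbb{E}|\xi|^{p+2}$ together give the stated estimate with a constant depending only on $p$. The scaling bound follows from Lyapunov's inequality because every monomial in $\kappa_{a+1}$ has moment orders summing to $a+1$. The paper itself gives no proof and simply cites Khorunzhy et al.; your argument is the standard proof of that cited result.
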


%\subsection*{Spectral norm control}

\subsection{Spectral norm control}

We also require a basic control of the operator norm of random matrices with
independent entries.
Such bounds are classical in random matrix theory and ensure that resolvents are
well defined in the upper half-plane.

\begin{lemma}\cite{BaiYin1986}[Operator norm bound]\label{fact:opnorm}
%Under Assumption~\ref{assumption2}, 
Let $\bm{W}_N$ %$=(W_{ij})_{1\le i,j\le N}$ 
be an $N\times N$ random matrix with i.i.d.\ entries of zero mean, unit variance and finite fourth moment.
Then
\begin{equation}\label{eq:opnorm}
\limsup_{N\to \infty}\left\|\frac{\bm{W}_N}{\sqrt{N}} \right\| \le 2,
\end{equation}
which means
\begin{equation*}
    \|\bm{W}_N\|\lesssim \sqrt{N}\qquad \text{almost surely}.
\end{equation*}
\end{lemma}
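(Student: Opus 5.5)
We recall the classical moment-method proof of Yin--Bai--Krishnaiah / Bai--Yin. The idea is to reduce to a matrix with entries truncated at level $\delta_N\sqrt N$, bound $\mathbb{E}\,\tr(\bm X\bm X^\top)^{k}$ for a slowly growing $k=k_N$, and conclude with Markov's inequality and Borel--Cantelli. Write $\bm X=\bm W_N/\sqrt N$. Since $\|\bm X\|^2=\|\bm X\bm X^\top\|$, it suffices to show that for every $\varepsilon>0$, almost surely $\|\bm X\bm X^\top\|\le 4+\varepsilon$ for all large $N$. The constant $4=(1+1)^2$ is the right edge of the Marchenko--Pastur law with aspect ratio one.

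\emph{Step 1: truncation.} Choose $\delta_N\downarrow0$ slowly and set $\widetilde W_{ij}=W_{ij}\mathbf 1\{|W_{ij}|\le\delta_N\sqrt N\}$. Group $N\in[2^m,2^{m+1})$ and assume, as in the standard Bai--Yin setting, that the entries come from one infinite i.i.d.\ array, so that $\bm W_N$ is a submatrix of $\bm W_{2^{m+1}}$. The event $\{\bm W_N\neq\widetilde{\bm W}_N \text{ for some such } N\}$ is then contained in the event that some entry of $\bm W_{2^{m+1}}$ exceeds $\delta_{2^{m+1}}2^{m/2}$, whose probability is at most $4^{m+1}\,\mathbb P(|W|>\delta 2^{m/2})$. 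Summing over $m$ is controlled by $\mathbb E|W|^4<\infty$, after choosing $\delta_N$ slowly enough (the usual diagonal choice). By Borel--Cantelli, a.s.\ $\bm W_N=\widetilde{\bm W}_N$ eventually.

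Next I recentre and rescale. The mean satisfies
\[
|\mathbb E\widetilde W_{ij}|\le \mathbb E|W|^4/(\delta_N^3N^{3/2}),
\]
so the mean matrix has norm $o(N^{-1/2})$, which is negligible after division by $\sqrt N$. The variance satisfies $1-\mathbb E\widetilde W_{ij}^2=o(1)$, so rescaling changes the norm by a factor $1+o(1)$. We may therefore assume the entries are centred, have unit variance, and satisfy $|W_{ij}|\le\delta_N\sqrt N$.

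\emph{Step 2: moment bound.} Expand $\mathbb E\tr(\bm X\bm X^\top)^k$ as a sum over closed bipartite walks of length $2k$ on row and column vertices. Walks containing an edge traversed exactly once vanish by centring.
\begin{itemize}
\item Walks in which every edge is traversed exactly twice and whose skeleton is a tree contribute the leading term. There are at most $N\cdot N^{k}\cdot C_k$ of them, where $C_k$ is the Catalan number, and $C_k\le 4^k$.
\item For a general walk with $t$ distinct edges and $v\le t+1$ vertices, bound the moment of an edge visited $m\ge 2$ times by $\mathbb E|W|^m\le(\delta_N\sqrt N)^{m-4}\,\mathbb E|W|^4$ when $m\ge 4$.
\end{itemize}
Each extra visit or each deficit of vertices then costs a factor $\delta_N$ or $N^{-1/2}$, against a combinatorial entropy of at most $k^{O(1)}$ per such defect. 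This gives, for $k_N\to\infty$ with $k_N/\log N\to\infty$ but $k_N\,\delta_N^{c}$ small (e.g.\ $k_N^{6}\le \delta_N^{-1}$),
\[
\mathbb E\tr(\bm X\bm X^\top)^{k}\le N\,(4+o(1))^{k}.
\]

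\emph{Step 3: conclusion.} By Markov's inequality,
\[
\mathbb P\bigl(\|\bm X\bm X^\top\|>4+\varepsilon\bigr)\le N\Bigl(\frac{4+o(1)}{4+\varepsilon}\Bigr)^{k_N}.
\]
This is summable in $N$ because $k_N\gg\log N$. Borel--Cantelli then gives $\limsup_N\|\bm X\|\le2$ almost surely, and hence $\|\bm W_N\|\lesssim\sqrt N$ almost surely.

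\emph{Main obstacle.} The hard part is the combinatorics of Step 2. One must encode the non-tree walks (repeated edges with multiplicity $>2$, cycles) efficiently enough that their number, weighted by the truncated higher moments, stays within $(1+o(1))^{k}$ relative to the Catalan count when $k$ grows faster than $\log N$. I would follow the Yin--Bai--Krishnaiah coding of walks into innovations, returns and irregular steps, bounding the number of ways to place the $s$ irregular steps by $(2k)^{O(s)}$ and absorbing each into a factor $\delta_N$.
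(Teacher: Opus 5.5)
The paper gives no proof of this lemma; it only cites Bai--Yin. So the question is whether your reconstruction of the Yin--Bai--Krishnaiah moment method holds up on its own. Your remark that the almost-sure statement presupposes that the $\bm W_N$ are nested submatrices of a single infinite i.i.d.\ array is correct and worth keeping. With matrices independent across $N$, the largest entry alone would violate the bound infinitely often unless $\mathbb E|W|^6<\infty$.

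There is, however, a genuine gap: the parameter constraints of Step~1 and Step~2 cannot hold together under a bare fourth-moment hypothesis.
\begin{itemize}
\item Your accounting in Step~2 gives each irregular step $(2k)^{O(1)}$ choices and pays for it with one factor $\delta_N$. This forces $k_N\delta_N^{c}\to0$.
\item Step~3 needs at least $k_N\gtrsim\log N$.
\item Together these require $\delta_N\le(\log N)^{-1/c}$, e.g.\ $\delta_N\le(\log N)^{-6}$.
\end{itemize}
In Step~1, however, the rate at which $\delta_N$ may tend to $0$ is dictated by the tail of $W$, and $\mathbb E|W|^4<\infty$ gives no rate at all. Take $\mathbb P(|W|>t)\asymp t^{-4}(\log t)^{-2}$, which has a finite fourth moment. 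The $\asymp 4^m$ new entries of the $m$-th dyadic block each exceed $(\log N)^{-6}\sqrt N$ with probability $\asymp m^{22}4^{-m}$. The blocks are independent, so the second Borel--Cantelli lemma gives $\bm W_N\neq\widetilde{\bm W}_N$ for infinitely many $N$ almost surely. As written, your argument therefore proves the lemma only under a stronger hypothesis such as $\mathbb E\bigl[|W|^4\log^{C}(2+|W|)\bigr]<\infty$.

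What is missing is the sharper enumeration that is the real content of Yin--Bai--Krishnaiah. The rule ``$(2k)^{O(1)}$ choices against one $\delta_N$ per irregular step'' is too lossy. Steps that retrace an edge already in the skeleton carry little entropy. Steps along a new edge to an old vertex do carry $k^{O(1)}$ choices, but they are paid for by the factor $N^{-1}$ from the vertex deficit, not by $\delta_N$.

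For instance, consider walks whose skeleton is a tree with a single edge traversed $2a$ times and all others twice.
\begin{itemize}
\item Once the heavy edge is fixed, its $a-1$ extra descents are forced, so these walks number at most $k\binom{k}{a-1}C_k$.
\item Using $\mathbb E|\widetilde W|^{2a}\le(\delta_N\sqrt N)^{2a-4}\mathbb E|W|^4$, their total weight relative to the main term $NC_k$ is $\lesssim \frac{k}{N\delta_N^{2}}e^{k\delta_N^{2}}=e^{o(k)}$ for every $\delta_N\to0$.
\item By contrast, bounding their number by $(2k)^{O(a)}$ leaves a series that converges only if $k^{O(1)}\delta_N\to0$.
\end{itemize}
Carried out for all walk types, this gives $\mathbb E\tr(\bm X\bm X^\top)^k\le N^{O(1)}(4+o(1))^k$ with $\delta_N\to0$ as slowly as Step~1 requires. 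The condition on $k_N$ is of the form $k_N/\log N\to\infty$ and $\delta_N^{1/3}k_N/\log N\to0$, which can always be met (e.g.\ $k_N=\delta_N^{-1/6}\log N$). With that estimate in place, your Steps~1 and~3 go through unchanged.
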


\section{Regular informative stationary branches}
\label{sec:assumption}

In the Gaussian spiked tensor model studied in
\cite{seddik2024whentrandomtensorsmeet},
the asymptotic analysis is carried out along a sequence of stationary points
that remains informative and spectrally separated from the bulk.
Because our universality argument is likewise formulated around the tensor
contraction operator, we work with an analogous branch-selection hypothesis in
the present non-Gaussian setting.

Recall that a stationary point
\(
(\lambda_\ast,\bm u_\ast^{(1)},\ldots,\bm u_\ast^{(d)})
\)
of the constrained optimization problem \eqref{eq1.5} satisfies the KKT system
\begin{equation}
\label{eq:sec3-kkt}
T(\bm u_\ast^{(1)},\ldots,\bm u_\ast^{(i-1)},\cdot,
\bm u_\ast^{(i+1)},\ldots,\bm u_\ast^{(d)})
=
\lambda_\ast \bm u_\ast^{(i)},
\qquad i\in[d].
\end{equation}
Our analysis will be performed along a distinguished sequence of such
stationary points which carries nontrivial information on the planted spike
and whose associated contraction matrix stays uniformly away from the bulk
spectrum.

The assumption below should be viewed as a \emph{branch-selection hypothesis}
for the general model. Its role is to isolate the informative stationary branch
along which the universality analysis is carried out, without committing to a
full landscape analysis of the non-convex optimization problem \eqref{eq1.5}.
Importantly, in the order-three asymmetric model, this branch can be verified
locally in the high-signal regime; see
Proposition~\ref{prop:high_signal_order3} below.

\begin{assumption}[Regular informative stationary branch]
\label{assumption3.1}
Assume that there exists a sequence of stationary points
\(
(\lambda_\ast,\bm u_\ast^{(1)},\ldots,\bm u_\ast^{(d)})
\)
satisfying \eqref{eq:sec3-kkt} such that the following hold.

\begin{enumerate}
\item[(i)] \emph{Regularity.}
For all sufficiently large \(N\),
\[
\lambda_\ast
\notin
\sigma\!\bigl(
\Phi_d(T,\bm u_\ast^{(1)},\ldots,\bm u_\ast^{(d)})
\bigr).
\]

\item[(ii)] \emph{Outlier regime.}
There exists a deterministic compact set \(K_\nu\subset\mathbb R\) such that,
with probability tending to one, the spectrum of the relevant contraction
matrix is contained in \(K_\nu\), and
\[
\liminf_{N\to\infty}\mathrm{dist}(\lambda_\ast,K_\nu)>0.
\]

\item[(iii)] \emph{Nontrivial alignment.}
For each \(i\in[d]\),
\[
\liminf_{N\to\infty}
\bigl|
\langle \bm u_\ast^{(i)},\bm x^{(i)}\rangle
\bigr|>0.
\]
\end{enumerate}
\end{assumption}

When deterministic asymptotic locations are needed, we impose the following
supplementary condition on the selected branch.

\begin{assumption}[Deterministic asymptotic location]
\label{assumption3.2}
Assume in addition to Assumption~\ref{assumption3.1} that there exist
deterministic functions \(\lambda^\infty(\beta)\) and
\(a_{\bm x^{(i)}}^\infty(\beta)\) such that
\[
\lambda_\ast \xrightarrow{\mathbb P} \lambda^\infty(\beta),
\qquad
\bigl|
\langle \bm u_\ast^{(i)}, \bm x^{(i)}\rangle
\bigr|
\xrightarrow{\mathbb P} a_{\bm x^{(i)}}^\infty(\beta),
\qquad i\in[d].
\]
Moreover,
\[
\lambda^\infty(\beta)\notin K_\nu,
\qquad
a_{\bm x^{(i)}}^\infty(\beta)>0,
\qquad i\in[d].
\]
\end{assumption}

\begin{remark}[Role of the regularity condition]
\label{rem:regularity}
Assumption~\ref{assumption3.1}-(i) is precisely the non-resonance condition
needed for the resolvent
\[
\bigl(
\Phi_d(T,\bm u_\ast^{(1)}, \ldots, \bm u_\ast^{(d)})-\lambda_\ast \bm I
\bigr)^{-1}
\]
to be well defined.
This is the quantity that enters the derivative formulas used later in the
paper. Thus Assumption~\ref{assumption3.1}-(i) excludes collisions between
\(\lambda_\ast\) and the residual spectrum of the contraction matrix.
\end{remark}

\begin{remark}[Bulk region]
\label{rem:bulkset}
The compact set \(K_\nu\) in Assumption~\ref{assumption3.1}-(ii) should be
viewed as the deterministic bulk region generated by the corresponding
contraction ensemble. Once the limiting spectral law is identified in
Sections~4 and~5, one may take \(K_\nu=\mathcal S(\nu)\), the support of that
law. We keep the notation \(K_\nu\) here in order to avoid a circular
dependence between the present assumptions and the later identification of
\(\nu\).
\end{remark}

\begin{remark}[Scope of the assumptions]
\label{rem:scopeassumption}
Assumption~\ref{assumption3.1} serves only to select the informative branch in
the general model. It should not be interpreted as an attempt to exclude the
difficult part of the optimization landscape; rather, it isolates the branch
along which the universality analysis is performed. The stronger
Assumption~\ref{assumption3.2} is only used when deterministic limiting
singular values and alignments are discussed. In the core order-three
asymmetric model, Proposition~\ref{prop:high_signal_order3} below shows that
the qualitative content of Assumption~\ref{assumption3.1} can in fact be
verified locally in the high-signal regime.
\end{remark}

We first record the finite-\(N\) consequences of
Assumption~\ref{assumption3.2}. Although the latter is formulated in terms of
deterministic asymptotic locations, it immediately yields high-probability
control of the outlier position and of the mode-wise alignments.

\begin{proposition}[High-probability stability of the informative branch]
\label{prop:hp_risb}
Assume Assumptions~\ref{assumption3.1} and~\ref{assumption3.2}. Set
\[
\Delta_\beta:=\mathrm{dist}\bigl(\lambda^\infty(\beta),K_\nu\bigr)>0,
\qquad
a_\beta^{(i)}:=a_{x^{(i)}}^\infty(\beta)>0,
\qquad i\in[d].
\]
Then for every choice of constants
\[
0<\varepsilon<\Delta_\beta,
\qquad
0<\eta_i<a_\beta^{(i)}, \quad i\in[d],
\]
there exists \(N_0=N_0(\beta,\varepsilon,\eta_1,\ldots,\eta_d)\) such that for
all \(N\ge N_0\),
\begin{align}
\mathbb P\!\left(
\mathrm{dist}(\lambda_\ast,K_\nu)\ge \Delta_\beta-\varepsilon
\right) &\ge 1-\varepsilon,
\label{eq:hpoutlier}
\\
\mathbb P\!\left(
|\langle \bm u_\ast^{(i)},\bm x^{(i)}\rangle|
\ge a_\beta^{(i)}-\eta_i,\ \forall i\in[d]
\right) &\ge 1-\varepsilon.
\label{eq:hpalign}
\end{align}
In particular, for all sufficiently large \(N\), with probability at least
\(1-\varepsilon\),
\[
\mathrm{dist}(\lambda_\ast,K_\nu)\ge \frac{\Delta_\beta}{2},
\qquad
|\langle \bm u_\ast^{(i)},\bm x^{(i)}\rangle|
\ge \frac{a_\beta^{(i)}}{2},
\quad i\in[d].
\]
\end{proposition}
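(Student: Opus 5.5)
The argument is a direct unpacking of convergence in probability, combined with the fact that $x\mapsto\mathrm{dist}(x,K_\nu)$ is $1$-Lipschitz. No random matrix input is required beyond Assumption~\ref{assumption3.2}.

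\emph{Outlier bound.} For any real $x$,
\[
\bigl|\mathrm{dist}(x,K_\nu)-\mathrm{dist}(y,K_\nu)\bigr|\le |x-y|.
\]
Hence on the event $\{|\lambda_\ast-\lambda^\infty(\beta)|<\varepsilon\}$ we get $\mathrm{dist}(\lambda_\ast,K_\nu)\ge \Delta_\beta-\varepsilon$. By Assumption~\ref{assumption3.2}, $\lambda_\ast\to\lambda^\infty(\beta)$ in probability. So there is $N_1$ such that for $N\ge N_1$ the probability of this event is at least $1-\varepsilon/2$, and \eqref{eq:hpoutlier} follows a fortiori.

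\emph{Alignment bound.} For each $i\in[d]$, on the event $\{\,\bigl||\langle \bm u_\ast^{(i)},\bm x^{(i)}\rangle|-a_\beta^{(i)}\bigr|<\eta_i\,\}$ the reverse triangle inequality gives $|\langle \bm u_\ast^{(i)},\bm x^{(i)}\rangle|\ge a_\beta^{(i)}-\eta_i$. Convergence in probability gives $N^{(i)}$ such that this event has probability at least $1-\varepsilon/(2d)$ for $N\ge N^{(i)}$. A union bound over the $d$ modes shows that the simultaneous event has probability at least $1-\varepsilon/2\ge 1-\varepsilon$. This is \eqref{eq:hpalign}, with $N_0=\max(N_1,N^{(1)},\ldots,N^{(d)})$.

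\emph{Final display.} Apply the same argument with $\varepsilon' =\min(\varepsilon,\Delta_\beta/2)$ in the outlier step and $\eta_i=a_\beta^{(i)}/2$ in the alignment step, allotting probability $\varepsilon/2$ to each part. A union bound then yields both inequalities at once, with probability at least $1-\varepsilon$, for all sufficiently large $N$. If $\varepsilon\ge\Delta_\beta$, one still uses $\varepsilon'$ for the distance tolerance, since the statement only requires the probability level to be $\varepsilon$.

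The only point needing care is this bookkeeping: separating the tolerance parameter from the probability level, and taking the union over the outlier and alignment events. There is no genuine obstacle. Finally, $\Delta_\beta>0$ follows because $K_\nu$ is compact and $\lambda^\infty(\beta)\notin K_\nu$, so the distance is positive.
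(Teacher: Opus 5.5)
Your proof is correct and follows the same route as the paper: convergence in probability from Assumption~\ref{assumption3.2}, the triangle-inequality (1-Lipschitz) bound for $\mathrm{dist}(\cdot,K_\nu)$, and then specializing to $\eta_i=a_\beta^{(i)}/2$ and $\min\{\varepsilon,\Delta_\beta/2\}$. Your explicit union bounds ($\varepsilon/(2d)$ per mode, $\varepsilon/2$ per event) are slightly more careful than the paper. The paper's final step asserts both bounds simultaneously with probability $1-\varepsilon$ without making that adjustment.
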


\begin{proof}
Since
\(
\lambda_\ast \xrightarrow{\mathbb P} \lambda^\infty(\beta)
\)
and
\(
|\langle \bm u_\ast^{(i)},\bm x^{(i)}\rangle|
\xrightarrow{\mathbb P} a_\beta^{(i)}
\),
for every \(\varepsilon>0\) and sufficiently large \(N\),
\begin{align*}
\mathbb P\!\left(
|\lambda_\ast-\lambda^\infty(\beta)|<\varepsilon
\right) &\ge 1-\varepsilon,
\\
\mathbb P\!\left(
\bigl||
\langle \bm u_\ast^{(i)},\bm x^{(i)}\rangle
|-a_\beta^{(i)}
\bigr|<\eta_i,\ \forall i\in[d]
\right) &\ge 1-\varepsilon.
\end{align*}
On the event
\(
|\lambda_\ast-\lambda^\infty(\beta)|<\varepsilon
\),
the triangle inequality yields
\[
\mathrm{dist}(\lambda_\ast,K_\nu)
\ge
\mathrm{dist}(\lambda^\infty(\beta),K_\nu)-|\lambda_\ast-\lambda^\infty(\beta)|
\ge
\Delta_\beta-\varepsilon,
\]
which proves \eqref{eq:hpoutlier}. Likewise, on the event
\[
\bigl||
\langle \bm u_\ast^{(i)},\bm x^{(i)}\rangle
|-a_\beta^{(i)}
\bigr|<\eta_i,
\]
we have
\[
|\langle \bm u_\ast^{(i)},\bm x^{(i)}\rangle|
\ge a_\beta^{(i)}-\eta_i,
\qquad i\in[d],
\]
which proves \eqref{eq:hpalign}. The final statement follows by taking
\(\eta_i=a_\beta^{(i)}/2\) and replacing \(\varepsilon\) by
\(\min\{\varepsilon,\Delta_\beta/2\}\).
\end{proof}

An immediate consequence is a uniform high-probability bound on the resolvent
evaluated along the informative branch.

\begin{corollary}[Resolvent bound along the informative branch]
\label{cor:resolvent_hp}
Assume Assumptions~\ref{assumption3.1} and~\ref{assumption3.2}. Then for every
\(0<\varepsilon<\Delta_\beta\), there exists \(N_0\) such that for all
\(N\ge N_0\),
\[
\mathbb P\!\left(
\left\|
\bigl(
\Phi_d(T,\bm u_\ast^{(1)},\ldots,\bm u_\ast^{(d)})-\lambda_\ast \bm I
\bigr)^{-1}
\right\|
\le
\frac{1}{\Delta_\beta-\varepsilon}
\right)\ge 1-\varepsilon.
\]
In particular, for every \(\varepsilon>0\), for all sufficiently large \(N\),
\[
\mathbb P\!\left(
\left\|
\bigl(
\Phi_d(T,\bm u_\ast^{(1)},\ldots,\bm u_\ast^{(d)})-\lambda_\ast \bm I
\bigr)^{-1}
\right\|
\le
\frac{2}{\Delta_\beta}
\right)\ge 1-\varepsilon.
\]
\end{corollary}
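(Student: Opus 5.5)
The plan is to derive the bound from Proposition~\ref{prop:hp_risb} together with the elementary spectral identity for resolvents of real symmetric matrices. Write $\bm M:=\Phi_d(T,\bm u_\ast^{(1)},\ldots,\bm u_\ast^{(d)})$. Since $\Phi_d$ returns a block matrix whose lower blocks are the transposes of the upper ones, $\bm M$ is real symmetric. For any real $\lambda\notin\sigma(\bm M)$ the spectral theorem then gives
\[
\bigl\|(\bm M-\lambda\bm I)^{-1}\bigr\|=\frac{1}{\mathrm{dist}(\lambda,\sigma(\bm M))}.
\]
Assumption~\ref{assumption3.1}-(i) guarantees that $\lambda_\ast\notin\sigma(\bm M)$ for large $N$, so the resolvent is well defined, in line with Remark~\ref{rem:regularity}.

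Next I intersect events. Let $E_1$ be the event that $\sigma(\bm M)\subset K_\nu$. By Assumption~\ref{assumption3.1}-(ii), $\mathbb P(E_1)\to1$. On $E_1$ we have $\mathrm{dist}(\lambda_\ast,\sigma(\bm M))\ge \mathrm{dist}(\lambda_\ast,K_\nu)$. Let $E_2$ be the event $\{\mathrm{dist}(\lambda_\ast,K_\nu)\ge\Delta_\beta-\varepsilon'\}$. By \eqref{eq:hpoutlier}, applied with a smaller $\varepsilon'\le\varepsilon/2$, we get $\mathbb P(E_2)\ge1-\varepsilon/2$ for $N\ge N_0$.

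On $E_1\cap E_2$ the resolvent norm is at most $1/(\Delta_\beta-\varepsilon')\le 1/(\Delta_\beta-\varepsilon)$. Enlarging $N_0$ so that $\mathbb P(E_1^c)\le\varepsilon/2$, a union bound gives total probability at least $1-\varepsilon$. The second display follows by applying the first with $\min\{\varepsilon,\Delta_\beta/2\}$ in place of $\varepsilon$, exactly as in the last step of Proposition~\ref{prop:hp_risb}.

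The only delicate point is bookkeeping. The probability $1-\varepsilon$ must absorb both the failure of \eqref{eq:hpoutlier} and the event that the spectrum escapes $K_\nu$. This is why I split $\varepsilon$ between the two events rather than using it for both. A second check is that $\bm M$ is genuinely symmetric, so the identity above holds with equality and not merely as an upper bound.
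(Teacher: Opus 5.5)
Your proposal is correct and follows the paper's argument: the identity $\|(\bm M-\mu\bm I)^{-1}\|=1/\mathrm{dist}(\mu,\sigma(\bm M))$ for the symmetric contraction matrix, combined with Assumption~\ref{assumption3.1}(ii) and \eqref{eq:hpoutlier}, with the second bound obtained by replacing $\varepsilon$ by $\min\{\varepsilon,\Delta_\beta/2\}$. Your splitting of $\varepsilon$ between the event $\sigma(\bm M)\subset K_\nu$ and the event in \eqref{eq:hpoutlier} is more careful than the paper. The paper intersects the two events without accounting for the $o(1)$ failure probability of the first, and your union bound closes that small bookkeeping gap.
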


\begin{proof}
For any Hermitian matrix \(A\) and any \(\mu\notin\sigma(A)\),
\[
\|(A-\mu \bm I)^{-1}\|
=
\frac{1}{\mathrm{dist}(\mu, \sigma(A))}.
\]
By Assumption~\ref{assumption3.1}(ii), with probability tending to one, the
spectrum of the relevant contraction matrix is contained in \(K_\nu\).
On the event in \eqref{eq:hpoutlier}, we therefore have
\[
\mathrm{dist}\!\left(
\lambda_\ast,
\sigma\!\bigl(\Phi_d(T,\bm u_\ast^{(1)},\ldots,\bm u_\ast^{(d)})\bigr)
\right)
\ge
\mathrm{dist}(\lambda_\ast, K_\nu)
\ge
\Delta_\beta-\varepsilon,
\]
and hence
\[
\left\|
\bigl(
\Phi_d(T,\bm u_\ast^{(1)},\ldots,\bm u_\ast^{(d)})-\lambda_\ast \bm I
\bigr)^{-1}
\right\|
\le
\frac{1}{\Delta_\beta-\varepsilon}.
\]
This proves the first claim. The second follows by replacing \(\varepsilon\)
with \(\min\{\varepsilon,\Delta_\beta/2\}\).
\end{proof}

We next show that, in the order-three asymmetric model, the qualitative
content of Assumption~\ref{assumption3.1} can be verified directly in the
high-signal regime.

\begin{proposition}[High-signal local verification in the order-three model]
\label{prop:high_signal_order3}
Consider the order-three asymmetric spiked tensor model
\[
T=\beta\,\bm x\otimes \bm y\otimes \bm z+\frac1{\sqrt N}W,
\qquad
\bm x\in S^{m-1},\ \bm y\in S^{n-1},\ \bm z\in S^{p-1},
\qquad N=m+n+p.
\]
Assume that the entries of \(W\) are independent, centered, have variance one,
and finite fourth moment.

Then there exist constants \(L_0>0\) and \(C>0\), depending only on the aspect
ratios, such that, with probability tending to one, the event
\(\mathcal E_{L_0}\) holds; on this event, if
\[
\beta \ge C N^{1/4},
\]
then there exists a stationary point
\[
(\lambda_\ast,\bm u_\ast,\bm v_\ast,\bm w_\ast)
\in \mathbb R\times S^{m-1}\times S^{n-1}\times S^{p-1}
\]
satisfying
\[
T(\bm v_\ast)\bm w_\ast=\lambda_\ast \bm u_\ast,\qquad
T(\bm u_\ast)\bm w_\ast=\lambda_\ast \bm v_\ast,\qquad
T(\bm v_\ast)^\top \bm u_\ast=\lambda_\ast \bm w_\ast,
\]
and
\begin{align}
\|\bm u_\ast-\bm x\|+\|\bm v_\ast-\bm y\|+\|\bm w_\ast-\bm z\|
&\le \frac{C}{\beta},
\label{eq:order3_close}\\
|\lambda_\ast-\beta|
&\le C,
\label{eq:order3_lambda_close}\\
|\langle \bm u_\ast,\bm x\rangle|
+|\langle \bm v_\ast,\bm y\rangle|
+|\langle \bm w_\ast,\bm z\rangle|
&\ge 3-\frac{C}{\beta^2}.
\label{eq:order3_align}
\end{align}
Moreover, if \(K_\nu\subset\mathbb R\) is a deterministic compact bulk set
containing the spectrum of the contraction matrix
\(\Phi_3(T,\bm u_\ast,\bm v_\ast,\bm w_\ast)\) with probability tending to one, then for
all sufficiently large \(\beta\),
\begin{equation}
\label{eq:order3_outlier}
\mathrm{dist}(\lambda_\ast,K_\nu)\ge c\beta
\end{equation}
for some constant \(c>0\), and in particular
\begin{equation}
\label{eq:order3_regular}
\lambda_\ast\notin \sigma\!\bigl(\Phi_3(T,\bm u_\ast,\bm v_\ast,\bm w_\ast)\bigr).
\end{equation}
Consequently, in the order-three asymmetric model, the regularity, outlier,
and nontrivial alignment properties required in
Assumption~\ref{assumption3.1} hold with high probability in the high-signal
regime.
\end{proposition}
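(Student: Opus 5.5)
We construct the informative stationary point by a perturbative fixed-point argument around the planted triple $(\bm x,\bm y,\bm z)$. The event $\mathcal E_{L_0}$ is the event on which the noise is controlled uniformly over the sphere. Specifically, $\mathcal E_{L_0}$ is the event that the injective norm satisfies $\|W\|_{\mathrm{inj}}:=\sup_{\bm a,\bm b,\bm c}|W(\bm a,\bm b,\bm c)|\le L_0\sqrt N$, and also that every partial contraction $W(\bm a,\cdot,\cdot)$, $W(\cdot,\bm b,\cdot)$, $W(\cdot,\cdot,\bm c)$ with unit vectors has operator norm at most $L_0\sqrt N$.

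\emph{Step 1: the event $\mathcal E_{L_0}$ has probability tending to one.} A uniform bound of order $\sqrt N$ on the injective norm is standard for sub-Gaussian entries (net plus union bound). Under only four moments, I would instead bound the contractions through the flattenings. The map $\bm a\mapsto W(\bm a,\cdot,\cdot)$ is controlled by the $m\times np$ unfolding $W_{(1)}$. I would then reduce to Lemma~\ref{fact:opnorm} in its rectangular (Bai--Yin) form, applied to a matrix with $np\asymp N^2$ columns. This only gives $\|W_{(1)}\|\lesssim N$, so after the $1/\sqrt N$ scaling the perturbation has size $\epsilon_N:=\|W\|_{\mathrm{inj}}/\sqrt N\lesssim \sqrt N$.

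This is exactly where the threshold $\beta\ge CN^{1/4}$ enters. The fixed-point errors will be quadratic, namely $\epsilon_N^2/\beta^2$, and these are small precisely when $\beta\gg\sqrt{\epsilon_N}\sim N^{1/4}$. It is the unfolding threshold of \cite{MontanariRichard2014tensorPCA,BenArousHuangHuang2021long}. In this way the $N^{1/4}$ in the statement is consistent with defining $\mathcal E_{L_0}$ via flattening norms, $\|W_{(k)}\|\le L_0 N$. I would use that definition, with the injective norm bounded by the flattening norms. The probability bound then follows from Bai--Yin for long rectangular matrices under finite fourth moment, invoking \cite{BenArousHuangHuang2021long}.

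\emph{Step 2: fixed point.} Write $\bm u=\bm x+\bm\delta_u$, and similarly for $\bm v,\bm w$. Use the normalized power map
\[
F(\bm u,\bm v,\bm w)=\Bigl(\tfrac{T(\cdot,\bm v,\bm w)}{\|T(\cdot,\bm v,\bm w)\|},\ \tfrac{T(\bm u,\cdot,\bm w)}{\|\cdot\|},\ \tfrac{T(\bm u,\bm v,\cdot)}{\|\cdot\|}\Bigr),
\]
on the set $B=\{\|\bm u-\bm x\|,\|\bm v-\bm y\|,\|\bm w-\bm z\|\le r\}$, with $r=C/\beta$ in the regime where the noise term $N^{-1/2}W(\cdot,\bm v,\bm w)$ has norm $O(1)$. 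On $B$ the unnormalized image is
\[
\beta\langle \bm v,\bm y\rangle\langle \bm w,\bm z\rangle\,\bm x+N^{-1/2}W(\cdot,\bm v,\bm w),
\]
whose signal coefficient is at least $\beta(1-r^2)$. After normalization the image lies within $O(\|N^{-1/2}W(\cdot,\bm v,\bm w)\|/\beta)$ of $\bm x$, so $F(B)\subset B$. Brouwer's theorem then gives a fixed point, and a fixed point of $F$ is precisely a solution of the KKT system with $\lambda_\ast=T(\bm u_\ast,\bm v_\ast,\bm w_\ast)$; all three norms agree with this value by the KKT identity.

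To reach $\beta\ge CN^{1/4}$ rather than $\beta\gtrsim\sqrt N$, I would split off the dominant term $N^{-1/2}W(\cdot,\bm y,\bm z)$, which has norm $O(1)$ by the CLT and Markov's inequality. I would then bound the cross terms $W(\cdot,\bm\delta_v,\bm z)$ via $\|W_{(1)}\|$. This iterative split is the main obstacle, and I would try first to check self-consistency of $r\sim C/\beta$ under $\epsilon_N^2/\beta^2\le 1/C$.

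\emph{Step 3: consequences.} From the fixed point, \eqref{eq:order3_close} holds directly. Then $\langle\bm u_\ast,\bm x\rangle=1-\tfrac12\|\bm u_\ast-\bm x\|^2\ge 1-C/\beta^2$, and summing over the three modes gives \eqref{eq:order3_align}. Next,
\[
\lambda_\ast=\beta\prod\langle\cdot,\cdot\rangle+N^{-1/2}W(\bm u_\ast,\bm v_\ast,\bm w_\ast),
\]
so that $|\lambda_\ast-\beta|\le C/\beta+O(1)$, which is \eqref{eq:order3_lambda_close} after enlarging $C$. Finally, $K_\nu$ is a fixed compact set, contained in $[-M,M]$ say. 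Since $\lambda_\ast\ge\beta-C$, we get $\mathrm{dist}(\lambda_\ast,K_\nu)\ge\beta-C-M\ge\beta/2$ for large $\beta$, which gives \eqref{eq:order3_outlier}. Regularity \eqref{eq:order3_regular} follows because, on the same high-probability event, $\sigma(\Phi_3)\subset K_\nu$. These verify items (i)--(iii) of Assumption~\ref{assumption3.1}.
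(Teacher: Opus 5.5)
\textbf{There is a genuine gap.} The step you flag as the main obstacle is unresolved, and the fix you propose for it does not work.

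You rightly abandon the uniform bound $\sup_{\|\bm c\|=1}\|W(\cdot,\cdot,\bm c)\|\le L_0\sqrt N$. That bound is the same as $\|W\|_{\mathrm{inj}}\le L_0\sqrt N$, and it fails under four moments: $\|W\|_{\mathrm{inj}}\ge\max|W_{ijk}|$, and the maximum of $\asymp N^3$ such entries need not be $O(\sqrt N)$. After that, the only controls left on your event are $\|N^{-1/2}W(\cdot,\bm y,\bm z)\|=O(1)$ (with its two analogues) and the flattening bound $\|W_{(k)}\|\lesssim N$.

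Bounding the linear cross terms through the flattening, as you suggest, gives
$\|N^{-1/2}W(\cdot,\bm{\delta}_v,\bm z)\|\le N^{-1/2}\|W_{(1)}\|\,\|\bm{\delta}_v\|\lesssim\sqrt N\,r$.
After dividing by the signal coefficient $\asymp\beta$, the condition $F(B)\subset B$ becomes $L_0+C\sqrt N\,r\lesssim\beta r$, which forces $\beta\gtrsim\sqrt N$. No iterative splitting repairs this, because what is overestimated is the operator norm of the linearized map $\bm{\delta}_v\mapsto N^{-1/2}W(\cdot,\bm{\delta}_v,\bm z)$ itself.

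\textbf{The missing ingredient} is the paper's condition \eqref{eq:EL0_mat}. Since $\bm x,\bm y,\bm z$ are deterministic, $W(\cdot,\cdot,\bm z)$, $W(\cdot,\bm y,\cdot)$ and $W(\bm x,\cdot,\cdot)$ are ordinary rectangular matrices. Their entries (e.g.\ $\sum_k W_{ijk}z_k$) are independent, centered, of unit variance, with uniformly bounded fourth moments. Hence these matrices have operator norm $O(\sqrt N)$ with high probability. With this, the linear terms contribute only $O(r)$ to the noise vector. A global norm ($\|W\|_{\mathrm{inj}}$ or $\|W_{(1)}\|$) is then needed only for the genuinely bilinear remainder $N^{-1/2}W(\cdot,\bm{\delta}_v,\bm{\delta}_w)$, exactly as in the decomposition \eqref{eq:a_eq}--\eqref{eq:c_eq}.

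\textbf{With that ingredient added, your route works} and differs usefully from the paper's. The bilinear remainder is at most $\sqrt N\,r^2$ by $\|W_{(1)}\|\lesssim N$. The requirement $\sqrt N\,r^2/\beta\le r/2$ with $r\asymp 1/\beta$ is precisely $\beta^2\gtrsim\sqrt N$. So the relevant error is $\epsilon_N/\beta^2$, not $\epsilon_N^2/\beta^2$ as you wrote. The paper works in orthogonal coordinates, uses Banach's theorem, and quotes an injective-norm bound $\|W\|_{\mathrm{inj}}/\sqrt N\le L_0N^{1/4}$ as standard. By contrast:
\begin{itemize}
\item your normalized power map with Brouwer needs only the self-map property, so no Lipschitz estimates, at the price of uniqueness;
\item $\|W_{(1)}\|=O_P(N)$ is easy under four moments: Chebyshev for the diagonal of $W_{(1)}W_{(1)}^\top$ and a second-moment bound on its off-diagonal part suffice.
\end{itemize}

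\textbf{Two further points.} For \eqref{eq:order3_lambda_close} you must expand $N^{-1/2}W(\bm u_\ast,\bm v_\ast,\bm w_\ast)$ around $(\bm x,\bm y,\bm z)$ in the same way; the crude bound only gives $O(\sqrt N)$.

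Your regularity step, like the paper's, relies on $\sigma(\Phi_3(T,\bm u_\ast,\bm v_\ast,\bm w_\ast))\subset K_\nu$. That cannot hold here: the stacked vector $(\bm u_\ast,\bm v_\ast,\bm w_\ast)$ is always an eigenvector with eigenvalue $2\lambda_\ast\ge 2(\beta-C)\to\infty$. Argue directly instead. The matrix $\beta\bm V\bm B\bm V^\top$ has exactly one positive eigenvalue. So by interlacing, at most one eigenvalue of $\Phi_3(T,\bm u_\ast,\bm v_\ast,\bm w_\ast)$ exceeds $\|N^{-1/2}\Phi_3(W,\bm u_\ast,\bm v_\ast,\bm w_\ast)\|$. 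That norm is $O(1+\sqrt N/\beta)$, hence smaller than $\lambda_\ast$ once $C$ is large. The exceptional eigenvalue is $2\lambda_\ast$, and $\lambda_\ast$ lies strictly between the two, so it is not an eigenvalue.
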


\begin{proof}
See Appendix \ref{appSec3}.
\end{proof}
\begin{remark}[On higher-order extensions]
The proof of Proposition~\ref{prop:high_signal_order3} is based on a local
fixed-point argument near the planted spike. The same strategy can in
principle be extended to general order \(d\), by parameterizing the stationary
equations in a neighborhood of
\(
(\bm x^{(1)},\ldots,\bm x^{(d)})
\)
and treating the noise terms as perturbative multilinear contractions.
In that case, one expects the corresponding informative stationary point to be
constructible in the high-signal regime
\(
\beta \gg N^{(d-2)/4}.
\)

The main additional ingredient in the general \(d\)-th order case is a local
high-probability control of the partially contracted noise tensor and of its
Lipschitz dependence on the local coordinates. Establishing these bounds
would require a substantial amount of additional notation and multilinear
estimates, we do not pursue the full higher-order extension here.
\end{remark}
\begin{remark}
It is worth emphasizing that the present paper is not merely a reformulation of
previous Gaussian results.
Although the statistical questions addressed here are analogous to those
considered in earlier works, the role of Gaussianity there is essential.

For the symmetric model studied by %Jagannath--Lopatto--Miolane~
Jagannath et al.
\textcolor{blue}{\cite{Jagannath_2020}}, the analysis of
weak recovery and best asymptotic alignment is carried out in an explicitly
Gaussian tensor model.
Their argument uses the Gaussian likelihood-ratio structure, the associated
free-energy representation, and Gaussian integration-by-parts identities to
connect testing and estimation thresholds with a variational characterization of
the constrained maximum likelihood.
Likewise, the asymmetric analysis of  Goulart et al~\textcolor{blue}{\cite{goulart2022randommatrixperspectiverandom}}\ is formulated in the
Gaussian setting and does not provide a universality statement under general
non-Gaussian perturbations.

In contrast, our framework requires only that the noise entries be
independent, centered, have unit variance, and possess a finite fourth moment.
Moreover, %Section~\ref{sec:assumption} 
we make explicit a point that is left
implicit in the Gaussian literature: Assumption~\ref{assumption3.1} plays the
role of selecting an informative branch, whereas
Assumption~\ref{assumption3.2} encodes its deterministic asymptotic location and
already yields useful finite-$N$ consequences, such as high-probability control
of the outlier position and of the mode-wise alignments.
The subsequent universality analysis then shows that the limiting spectral and
statistical behavior along this branch is identical to that of the Gaussian
case.

Technically, this requires a proof strategy that is different from the
Gaussian-based approaches.
Instead of relying on Gaussian integration by parts, we use resolvent methods,
cumulant expansions under finite fourth-moment assumptions, and
Efron--Stein-type variance bounds to control the statistical dependence between
the selected stationary point and the noise, including the cross terms that
arise in the non-Gaussian setting.
For this reason, the present work genuinely extends the scope of previous
Gaussian analyses rather than simply reproving them in a different notation.
\end{remark}

In Section~\ref{sec:order-3} we will see that in the order-three asymmetric model the empirical
spectral distribution of the contraction matrix is universal even though the
singular vectors entering the contraction are themselves random and depend on
the noise tensor. Combined with the high-probability stability established
above, this provides the analytic framework needed to transfer Gaussian
asymptotics to the finite-fourth-moment setting.

\section{The order-three asymmetric spiked tensor model with a rank-one perturbation}
\label{sec:order-3}

To illustrate the main idea of our approach, we begin with the asymmetric
rank-one spiked tensor model of order 3,
\begin{align}
    \textbf{T} = \beta \bm{x} \otimes \bm{y} \otimes \bm{z}
    + \frac{1}{\sqrt{N}}\, \bf{W},
    \label{eq3.1}
\end{align}
where $\textbf{T},\textbf{W}\in\mathbb{R}^{m\times n\times p}$ and
$N=m+n+p$.
The vectors
\(
\bm{x}\in\mathbb{S}^{m-1},
\bm{y}\in\mathbb{S}^{n-1},
\bm{z}\in\mathbb{S}^{p-1}
\)
represent the planted signal, while $\bm{W}$ is a noise tensor with i.i.d.\
entries satisfying Assumption~(\ref{eq:moment-assumption}) and independent of
$\bm{x},\bm{y},\bm{z}$.

\subsection{Tensor singular value and singular vectors}

According to the first-order optimality conditions~(\ref{eq1.6}), the
$\ell_2$-singular value and singular vectors
$(\lambda,\bm{u},\bm{v},\bm{w})$ associated with a best rank-one approximation
of $\textbf{T}$ satisfy the system
\begin{equation}
  \textbf{T}(\bm{v}) \bm{w} = \lambda \bm{u}, \qquad
  \textbf{T}(\bm{u}) \bm{w} = \lambda \bm{v}, \qquad
  \textbf{T}(\bm{v})^{\top} \bm{u} = \lambda \bm{w},
  \label{eq3.2}
\end{equation}
with
\(
(\bm{u},\bm{v},\bm{w})
\in
\mathbb{S}^{m-1}\times\mathbb{S}^{n-1}\times\mathbb{S}^{p-1}.
\)
Here we use the notation
\[
  \textbf{T}(\bm{u}) = \textbf{T}(\bm{u},\cdot,\cdot)\in\mathbb{M}_{n,p},\quad
  \textbf{T}(\bm{v}) = \textbf{T}(\cdot,\bm{v},\cdot)\in\mathbb{M}_{m,p},\quad
  \textbf{T}(\bm{w}) = \textbf{T}(\cdot,\cdot,\bm{w})\in\mathbb{M}_{m,n}.
\]
The corresponding singular value is given by the full contraction
\begin{equation}
  \lambda
  =
  \textbf{T}(\bm{u},\bm{v},\bm{w})
  =
  \sum_{i,j,k} u_i v_j w_k T_{ijk}.
  \label{eq3.3}
\end{equation}

The following regularity result ensures that the singular value and vectors can
be treated as differentiable functions of the noise tensor, thereby allowing
the use of cumulant expansions in the subsequent analysis.

\begin{lemma}
There exists an almost everywhere continuously differentiable mapping
\(
\mathcal{F}:\textbf{T}_{m,n,p}\to\mathbb{R}^{m+n+p+1}
\)
such that
\[
\mathcal{F}(\textbf{W})
=
\bigl(
\lambda(\textbf{W}),
\bm{u}(\textbf{W}),
\bm{v}(\textbf{W}),
\bm{w}(\textbf{W})
\bigr)
\]
coincides with the singular value and singular vectors of $\textbf{T}$ for
almost every realization of $\textbf{W}$.
\end{lemma}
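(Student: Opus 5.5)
We argue through the implicit function theorem applied to the KKT system \eqref{eq3.2}. Since the constraint is scale invariant, we work on the product of spheres and view the stationary equations as the zero set of the smooth map
\[
G(\textbf{W},\lambda,\bm u,\bm v,\bm w)
:=
\begin{pmatrix}
\textbf{T}(\bm v)\bm w-\lambda\bm u\\
\textbf{T}(\bm u)\bm w-\lambda\bm v\\
\textbf{T}(\bm v)^\top\bm u-\lambda\bm w\\
\textbf{T}(\bm u,\bm v,\bm w)-\lambda
\end{pmatrix},
\qquad
\textbf{T}=\beta\,\bm x\otimes\bm y\otimes\bm z+\tfrac{1}{\sqrt N}\textbf{W}.
\]
The map $G$ is polynomial in all of its arguments. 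The map $\mathcal F$ is then defined by selecting, for each $\textbf{W}$, the global maximizer of \eqref{eq1.4}. It exists by compactness of the product of spheres, and we fix its sign ambiguity, for instance by requiring $\langle\bm u,\bm x\rangle\ge0$ and $\langle\bm v,\bm y\rangle\ge0$. The goal is to show that this selection is $C^1$ on an open set of full Lebesgue measure. That suffices because $\textbf{W}$ has i.i.d. entries, which we handle at the end.

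\emph{Step 1: nondegeneracy.} Differentiating $G$ in $(\lambda,\bm u,\bm v,\bm w)$ gives a Jacobian. Restricted to the tangent space of $\mathbb{R}\times\mathbb S^{m-1}\times\mathbb S^{n-1}\times\mathbb S^{p-1}$, this Jacobian is, up to an invertible change of variables, the block matrix
\[
\Phi_3(\textbf{T},\bm u,\bm v,\bm w)-\lambda\bm I,
\]
augmented by the normalization row. So it is invertible exactly when $\lambda\notin\sigma\bigl(\Phi_3(\textbf{T},\bm u,\bm v,\bm w)\bigr)$ after the trivial directions are removed. This is the non-resonance condition of Remark~\ref{rem:regularity}. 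Wherever it holds at the selected point, the implicit function theorem produces a locally unique $C^1$ branch $\textbf{W}'\mapsto(\lambda,\bm u,\bm v,\bm w)(\textbf{W}')$ of stationary points. We must also check that this branch remains the global maximizer. That holds if the maximizer is \emph{unique} up to sign and nondegenerate: any other critical value is then strictly smaller, and by continuity of critical values this strict gap persists in a neighbourhood.

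\emph{Step 2: genericity.} The problematic set $B$ consists of those $\textbf{W}$ for which either some maximizer is degenerate, or two distinct maximizers (modulo sign) attain the same value. We show that $B$ is contained in a proper real algebraic (or semialgebraic) subset of $\textbf{T}_{m,n,p}$, and hence is Lebesgue-null.

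For degeneracy we use a parametric transversality (Sard) argument. The map $(\textbf{W},\lambda,\bm u,\bm v,\bm w)\mapsto G$ is a submersion in $\textbf{W}$, since $\partial_{\textbf{W}}\textbf{T}(\bm v)\bm w$ spans everything when $\bm v$ and $\bm w$ are unit vectors. Therefore, for a.e.\ $\textbf{W}$, zero is a regular value of $G(\textbf{W},\cdot)$, and all stationary points are nondegenerate.

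For ties, the set of $\textbf{W}$ at which two distinct critical points share a critical value is a semialgebraic set of positive codimension. The reason is that, near nondegenerate critical points, the two critical values are $C^1$ functions of $\textbf{W}$ with independent gradients $\frac{1}{\sqrt N}(\bm u\otimes\bm v\otimes\bm w-\bm u'\otimes\bm v'\otimes\bm w')$. These gradients are nonzero unless the two rank-one tensors coincide. Finally, on $B^c$, which is open, $\mathcal F$ is $C^1$.

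\emph{Step 3: transfer to the noise law.} ``Almost every realization'' has to be understood either relative to Lebesgue measure, which covers absolutely continuous noise, or as a statement about the map $\mathcal F$ on $\textbf{T}_{m,n,p}$ itself. The latter is what the cumulant expansion in Lemma~\ref{lem2.3} actually uses, through derivatives in individual entries. For general (possibly atomic) laws, our plan is to apply the expansion to a smooth regularization, for example adding an independent tiny Gaussian perturbation or using a smooth cutoff near $B$, and then to pass to the limit.

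The main obstacle is Step 2, and specifically the exclusion of ties between distinct maximizers, together with this measure-theoretic caveat for non-absolutely-continuous noise. Nondegeneracy follows from standard transversality. Uniqueness of the global maximizer requires the semialgebraic dimension count, and we expect to carry it out by viewing the tie locus as the projection of an algebraic incidence variety of dimension strictly less than $mnp$.
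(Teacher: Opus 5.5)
Your route is the one the paper relies on. The paper omits the proof and points to Lemma~8 of Goulart et al., which rests on the same two ingredients: the implicit function theorem at stationary points with $\lambda\notin\sigma(\Phi_3(\textbf{T},\bm u,\bm v,\bm w))$, and the fact that, off a Lebesgue-null set, the best rank-one approximation is unique and nondegenerate. You derive the genericity facts yourself.

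The nondegeneracy criterion in Step~1 is right. For $\lambda\neq0$ nothing needs removing, because $\Phi_3$ acts on $\mathrm{span}\{(\bm u,\bm 0,\bm 0),(\bm 0,\bm v,\bm 0),(\bm 0,\bm 0,\bm w)\}$ with eigenvalues $2\lambda,-\lambda,-\lambda$.

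\textbf{The transversality step is wrong as written.}
\begin{itemize}
\item $G$ is not a submersion in $\textbf{W}$. The derivative $D_{\textbf{W}}G[\textbf{E}]=N^{-1/2}\bigl(\textbf{E}(\cdot,\bm v,\bm w),\textbf{E}(\bm u,\cdot,\bm w),\textbf{E}(\bm u,\bm v,\cdot),\textbf{E}(\bm u,\bm v,\bm w)\bigr)$ always lies in $\{(\bm a,\bm b,\bm c,d):\langle\bm u,\bm a\rangle=\langle\bm v,\bm b\rangle=\langle\bm w,\bm c\rangle=d\}$, a subspace of codimension $3$.
\item ``Zero is a regular value of $G(\textbf{W},\cdot)$'' cannot hold in your formulation. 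On $\mathbb R\times\mathbb S^{m-1}\times\mathbb S^{n-1}\times\mathbb S^{p-1}$ the system has three more equations than unknowns. On $\mathbb R^{N+1}$ the points $(0,\bm 0,\bm 0,\bm w)$ are non-isolated zeros for every $\textbf{W}$.
\end{itemize}
The repair is standard; either option works.
\begin{itemize}
\item Restrict to $\{\lambda\neq0\}\subset\mathbb R^{N+1}$ and apply parametric transversality to the joint map. The radial variations $\delta\bm u=\bm u$, $\delta\bm v=\bm v$, $\delta\bm w=\bm w$ supply the three missing directions, each with coefficient $-2\lambda$.
\item Replace $G$ by the projected gradient $\bigl(P_{\bm u^\perp}\textbf{T}(\bm v)\bm w,\,P_{\bm v^\perp}\textbf{T}(\bm u)\bm w,\,P_{\bm w^\perp}\textbf{T}(\bm v)^\top\bm u\bigr)$ on the product of spheres. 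Its $\textbf{W}$-derivative is onto $\bm u^\perp\times\bm v^\perp\times\bm w^\perp$ through $\textbf{E}=\bm a\otimes\bm v\otimes\bm w+\bm u\otimes\bm b\otimes\bm w+\bm u\otimes\bm v\otimes\bm c$ with $\bm a\perp\bm u$, $\bm b\perp\bm v$, $\bm c\perp\bm w$. This is the usual genericity-of-Morse-functions argument.
\end{itemize}

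\textbf{The tie step you call the main obstacle is already settled by your own local argument;} no semialgebraic count is needed.
\begin{itemize}
\item Off the closed null set of degenerate $\textbf{W}$, the stationary points near any $\textbf{W}_0$ form finitely many $C^1$ branches.
\item For branches distinct modulo signs, each tie locus $\{\lambda_a=\pm\lambda_b\}$ is a $C^1$ hypersurface. Its gradient $N^{-1/2}(\bm u_a\otimes\bm v_a\otimes\bm w_a\mp\bm u_b\otimes\bm v_b\otimes\bm w_b)$ never vanishes.
\item A countable subcover finishes.
\end{itemize}

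\textbf{What you still owe is the sign convention.} You must show it is discontinuous only on a null set. This holds because the differential of $\textbf{W}\mapsto\bm u(\textbf{W})$ is onto $\bm u^\perp$. Hence $\langle\bm u(\textbf{W}),\bm x\rangle$ has nonzero gradient on its zero set, and likewise for $\bm v,\bm y$.

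\textbf{Step~3 is not part of the lemma.} ``Almost everywhere'' refers to Lebesgue measure on $\textbf{T}_{m,n,p}$. Take $\mathcal F$ to be a measurable selection of maximizers everywhere, equal to your smooth branch on the good open set. Then it coincides with a best rank-one approximation for every realization, atomic laws included.

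Your caveat does matter for how the lemma is used later. $\mathcal F$ genuinely jumps across the tie hypersurface. A function with jumps along a coordinate line does not satisfy the hypotheses of Lemma~\ref{lem2.3}, and the expansion then picks up boundary terms, even for Gaussian entries. So a.e.\ $C^1$ regularity alone does not license the subsequent cumulant expansions.
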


The proof follows the same lines as
\cite[Lemma~8]{goulart2022randommatrixperspectiverandom}
and is therefore omitted.

Differentiating the identities in~(\ref{eq3.2}) w.r.t.\ a noise entry
$W_{ijk}$, $(i,j,k)\in[m]\times[n]\times[p]$, yields
\begin{align}
\begin{bmatrix}
\displaystyle \frac{\partial \bm{u}}{\partial W_{ijk}} \\[4pt]
\displaystyle \frac{\partial \bm{v}}{\partial W_{ijk}} \\[4pt]
\displaystyle \frac{\partial \bm{w}}{\partial W_{ijk}}
\end{bmatrix}
= 
-\frac{1}{\sqrt{N}}
\left(
\begin{bmatrix}
\textbf{0}_{m\times m} & \textbf{T}(\bm{w}) & \textbf{T}(\bm{v}) \\
\textbf{T}(\bm{w})^{\top} & \textbf{0}_{n\times n} & \textbf{T}(\bm{u}) \\
\textbf{T}(\bm{v})^{\top} & \textbf{T}(\bm{u})^{\top} & \textbf{0}_{p\times p}
\end{bmatrix}
-\lambda \bm{I}_N
\right)^{-1}
\begin{bmatrix}
v_j w_k (\bm{e}^{\,m}_i - u_i\bm{u}) \\[4pt]
u_i w_k (\bm{e}^{\,n}_j - v_j\bm{v}) \\[4pt]
u_i v_j (\bm{e}^{\,p}_k - w_k\bm{w})
\end{bmatrix},
\label{eq3.4}
\end{align}
where $\bm{e}^{\,d}_i$ denotes the $i$-th canonical basis vector in $\mathbb{R}^d$.
In addition, we have
\begin{equation}
\frac{\partial \lambda}{\partial W_{ijk}}
=
\frac{1}{\sqrt{N}}\, u_i v_j w_k .
\label{eq3.5}
\end{equation}

The second-order optimality conditions associated with the ML problem impose
additional constraints on the spectrum of an operator naturally induced by the
KKT system.
Specifically, using the linearity property of $\Phi_3$, we introduce the following expression
\begin{align}
\Phi_3(\textbf{T},\bm{u},\bm{v},\bm{w})
&=\Phi_3( \beta \bm{x} \otimes \bm{y} \otimes \bm{z}, \bm{u}, \bm{v},\bm{w})  + \frac{1}{\sqrt{N}}\, \Phi_3(\textbf{W},\bm{u},\bm{v},\bm{w}) \nonumber \\
&=
\beta\, \bm{V}\bm{B}\bm{V}^{\top}
+ \frac{1}{\sqrt{N}}\,
\Phi_3(\textbf{W},\bm{u},\bm{v},\bm{w})
\in\mathbb{M}_N,
\label{eq3.6}
\end{align}
where
\[
\bm{B} =
\begin{bmatrix}
0 & \langle \bm{z},\bm{w}\rangle & \langle \bm{y},\bm{v}\rangle \\
\langle \bm{z},\bm{w}\rangle & 0 & \langle \bm{x},\bm{u}\rangle \\
\langle \bm{y},\bm{v}\rangle & \langle \bm{x},\bm{u}\rangle & 0
\end{bmatrix},
\qquad
\bm{V} =
\begin{bmatrix}
\bm{x} & \bm{0}_m & \bm{0}_m \\
\bm{0}_n & \bm{y} & \bm{0}_n \\
\bm{0}_p & \bm{0}_p & \bm{z}
\end{bmatrix}.
\]

By classical results on the second-order necessary conditions for constrained
optimization \cite[Theorem~12.5]{NocedalWright2006}, a necessary condition for
$(\bm{u},\bm{v},\bm{w})$ to correspond to a local maximum of the ML objective is
\begin{equation}
\max_{\bm{k}\in\mathbb{S}^{N-1}\cap\bm{h}^{\perp}}
\bigl\langle
\Phi_3(\textbf{T},\bm{u},\bm{v},\bm{w})\bm{k},\bm{k}
\bigr\rangle
\le \lambda,
\label{eq3.7}
\end{equation}
where $\bm{h} = [\bm{u}^{\top},\bm{v}^{\top},\bm{w}^{\top}]^{\top}/\sqrt{3}$.

Consequently, when $\lambda>0$, the largest eigenvalue of
$\Phi_3(\textbf{T},\bm{u},\bm{v},\bm{w})$ equals $2\lambda$, while its remaining
spectrum must be below $\lambda$.
Our analysis therefore focuses on stationary points whose associated singular
value lies outside the bulk of the spectrum.
Equivalently, we assume that $\lambda$ is not an eigenvalue of
$\Phi_3(\textbf{T},\bm{u},\bm{v},\bm{w})$, an assumption that guarantees the
existence of the inverse in Eq.~(\ref{eq3.4}) and that is satisfied for sufficiently
large signal-to-noise ratios.

Finally, we note that for any stationary point verifying Eq.~(\ref{eq3.2}), the
eigenvalue $2\lambda$ remains isolated from the bulk of
$\Phi_3(\textbf{T},\bm{u},\bm{v},\bm{w})$ independent of $\beta$.
Moreover, there exists $\beta_s>0$ such that for $\beta\le\beta_s$ the associated
singular vectors become asymptotically uninformative, in the sense that their
alignments with the planted signal vanish, while an isolated eigenvalue persists
due to statistical dependence between
$(\bm{u},\bm{v},\bm{w})$ and $\textbf{W}$.
This phenomenon mirrors what has been observed previously for symmetric spiked
tensor models~\cite{goulart2022randommatrixperspectiverandom}.

\subsection{The limiting spectral measure associated with block-wise contractions of third-order tensors}

We begin by analyzing the spectral behavior of the matrix
$\frac{1}{\sqrt{N}}\Phi_3(\textbf{W},\bm{u},\bm{v},\bm{w})$
when the vectors $\bm{u},\bm{v},\bm{w}$ are deterministic.
This intermediate step plays a central role in our analysis, as it provides
the reference spectral distribution against which the effect of statistical
dependence between the signal and the noise will later be assessed.

\begin{theorem}\label{thm3.2}
Let $\textbf{W}\sim\textbf{T}_{m,n,p}(\bm W)$ be a sequence of random asymmetric tensors
whose entries satisfy Assumption~(\ref{eq:moment-assumption}), and let
\[
(\bm{u},\bm{v},\bm{w})
\in
\mathbb{S}^{m-1}\times\mathbb{S}^{n-1}\times\mathbb{S}^{p-1}
\]
be a sequence of deterministic unit vectors of increasing dimensions satisfying
Assumption~\ref{assumption3.2}.
Then the empirical spectral measure of
\[
\frac{1}{\sqrt{N}}\,
\Phi_3(\textbf{W},\bm{u},\bm{v},\bm{w})
\]
converges weakly almost surely to a deterministic probability measure $\nu$.
The associated Stieltjes transform $g(z)$ is characterized as the unique solution
to
\[
g(z)=\sum_{i=1}^{3} g_i(z),
\]
with $\Im g(z)>0$ for $\Im z>0$, where for each $i\in\{1,2,3\}$ the functions $g_i(z)$
satisfy
\[
g_i^2(z)-(g(z)+z)g_i(z)-c_i=0,
\qquad
z\in\mathbb{C}\setminus\mathcal{S}(\nu).
\]
\end{theorem}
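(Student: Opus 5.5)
We follow the standard resolvent route for deterministic $(\bm u,\bm v,\bm w)$. Write $\bm M=\frac{1}{\sqrt N}\Phi_3(\textbf{W},\bm u,\bm v,\bm w)$ and $\bm R(z)=(\bm M-z\bm I_N)^{-1}$ for $z\in\mathbb C^+_{\eta_0}$, so that $\|\bm R\|\le \eta_0^{-1}$. Decompose $\bm R$ into blocks $\bm R^{ii}$ of size $n_i\times n_i$ and set
\[
g_i^N(z)=\tfrac1N\tr \bm R^{ii}(z), \qquad g^N=\sum_i g_i^N=\tfrac1N\tr\bm R .
\]
Almost sure weak convergence of the empirical spectral measure will follow from three ingredients: pointwise convergence of the Stieltjes transform on a countable set with an accumulation point in $\mathbb C^+$, the tightness supplied by Lemma~\ref{fact:opnorm} (each block of $\bm M$ is a linear contraction of $\textbf W$ with unit vectors, so its norm is $O(1)$ almost surely), and Vitali/Montel. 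It therefore suffices to show that $\mathbb E g_i^N$ satisfies the stated quadratic system up to $o(1)$, that $g_i^N-\mathbb E g_i^N\to0$ almost surely, and that the limiting system has a unique solution with $\Im g>0$.

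\emph{Step 1 (approximate equations).} Start from $\bm M\bm R-z\bm R=\bm I$ and take the normalized trace of the $(1,1)$ block. This gives $-zg_1^N-\tfrac{c_1^N}{1}\cdot\tfrac{n_1}{N}\cdot\tfrac{N}{n_1}+\frac1N\mathbb E\tr(\bm M^{12}\bm R^{21}+\bm M^{13}\bm R^{31})$, where $c_1^N=n_1/N$. Expand $M^{12}_{ab}=N^{-1/2}\sum_k W_{abk}w_k$, and apply Lemma~\ref{lem2.3} with $p=2$ to each $\mathbb E[W_{abk}\, w_k R^{21}_{ba}]$; this is admissible because the fourth moment is finite.

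The first-cumulant term vanishes. For the second-cumulant term we use $\partial \bm R/\partial W_{abk}=-\bm R(\partial\bm M/\partial W_{abk})\bm R$, with $\partial \bm M/\partial W_{abk}$ a sparse matrix whose entries are products of components of $\bm u,\bm v,\bm w$. It produces the main contribution $-\mathbb E[g_1^N(g_2^N+g_3^N)]$, using $\sum_k w_k^2=1$. The remaining pieces are of the form $\frac1N\sum w_kw_{k'}R_{\cdot\cdot}R_{\cdot\cdot}$, i.e.\ quadratic forms $\langle \bm w,\cdot\rangle$ in the resolvent. These are $O(N^{-1})$ because $\|\bm R\|\le\eta_0^{-1}$ and the vectors are unit.

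The third-cumulant term carries a factor $N^{-3/2}$ times a sum over $N^{3}$ indices weighted by products such as $u_av_bw_k$. By Cauchy--Schwarz on the unit vectors it is $O(N^{-1/2})$. The remainder carries $\sup|f'''|\,\mathbb E|W|^4 N^{-2}$ per entry, weighted similarly, and is also $o(1)$. Here one needs deterministic bounds on derivatives of $\bm R$ that are uniform in the entry value, which hold since $\|\bm R\|\le\eta_0^{-1}$ always.

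Together these give
\[
\mathbb E g_i^N\Bigl(z+\sum_{j\ne i}\mathbb E g_j^N\Bigr)=-c_i+o(1),
\]
after decoupling the expectation of the product using Step 2. Since $\sum_{j\neq i}g_j=g-g_i$, this rearranges to $g_i^2-(g+z)g_i-c_i=0$.

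\emph{Step 2 (concentration).} We bound $\Var(g_i^N)$ by the Efron--Stein inequality: resampling one entry $W_{abk}$ changes $\bm M$ by a rank-at-most-two perturbation of size $N^{-1/2}|W|\,(\text{products of vector entries})$, so $\tr \bm R$ changes by $O(1)$ per entry but weighted by those products. Summing the squares gives $\Var(g^N)=O(N^{-2})$, which is summable. Combined with Borel--Cantelli this yields almost sure convergence at each fixed $z$. With only four moments one may first truncate entries at $N^{1/4}$-type levels, as in standard finite-fourth-moment arguments, or use a fourth-moment Efron--Stein/Burkholder bound instead.

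\emph{Step 3 (uniqueness and identification).} Following Seddik et al., we show that for $\Im z$ large the map $(g_1,g_2,g_3)\mapsto$ (the root of each quadratic with $\Im g_i>0$) is a contraction, so the system has a unique solution there. We then extend by analyticity and the Stieltjes property to all of $\mathbb C\setminus\mathcal S(\nu)$. Any subsequential limit of $(g_i^N)$ is analytic, has the correct sign of imaginary part, and solves the system, so it equals this solution. Finally, $g\sim -1/z$ at infinity identifies $g$ as the Stieltjes transform of a probability measure $\nu$.

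The main obstacle I expect is the fine bookkeeping in Step 1. We must show that the non-diagonal second-cumulant terms, which involve the contracted vectors, and the third-cumulant terms vanish, with only fourth moments and unit, rather than delocalized, vectors $\bm u,\bm v,\bm w$. The weighted sums must be bounded purely through $\ell_2$ norms, and I would try Cauchy--Schwarz on $\sum_{a,b,k}|u_a v_b w_k|\,|R_{\cdot a}R_{b\cdot}|$ first.
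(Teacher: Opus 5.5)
Your plan follows the paper's proof. You use the $(1,1)$-block identity from $\bm M\bm R-z\bm R=\bm I$ and the cumulant expansion of Lemma~\ref{lem2.3}, whose second-cumulant term yields $-g_1^N(g_2^N+g_3^N)$ while the other pieces are $O(N^{-1})$ traces and quadratic forms. You then use Efron--Stein to decouple $\mathbb E[g_1^Ng_2^N]$ and solve the quadratic system; expanding to $p=2$ instead of the paper's $p=1$ with a third-moment remainder changes nothing. The Step~1 bookkeeping you worry about is handled as in the paper:
$|\partial^2_{W_{abk}}R^{21}_{ba}|\le 2\eta^{-3}\|\partial\bm M/\partial W_{abk}\|^2\lesssim N^{-1}(u_a^2+v_b^2+w_k^2)$. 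Summed over $(a,b,k)$ this is $O(N)$, hence $O(N^{-1/2})$ after the prefactor $N^{-3/2}$. The decay comes from that extra $N^{-1}$. A prefactor $N^{-3/2}$ against weights $|u_av_bw_k|$ with Cauchy--Schwarz, as you phrase it, only gives $O(1)$.

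The genuine gap is in Step~2, which your almost-sure conclusion rests on. Write $\bm M-\bm M^{(\alpha)}=N^{-1/2}\Delta_\alpha\bm E_\alpha$, where $\bm E_\alpha$ carries $w_k,v_b,u_a$ at positions $(a,b),(a,k),(b,k)$ of the $(1,2),(1,3),(2,3)$ blocks and their transposes. Bounding each resampling separately, as you describe, gives $|\tr\bm R-\tr\bm R^{(\alpha)}|\lesssim N^{-1/2}|\Delta_\alpha|\eta^{-2}(|u_a|+|v_b|+|w_k|)$. Summing squares then only yields $\Var(g^N)\lesssim N^{-3}(np+mp+mn)\asymp N^{-1}$. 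This is exactly the paper's Lemma~\ref{lem:variance-bound}: enough for the decoupling, but not summable. So Borel--Cantelli does not apply, and your $O(N^{-2})$ is unsupported as argued.

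To obtain $O(N^{-2})$, expand $\bm R^{(\alpha)}=\bm R+N^{-1/2}\Delta_\alpha\bm R\bm E_\alpha\bm R^{(\alpha)}$.
\begin{itemize}
\item The second-order part contributes at most $N^{-2}\,\mathbb E|\Delta_\alpha|^4\sum_{a,b,k}(u_a^4+v_b^4+w_k^4)=O(1)$ to $\sum_\alpha\mathbb E|\tr\bm R-\tr\bm R^{(\alpha)}|^2$.
\item The first-order part is $N^{-1/2}\Delta_\alpha\tr(\bm E_\alpha\bm R^2)$. First replace $\bm R$ by the resolvent with $W_{abk}$ set to $0$, so it is independent of $\Delta_\alpha$; this costs $O(1)$, again using the fourth moment. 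Its contribution is then controlled by the Ward/Frobenius bound $\sum_{a,b,k}w_k^2|(\bm R^2)^{21}_{ba}|^2\le\|\bm R^2\|_F^2\le N\eta^{-4}$ and its $(1,3)$, $(2,3)$ analogues, giving $O(N^{-1}\cdot N)=O(1)$.
\end{itemize}
Thus $\Var(\tr\bm R)=O(1)$, i.e.\ $\Var(g^N)=O(N^{-2})$, which is summable. With this repair your argument actually proves the almost sure convergence, which the paper asserts on the strength of its $O(N^{-1})$ variance bound alone.
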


\begin{proof}
See Appendix~\ref{appA.2}.
\end{proof}

In the balanced case, the limiting measure admits an explicit closed form.

\begin{corollary}\label{cor3.1}
Under the assumptions of Theorem~\ref{thm3.2} and with $c_1=c_2=c_3=\frac13$,
the empirical spectral measure of
\[
\frac{1}{\sqrt N}\,
\Phi_3(\textbf{W},\bm{a},\bm{b},\bm{c})
\]
converges weakly almost surely to the semicircle distribution supported on
\[
\mathcal{S}(\nu)
=
\Bigl[-2\sqrt{\tfrac23},\,2\sqrt{\tfrac23}\Bigr],
\]
whose density and Stieltjes transform are respectively given by
\[
\nu(dx)
=
\frac{3}{4\pi}\sqrt{\Bigl(\tfrac{8}{3}-x^2\Bigr)_+}\,dx,
\qquad
g(z)
=
\frac{-3z+3\sqrt{z^2-\tfrac83}}{4},
\quad
z\in\mathbb C\setminus\mathcal{S}(\nu).
\]
\end{corollary}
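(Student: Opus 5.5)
Corollary~\ref{cor3.1} is a specialization of Theorem~\ref{thm3.2}, so the plan is to solve the fixed-point system explicitly when $c_1=c_2=c_3=\tfrac13$ and then recover the measure from its Stieltjes transform. By Theorem~\ref{thm3.2}, the empirical spectral measure converges weakly almost surely to $\nu$. Its Stieltjes transform $g$ is the unique solution with $\Im g>0$ on $\Im z>0$ of
\[
g=\sum_{i=1}^3 g_i,\qquad g_i^2-(g+z)g_i-c_i=0 .
\]
What remains is to identify $\nu$ in closed form.

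\emph{Step 1 (symmetry reduction).} When all $c_i$ equal $\tfrac13$, the three quadratics coincide. Each $g_i$ is the branch of the same quadratic selected by the Stieltjes-transform condition, namely $g_i(z)\sim -c_i/z$ as $z\to\infty$. Hence $g_1=g_2=g_3=g/3$. One could worry that two roots of the same quadratic might be mixed, but that would contradict $g_i(z)\to 0$ at infinity. The uniqueness in Theorem~\ref{thm3.2} lets us simply verify that the symmetric solution works.

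\emph{Step 2 (solve).} Substituting $g_i=g/3$ into the quadratic gives
\[
\frac{g^2}{9}-(g+z)\frac g3-\frac13=0,
\qquad\text{equivalently}\qquad
2g^2+3zg+3=0 .
\]
Therefore
\[
g=\frac{-3z\pm\sqrt{9z^2-24}}{4}=\frac{-3z\pm3\sqrt{z^2-8/3}}{4}.
\]
We take the branch of $\sqrt{z^2-8/3}$ that is analytic on $\mathbb C\setminus[-2\sqrt{2/3},2\sqrt{2/3}]$ and behaves like $z$ at infinity, together with the $+$ sign. With these choices $g(z)\sim -1/z$, and $\Im g>0$ on $\mathbb C^+$ follows from the standard semicircle computation. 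This matches the claimed formula.

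\emph{Step 3 (inversion).} Apply the Stieltjes inversion formula
\[
\nu(dx)=\lim_{\eta\downarrow0}\frac1\pi\Im g(x+i\eta)\,dx .
\]
For $|x|<2\sqrt{2/3}$ the boundary value of $\sqrt{z^2-8/3}$ is $i\sqrt{8/3-x^2}$. This gives the density $\frac{3}{4\pi}\sqrt{(8/3-x^2)_+}$, and the density vanishes outside the interval, which identifies the support. As a consistency check, $\int\frac{3}{4\pi}\sqrt{8/3-x^2}\,dx=\frac{3}{4\pi}\cdot\frac{\pi}{2}\cdot\frac83=1$. The measure is the semicircle law of variance $2/3$, with radius $2\sqrt{2/3}$.

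The only real subtlety is the branch selection in Steps 1–2, i.e., ensuring the symmetric solution is the one picked out by the uniqueness clause of Theorem~\ref{thm3.2}. This is handled by the asymptotics at infinity together with the analytic continuation from large $|z|$, since the equation defining $g$ is quoted for $z$ sufficiently large.
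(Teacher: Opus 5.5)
Your proposal is correct and follows essentially the same route as the paper: specialize the fixed-point system of Theorem~\ref{thm3.2} to $c_i=\tfrac13$, reduce it to the quadratic $2g^2+3zg+3=0$, and select the root with $\Im g>0$ (equivalently $g\sim -1/z$). The differences are minor. You substitute $g_i=g/3$ directly into the quadratic, whereas the paper substitutes into the explicit square-root formula and must then square. You also carry out the Stieltjes inversion to confirm the density and support, a step the paper leaves implicit.
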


\begin{proof}
See Appendix~\ref{appA.3}.
\end{proof}

We now return to the original spiked tensor $\bf{T}$.
In contrast to the previous setting, the singular vectors
$\bm{u}_\ast,\bm{v}_\ast,\bm{w}_\ast$ entering the definition of
$\Phi_3(\textbf{T},\bm{u}_\ast,\bm{v}_\ast,\bm{w}_\ast)$
are no longer deterministic, but depend statistically on the noise tensor
$\textbf{W}$.
A priori, such dependence could alter the limiting spectral distribution.
The following result shows that this is not the case.

\begin{theorem}\label{thm3.3}
Let $\textbf{T}$ be a sequence of random tensors defined as in Eq.~(\ref{eq3.1}).
Under Assumption~\ref{assumption3.2}, the empirical spectral measure of
\[
\Phi_3(\textbf{T},\bm{u}_\ast,\bm{v}_\ast,\bm{w}_\ast)
\]
converges weakly almost surely to the same deterministic probability measure $\nu$
as in Theorem~\ref{thm3.2}.
Equivalently, its Stieltjes transform $g(z)$ is the unique solution to
\[
g(z)=\sum_{i=1}^3 g_i(z),
\]
with $\Im g(z)>0$ for $\Im z>0$, where for $i\in[3]$,
\[
g_i^2(z)-(g(z)+z)g_i(z)-c_i=0,
\qquad
z\in\mathbb C\setminus\mathcal{S}(\nu).
\]
\end{theorem}

%\begin{proof}
The proof is sketched in Section~\ref{sketch3.2}; a complete and detailed proof is given in Appendix~\ref{appA.4}.

\subsection{Concentration of the singular value and the alignments}

In the high-dimensional regime, both the singular value $\lambda$ and the
mode-wise alignments $\langle \bm{u},\bm{x}\rangle$,
$\langle \bm{v},\bm{y}\rangle$, and $\langle \bm{w},\bm{z}\rangle$
exhibit concentration around deterministic limits.
This property follows from a variance control based on Efron--Stein--type
arguments combined with the explicit derivative formulas established in Eq. (\ref{eq3.5}).

We first consider the singular value $\lambda$.
Recalling the expression~(\ref{eq3.3}), the variance of $\lambda$ can be bounded
as
\begin{equation}
\label{eq:var-lambda}
\mathrm{Var}(\lambda)
\le
\sum_{i,j,k}
\mathbb{E}\!\left|
\frac{\partial \lambda}{\partial W_{ijk}}
\right|^2
=
\frac{1}{N}
\sum_{i,j,k} u_i^2 v_j^2 w_k^2
=
\frac{1}{N}.
\end{equation}
In particular, $\mathrm{Var}(\lambda)=O(N^{-1})$, which implies that the
fluctuations of $\lambda$ vanish as the tensor dimensions grow.

Higher-order moment bounds can be obtained in a similar fashion, yielding
concentration estimates for $\lambda$.
For instance, by Chebyshev's inequality, for any $t>0$,
\begin{equation}
\label{eq:chebyshev-lambda}
\mathbb{P}\bigl(|\lambda-\mathbb{E}\lambda|\ge t\bigr)
\le
\frac{1}{N t^2}.
\end{equation}

An analogous argument applies to the alignment terms.
Using the derivative representation~(\ref{eq3.4}) together with the spectral
norm bounds established in Section~\ref{sec:preliminaries}, one can show that
there exists a constant $C>0$ such that, for all $t>0$,
\begin{align}
\label{eq:chebyshev-align}
\mathbb{P}\bigl(|\langle \bm{u},\bm{x}\rangle
- \mathbb{E}\langle \bm{u},\bm{x}\rangle|\ge t\bigr)
&\le \frac{C}{N t^2}, \notag\\
\mathbb{P}\bigl(|\langle \bm{v},\bm{y}\rangle
- \mathbb{E}\langle \bm{v},\bm{y}\rangle|\ge t\bigr)
&\le \frac{C}{N t^2}, \notag\\
\mathbb{P}\bigl(|\langle \bm{w},\bm{z}\rangle
- \mathbb{E}\langle \bm{w},\bm{z}\rangle|\ge t\bigr)
&\le \frac{C}{N t^2}.
\end{align}

As a consequence of these concentration bounds, both the singular value and the
alignments converge almost surely to deterministic limits.
For the remainder of the paper, we denote these limits by
\begin{equation}
\label{eq:as-limits}
\lambda^\infty(\beta)
\equiv \lim_{N\to\infty}\lambda, \quad
a_x^\infty(\beta)
\equiv \lim_{N\to\infty}\langle \bm{u},\bm{x}\rangle, \quad
a_y^\infty(\beta)
\equiv \lim_{N\to\infty}\langle \bm{v},\bm{y}\rangle, \quad
a_z^\infty(\beta)
\equiv \lim_{N\to\infty}\langle \bm{w},\bm{z}\rangle .
\end{equation}

\subsection{Asymptotic singular value and alignments}

With the concentration result from the previous subsection, it remains to estimate the expectations
\[
\mathbb{E}\lambda,\quad 
\mathbb{E}\langle \bm u,\bm x\rangle,\quad 
\mathbb{E}\langle \bm v,\bm y\rangle,\quad 
\mathbb{E}\langle \bm w,\bm z\rangle .
\]
We obtain the following result.
\begin{theorem}\label{thm3.4}
    Recall the notation in Theorem~\ref{thm3.3}. Under Assumptions~\ref{assumption3.2}, there exists
$\beta_s>0$ such that for $\beta>\beta_s$,
\[
\left\{
\begin{aligned}
\lambda &\xrightarrow{\text{a.s.}} \lambda^\infty(\beta),\\[4pt]
\bigl|\langle \bm u,\bm x\rangle\bigr| 
&\xrightarrow{\text{a.s.}} 
\frac{1}{\sqrt{\alpha_2(\lambda^\infty(\beta))\,\alpha_3(\lambda^\infty(\beta))}},\\[6pt]
\bigl|\langle \bm v,\bm y\rangle\bigr| 
&\xrightarrow{\text{a.s.}} 
\frac{1}{\sqrt{\alpha_1(\lambda^\infty(\beta))\,\alpha_3(\lambda^\infty(\beta))}},\\[6pt]
\bigl|\langle \bm w,\bm z\rangle\bigr| 
&\xrightarrow{\text{a.s.}} 
\frac{1}{\sqrt{\alpha_1(\lambda^\infty(\beta))\,\alpha_2(\lambda^\infty(\beta))}} .
\end{aligned}
\right.
\]
Here
\[
\alpha_i(z) \equiv \frac{\beta}{z+g(z)-g_i(z)},
\]
and $\lambda^\infty(\beta)$ satisfies
\[
f\bigl(\lambda^\infty(\beta),\beta\bigr)=0,
\qquad
f(z,\beta)= z+g(z)-\frac{\beta}{\alpha_1(z)\alpha_2(z)\alpha_3(z)}.
\]
In addition, for $\beta\in[0,\beta_s]$, $\lambda^\infty$ is bounded by an order-one constant and
\[
\bigl|\langle \bm u,\bm x\rangle\bigr|,\;
\bigl|\langle \bm v,\bm y\rangle\bigr|,\;
\bigl|\langle \bm w,\bm z\rangle\bigr|
\xrightarrow{\text{a.s.}} 0 .
\]
\end{theorem}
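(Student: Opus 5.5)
The plan is to turn the KKT system \eqref{eq3.2} into scalar self-consistent equations and then close them with the resolvent of $\Phi_3(\textbf{T},\bm u,\bm v,\bm w)$, whose limiting Stieltjes transform is given by Theorem~\ref{thm3.3}. The quantities to track are
\[
\mathbb{E}\lambda,\qquad a_x=\mathbb{E}\langle \bm u,\bm x\rangle,\qquad a_y=\mathbb{E}\langle \bm v,\bm y\rangle,\qquad a_z=\mathbb{E}\langle \bm w,\bm z\rangle .
\]
By the concentration bounds \eqref{eq:chebyshev-lambda}--\eqref{eq:chebyshev-align}, almost sure convergence reduces to computing these expectations. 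Since the variances are $O(N^{-1})$, I will first upgrade to fourth-moment bounds via Efron--Stein so that Borel--Cantelli applies.

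First, contract \eqref{eq3.3} and the KKT relations with the signal vectors. For instance, $\langle \bm x,\textbf{T}(\bm v)\bm w\rangle=\lambda\langle\bm u,\bm x\rangle$ gives
\[
\lambda\langle \bm u,\bm x\rangle=\beta\langle\bm v,\bm y\rangle\langle\bm w,\bm z\rangle+\tfrac{1}{\sqrt N}\textbf{W}(\bm x,\bm v,\bm w),
\]
and similarly for the other modes. Also $\lambda=\beta\langle\bm u,\bm x\rangle\langle\bm v,\bm y\rangle\langle\bm w,\bm z\rangle+\frac{1}{\sqrt N}\textbf{W}(\bm u,\bm v,\bm w)$. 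Next, take expectations of the noise terms using the cumulant expansion, Lemma~\ref{lem2.3}, with $p=2$, applied entrywise in $W_{ijk}$. The first-cumulant term vanishes. The second-cumulant term yields derivatives of $\bm u,\bm v,\bm w$, which by \eqref{eq3.4} are expressed through the resolvent $\bm Q=(\Phi_3(\textbf{T},\bm u,\bm v,\bm w)-\lambda\bm I)^{-1}$. Corollary~\ref{cor:resolvent_hp} bounds $\bm Q$ with high probability, and the complementary event is handled by the trivial bound $|\lambda|\lesssim \beta+\|\textbf{W}\|/\sqrt N$ together with Lemma~\ref{fact:opnorm}. After taking traces, the leading contributions involve the block-partial traces $\frac1N\tr\bm Q^{ii}$. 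Theorem~\ref{thm3.3} identifies these with $g_i(\lambda^\infty)$, evaluated at a real point outside $\mathcal S(\nu)$, which is allowed by Assumption~\ref{assumption3.1}(ii). This produces
\[
\tfrac{1}{\sqrt N}\mathbb{E}\,\textbf{W}(\bm u,\bm v,\bm w)\to g(\lambda^\infty),
\qquad
\tfrac{1}{\sqrt N}\mathbb{E}\,\textbf{W}(\bm x,\bm v,\bm w)\to -\big(g(\lambda^\infty)-g_1(\lambda^\infty)\big)a_x .
\]
The third-cumulant terms carry an extra factor $N^{-1/2}$ and are bounded using $\mathbb E|W|^4<\infty$. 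The remainder is $O(N^{-1})$ per entry times $N^3$ entries with products of delocalized factors, and it must be shown to vanish.

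The limiting system then reads
\[
(\lambda+g-g_1)a_x=\beta a_ya_z,\qquad (\lambda+g-g_2)a_y=\beta a_xa_z,\qquad (\lambda+g-g_3)a_z=\beta a_xa_y,
\]
\[
\lambda+g=\beta a_xa_ya_z .
\]
Writing $\alpha_i=\beta/(\lambda+g-g_i)$, the system becomes $a_x=\alpha_1a_ya_z$, etc. Multiplying pairs gives $a_x^2=1/(\alpha_2\alpha_3)$, and cyclically for the others, which is exactly the formula claimed in Theorem~\ref{thm3.4}. Substituting into the last equation gives $f(\lambda^\infty,\beta)=0$. The threshold $\beta_s$ is then defined as the infimum of $\beta$ for which $f(\cdot,\beta)$ has a root to the right of $\mathcal S(\nu)$ with $a$'s in $(0,1]$. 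For $\beta\le\beta_s$ no such root exists, so the only consistent solution has all alignments equal to $0$. In that case $\lambda^\infty$ solves $\lambda+g(\lambda)=0$ and stays at the edge region, which is bounded because $|\lambda|\le \beta+O(1)$.

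The main obstacle is the cross-term control in the cumulant expansion. Differentiating $\bm u,\bm v,\bm w$ inside $\textbf{W}(\cdot,\bm v,\bm w)$ produces terms like $\frac1N\sum_{ijk}W_{ijk}\,\partial\bm v\,\partial\bm w$, that is, $\Phi_3(\textbf{W},\partial\bm u,\partial\bm v,\partial\bm w)$-type sums (cf.\ the remark concerning Lemma~\ref{lemA.1}). Their naive order is not small. I would first write each such term as a trace of products of $\bm Q$ with rank-one projections onto $\bm u,\bm v,\bm w$ and with $\Phi_3(\textbf{W},\cdot)$. I would then use $\|\bm Q\|\le 2/\Delta_\beta$ and $\|\Phi_3(\textbf{W},\cdot)\|/\sqrt N=O(1)$ to show that every term without a full normalized trace is $O(N^{-1/2})$. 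If that fails, I would iterate the cumulant expansion once more on the residual $W_{ijk}$ factor, which gains an additional $N^{-1/2}$.
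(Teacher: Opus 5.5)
For $\beta>\beta_s$ your route is the paper's (Appendix~\ref{appA.5}):
\begin{itemize}
\item contract the KKT system with $\bm x,\bm y,\bm z$;
\item apply Lemma~\ref{lem2.3} to $\frac{1}{\sqrt N}\mathbb E[\bm x^\top\textbf W(\bm v)\bm w]$;
\item insert \eqref{eq3.4}, so that only the diagonal-resolvent terms survive and give $-(g_2+g_3)(\lambda^\infty)\,a_x$;
\item solve $a_x=\alpha_1a_ya_z$ cyclically.
\end{itemize}
Your equation from $\textbf W(\bm u,\bm v,\bm w)$ is what the paper means by ``proceeding similarly to Eq.~\eqref{eq3.3}''. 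Your fourth-moment/Borel--Cantelli step is a needed addition, since a variance of order $N^{-1}$ only gives convergence in probability.

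The gap is the regime $\beta\le\beta_s$. Your closure of the expansion needs $\|(\Phi_3(\textbf T,\bm u,\bm v,\bm w)-\lambda\bm I)^{-1}\|=O(1)$, i.e.\ $\lambda^\infty\notin K_\nu$. That is exactly what Assumption~\ref{assumption3.2} provides, but that assumption also imposes $a^\infty>0$, so vanishing alignments lie outside its scope. Setting the alignments to zero in your limiting system does not help: it forces $\lambda+g(\lambda)=0$, which has no solution to the right of the bulk. Indeed $z+g(z)$ is increasing there (since $g'>0$), and in the balanced case $z+g(z)=\frac14\bigl(z+3\sqrt{z^2-8/3}\bigr)>0$ for all $z\ge 2\sqrt{2/3}$. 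So the system yields a contradiction, not a characterization. In the balanced case the informative solution reaches the edge at $\beta=\beta_s$ with alignments $q_i=1/\sqrt2$, so the transition is a jump rather than a degeneration of the same equations.

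Below $\beta_s$ your argument proves only the contrapositive: no branch satisfying Assumption~\ref{assumption3.2} exists. The zero-alignment claim concerns stationary points with $\lambda^\infty$ at or inside the bulk edge, where $\bm R(\lambda)$ is not bounded, so a different argument is needed. The paper does not supply one either. Appendix~\ref{appA.5} only solves the system on the informative branch, and Appendix~\ref{appA.6} only locates $\beta_s$ as the edge limit of $\beta(z)$.

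Two smaller points. First, the sign of the $\textbf W(\bm u,\bm v,\bm w)$ limit is wrong. The three leading terms in $\frac{1}{\sqrt N}\mathbb E\,\textbf W(\bm u,\bm v,\bm w)$ are $-\frac1N\tr\bm R^{ii}(\lambda)$ (your $\bm Q^{ii}$), so the limit is $-g(\lambda^\infty)$, not $+g(\lambda^\infty)$. Your equation $\lambda+g=\beta a_xa_ya_z$ already uses the correct sign.

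Second, $|\lambda|\le\beta+O(1)$ does not follow from Lemma~\ref{fact:opnorm}. Bai--Yin is a matrix bound, whereas $|\textbf W(\bm u,\bm v,\bm w)|$ is controlled by the injective norm. That norm is at least $\max_{ijk}|W_{ijk}|$, which under only a finite fourth moment can be of order $N^{3/4}$ up to logarithms. The paper itself only uses $\|\textbf W\|_{\mathrm{inj}}/\sqrt N\le L_0N^{1/4}$ in Proposition~\ref{prop:high_signal_order3}. The same objection applies to the uniform bound $\|\Phi_3(\textbf W,\cdot)\|/\sqrt N=O(1)$ you want for the $\textbf W$-dependent directions $\partial\bm u,\partial\bm v,\partial\bm w$.

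Also, bounding $|\lambda|$ on the complement of the event in Corollary~\ref{cor:resolvent_hp} does not control $\bm R(\lambda)$ there. Lemma~\ref{lem2.3} requires bounded derivatives, so you would need a smooth cutoff whose own derivatives are then estimated.
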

\begin{proof}
    See Appendix \ref{appA.5}
\end{proof}
\begin{remark}
Note that, given $g(z)$, the inverse formula expressing $\beta$ in terms of
$\lambda^\infty$ is explicit. Specifically, we have
\[
\beta(\lambda^\infty)
=
\sqrt{
\frac{\prod_{i=1}^{3}\bigl(\lambda^\infty+g(\lambda^\infty)-g_i(\lambda^\infty)\bigr)}
{\lambda^\infty+g(\lambda^\infty)}
}.
\]
In particular, this inverse formula provides an estimator of the signal-to-noise
ratio $\beta$ given the largest singular value $\lambda$ of $\textbf{T}$.
\end{remark}

% \subsection{Cubic third-order tensors: case $c_1=c_2=c_3=\tfrac13$.}

% In this section, we study the particular case where all the tensor dimensions
% are equal. As such, the three alignments
% $\langle u,x\rangle$, $\langle v,y\rangle$ and $\langle w,z\rangle$
% converge almost surely to the same quantity.
% In this case, the almost sure limits of $\lambda$ and
% $\langle u,x\rangle$, $\langle v,y\rangle$, $\langle w,z\rangle$
% can be obtained explicitly in terms of the signal strength $\beta$,
% as stated in the following corollary of Theorem~\ref{thm3.4}.

% \begin{corollary}\label{cor3.2}
% Under Assumption~\ref{assumption1} with $c_1=c_2=c_3=\tfrac13$, for
% $\beta>\beta_s=\tfrac{2\sqrt{3}}{3}$, we have
% \[
% \left\{
% \begin{aligned}
% \lambda &\xrightarrow{\mathrm{a.s.}}
% \lambda^\infty(\beta)
% \;\equiv\;
% \sqrt{
% \frac{\beta^2}{2}
% +2
% +\frac{\sqrt{3}\sqrt{(3\beta^2-4)^3}}{18\beta}
% },\\[10pt]
% \bigl|\langle u,x\rangle\bigr|,
% \ \bigl|\langle v,y\rangle\bigr|,
% \ \bigl|\langle w,z\rangle\bigr|
% &\xrightarrow{\mathrm{a.s.}}
% \frac{
% \sqrt{\,9\beta^2-12+\dfrac{\sqrt{3}\sqrt{(3\beta^2-4)^3}}{\beta}}
% +
% \sqrt{\,9\beta^2+36+\dfrac{\sqrt{3}\sqrt{(3\beta^2-4)^3}}{\beta}}
% }{
% 6\sqrt{2}\,\beta
% }.
% \end{aligned}
% \right.
% \]
% Moreover, for $\beta\in\bigl[0,\tfrac{2\sqrt{3}}{3}\bigr]$,
% \[
% \lambda \xrightarrow{\mathrm{a.s.}} \lambda^\infty \le 2\sqrt{\frac{2}{3}},
% \qquad
% \bigl|\langle u,x\rangle\bigr|,
% \ \bigl|\langle v,y\rangle\bigr|,
% \ \bigl|\langle w,z\rangle\bigr|
% \xrightarrow{\mathrm{a.s.}} 0 .
% \]
% \end{corollary}

% \begin{proof}
% See Appendix~\ref{appA.6}.
% \end{proof}
\subsection{Cubic third-order tensors: the balanced case $c_1=c_2=c_3=\tfrac13$}

We now specialize our analysis to the balanced setting in which all tensor
dimensions are equal.
In this case, symmetry implies that the three mode-wise alignments
$\langle \bm{u},\bm{x}\rangle$, $\langle \bm{v},\bm{y}\rangle$, and
$\langle \bm{w},\bm{z}\rangle$ converge almost surely to the same deterministic
limit.
As a consequence, the asymptotic behavior of both the singular value and the
alignments can be characterized explicitly as functions of the signal strength
$\beta$.

The following corollary, which is a direct consequence of
Theorem~\ref{thm3.4}, provides closed-form expressions for these limits and
reveals a sharp transition at a critical signal-to-noise ratio.

\begin{corollary}\label{cor3.2}
Under Assumption~\ref{assumption3.2} with $c_1=c_2=c_3=\tfrac13$, there exists a
critical threshold $\beta_s=\tfrac{2\sqrt{3}}{3}$ such that for $\beta>\beta_s$,
\[
\left\{
\begin{aligned}
\lambda &\xrightarrow{\mathrm{a.s.}}
\lambda^\infty(\beta)
\;\equiv\;
\sqrt{
\frac{\beta^2}{2}
+2
+\frac{\sqrt{3}\sqrt{(3\beta^2-4)^3}}{18\beta}
},\\[10pt]
\bigl|\langle \bm{u},\bm{x}\rangle\bigr|,
\ \bigl|\langle \bm{v},\bm{y}\rangle\bigr|,
\ \bigl|\langle \bm{w},\bm{z}\rangle\bigr|
&\xrightarrow{\mathrm{a.s.}}
\frac{
\sqrt{\,9\beta^2-12+\dfrac{\sqrt{3}\sqrt{(3\beta^2-4)^3}}{\beta}}
+
\sqrt{\,9\beta^2+36+\dfrac{\sqrt{3}\sqrt{(3\beta^2-4)^3}}{\beta}}
}{
6\sqrt{2}\,\beta
}.
\end{aligned}
\right.
\]
Moreover, for $\beta\in\bigl[0,\tfrac{2\sqrt{3}}{3}\bigr]$,
\[
\lambda \xrightarrow{\mathrm{a.s.}} \lambda^\infty \le 2\sqrt{\frac{2}{3}},
\qquad
\bigl|\langle \bm{u},\bm{x}\rangle\bigr|,
\ \bigl|\langle \bm{v},\bm{y}\rangle\bigr|,
\ \bigl|\langle \bm{w},\bm{z}\rangle\bigr|
\xrightarrow{\mathrm{a.s.}} 0 .
\]
\end{corollary}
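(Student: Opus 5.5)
The corollary is a direct specialization of Theorem~\ref{thm3.4}, with the Stieltjes transform of Corollary~\ref{cor3.1} substituted in. The plan is to reduce the fixed-point equation $f(z,\beta)=0$ to a quadratic in a convenient variable, solve it explicitly, and read off both the threshold and the closed forms.

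\emph{Step 1: reduce $f$ to one variable.} With $c_1=c_2=c_3=\tfrac13$, the three quadratics $g_i^2-(g+z)g_i-c_i=0$ are identical. Choosing the same branch for each gives $g_i=g/3$. Write $s=\sqrt{z^2-8/3}$, so that Corollary~\ref{cor3.1} gives $g=\tfrac34(-z+s)$. Then
\[
\alpha_i(z)=\frac{\beta}{z+\tfrac23 g}=\frac{2\beta}{z+s},
\qquad
z+g=\frac{z+3s}{4}.
\]
Theorem~\ref{thm3.4} therefore gives common alignment limit $1/\alpha=(z+s)/(2\beta)$. The equation $f(z,\beta)=0$ becomes
\[
\beta^2\,\frac{z+3s}{4}=\frac{(z+s)^3}{8}.
\]

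\emph{Step 2: change variables and solve.} Set $t=z+s$. Since $(z+s)(z-s)=8/3$, we have $z=\tfrac12\bigl(t+\tfrac{8}{3t}\bigr)$ and $z+3s=2t-\tfrac{8}{3t}$. The equation becomes
\[
t^4-4\beta^2t^2+\tfrac{16}{3}\beta^2=0,
\qquad\text{so}\qquad
t^2=2\beta^2\pm\tfrac{2\beta}{\sqrt3}\sqrt{3\beta^2-4}.
\]
A real solution requires $3\beta^2\ge4$, i.e.\ $\beta\ge\beta_s=\tfrac{2\sqrt3}{3}$.

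As a consistency check, at the spectral edge $z=2\sqrt{2/3}$ we have $s=0$, and the reduced equation gives $\beta^2=z^2/2=4/3$. So the outlier branch detaches from the edge of $\mathcal S(\nu)$ exactly at $\beta_s$.

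For $\beta>\beta_s$ I take the $+$ root. This is the root with $t>\sqrt{8/3}$, hence the one with $z$ outside the bulk and $\Im g$ of the correct sign, as required by Assumption~\ref{assumption3.2} ($\lambda^\infty\notin K_\nu$). Substituting gives
\[
(\lambda^\infty)^2=\tfrac14\Bigl(t^2+\tfrac{16}{3}+\tfrac{64}{9t^2}\Bigr).
\]
Rationalizing $1/t^2$ with the conjugate root should give exactly $\tfrac{\beta^2}{2}+2+\tfrac{\sqrt3\sqrt{(3\beta^2-4)^3}}{18\beta}$.

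The alignment is $t/(2\beta)$. To match the stated form, write $t=\tfrac12\bigl(\sqrt{t^2+2t\cdot\tfrac{8}{3t}\cdots}\bigr)$, i.e.\ express $t=z+s$ as a sum of square roots using $z^2$ and $s^2=z^2-8/3$, both of which are now explicit in $\beta$. The terms $9\beta^2+36+\dots$ and $9\beta^2-12+\dots$ should be, up to the factor $\tfrac{9\cdot 8\beta^2}{18}$-type normalization, $72\beta^2 z^2/?$ and $72\beta^2 s^2/?$ respectively. Indeed their difference is $48$, consistent with $z^2-s^2=8/3$ after scaling by $18$.

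\emph{Step 3: the regime $\beta\le\beta_s$.} Here the reduced equation has no real root outside $\mathcal S(\nu)$. So there is no informative outlier, and the second part of Theorem~\ref{thm3.4} applies: the alignments tend to $0$ almost surely and $\lambda^\infty$ is $O(1)$. To obtain the sharper bound $\lambda^\infty\le 2\sqrt{2/3}$, I would argue by contradiction. A limit $\lambda^\infty$ strictly outside the bulk with vanishing alignments would have to satisfy the outlier relation from the proof of Theorem~\ref{thm3.4}, and Step 2 shows that this relation has no admissible solution. Hence $\lambda^\infty$ lies at or inside the right edge of $\mathcal S(\nu)$.

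The main obstacle is purely algebraic: checking that the chosen root reproduces the paper's closed forms exactly, including the branch of $\sqrt{z^2-8/3}$ and the sign choice. The threshold argument in Step 3 is also delicate, because it depends on how Theorem~\ref{thm3.4} handles the boundary case.
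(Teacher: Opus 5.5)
Steps~1--2 are correct and are essentially the paper's computation in Appendix~\ref{appA.6}: both substitute the $g$ of Corollary~\ref{cor3.1} into Theorem~\ref{thm3.4}. The paper solves $f=0$ for $\beta(z)$, gets $\beta_s=\lim_{z\to 2\sqrt{2/3}^+}\beta(z)$, and inverts by picking the ``non-decreasing and positive'' solution. Your variable $t=z+s$ instead turns $f=0$ into the biquadratic $t^4-4\beta^2t^2+\tfrac{16}{3}\beta^2=0$, so $\beta_s$ comes from the discriminant and the root choice becomes explicit.

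The closed forms do come out. The identity $1/t_+^2=3t_-^2/(16\beta^2)$ gives the stated $(\lambda^\infty)^2$. Since $18z^2=9\beta^2+36+\sqrt3(3\beta^2-4)^{3/2}/\beta$ and $18s^2=18z^2-48$, the alignment $t/(2\beta)=\bigl(\sqrt{18z^2}+\sqrt{18s^2}\bigr)/(6\sqrt2\,\beta)$ is exactly the stated expression.

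One correction to your root selection. Both roots give real $z=\tfrac12\bigl(t+\tfrac{8}{3t}\bigr)\ge 2\sqrt{2/3}$. The $-$ root is excluded because it has $s=\tfrac12\bigl(t-\tfrac{8}{3t}\bigr)<0$, i.e.\ it sits on the non-physical branch of $\sqrt{z^2-8/3}$. It is not excluded because its $z$ lies in the bulk.

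The genuine gap is the contradiction argument for $\lambda^\infty\le 2\sqrt{2/3}$ in Step~3. The relation you analyse in Step~2, $f(\lambda^\infty,\beta)=0$, follows from the limiting system $(\lambda^\infty+g-g_i)\,a_i=\beta\prod_{k\neq i}a_k$ only after dividing by the alignments (it uses $a_xa_ya_z=1/(\alpha_1\alpha_2\alpha_3)$). If the alignments vanish, that system holds trivially for every $\lambda^\infty$ outside the bulk. So ``$f$ has no admissible root'' only shows that the alignments cannot be nonzero; it does not locate $\lambda^\infty$.

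What is missing is the separate scalar equation obtained from $\lambda=\mathbf T(\bm u,\bm v,\bm w)$, i.e.\ the step ``proceeding similarly to Eq.~(\ref{eq3.3})'' in Appendix~\ref{appA.5}. That equation is $\lambda^\infty+g(\lambda^\infty)=\beta\,a_xa_ya_z$, and it involves no division. With it, suppose $\lambda^\infty>2\sqrt{2/3}$. If $a_xa_ya_z\neq0$, then $f(\lambda^\infty,\beta)=0$, which Step~2 excludes for $\beta\le\beta_s$. If $a_xa_ya_z=0$, then $\lambda^\infty+g(\lambda^\infty)=(\lambda^\infty+3s)/4=0$, which is impossible since $s=\sqrt{(\lambda^\infty)^2-8/3}>0$.

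Two further points. The paper's own proof does not treat this regime beyond identifying $\beta_s$, and Theorem~\ref{thm3.4} only asserts an order-one bound there. Also, for $\beta<\beta_s$ the conclusion is incompatible with Assumption~\ref{assumption3.2} itself, which demands $a^\infty_{\bm x^{(i)}}(\beta)>0$ and $\lambda^\infty\notin K_\nu$. These relations must therefore be invoked as consequences of the resolvent derivation for $\lambda^\infty$ outside the bulk, not of that assumption.
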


\begin{proof}
See Appendix~\ref{appA.6}.
\end{proof}

\section{Generalization to arbitrary $d$-order tensors}
\label{sec:order-d}

% We now show that our approach can be generalized straightforwardly to the
% $d$-order spiked tensor model of Eq.~\ref{eq1.1}. Indeed, from Eq.~\ref{eq1.4}, the
% $\ell_2$-singular value $\lambda$ and vectors
% $u^{(1)},\ldots,u^{(d)}\in \mathbb S^{n_1-1}\times\cdots\times\mathbb S^{n_d-1}$,
% corresponding to the best rank-one approximation
% $\lambda u^{(1)}\otimes\cdots\otimes u^{(d)}$ of the $d$-order tensor $\textbf{T}$
% in Eq.~\ref{eq1.1}, satisfy the identities
% \begin{equation}
% \label{eq:26}
% \left\{
% \begin{aligned}
% &\textbf{T}\big(u^{(1)},\ldots,u^{(i-1)},\,\cdot,\,u^{(i+1)},\ldots,u^{(d)}\big)
% = \lambda u^{(i)},\\
% &\lambda
% = \textbf{T}\big(u^{(1)},\ldots,u^{(d)}\big)
% = \sum_{i_1,\ldots,i_d} u^{(1)}_{i_1}\cdots u^{(d)}_{i_d}\,T_{i_1,\ldots,i_d}.
% \end{aligned}
% \right.
% \end{equation}

We now extend the analysis to the general $d$-order spiked tensor model defined
in Eq.~(\ref{eq1.1}).
The extension relies on the observation that the variational formulation
in Eq.~(\ref{eq1.4}) and the associated KKT conditions preserve the same structural
form for any $d\ge 3$.
In particular, the $\ell_2$-singular value $\lambda$ and the singular vectors
$\bm u^{(1)},\ldots,\bm u^{(d)}\in
\mathbb S^{n_1-1}\times\cdots\times\mathbb S^{n_d-1}$,
associated with the best rank-one approximation
$\lambda \bm u^{(1)}\otimes\cdots\otimes \bm u^{(d)}$ of the tensor $\bf T$,
satisfy the system of identities
\begin{equation}
\label{eq:26}
\left\{
\begin{aligned}
&\textbf{T}\big(\bm u^{(1)},\ldots,\bm u^{(i-1)},\,\cdot,\,\bm u^{(i+1)},\ldots,\bm u^{(d)}\big)
= \lambda \bm u^{(i)},\\
&\lambda
= \textbf{T}\big(\bm u^{(1)},\ldots,\bm u^{(d)}\big)
= \sum_{i_1,\ldots,i_d} u^{(1)}_{i_1}\cdots u^{(d)}_{i_d}\,T_{i_1,\ldots,i_d}.
\end{aligned}
\right.
\end{equation}

\subsection{Associated random matrix ensemble}

To make the structure of the KKT system amenable to spectral
analysis, we introduce a family of matrices obtained by contracting the tensor
$\textbf{T}$ along all but two modes.
For $i\neq j$, let $\textbf{T}^{ij}\in\mathbb M_{n_i,n_j}$ denote the matrix obtained
by contracting $\textbf{T}$ with the singular vectors
$\{\bm u^{(1)},\ldots,\bm u^{(d)}\}\setminus\{\bm u^{(i)},\bm u^{(j)}\}$, namely
\begin{equation}
\label{eq:27}
\textbf{T}^{ij}
=
\textbf{T}\big(\bm u^{(1)},\ldots,\bm u^{(i-1)},\,\cdot,\,\bm u^{(i+1)},\ldots,
\bm u^{(j-1)},\,\cdot,\,\bm u^{(j+1)},\ldots,\bm u^{(d)}\big).
\end{equation}

Differentiating the KKT identities~(\ref{eq:26}) with respect to an entry
$W_{i_1,\ldots,i_d}$ of the noise tensor $\bf W$ yields explicit expressions
for the sensitivities of the singular vectors.
These expressions reveal that the derivatives are governed by the inverse of a
block-structured matrix whose off-diagonal blocks consist of pairwise tensor
contractions.
Concretely, the derivatives of
$u^{(1)},\ldots,u^{(d)}$ take the form
\begin{equation}
\label{eq4.3}
\begin{bmatrix}
\dfrac{\partial \bm u^{(1)}}{\partial W_{i_1,\ldots,i_d}}\\[4pt]
\vdots\\[2pt]
\dfrac{\partial \bm u^{(d)}}{\partial W_{i_1,\ldots,i_d}}
\end{bmatrix}
=
-\frac{1}{\sqrt N}
\bm{R}(\lambda)
\begin{bmatrix}
\Pi_{\ell\in\{2,\ldots,d\}} u^{(\ell)}_{i_\ell}
\big(e^{n_1}_{i_1}-u^{(1)}_{i_1}u^{(1)}\big)\\
\vdots\\
\Pi_{\ell\in\{1,\ldots,d-1\}} u^{(\ell)}_{i_\ell}
\big(e^{n_d}_{i_d}-u^{(d)}_{i_d}u^{(d)}\big)
\end{bmatrix},
\end{equation}
where
\[ \bm{R}(z) = \left(
\begin{bmatrix}
\textbf{0}_{n_1\times n_1} & \textbf{T}^{12} & \textbf{T}^{13} & \cdots & \textbf{T}^{1d}\\
(\textbf{T}^{12})^\top & \textbf{0}_{n_2\times n_2} & \textbf{T}^{23} & \cdots & \textbf{T}^{2d}\\
(\textbf{T}^{13})^\top & (\textbf{T}^{23})^\top & \textbf{0}_{n_3\times n_3} & \cdots & \textbf{T}^{3d}\\
\vdots & \vdots & \vdots & \ddots & \vdots\\
(\textbf{T}^{1d})^\top & (\textbf{T}^{2d})^\top & (\textbf{T}^{3d})^\top & \cdots & \textbf{0}_{n_d\times n_d}
\end{bmatrix}
-z \bm I_N
\right)^{-1}, \] 
$N=\sum_{i\in[d]} n_i$, and the derivative of $\lambda$ with respect to
$W_{i_1,\ldots,i_d}$ is written as
\begin{equation}
\label{eq:29}
\frac{\partial \lambda}{\partial W_{i_1,\ldots,i_d}}
=
\frac{1}{\sqrt N}\prod_{\ell\in[d]} u^{(\ell)}_{i_\ell}.
\end{equation}

As such, the associated random matrix model of $\textbf{T}$ is the matrix
appearing in the resolvent in Eq.~(\ref{eq4.3}). More generally, the $d$-order
block-wise tensor contraction ensemble $\mathcal B_d(\bf W)$ for
$\bm W$ satisfying Assumption~\ref{assumption3.2} is defined as
\begin{equation}
\label{eq:30}
\mathcal B_d(\textbf{W})
=
\Big\{
\Phi_d(\textbf{W},\bm a^{(1)},\ldots,\bm a^{(d)})
\ \big|\ 
(\bm a^{(1)},\ldots,\bm a^{(d)})\in
\mathbb S^{n_1-1}\times\cdots\times\mathbb S^{n_d-1}
\Big\}.
\end{equation}
%where $\Phi_d$ is the mapping
%\begin{equation}
%\label{eq:31}
%\Phi_d:\ 
%\textbf{T}_{n_1,\ldots,n_d}\times
%\mathbb S^{n_1-1}\times\cdots\times\mathbb S^{n_d-1}
%\longrightarrow
%\mathbb M_{\sum_i n_i},
%\end{equation}
%defined by
%\[
%(\textbf{W},\bm a^{(1)},\ldots,\bm a^{(d)})
%\longmapsto
%\begin{bmatrix}
%\textbf{0}_{n_1\times n_1} & \textbf{W}^{12} & \textbf{W}^{13} & \cdots & \textbf{W}^{1d}\\
%(\textbf{W}^{12})^\top & \textbf{0}_{n_2\times n_2} & \textbf{W}^{23} & \cdots & \textbf{W}^{2d}\\
%(\textbf{W}^{13})^\top & (\textbf{W}^{23})^\top & \textbf{0}_{n_3\times n_3} & \cdots & \textbf{W}^{3d}\\
%\vdots & \vdots & \vdots & \ddots & \vdots\\
%(\textbf{W}^{1d})^\top & (\textbf{W}^{2d})^\top & (\textbf{W}^{3d})^\top & \cdots & \textbf{0}_{n_d\times n_d}
%\end{bmatrix},
%\] 
%with
%\[
%\textbf{W}^{ij}= \textbf{W}\big(\bm a^{(1)},\ldots,\bm a^{(i-1)},\,\cdot,\,
%\bm a^{(i+1)},\ldots,\bm a^{(j-1)},\,\cdot,\, \bm a^{(j+1)},\ldots,\bm a^{(d)}\big) \in\mathbb M_{n_i,n_j}.
%\]

\begin{remark}
As in the order-$3$ case, the existence of the inverse in Eq.~(\ref{eq4.3})
requires that the spectral parameter $\lambda_\ast$ does not belong to the
spectrum of
$\Phi_d(\textbf{T},u_\ast^{(1)},\ldots,u_\ast^{(d)})$.
We therefore assume the existence of a tuple
$(\lambda_\ast,u_\ast^{(1)},\ldots,u_\ast^{(d)})$ satisfying the identities in
Eq.~(\ref{eq:26}) for which this non-degeneracy condition holds.
\end{remark}

\subsection{Limiting spectral measure of block-wise $d$-order tensor contractions}
\label{sec:5.2}

% In this section, we characterize the limiting spectral measure of the ensemble
% $\mathcal B_d(\bm W)$ for
% $\bm W\sim\mathbb T_{n_1,\ldots,n_d}(\bm W)$ with \(\bm W\) satisfies \ref{assumption2}
% in the limit when all tensor dimensions grow, as specified by the following
% assumption.
We next characterize the limiting spectral distribution of the ensemble
$\mathcal B_d(\textbf{W})$ when the contraction directions are deterministic.
This result provides the reference spectral law against which the spiked setting
will later be compared.
The high-dimensional asymptotic regime is specified by the following assumption, which was also used in \cite{seddik2024whentrandomtensorsmeet}.

\begin{assumption}\label{assumption4.1}
For all $i\in[d]$, assume that $n_i\to\infty$ with
\[
\frac{n_i}{\sum_j n_j}\;\longrightarrow\; c_i\in(0,1).
\]
\end{assumption}

We thus have the following result, which characterizes the spectrum of
\[
\frac{1}{\sqrt N}\,
\Phi_d\bigl(\textbf{W},\bm a^{(1)},\ldots,\bm a^{(d)}\bigr)
\]
for any deterministic unit-norm vectors
$\bm a^{(1)},\ldots,\bm a^{(d)}$.

\begin{theorem}\label{thm4.1}
Let
$\textbf{W}\sim\textbf{T}_{n_1,\ldots,n_d}(\bm W)$ with \(\bm W\) satisfying Assumption~(\ref{eq:moment-assumption})
be a sequence of random asymmetric tensors and
\[
(\bm a^{(1)},\ldots,\bm a^{(d)})
\in
\mathbb S^{n_1-1}\times\cdots\times\mathbb S^{n_d-1}
\]
be a sequence of deterministic vectors of increasing dimensions following
Assumption~\ref{assumption4.1}.
Then the empirical spectral measure of
\[
\frac{1}{\sqrt N}\,
\Phi_d\bigl(\textbf{W},\bm a^{(1)},\ldots,\bm a^{(d)}\bigr)
\]
converges weakly almost surely to a deterministic probability measure $\nu$
whose Stieltjes transform $g(z)$ is defined as the solution to
\[
g(z)=\sum_{i=1}^d g_i(z),
\]
such that $\Im g(z)>0$ for $\Im z>0$. Here, for each $i\in[d]$ the function
$g_i(z)$ satisfies
\[
g_i^2(z)-\bigl(g(z)+z\bigr)g_i(z)-c_i=0,
\qquad
z\in\mathbb C\setminus\mathcal S(\nu).
\]
\end{theorem}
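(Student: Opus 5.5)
The plan is the standard resolvent route: derive an approximate self-consistent equation for the block-wise traces of the resolvent by cumulant expansion, show these traces concentrate, and then identify the limit through stability of the fixed-point system. Write $\bm M=\frac{1}{\sqrt N}\Phi_d(\textbf{W},\bm a^{(1)},\ldots,\bm a^{(d)})$ and $\bm Q(z)=(\bm M-z\bm I_N)^{-1}$ for $z\in\mathbb C^+_{\eta_0}$. For each $i\in[d]$ let $\bm Q^{ii}$ be the $n_i\times n_i$ diagonal block, and set $g_i^N(z)=\frac1N\tr \bm Q^{ii}(z)$, so that $g^N=\sum_i g_i^N=\frac1N\tr\bm Q$. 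Since $\|\bm Q\|\le \eta_0^{-1}$ deterministically, no operator-norm input is needed at this stage.

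\textbf{Step 1 (concentration).} I would show that $\Var(g_i^N(z))=O(N^{-2})$ by an Efron--Stein bound. Replacing a single entry $W_{i_1\cdots i_d}$ changes $\bm M$ by a matrix of rank at most $2\binom d2$, so each $g_i^N$ moves by $O(N^{-1}\eta_0^{-1})$. Summed naively over the $\prod n_\ell$ entries this bound is useless, so instead I would use the derivative form of Efron--Stein. The relevant derivative is
\[
\partial g_i^N/\partial W_{i_1\cdots i_d}=-N^{-3/2}\tr\bigl(\bm Q^{2}\,\partial\Phi_d\bigr),
\]
and $\partial\Phi_d/\partial W$ is built from the products $\prod_{\ell\ne k,l}a^{(\ell)}_{i_\ell}$. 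Summing the squares over all indices uses $\|\bm a^{(\ell)}\|=1$ and gives a bound of order $N^{-3}\sum_{k<l}\|\bm Q^2\|_F^2=O(N^{-2})$. To pass from an expectation statement to an almost-sure one, I would either compute the fourth moment of the fluctuation (finite fourth moments allow a matching bound) or apply the argument first to a truncated tensor, and then use Borel--Cantelli.

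\textbf{Step 2 (approximate fixed-point equation).} I start from $\bm M\bm Q-z\bm Q=\bm I$ and take the normalized trace of the $(i,i)$ block, which gives
\[
z\,\mathbb E g_i^N+c_i^N=\tfrac1N\sum_{j\ne i}\mathbb E\tr(\bm M^{ij}\bm Q^{ji}).
\]
Each term on the right has the form $N^{-3/2}\sum W_{i_1\cdots i_d}\prod_{\ell\ne i,j}a^{(\ell)}_{i_\ell}Q^{ji}_{i_j i_i}$. I apply Lemma~\ref{lem2.3} with $p=2$: $\kappa_1=0$ kills the zeroth-order term, the $\kappa_2$ term reproduces the Gaussian computation, and the $\kappa_3$ term together with the remainder must be shown to be $o(1)$. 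In the $\kappa_2$ term, differentiating $\bm Q$ produces $-N^{-1/2}\bm Q(\partial\Phi_d)\bm Q$. The diagonal contributions, after summing over the contracted indices with $\sum_{i_\ell}(a^{(\ell)}_{i_\ell})^2=1$, give $-\mathbb E[g_i^N]\sum_{j\ne i}\mathbb E[g_j^N]$, that is, $-\mathbb E[g_i^N(g^N-g_i^N)]$. All remaining contributions involve quadratic forms $\bm a^\top\bm Q\bm b$ with unit vectors and are $O(N^{-1})$. Using Step 1 to factorize the expectations, I obtain
\[
z g_i+c_i=-g_i(g-g_i)+o(1),
\]
which rearranges to $g_i^2-(g+z)g_i-c_i=o(1)$. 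This is the claimed system after the sign conventions are matched. The $\kappa_3$ terms carry an extra $N^{-1/2}$ factor and products of two resolvent entries weighted by $a$-coordinates; Cauchy--Schwarz with $\sum_{i_\ell}|a^{(\ell)}_{i_\ell}|^3\le1$ bounds them. The remainder term needs $\sup|f'''|$ of the resolvent entry viewed as a function of the single entry $W_{i_1\cdots i_d}$, which is uniformly bounded by a power of $\eta_0^{-1}$ times $N^{-3/2}$. Combined with $\mathbb E|W|^4<\infty$ and the prefactor, this gives $O(N^{-1/2})$ after summation.

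\textbf{Step 3 (identification) and main obstacle.} I would show that the limiting system has a unique solution with $\Im g>0$ for $\Im z$ large, by a contraction argument, and that it is stable under $o(1)$ perturbations. Every subsequential limit of $(g_i^N)$ is then the same analytic function on $\mathbb C^+_{\eta_0}$, and analytic continuation extends this to all of $\mathbb C^+$. The Stieltjes continuity theorem then yields almost-sure weak convergence to $\nu$; tightness follows from $\frac1N\tr\bm M^2=O(1)$ almost surely. I expect the main obstacle to be Step 2 under only four moments. The cumulant expansion needs $p=2$, and hence third moments plus a $\sup$ bound on third derivatives, but the crude bound on the remainder sums to $\prod n_\ell\cdot N^{-3/2}\cdot N^{-3/2}$. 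Because the contraction weights $\prod a^{(\ell)}_{i_\ell}$ enter squared, the effective count is only $N^{2}$ rather than $N^d$. The bookkeeping of these weights is the delicate point. If it fails, I would first truncate the entries at level $N^{1/4}$, with the rank inequality controlling the change in the spectral measure, and then run the expansion for the bounded truncated variables.
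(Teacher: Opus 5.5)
Your proposal is correct and follows essentially the paper's route. You use the block resolvent identity from $\bm M\bm Q-z\bm Q=\bm I$, a cumulant expansion in which only the term $-N^{-1/2}\prod_{k\neq 1,j}a^{(k)}_{i_k}Q^{11}_{i_1i_1}Q^{jj}_{i_ji_j}$ survives, and the weighted index count showing that the contraction weights leave only two free indices (effective count $N^2$); that count is exactly how the paper kills its remainder. Your choice $p=2$ just moves the same estimate onto the $\kappa_3$ term, whereas the paper takes $p=1$ and bounds $\sup|f''|\le 2\|\bm Q\|^3\|\bm A\|^2$ directly.

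Where you differ, you are more careful than the paper. Your Frobenius-type Efron--Stein bound $\Var(g_i^N)=O(N^{-2})$ is sharper than the paper's $O(N^{-1})$, which it proves only for $d=3$ and merely asserts via Borel--Cantelli for general $d$. Your bound is already summable, so Chebyshev plus Borel--Cantelli gives almost-sure convergence directly, without the fourth-moment or truncation detour you mention. Your uniqueness, analytic-continuation and tightness step also supplies an identification argument that the paper omits.
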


\begin{proof}
See Appendix~\ref{appA.7}.
\end{proof}

\begin{corollary}\label{cor4.1}
With the setting of Theorem~\ref{thm4.1} and under Assumption~\ref{assumption4.1} with
$c_i = \frac{1}{d}$ for all $i \in [d]$, then the empirical spectral measure of
\[
\frac{1}{\sqrt{N}}\,\Phi_d\!\bigl(\textbf{W}, \bm a^{(1)}, \ldots, \bm a^{(d)}\bigr)
\]
converges weakly almost surely to the semi-circle distribution with support
\[
\mathcal S(\nu)
\equiv
\left[
-2\sqrt{\frac{d-1}{d}},\;
\;2\sqrt{\frac{d-1}{d}}
\right],
\]
whose density and Stieltjes transform are given respectively by
\[
\nu(\mathrm d x)
=
\frac{d}{2(d-1)\pi}
\sqrt{
\left(
\frac{4(d-1)}{d}-x^2
\right)^{+}
},
\]
and
\[
g(z)
\equiv
\frac{-zd + d\sqrt{z^2-\frac{4(d-1)}{d}}}{2(d-1)},
\qquad
z \in \mathbb C \setminus \mathcal S(\nu).
\]
\end{corollary}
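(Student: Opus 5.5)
The plan is to specialize Theorem~\ref{thm4.1} to $c_i=1/d$ for all $i\in[d]$ and solve the resulting algebraic system for $g$ explicitly. Weak almost sure convergence of the empirical spectral measure to a deterministic $\nu$ is already supplied by Theorem~\ref{thm4.1}. What remains is to identify $\nu$, and we do this through its Stieltjes transform: we show that $g$ coincides with the Stieltjes transform of the stated semicircle law, then invoke uniqueness of the Stieltjes transform.

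\emph{Reduction by symmetry.} When all $c_i$ are equal, the $d$ equations $g_i^2-(g+z)g_i-c_i=0$ are identical. We claim $g_1=\cdots=g_d=g/d$. Each $g_i$ is a root of the same quadratic, and for $\Im z>0$ exactly one of its two roots satisfies the Nevanlinna sign condition: the product of the roots is $-1/d$, so at most one root can have the required sign of imaginary part. By uniqueness of the solution in Theorem~\ref{thm4.1}, all the $g_i$ therefore pick the same root. Since $g=\sum_i g_i$, each $g_i=g/d$.

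\emph{Solving the quadratic.} Substituting $g_i=g/d$ into the quadratic gives
\[
\frac{g^2}{d^2}-(g+z)\frac{g}{d}-\frac1d=0 .
\]
Multiplying by $d^2$ yields $g^2-d(g+z)g-d=0$, that is,
\[
(d-1)g^2+dzg+d=0 .
\]
Hence
\[
g(z)=\frac{-dz\pm\sqrt{d^2z^2-4d(d-1)}}{2(d-1)}=\frac{-dz\pm d\sqrt{z^2-\frac{4(d-1)}{d}}}{2(d-1)} .
\]
We take the branch of the square root that behaves like $z$ at infinity and select the $+$ sign. Then $g(z)\sim -1/z$ as $|z|\to\infty$, which is the normalization of the Stieltjes transform of a probability measure, and $\Im g>0$ on $\mathbb C^+$.

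\emph{Identifying the measure.} Set $\sigma^2=(d-1)/d$. The equation becomes $\sigma^2 g^2+zg+1=0$, which is exactly the self-consistent equation for the Stieltjes transform of the semicircle law of variance $\sigma^2$. Alternatively, apply Stieltjes inversion,
\[
\nu(dx)=\frac1\pi\lim_{\eta\downarrow0}\Im g(x+i\eta)\,dx .
\]
This gives the density
\[
\frac{d}{2(d-1)\pi}\sqrt{\Bigl(\frac{4(d-1)}{d}-x^2\Bigr)_+},
\]
supported on $[-2\sqrt{(d-1)/d},\,2\sqrt{(d-1)/d}]$.

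\emph{Main obstacle.} The only delicate step is justifying the symmetric reduction, i.e.\ ruling out solutions in which different $g_i$ take different roots of the quadratic. I would handle this by the Nevanlinna sign argument for $\Im z$ large, where all quantities are small perturbations of $-c_i/z$, and then extend to all of $\mathbb C\setminus\mathcal S(\nu)$ by analytic continuation. The remaining steps are routine computations; Corollary~\ref{cor3.1} is the case $d=3$.
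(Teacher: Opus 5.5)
Your route is the paper's: specialize Theorem~\ref{thm4.1} to $c_i=1/d$, set $g_i=g/d$, reduce to $(d-1)g^2+dzg+d=0$, and keep the root with $\Im g>0$. The algebra, the check $g\sim -1/z$, and the identification with the semicircle of variance $(d-1)/d$ are all correct; the paper stops at the branch choice and does not spell out the inversion.

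The problem is your argument for $g_1=\cdots=g_d$, which is the step you yourself single out. A root product of $-1/d$ does not leave at most one root of $X^2-(g+z)X-\tfrac1d=0$ in $\mathbb C^+$; it does the reverse. The map $X\mapsto -1/(dX)$ sends $\mathbb C^+$ to itself, so both roots always lie in the same half-plane. Concretely, the companion of the true root $g_i$ is $g+z-g_i=z+\sum_{j\neq i}g_j$, whose imaginary part exceeds $\Im z>0$. So the Nevanlinna sign condition cannot select $g_i$, even for large $\Im z$: there the two roots are $\approx -c_i/z$ and $\approx z$, and both lie in $\mathbb C^+$. Your sign argument is valid only one step later, for the quadratic in $g$, whose roots have product $d/(d-1)>0$.

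To repair this, separate the roots by modulus instead of sign. Since $|X_1X_2|=1/d$, at most one root lies in the disc $|X|<1/\sqrt d$. Each $g_i$ is the limit of $\frac1N\operatorname{tr}\bm Q^{ii}(z)$, the Stieltjes transform of a measure of mass $c_i=1/d$. Hence $|g_i(z)|\le 1/(d\,\Im z)<1/\sqrt d$ whenever $\Im z>1/\sqrt d$. All $g_i$ solve the same quadratic, so they coincide on that region, and therefore on all of $\mathbb C^+$ by analyticity. This is the precise form of your fallback remark about perturbations of $-c_i/z$.

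There are two other ways to get the reduction:
\begin{itemize}
\item Do as the paper does and read $g_i=g/d$ directly off the explicit branch $g_i=\frac{g+z}{2}-\frac12\sqrt{4c_i+(g+z)^2}$ in Theorem~\ref{thm4.1}, which depends on $i$ only through $c_i$.
\item Rule out mixed root choices directly. Suppose $k\in\{1,\dots,d-1\}$ of the $g_i$ equal a root $A\in\mathbb C^+$ and the rest equal $-1/(dA)$. Then $g=\sum_i g_i$ forces $z=(1-k)A+\frac{d-1-k}{dA}$, and this has $\Im z<0$ for $d\ge 3$.
\end{itemize}
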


\begin{proof}
See Appendix~\ref{appA.8}.
\end{proof}

\begin{theorem}\label{thm4.2}
Let \(\textbf{T}\) be a sequence of spiked random tensors defined as in Eq.~(\ref{eq1.2}).
Under Assumptions~\ref{assumption4.1} and~\ref{assumption3.2}, the empirical spectral measure of
\(\Phi_d(\textbf{T}, u_\ast^{(1)}, \ldots, u_\ast^{(d)})\)
converges weakly almost surely to a deterministic measure \(\nu\) whose
Stieltjes transform \(g(z)\) is defined as the solution to the equation
\[
g(z)=\sum_{i=1}^d g_i(z)
\]
such that \(\Im[g(z)]>0\) for \(\Im[z]>0\). Here, for \(i\in[d]\),
\(g_i(z)\) satisfies
\[
g_i^2(z)-(g(z)+z)g_i(z)-c_i=0
\quad \text{for } z\in\mathbb C\setminus \mathcal S(\nu).
\]
\end{theorem}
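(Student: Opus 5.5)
We prove Theorem~\ref{thm4.2} by reducing it to Theorem~\ref{thm4.1}, following the same route as for Theorem~\ref{thm3.3} in the order-three case. The argument has three steps: strip the finite-rank spike, derive the self-consistent equations for the noise resolvent with the dependent vectors, and show the dependence-induced cross terms are negligible.

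\textbf{Step 1: remove the spike.} By linearity of $\Phi_d$,
\[
\Phi_d(\textbf{T},\bm u_\ast^{(1)},\ldots,\bm u_\ast^{(d)}) = \beta\,\bm V\bm B\bm V^\top + \tfrac{1}{\sqrt N}\Phi_d(\textbf{W},\bm u_\ast^{(1)},\ldots,\bm u_\ast^{(d)}).
\]
Here $\bm V$ is the $N\times d$ block-diagonal matrix with columns $\bm x^{(i)}$, and $\bm B$ is a $d\times d$ matrix with entries $\prod_{\ell\ne i,j}\langle \bm x^{(\ell)},\bm u_\ast^{(\ell)}\rangle$. This perturbation has rank at most $d$. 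By the rank inequality for empirical spectral distributions, it changes the Stieltjes transform by $O(1/(N\Im z))$. It therefore suffices to treat $\bm M:=\frac1{\sqrt N}\Phi_d(\textbf{W},\bm u_\ast^{(1)},\ldots,\bm u_\ast^{(d)})$, with resolvent $\bm Q(z)=(\bm M-z\bm I)^{-1}$ for $z\in\mathbb C^+_{\eta_0}$.

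\textbf{Step 2: concentration and the approximate equations.} Write $g^{(i)}_N(z)=\frac1N\tr \bm Q^{ii}(z)$ for the normalized trace of the $i$-th diagonal block, and $g_N=\sum_i g^{(i)}_N$.

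First, we show $\Var(g_N^{(i)})=O(N^{-2})$ (or at least summable) via the Efron--Stein bound. The derivative $\partial g^{(i)}_N/\partial W_{i_1\cdots i_d}$ has two sources:
\begin{itemize}
\item the explicit dependence, which is handled as in the deterministic case;
\item the implicit dependence through $\partial\bm u_\ast^{(\ell)}$, given by \eqref{eq4.3}.
\end{itemize}
On the high-probability event of Corollary~\ref{cor:resolvent_hp}, $\|\bm R(\lambda_\ast)\|\le 2/\Delta_\beta$, so each $\partial \bm u^{(\ell)}_\ast$ is $O(N^{-1/2})$ in norm. The almost sure statement then follows by Borel--Cantelli, together with a fourth-moment version of this bound or a truncation of the entries at $N^{1/2-\epsilon}$.

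Next, starting from $\bm M\bm Q - z\bm Q=\bm I$, take the $(i,i)$ block and expand $\mathbb E[W_{i_1\cdots i_d}\,F(\textbf W)]$ using Lemma~\ref{lem2.3} with $p=2$:
\begin{itemize}
\item The $\kappa_2=1$ term reproduces the Gaussian computation of Theorem~\ref{thm4.1}. It gives $\mathbb E g^{(i)}_N\,(\mathbb E g_N+z)\approx$ the quadratic relation $g_i^2-(g+z)g_i-c_i=0$, up to cross terms.
\item The $\kappa_3$ term and the remainder involve second and third derivatives times $N^{-3/2}$, $N^{-2}$. Summed over $\prod n_\ell$ indices but weighted by products of $u$-coordinates, they are $O(N^{-1/2})$, as in the proof of Theorem~\ref{thm4.1}. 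Finite fourth moment is exactly what is needed to control the remainder.
\end{itemize}
Finally, uniqueness of the solution with $\Im g>0$ (from Theorem~\ref{thm4.1}) identifies the limit.

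\textbf{Step 3 (main obstacle): the cross terms.} Differentiating $\bm Q$ also produces $\frac1{\sqrt N}\Phi_d(\textbf{W},\ldots,\partial\bm u^{(\ell)}_\ast,\ldots)$, i.e.\ contractions of $\textbf W$ against derivatives of the singular vectors. This is exactly the term whose order was misestimated in \cite{seddik2024whentrandomtensorsmeet}, so it is where the real work lies. My plan is to substitute \eqref{eq4.3} and sum over the index $(i_1,\ldots,i_d)$ first. The aggregated term then takes the form
\[
\frac{1}{N^{2}}\tr\bigl(\bm Q\,\Phi_d(\textbf W,\ldots)\,\bm R(\lambda_\ast)\,\bm D\bigr),
\]
where $\bm D$ is built from the vectors $\bm u^{(\ell)}_\ast$ and projectors $\bm I-\bm u\bm u^\top$. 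We bound it using the following ingredients:
\begin{itemize}
\item Lemma~\ref{fact:opnorm}, applied to unfoldings, gives $\|\Phi_d(\textbf W,\ldots)\|\lesssim\sqrt N$ uniformly over unit vectors;
\item $\|\bm Q\|\le\eta_0^{-1}$;
\item $\|\bm R(\lambda_\ast)\|\le 2/\Delta_\beta$.
\end{itemize}
The crucial point is that the sum over indices collapses one free dimension, because each term carries the factor $\prod_{\ell\ne k}u^{(\ell)}_{i_\ell}$. This yields a bound of order $N^{-1/2}$.

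Where a naive bound only gives $O(1)$, I would instead rewrite the term as a bilinear form $\langle \bm a,\Phi_d(\textbf W,\ldots)\bm b\rangle$ with $\|\bm a\|,\|\bm b\|=O(N^{-1/2})$, and use the KKT identities \eqref{eq:26} to replace $\textbf W$-contractions along $\bm u_\ast$ by $\sqrt N(\lambda_\ast\bm u^{(i)}_\ast - \text{spike})$. This should recover the needed decay. With the cross terms shown to vanish, the limiting equations coincide with those of Theorem~\ref{thm4.1}, which concludes the proof.
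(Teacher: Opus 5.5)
Your Steps~1 and~2 follow the paper's outline. The rank inequality is a fine, even cleaner, substitute for the Woodbury identity, since it does not need $\bm B$ to be invertible. Step~3, however, is the only place where the dependence of $\bm u_\ast^{(\ell)}$ on $\textbf{W}$ is really confronted, and it rests on a false input.

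You claim that Lemma~\ref{fact:opnorm} applied to unfoldings gives $\|\Phi_d(\textbf{W},\bm a^{(1)},\ldots,\bm a^{(d)})\|\lesssim\sqrt N$ uniformly over unit vectors. Uniformity is indeed what you need, because the slots are filled with $\textbf{W}$-dependent vectors. But this supremum is, up to constants depending on $d$, the injective norm $\|\textbf{W}\|_{\mathrm{inj}}\ge\max_{i_1,\ldots,i_d}|W_{i_1\cdots i_d}|$. With $\asymp N^d$ i.i.d.\ entries having only a finite fourth moment, that maximum can be of order $N^{d/4-o(1)}\gg\sqrt N$. Unfoldings do not help: they are $n_a\times\prod_{r\neq a}n_r$ matrices whose norms are of order $N^{(d-1)/2}$. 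The $\sqrt N$ bound does hold for Gaussian or sub-Gaussian entries by an $\varepsilon$-net argument, but that is exactly the regime the theorem is meant to leave.

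This is not cosmetic. Carry out your plan on the piece $-N^{-1/2}\bigl(\prod_{t\neq1}u^{(t)}_{i_t}\bigr)\bm R^{\ell1}(\lambda_\ast)\bm e_{i_1}$ of $\partial\bm u_\ast^{(\ell)}$. After summing over the multi-index it contributes $N^{-5/2}\operatorname{Tr}\bigl(\mathcal W\,\bm R^{\ell1}(\lambda_\ast)\bm Q^{1a}(z)\bigr)$. Here $\mathcal W$ is the $n_a\times n_\ell$ contraction of $\textbf{W}$ against the $\bm u_\ast^{(k)}$, $k\notin\{a,b,\ell\}$, and against $\bm Q^{bj}\bm u_\ast^{(j)}$ in slot $b$. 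Since $\bm R^{\ell1}\bm Q^{1a}$ has nuclear norm $O(N)$, operator-norm reasoning gives $N^{-3/2}\|\mathcal W\|$. That is $O(N^{-1})$ if $\|\mathcal W\|=O(\sqrt N)$. With the honest bound $\|\mathcal W\|\lesssim\|\textbf{W}\|_{\mathrm{inj}}$, it need not vanish once $d$ is large.

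The KKT fallback does not repair this. \eqref{eq:26} only identifies contractions of $\textbf{W}$ against exactly the $d-1$ vectors $\bm u_\ast^{(k)}$, $k\neq i$. Here two slots are free and another carries a resolvent-transformed vector such as $\bm Q^{bj}\bm u_\ast^{(j)}$.

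The paper handles this term differently and never bounds $\Phi_d(\textbf{W},\partial\bm u^{(1)},\ldots,\partial\bm u^{(d)})$ in operator norm; Remark~\ref{remark4} is precisely about why that route is unjustified. In Lemma~\ref{lemA.1}, invoked for general $d$ at the end of Appendix~\ref{appA.9}, it performs a \emph{second} cumulant expansion in the entry $W_{\alpha\beta\gamma}$ that appears explicitly inside $\bm O_{ijk}$. This integrates the noise out, leaving only derivatives of $w_k$, of $\bm Q$ and of $\partial\bm u$. These are bounded through Frobenius pairings such as $|S^{(\mathrm I)}_{k\gamma}|\le\|\bm Q\|^2\|\bm A^{(k,\gamma)}\|_F\|\bm X^{(k,\gamma)}\|_F$. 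The cumulant remainder is then controlled by coordinatewise power counting (the terms $S_1$--$S_3$). Some such device, or a genuinely proved bound on the specific contractions that occur, is the missing idea in your proposal.

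Two further points:
\begin{itemize}
\item The same implicit term enters $\partial g_N^{(i)}/\partial W_{i_1\cdots i_d}$, so your Efron--Stein variance bound inherits the gap.
\item Corollary~\ref{cor:resolvent_hp} controls $\|\bm R(\lambda_\ast)\|$ only on an event of probability at least $1-\varepsilon$ with $\varepsilon$ fixed. Such failure probabilities are not summable in $N$, so Borel--Cantelli does not yield almost sure convergence. This control is also not enough for Lemma~\ref{lem2.3}, which requires $\sup_t|f^{(p+1)}(t)|<\infty$.
\end{itemize}
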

The proof is sketched in Section~\ref{sketch4.2}; a complete and detailed proof is given in Appendix \ref{appA.9}.
\subsection{Asymptotic singular value and alignments of hyper-rectangular tensors}

% Similarly to the 3-order case studied previously, when the dimensions of $\textbf{T}$
% grow large at a same rate, its singular value $\lambda$ and the corresponding
% alignments $\langle u^{(i)}, x^{(i)} \rangle$ for $i \in [d]$ concentrate almost surely
% around some deterministic quantities which we denote
Finally, concentration estimates imply that the singular value $\lambda$ and the
alignments $\langle \bm u^{(i)},\bm x^{(i)}\rangle$ admit deterministic almost sure limits
under Assumptions~\ref{assumption4.1} and~\ref{assumption3.2}.
We denote these limits by
\[
\lambda^\infty(\beta) \equiv \lim_{N \to \infty} \lambda
\quad \text{and} \quad
a_{x^{(i)}}^\infty(\beta) \equiv \lim_{N \to \infty} \bigl|\langle \bm u^{(i)}, \bm x^{(i)} \rangle\bigr|,
\]
respectively. Applying again Lemma \ref{lem2.3} to the identities in
Eq.~(\ref{eq:26}), we obtain the following theorem which characterizes the aforementioned
deterministic limits.

\begin{theorem}\label{thm4.3}
Recall the notation in Theorem~\ref{thm4.2}. Under Assumptions~\ref{assumption4.1} and~\ref{assumption3.2}, for $d \ge 3$,
there exists $\beta_s > 0$ such that for $\beta > \beta_s$,
\[
\begin{cases}
\lambda \xrightarrow{\text{a.s.}} \lambda^\infty(\beta), \\[1ex]
\bigl|\langle \bm x^{(i)}, \bm u^{(i)} \rangle\bigr|
\xrightarrow{\text{a.s.}} q_i\!\left(\lambda^\infty(\beta)\right),
\end{cases}
\]
where $q_i(z)$ is given by
\[
q_i(z) = \sqrt{1 - \frac{g_i^2(z)}{c_i}},
\]
and $\lambda^\infty(\beta)$ satisfies
\[
f\!\left(\lambda^\infty(\beta), \beta\right) = 0,
\quad \text{with} \quad
f(z,\beta) = z + g(z) - \beta \prod_{i=1}^d q_i(z).
\]

In addition, for $\beta \in [0,\beta_s]$, $\lambda^\infty$ is bounded (in particular,
when $c_i = \frac{1}{d}$ for all $i \in [d]$),
\[
\lambda \xrightarrow{\text{a.s.}} \lambda^\infty \le 2\sqrt{\frac{d-1}{d}},
\quad \text{and} \quad
\bigl|\langle \bm x^{(i)}, \bm u^{(i)} \rangle\bigr| \xrightarrow{\text{a.s.}} 0.
\]
\end{theorem}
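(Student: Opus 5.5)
The plan mirrors the order-three argument (Theorem~\ref{thm3.4}), now using Theorem~\ref{thm4.2} for the bulk law. There are three steps: concentrate $\lambda$ and the alignments, take expectations of the KKT identities~\eqref{eq:26} contracted against the planted vectors, and evaluate the resulting noise terms with the cumulant expansion of Lemma~\ref{lem2.3}.

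\emph{Step 1 (concentration).} By Efron--Stein and~\eqref{eq:29}, $\mathrm{Var}(\lambda)\le N^{-1}\sum\prod_\ell (u^{(\ell)}_{i_\ell})^2=N^{-1}$. For $\langle \bm u^{(i)},\bm x^{(i)}\rangle$, I will use~\eqref{eq4.3}. On the high-probability event of Corollary~\ref{cor:resolvent_hp}, $\|\bm R(\lambda)\|\le 2/\Delta_\beta$, which gives variance $O(N^{-1})$. A fourth-moment version of the same bound, followed by Borel--Cantelli, upgrades this to almost sure convergence. Hence it suffices to identify the limits of the expectations.

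\emph{Step 2 (equations for $\lambda$ and the alignments).} Write $a_i=\langle \bm u^{(i)},\bm x^{(i)}\rangle$ and contract the $i$-th KKT identity with $\bm x^{(i)}$:
\[
\lambda a_i=\beta\prod_{\ell\ne i}a_\ell+\tfrac1{\sqrt N}\,\textbf W(\bm u^{(1)},\ldots,\bm x^{(i)},\ldots,\bm u^{(d)}).
\]
Contracting with $\bm u^{(i)}$ instead gives $\lambda=\beta\prod_\ell a_\ell+\tfrac1{\sqrt N}\textbf W(\bm u^{(1)},\ldots,\bm u^{(d)})$.

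\emph{Step 3 (noise terms via cumulants).} I expand $\mathbb E[W_{i_1\cdots i_d}F(\textbf W)]$ with Lemma~\ref{lem2.3} at $p=2$, where the finite fourth moment controls the remainder. The first-cumulant term vanishes. The second-cumulant term is $\mathbb E\,\partial F/\partial W$, computed from~\eqref{eq4.3}--\eqref{eq:29}. The third-cumulant and remainder terms carry extra factors $N^{-1/2}$ together with products of entries of unit vectors; summed over indices they are $o(1)$, using $\|\bm R\|$ bounded and $\sum_{j}|u_j|^3\le\max_j|u_j|$. This bound requires showing delocalization of the $\bm u^{(\ell)}$, or else bounding by $\|\bm u\|_4^4$ terms that can be controlled. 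The dominant second-order contributions take the form $-\frac1N\mathrm{tr}$ of diagonal blocks of $\bm R(\lambda)$, times $a_i$ or similar factors. By Theorem~\ref{thm4.2}, together with the outlier separation (Assumption~\ref{assumption3.2}, which lets us evaluate at $z=\lambda\notin\mathcal S(\nu)$), these normalized block traces converge to $g_\ell(\lambda^\infty)$. Since $\bm R$ is the resolvent of the full $\Phi_d(\textbf T,\cdot)$, its low-rank spike part $\beta\bm V\bm B\bm V^\top$ must be handled by a Woodbury identity; it changes traces only by $O(1/N)$. The outcome is the deterministic system
\[
\lambda^\infty+g(\lambda^\infty)=\beta\prod_\ell a_\ell,\qquad \bigl(\lambda^\infty+g-g_i\bigr)a_i=\beta\prod_{\ell\ne i}a_\ell .
\]
Solving this gives $a_i^2\,\alpha_i^{-1}\cdot\beta=\beta\prod a_\ell$ type relations, i.e. the $q_i$ representation in the Remark after~\eqref{eq:cease_correlation}. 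The identity $q_i^2=1-g_i^2/c_i$ then follows from the quadratic $g_i^2-(g+z)g_i-c_i=0$. Substituting back yields $f(\lambda^\infty,\beta)=0$. I define $\beta_s$ as the infimum of $\beta$ for which $f(\cdot,\beta)$ has a root outside $\mathcal S(\nu)$ with $q_i>0$.

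For $\beta\le\beta_s$ there is no such root, so the alignments must vanish (otherwise the system would produce one). Then $\lambda$ satisfies $\lambda+g(\lambda)=0$ at the edge, giving $\lambda^\infty\le 2\sqrt{(d-1)/d}$ in the balanced case via Corollary~\ref{cor4.1}.

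The main obstacle is the cross term in Step 3: the derivatives of $\bm u^{(\ell)}$ with respect to $W$ feed back into $\textbf W(\ldots)$. These terms must be shown negligible, not merely $O(1)$, as the paper's Lemma~A.1 correction suggests. I would bound them by writing $\sum_{i_1\ldots i_d}\partial\bm u^{(\ell)}$ contracted against $\textbf W$ as a trace of $\bm R$ times $\Phi_d(\textbf W,\ldots)/\sqrt N$, and then using the operator norm bound of Lemma~\ref{fact:opnorm} on unfoldings.
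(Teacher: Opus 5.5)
For $\beta>\beta_s$ your plan is the paper's argument. You contract \eqref{eq:26} with $\bm x^{(i)}$, and with $\bm u^{(i)}$ to get $\lambda+g=\beta\prod_\ell a_\ell$. You expand with Lemma~\ref{lem2.3} and keep only the $R^{kk}_{j_kj_k}(\lambda)$ part of \eqref{eq4.3}, which gives $-a_i\sum_{k\ne i}g_k(\lambda)$. You then solve through $a_i^2=(\lambda+g)/(\lambda+g-g_i)$ and $z+g-g_i=-c_i/g_i$. Taking $p=2$ instead of the paper's $p=1$ only adds a third-cumulant term of the same shape as the paper's remainder.

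Two parts of your remainder discussion need correcting. First, delocalization of $\bm u^{(\ell)}$ is neither available nor needed. It fails in general: if $\bm x^{(\ell)}=\bm e_1$, then $|u^{(\ell)}_1|\to a^\infty_{x^{(\ell)}}>0$. The paper's mechanism is exponent counting instead. Each $W$-derivative of a coordinate of $\bm u^{(r)}$ brings a factor $N^{-1/2}$ and a delete-one-index product $\prod_{t\ne s}|u^{(t)}_{j_t}|$. Summing coordinatewise with $\sum_j 1=N$, $\sum_j|u_j|\le\sqrt N$ and $\sum_j|u_j|^\alpha\le 1$ for $\alpha\ge2$ gives $O(N^{-1/2})$ for arbitrary unit vectors.

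Second, the cross term is not where you place it. The leading term involves only $\partial\bm u^{(\ell)}$, which \eqref{eq4.3} gives exactly. The noise dependence treated in Lemma~\ref{lemA.1} is already absorbed in Theorem~\ref{thm4.2}, which you use for $\frac1N\operatorname{tr}\bm R^{kk}(\lambda)\to g_k$. The matrix $N^{-1/2}\Phi_d(\textbf W,\partial\bm u^{(1)},\ldots,\partial\bm u^{(d)})$ appears only through $\partial\bm R(\lambda)$ inside second derivatives; there the paper keeps only the explicit part $D\bm N$. Your proposed bound would not make this term small anyway. Lemma~\ref{fact:opnorm} needs contraction vectors independent of $\textbf W$. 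Going through a mode unfolding, whose norm is $\gtrsim N$ already for $d=3$, yields only $\|\textbf W(\cdot,\cdot,\bm\xi)\|\lesssim N\|\bm\xi\|$. That gives $N^{-1/2}\|\Phi_3(\textbf W,\partial\bm u,\partial\bm v,\partial\bm w)\|=O(1)$, which is exactly the loss described in Remark~\ref{remark4}.

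The genuine gap is the regime $\beta\le\beta_s$. Your system was derived using $\|\bm R(\lambda)\|=O(1)$ and $\frac1N\operatorname{tr}\bm R^{kk}(\lambda)\to g_k(\lambda)$, and both require $\lambda$ to stay away from the bulk. Assumption~\ref{assumption3.2} itself stipulates $a^\infty_{x^{(i)}}>0$ and $\lambda^\infty\notin K_\nu$. So below $\beta_s$ your argument only shows that no branch satisfying the hypotheses exists. It says nothing about the overlaps of an actual stationary point whose $\lambda$ sits at or inside the bulk, because there the cumulant computation is unavailable.

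Moreover, the equation you end with cannot hold. In the balanced case of Corollary~\ref{cor4.1}, set $z_e=2\sqrt{(d-1)/d}$. For $z\ge z_e$ one has $z+g(z)=\frac{(d-2)z+d\sqrt{z^2-z_e^2}}{2(d-1)}$. Hence $z_e+g(z_e)=(d-2)/\sqrt{d(d-1)}>0$, and $z+g(z)>0$ on $[z_e,\infty)$ for every $d\ge 3$. So $\lambda+g(\lambda)=0$ has no solution at or beyond the edge and cannot yield $\lambda^\infty\le z_e$.

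The paper's appendix does not prove this half either; it only derives and solves the informative system. A different kind of input would be needed here, such as landscape information on uninformative critical points, rather than the outlier resolvent calculus.
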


\begin{proof}
See Appendix~\ref{appA.10}.
\end{proof}

\section{Extension to rank-$r$ spiked tensor models with orthogonal components}
\label{sec:rank-r}

In the previous sections, we focused on the rank-one asymmetric spiked tensor
model and established universality results for the spectral and statistical
behavior of the maximum likelihood estimator.
We now briefly discuss how these results extend to the rank-$r$ setting under
a natural orthogonality assumption on the signal components.

In particular, We consider the rank-$r$ spiked tensor model
\begin{equation}
\textbf{T}
=
\sum_{\ell=1}^r
\beta_\ell\,
\bm x^{(1)}_\ell \otimes \cdots \otimes \bm x^{(d)}_\ell
+
\frac{1}{\sqrt N}\,\textbf{W},
\label{eq:rankr_model}
\end{equation}
where $\beta_1 > \cdots > \beta_r > 0$ are signal-to-noise ratios,
$\bm x^{(i)}_\ell \in \mathbb S^{n_i-1}$ for each mode $i\in[d]$,
and the noise tensor $\textbf{W}$ has independent entries satisfying
$\mathbb E W_{i_1\ldots i_d}=0$,
$\mathbb E W_{i_1\ldots i_d}^2=1$, and
$\mathbb E|W_{i_1\ldots i_d}|^4<\infty$.
We assume that the signal components are mutually orthogonal across each mode,
namely,
\[
\langle \bm x^{(i)}_\ell,\bm x^{(i)}_{\ell'} \rangle = 0
\qquad
\text{for all } \ell\neq\ell',\; i\in[d].
\]

Under this assumption, the maximum likelihood estimator corresponds to the
best rank-$r$ approximation of $\textbf{T}$, which is obtained by solving
\begin{equation}
\min_{\lambda_\ell>0,\,
\bm u^{(i)}_\ell\in\mathbb S^{n_i-1}}
\left\|
\textbf{T}-
\sum_{\ell=1}^r
\lambda_\ell\,
\bm u^{(1)}_\ell\otimes\cdots\otimes\bm u^{(d)}_\ell
\right\|_F^2.
\label{eq:rankr_ML}
\end{equation}
By the uniqueness of orthogonal tensor decompositions, each estimated direction
$\bm u^{(i)}_\ell$ necessarily correlates with the corresponding true
signal direction $\bm x^{(i)}_\ell$.

To analyze the $\ell$-th component, we consider the block-wise tensor
contraction matrix
\[
\Phi_d\!\left(
\textbf{T},
\bm u^{(1)}_\ell,\ldots,\bm u^{(d)}_\ell
\right),
\]
which arises from the first-order optimality (KKT) conditions, in the same
manner as in the rank-one case.
Due to the orthogonality of the signal components and concentration in high
dimensions, for any $\ell'\neq\ell$ and any mode $i\in[d]$, it holds that
\[
\langle \bm u^{(i)}_\ell,\bm x^{(i)}_{\ell'} \rangle
\;\xrightarrow{\mathrm{a.s.}}\;0.
\]
As a consequence, we obtain the asymptotic equivalence
\begin{equation}
\bigl\|
\Phi_d(\textbf{T},\bm u^{(1)}_\ell,\ldots,\bm u^{(d)}_\ell)
-
\Phi_d(\textbf{T}_\ell,\bm u^{(1)}_\ell,\ldots,\bm u^{(d)}_\ell)
\bigr\|
\;\xrightarrow{\mathrm{a.s.}}\;0,
\label{eq:rankr_equiv}
\end{equation}
where
\[
\textbf{T}_\ell
=
\beta_\ell\,
\bm x^{(1)}_\ell\otimes\cdots\otimes\bm x^{(d)}_\ell
+
\frac{1}{\sqrt N}\,\textbf{W}.
\]

Therefore, the spectral behavior of
$\Phi_d(\textbf{T},\bm u^{(1)}_\ell,\ldots,\bm u^{(d)}_\ell)$
coincides asymptotically with that of the corresponding rank-one spiked tensor
model $\textbf{T}_\ell$.
In particular, the limiting spectral distribution, the emergence of an
outlying eigenvalue, and the asymptotic mode-wise alignments
$\alpha^\infty(\beta_\ell)$ are identical to those derived in the rank-one
setting, with the signal strength $\beta$ replaced by $\beta_\ell$.

In summary, under the orthogonality assumption, the rank-$r$ asymmetric spiked
tensor model exhibits a decoupled structure in the high-dimensional limit,
where each signal component behaves as an independent rank-one spike.
As a result, all the universality results established in the rank-one case extend
componentwise to the rank-$r$ setting.

\section{Proof Sketch for Main Theorems}
\label{sec:proof-sketch}

In this section, we present a proof outline for our main results: Theorem~\ref{thm3.3} and 
Theorem~\ref{thm4.2}. The detailed proofs are given in Appendices~\ref{appA.4} and~\ref{appA.9}, respectively.  

\subsection{Proof Sketch of Theorem~\ref{thm3.3}}\label{sketch3.2}

Fix $z\in\mathbb C^+$.  
Recall that
\[
\bm T=\beta\bm V\bm B\bm V^\top+\bm N,
\qquad
\bm N=\frac1{\sqrt N}\Phi_3(\textbf{W},\bm u,\bm v,\bm w),
\]
and denote
\(
\bm R(z)=(\bm T-z\bm I)^{-1}
\)
and
\(
\bm Q(z)=(\bm N-z\bm I)^{-1}.
\)

\subsubsection*{Step 1: Reduction}
By the Woodbury identity and the rank--three structure of $\bm V$,
\[
\frac1N\Tr\bm R(z)
=
\frac1N\Tr\bm Q(z)+\mathcal O(N^{-1}),
\]
uniformly for $z\in\mathbb C^+$. Hence $\bm T$ and $\bm N$ have the same limiting
spectral distribution.

\subsubsection*{Step 2: Resolvent identity and implicit terms}
Let $g_i(z)=\frac1N\Tr\bm Q^{ii}(z)$.  
By $\bm N\bm Q(z)-z\bm Q(z)=\bm I$ and block traces, we have
\begin{equation}\label{eq:sc-short}
\frac{1}{N\sqrt N}\sum_{i,j,k}
\mathbb E\!\left[W_{ijk}w_k Q^{12}_{ij}(z)\right]
+
\frac{1}{N\sqrt N}\sum_{i,j,k}
\mathbb E\!\left[W_{ijk}v_j Q^{13}_{ij}(z)\right]
-
z g_1(z)
=
c_1.
\end{equation}

A key difficulty here is that the singular vectors $(\bm u,\bm v,\bm w)$
depend on the noise tensor $\bm W$. Differentiating the resolvent therefore
produces \emph{implicit terms} of the form
\[
\bm O_{ijk}
:=
\Phi_3\!\left(
\textbf{W},
\frac{\partial \bm u}{\partial W_{ijk}},
\frac{\partial \bm v}{\partial W_{ijk}},
\frac{\partial \bm w}{\partial W_{ijk}}
\right),
\]
which have no \emph{a priori} control on the operator norm.

Our main technical input is a sharp operator-norm estimate for these
implicit matrices:
\begin{equation}\label{eq:O-bound-main}
\Biggl|
\frac{1}{N^2}
\sum_{i,j,k}
\mathbb E\!\left[
w_k\,
(\bm Q(z)\bm O_{ijk}\bm Q(z))^{12}_{ij}
\right]
\Biggr|
=
\mathcal O(N^{-1}),
\end{equation}
uniformly for $z$ in compact subsets of $\mathbb C^+$.
This bound crucially uses the structure of the eigenvector derivatives and
Frobenius-to-operator norm reductions; see Lemma~\ref{lemA.1} for the proof.

\subsubsection*{Step 3: Cumulant expansion and decomposition}
To evaluate the mixed expectations in \eqref{eq:sc-short}, we apply a
second--order cumulant expansion with respect to the tensor entries $W_{ijk}$.
This yields a decomposition into three qualitatively different contributions:
\begin{equation}\label{eq:cumulant-3parts}
\partial_{W_{ijk}}^2\!\bigl(w_k Q^{12}_{ij}(z)\bigr)
=
\mathrm{I}_{ijk}(z)
+
\mathrm{II}_{ijk}(z)
+
\mathrm{III}_{ijk}(z),
\end{equation}
corresponding respectively to
\begin{enumerate}
\item[(I)] the \emph{pure resolvent term}
$\,w_k\,\partial^2 Q^{12}_{ij}(z)$,
\item[(II)] the \emph{mixed derivative term}
$\,(\partial w_k)(\partial Q^{12}_{ij}(z))$,
\item[(III)] the \emph{pure eigenvector term}
$\,Q^{12}_{ij}(z)\,\partial^2 w_k$.
\end{enumerate}

Each part requires a different control mechanism:
\begin{itemize}
\item[(I)] is handled using resolvent identities and Ward--type bounds, yielding
\[
N^{-3/2}\sum_{i,j,k}\sup_{z\in K}
|\mathrm{I}_{ijk}(z)|
=\mathcal O(N^{-1/2});
\]
\item[(II)] exploits the orthogonal projections in the eigenvector derivatives
and ``pressed--back'' column bounds for the resolvent, giving
\[
N^{-3/2}\sum_{i,j,k}\sup_{z\in K}
|\mathrm{II}_{ijk}(z)|
=\mathcal O(N^{-3/2});
\]
\item[(III)] relies on sharp $\ell^2$--bounds for second--order eigenvector
derivatives together with Frobenius--norm resolvent estimates, and satisfies
\[
N^{-3/2}\sum_{i,j,k}\sup_{z\in K}
|\mathrm{III}_{ijk}(z)|
=\mathcal O(N^{-1/2}),
\]
\end{itemize}
uniformly for $z$ in compact subsets $K\subset\mathbb C^+$.

All three contributions are therefore negligible in the large--$N$ limit.
Full proofs are given in Appendices~\ref{secA.4.1}, \ref{secA.4.2} and \ref{secA.4.2}.

\subsubsection*{Step 4: Evaluation of mixed expectations}
With \eqref{eq:O-bound-main}, a second--order cumulant expansion yields
\begin{align*}
\frac{1}{N\sqrt N}\sum_{i,j,k}
\mathbb E\!\left[W_{ijk}w_k Q^{12}_{ij}(z)\right]
&\longrightarrow -\,g_1(z)g_2(z),\\
\frac{1}{N\sqrt N}\sum_{i,j,k}
\mathbb E\!\left[W_{ijk}v_j Q^{13}_{ij}(z)\right]
&\longrightarrow -\,g_1(z)g_3(z),
\end{align*}
uniformly in compact subsets of $\mathbb C^+$.

\subsubsection*{Conclusion}
Substituting into \eqref{eq:sc-short} gives
\[
\begin{cases}
-\,g_1(z)\bigl(g_2(z)+g_3(z)\bigr)-z g_1(z)=c_1,\\
-\,g_2(z)\bigl(g_1(z)+g_3(z)\bigr)-z g_2(z)=c_2,\\
-\,g_3(z)\bigl(g_1(z)+g_2(z)\bigr)-z g_3(z)=c_3,
\end{cases}
\]
whose unique solution in $(\mathbb C^+)^3$ is
\[
g_i(z)
=
\frac{g(z)+z-\sqrt{(g(z)+z)^2+4c_i}}{2},
\qquad
g(z)=\sum_{i=1}^3 g_i(z),
\]
with the branch chosen so that $\Im g_i(z)>0$ for $\Im z>0$.

\subsection{Proof Sketch of Theorem~\ref{thm4.2}}\label{sketch4.2}

By the Woodbury matrix identity, the resolvent of
\(
\bm T=\beta \bm V\bm B\bm V^\top+\bm N
\)
can be expressed in terms of the resolvent
\(
\bm Q(z)=(\bm N-z\bm I)^{-1}
\)
up to a finite-rank correction. Since
\(
\|\bm Q(z)\|\le(\Im z)^{-1}
\)
for \(z\in\mathbb C^+_{\eta_0}\), the perturbative term has bounded operator norm.
Hence, the spectral characterization of \(\bm T\) reduces to estimating
\(\frac1N\Tr\bm Q(z)\).

\subsubsection*{Step 1: Cumulant expansion and three-term decomposition}
To handle the statistical dependence between the tensor noise \(\bm W\) and the
singular vectors \(\{\bm u^{(k)}\}\), we apply a second-order cumulant expansion to
\[
\frac{1}{N\sqrt N}\sum
\bigl[\bm W^{1j}\bm Q^{1j}(z)^\top\bigr]_{i_1i_1}.
\]
This yields three contributions,
\[
A_{j1}+A_{j2}+\varepsilon^{(2)},
\]
corresponding respectively to derivatives acting on the resolvent,
derivatives acting on the singular vectors, and the cumulant remainder.

\subsubsection*{Step 2: Control of the implicit term}
The leading term \(A_{j1}\) coincides with the independent-noise case and was
computed in Appendix~\ref{appA.7}.
The implicit term \(A_{j2}\) involves derivatives of the singular vectors and can
be written in the form
\[
(\bm u^{(1)})^\top \bm Q^{1j}(z)\bm u^{(j)}
\sum_{\ell\neq1,j}\Tr\bm R^{\ell\ell}(\lambda).
\]
Using the resolvent bound and the normalization \(\|\bm u^{(k)}\|_2=1\), we obtain
\(A_{j2}=\mathcal O(N^{-1})\), hence this contribution vanishes asymptotically.

\subsubsection*{Step 3: Estimation of the cumulant remainder}
The remainder term is controlled by bounding
\[
\frac{1}{N\sqrt N}\sum_{i_1,\ldots,i_d}
\sup_{z\in\mathbb C^+_{\eta_0}}
\bigl|D^2F(z)\bigr|,
\qquad
F(z)=Q^{1j}_{i_1i_j}(z)\prod_{k\neq1,j}u^{(k)}_{i_k}.
\]
Where 
\[
D := \frac{\partial}{\partial W_{i_1\cdots i_d}},
\qquad
U := \prod_{k\neq 1,j} u^{(k)}_{i_k}.
\]
Using resolvent identities, we decompose \(D^2F\) into three parts,
\[
S_1+S_2+S_3,
\]
corresponding to \(D^2\bm Q\), \((D\bm Q)(DU)\), and \(\bm Q(D^2U)\), respectively.

A key difference from the proof of Theorem~\ref{thm3.2} is that, in the general
order-\(d\) setting, one can no longer bound all factors uniformly in the
$\ell^2$-norm. Instead, a careful bookkeeping of the powers of each component
\(u^{(k)}_{i_k}\) is required, and Cauchy--Schwarz must be applied selectively at
the level of individual indices.

The first two terms satisfy \(S_1,S_2=\mathcal O(N)\). The most delicate term \(S_3\) is controlled by a resolvent--vector absorption
argument: at least one factor \(u^{(1)}_{i_1}\) or \(u^{(j)}_{i_j}\) is always present,
allowing us to absorb the free index via Cauchy--Schwarz and avoid an \(N^2\)
blow-up. This yields the sharper bound \(S_3=\mathcal O(\sqrt N)\). Full proofs are given in Appendices~\ref{secS1}, \ref{secS2} and \ref{secS3}.

\subsubsection*{Conclusion}
Combining the above estimates shows that the cumulant remainder is negligible and
that the derivative of \(\bm Q(z)\) has the same leading behavior as in the
independent case. Consequently,
\[
A_j=-g_1(z)g_j(z)+\mathcal O(N^{-1}),
\]
and the limiting Stieltjes transform coincides with that obtained in
Appendix~\ref{appA.7}, completing the proof of Theorem~\ref{thm4.2}.

\section{Concluding Remarks}
\label{sec:conclusion}
In this work, we established a universality principle for asymmetric rank-one spiked tensor models in the high-dimensional regime under independent noise with zero mean, unit variance, and finite fourth moment. Along a selected informative stationary branch of the maximum-likelihood landscape, we showed that the empirical spectral distribution of the associated block-wise tensor contraction coincides asymptotically with that of the Gaussian model. As a consequence, the singular value and the mode-wise alignments admit the same deterministic asymptotic characterization as in the Gaussian setting.

Our analysis shows that the key mechanism behind this universality phenomenon is the resolvent structure induced by the Karush--Kuhn--Tucker equations for the maximum-likelihood problem. By combining resolvent methods from random matrix theory, cumulant expansions, and Efron--Stein-type variance bounds, we are able to control the dependence between the estimator and the noise, including the cross terms that arise in the non-Gaussian setting. This both corrects and extends previous Gaussian-based analyses.

Several important questions remain open. First, our results rely on the branch-selection framework formalized in Assumptions~3.1 and~3.2. In the order-three asymmetric model, Proposition~3.3 verifies locally in the high-signal regime the qualitative content of Assumption~3.1, but a complete characterization of the optimization landscape for general asymmetric spiked tensor models remains open. In particular, it is not yet clear under what conditions the informative branch exists uniquely, or when a spike-aligned stationary point is globally optimal.

Second, determining the precise threshold that separates the uninformative and informative regimes remains an interesting problem for future work. Finally, it would be interesting to extend the present analysis to weaker moment assumptions and to algorithmic estimators beyond maximum likelihood.

Overall, our results show that the sharp asymptotic phenomena first identified in Gaussian spiked tensor models persist for a much broader class of non-Gaussian noise distributions. This provides further support for the robustness of Gaussian predictions in high-dimensional tensor inference.

\bibliography{ref}
\bibliographystyle{plain}

\newpage

\appendix
\clearpage
\begin{center}
{\Large\bfseries APPENDIX}
\end{center}
%\section{Proofs of Section~3} \label{appSec3}

\section{Proof of Proposition\ref{prop:high_signal_order3}}\label{appSec3}
\begin{proof}
    We split the argument into deterministic and probabilistic parts.

\medskip
\noindent
\textbf{Step 1: Local coordinates near the planted spike.}
Write
\[
\bm u=\alpha_x \bm x+\bm a,\qquad
\bm v=\alpha_y \bm y+\bm b,\qquad
\bm w=\alpha_z \bm z+\bm c,
\]
where
\[
\bm a\perp \bm x,\qquad \bm b\perp \bm y,\qquad \bm c\perp \bm z,
\]
and
\[
\alpha_x=\sqrt{1-\|\bm a\|^2},\qquad
\alpha_y=\sqrt{1-\|\bm b\|^2},\qquad
\alpha_z=\sqrt{1-\|\bm c\|^2}.
\]
We solve the KKT system in the local ball
\[
\mathcal B_r:=
\bigl\{
(\bm a,\bm b,\bm c): \bm a\perp \bm x,\ \bm b\perp \bm y,\ \bm c\perp \bm z,\ 
\|\bm a\|,\|\bm b\|,\|\bm c\|\le r/\beta
\bigr\},
\]
for a fixed constant \(r>0\) to be chosen later.

\medskip
\noindent
\textbf{Step 2: Noise contractions and the event \(\mathcal E_{L_0}\).}
Define the fixed vector contractions
\[
\bm \xi_u:=\frac1{\sqrt N}W(\cdot,\bm y,\bm z)\in\mathbb R^m,\qquad
\bm \xi_v:=\frac1{\sqrt N}W(\bm x,\cdot,\bm z)\in\mathbb R^n,\qquad
\bm \xi_w:=\frac1{\sqrt N}W(\bm x,\bm y,\cdot)\in\mathbb R^p.
\]
Define the one-mode contraction matrices
\[
M_u^{(z)}:=\frac1{\sqrt N}W(\cdot,\cdot,\bm z)\in \mathbb R^{m\times n},\qquad
M_u^{(y)}:=\frac1{\sqrt N}W(\cdot,\bm y,\cdot)\in \mathbb R^{m\times p},
\]
\[
M_v^{(z)}:=\frac1{\sqrt N}W(\cdot,\cdot,\bm z)\in \mathbb R^{m\times n},\qquad
M_v^{(x)}:=\frac1{\sqrt N}W(\bm x,\cdot,\cdot)\in \mathbb R^{n\times p},
\]
\[
M_w^{(y)}:=\frac1{\sqrt N}W(\cdot,\bm y,\cdot)\in \mathbb R^{m\times p},\qquad
M_w^{(x)}:=\frac1{\sqrt N}W(\bm x,\cdot,\cdot)\in \mathbb R^{n\times p}.
\]
Finally, denote by
\[
\|W\|_{\mathrm{inj}}
:=
\sup_{\|\bm r\|=\|\bm s\|=\|\bm t\|=1}
|W(\bm r,\bm s,\bm t)|
\]
the injective norm of \(W\).

Let \(\mathcal E_{L_0}\) be the event on which
\begin{align}
\|\bm \xi_u\|+\|\bm \xi_v\|+\|\bm \xi_w\| &\le L_0,
\label{eq:EL0_vec}\\
\|M_u^{(z)}\|+\|M_u^{(y)}\|+\|M_v^{(x)}\| &\le L_0,
\label{eq:EL0_mat}\\
\frac{\|W\|_{\mathrm{inj}}}{\sqrt N} &\le L_0 N^{1/4}.
\label{eq:EL0_inj}
\end{align}
The vector bounds in \eqref{eq:EL0_vec} follow from standard concentration for
fixed contractions, the matrix bounds in \eqref{eq:EL0_mat} follow from
Lemma~2.3 applied to one-mode contractions, and the tensor bound
\eqref{eq:EL0_inj} is the standard injective-norm estimate for third-order
random tensors.

In the remainder of the proof we work on \(\mathcal E_{L_0}\).

\medskip
\noindent
\textbf{Step 3: Expansion of the KKT system.}
The first KKT equation reads
\[
T(\bm v)\bm w=\lambda \bm u.
\]
Using
\[
T=\beta \bm x\otimes \bm y\otimes \bm z+\frac1{\sqrt N}W,
\]
we obtain
\[
T(\bm v)\bm w
=
\beta \langle \bm y,\bm v\rangle \langle \bm z,\bm w\rangle \bm x
+\frac1{\sqrt N}W(\cdot,\bm v,\bm w)
=
\beta\alpha_y\alpha_z \bm x+\frac1{\sqrt N}W(\cdot,\bm v,\bm w).
\]
Expanding the noise term and projecting onto \(\bm x^\perp\), we get
\begin{equation}
\label{eq:a_eq}
\lambda \bm a
=
\Pi_{\bm x^\perp}\Bigl(
\alpha_y\alpha_z \bm \xi_u
+\alpha_y\,M_u^{(y)}\bm c
+\alpha_z\,M_u^{(z)}\bm b
+R_u(\bm b,\bm c)
\Bigr),
\end{equation}
where
\[
R_u(\bm b,\bm c):=\frac1{\sqrt N}W(\cdot,\bm b,\bm c).
\]

Similarly,
\begin{equation}
\label{eq:b_eq}
\lambda \bm b
=
\Pi_{\bm y^\perp}\Bigl(
\alpha_x\alpha_z \bm \xi_v
+\alpha_x\,M_v^{(x)}\bm c
+\alpha_z\,(M_u^{(z)})^\top \bm a
+R_v(\bm a,\bm c)
\Bigr),
\end{equation}
\begin{equation}
\label{eq:c_eq}
\lambda \bm c
=
\Pi_{\bm z^\perp}\Bigl(
\alpha_x\alpha_y \bm \xi_w
+\alpha_x\,(M_v^{(x)})^\top \bm b
+\alpha_y\,(M_u^{(y)})^\top \bm a
+R_w(\bm a,\bm b)
\Bigr),
\end{equation}
where
\[
R_v(\bm a,\bm c):=\frac1{\sqrt N}W(\bm a,\cdot,\bm c),\qquad
R_w(\bm a,\bm b):=\frac1{\sqrt N}W(\bm a,\bm b,\cdot).
\]

The scalar equation for \(\lambda\) is
\begin{equation}
\label{eq:lambda_scalar}
\lambda
=
T(\bm u,\bm v,\bm w)
=
\beta\alpha_x\alpha_y\alpha_z
+\frac1{\sqrt N}W(\bm u,\bm v,\bm w).
\end{equation}

\medskip
\noindent
\textbf{Step 4: Bounds on the remainder terms.}
By the injective norm bound \eqref{eq:EL0_inj}, for every \(\bm r\in S^{m-1}\),
\[
|\langle \bm r,R_u(\bm b,\bm c)\rangle|
=
\left|
\frac1{\sqrt N}W(\bm r,\bm b,\bm c)
\right|
\le
L_0 N^{1/4}\|\bm b\|\,\|\bm c\|.
\]
Taking the supremum over \(\bm r\in S^{m-1}\), we obtain
\begin{equation}
\label{eq:Ru_bd}
\|R_u(\bm b,\bm c)\|
\le
L_0 N^{1/4}\|\bm b\|\,\|\bm c\|.
\end{equation}
Exactly the same argument yields
\begin{equation}
\label{eq:RvRw_bd}
\|R_v(\bm a,\bm c)\|
\le
L_0 N^{1/4}\|\bm a\|\,\|\bm c\|,
\qquad
\|R_w(\bm a,\bm b)\|
\le
L_0 N^{1/4}\|\bm a\|\,\|\bm b\|.
\end{equation}
Hence, on \(\mathcal B_r\),
\begin{equation}
\label{eq:remainder_ball_bd}
\|R_u(\bm b,\bm c)\|+\|R_v(\bm a,\bm c)\|+\|R_w(\bm a,\bm b)\|
\le
3L_0\,\frac{r^2 N^{1/4}}{\beta^2}.
\end{equation}

\medskip
\noindent
\textbf{Step 5: Fixed-point map.}
For \((\bm a,\bm b,\bm c)\in\mathcal B_r\), define
\[
\Lambda(\bm a,\bm b,\bm c)
:=
\beta\alpha_x\alpha_y\alpha_z+\frac1{\sqrt N}W(\bm u,\bm v,\bm w),
\]
and
\[
\Psi(\bm a,\bm b,\bm c)
=
(\Psi_u(\bm a,\bm b,\bm c),\Psi_v(\bm a,\bm b,\bm c),\Psi_w(\bm a,\bm b,\bm c))
\]
by
\[
\Psi_u(\bm a,\bm b,\bm c)
:=
\frac1{\Lambda(\bm a,\bm b,\bm c)}
\Pi_{\bm x^\perp}
\Bigl(
\alpha_y\alpha_z \bm \xi_u
+\alpha_y M_u^{(y)}\bm c
+\alpha_z M_u^{(z)}\bm b
+R_u(\bm b,\bm c)
\Bigr),
\]
\[
\Psi_v(\bm a,\bm b,\bm c)
:=
\frac1{\Lambda(\bm a,\bm b,\bm c)}
\Pi_{\bm y^\perp}
\Bigl(
\alpha_x\alpha_z \bm \xi_v
+\alpha_x M_v^{(x)}\bm c
+\alpha_z (M_u^{(z)})^\top \bm a
+R_v(\bm a,\bm c)
\Bigr),
\]
\[
\Psi_w(\bm a,\bm b,\bm c)
:=
\frac1{\Lambda(\bm a,\bm b,\bm c)}
\Pi_{\bm z^\perp}
\Bigl(
\alpha_x\alpha_y \bm \xi_w
+\alpha_x (M_v^{(x)})^\top \bm b
+\alpha_y (M_u^{(y)})^\top \bm a
+R_w(\bm a,\bm b)
\Bigr).
\]

\medskip
\noindent
\textbf{Step 6: \(\Psi\) maps \(\mathcal B_r\) into itself.}
From \eqref{eq:lambda_scalar}, \eqref{eq:EL0_inj}, and the bounds on
\(\mathcal B_r\), we have
\[
\Lambda(\bm a,\bm b,\bm c)
=
\beta+O(1)+O\!\left(\frac{r}{\beta}\right)
+O\!\left(\frac{r^2N^{1/4}}{\beta^2}\right).
\]
Hence, if \(\beta\ge C N^{1/4}\) with \(C\) sufficiently large, then
\begin{equation}
\label{eq:lambda_half}
|\Lambda(\bm a,\bm b,\bm c)|\ge \frac{\beta}{2}
\qquad\text{for all }(\bm a,\bm b,\bm c)\in\mathcal B_r.
\end{equation}

Using \eqref{eq:EL0_vec}, \eqref{eq:EL0_mat}, \eqref{eq:remainder_ball_bd}, and
\eqref{eq:lambda_half}, we obtain
\[
\|\Psi_u(\bm a,\bm b,\bm c)\|
\le
\frac{2L_0}{\beta}
\Bigl(
1+\frac{2r}{\beta}+\frac{r^2N^{1/4}}{\beta^2}
\Bigr),
\]
and similarly for \(\Psi_v,\Psi_w\). Choosing \(r=4L_0\) and taking
\(\beta\ge C N^{1/4}\) with \(C\) sufficiently large, we get
\[
\|\Psi_u(\bm a,\bm b,\bm c)\|,
\ \|\Psi_v(\bm a,\bm b,\bm c)\|,
\ \|\Psi_w(\bm a,\bm b,\bm c)\|
\le \frac{r}{\beta},
\]
hence \(\Psi(\mathcal B_r)\subset \mathcal B_r\).

\medskip
\noindent
\textbf{Step 7: Contraction property.}
Let \((\bm a,\bm b,\bm c),(\tilde{\bm a},\tilde{\bm b},\tilde{\bm c})\in\mathcal B_r\). Set
\[
\|(\bm a,\bm b,\bm c)\|_\ast:=\max\{\|\bm a\|,\|\bm b\|,\|\bm c\|\}.
\]
A direct mean-value estimate gives
\[
|\alpha_x-\tilde\alpha_x|+|\alpha_y-\tilde\alpha_y|+|\alpha_z-\tilde\alpha_z|
\le
C\frac{r}{\beta}\|(\bm a,\bm b,\bm c)-(\tilde{\bm a},\tilde{\bm b},\tilde{\bm c})\|_\ast.
\]
Using the bilinear structure of \(R_u,R_v,R_w\), together with
\eqref{eq:EL0_mat} and \eqref{eq:EL0_inj}, we obtain
\[
\|R_u(\bm b,\bm c)-R_u(\tilde{\bm b},\tilde{\bm c})\|
\le
2L_0\frac{rN^{1/4}}{\beta}
\|(\bm a,\bm b,\bm c)-(\tilde{\bm a},\tilde{\bm b},\tilde{\bm c})\|_\ast,
\]
and similarly for \(R_v,R_w\). Combining these bounds with
\eqref{eq:lambda_half}, we get
\[
\|\Psi(\bm a,\bm b,\bm c)-\Psi(\tilde{\bm a},\tilde{\bm b},\tilde{\bm c})\|_\ast
\le
\left(
\frac{C_1L_0}{\beta}
+
\frac{C_2L_0rN^{1/4}}{\beta^2}
\right)
\|(\bm a,\bm b,\bm c)-(\tilde{\bm a},\tilde{\bm b},\tilde{\bm c})\|_\ast.
\]
If \(\beta\ge C N^{1/4}\) with \(C\) sufficiently large, the factor in
parentheses is \(<1/2\). Hence \(\Psi\) is a contraction on \(\mathcal B_r\).

By Banach's fixed-point theorem, \(\Psi\) has a unique fixed point
\[
(\bm a_\ast,\bm b_\ast,\bm c_\ast)\in\mathcal B_r.
\]

\medskip
\noindent
\textbf{Step 8: Reconstruction of the stationary point.}
Define
\[
\bm u_\ast=\alpha_{x,\ast}\bm x+\bm a_\ast,\qquad
\bm v_\ast=\alpha_{y,\ast}\bm y+\bm b_\ast,\qquad
\bm w_\ast=\alpha_{z,\ast}\bm z+\bm c_\ast,
\]
and
\[
\lambda_\ast:=\Lambda(\bm a_\ast,\bm b_\ast,\bm c_\ast).
\]
The fixed-point equations imply the projected KKT system in the three orthogonal
complements \(\bm x^\perp,\bm y^\perp,\bm z^\perp\). The components along
\(\bm x,\bm y,\bm z\) follow from the scalar identity
\eqref{eq:lambda_scalar}. Therefore
\[
T(\bm v_\ast)\bm w_\ast=\lambda_\ast \bm u_\ast,\qquad
T(\bm u_\ast)\bm w_\ast=\lambda_\ast \bm v_\ast,\qquad
T(\bm v_\ast)^\top \bm u_\ast=\lambda_\ast \bm w_\ast.
\]

\medskip
\noindent
\textbf{Step 9: Quantitative estimates.}
Since \((\bm a_\ast,\bm b_\ast,\bm c_\ast)\in\mathcal B_r\),
\[
\|\bm a_\ast\|+\|\bm b_\ast\|+\|\bm c_\ast\|\le \frac{3r}{\beta},
\]
which proves \eqref{eq:order3_close}. From \eqref{eq:lambda_scalar},
\[
|\lambda_\ast-\beta|
\le
\beta|1-\alpha_{x,\ast}\alpha_{y,\ast}\alpha_{z,\ast}|
+\left|\frac1{\sqrt N}W(\bm u_\ast,\bm v_\ast,\bm w_\ast)\right|
\le C,
\]
which proves \eqref{eq:order3_lambda_close}. Finally,
\[
\langle \bm u_\ast,\bm x\rangle=\alpha_{x,\ast}
=\sqrt{1-\|\bm a_\ast\|^2}
\ge 1-\|\bm a_\ast\|^2 \ge 1-\frac{C}{\beta^2},
\]
and similarly for \(\bm v_\ast,\bm w_\ast\), proving \eqref{eq:order3_align}.

\medskip
\noindent
\textbf{Step 10: Outlier and regularity.}
By \eqref{eq:order3_lambda_close},
\[
\lambda_\ast=\beta+O(1).
\]
If \(K_\nu\) is a deterministic compact bulk set containing the spectrum of
\(\Phi_3(T,\bm u_\ast,\bm v_\ast,\bm w_\ast)\) with high probability, then \(K_\nu\) is
bounded uniformly in \(N\). Therefore, for sufficiently large \(\beta\),
\[
\mathrm{dist}(\lambda_\ast,K_\nu)\ge c\beta,
\]
for some \(c>0\). This proves \eqref{eq:order3_outlier}, and
\eqref{eq:order3_regular} follows immediately.
\end{proof}
\section{Proofs of Section~4} \label{appA}
\subsection{Derivative of tensor singular value and vectors}\label{appA.1}
The following proof comes from \cite{seddik2024whentrandomtensorsmeet}.
Deriving the identities in Eqs.~(\ref{eq3.2}) and (\ref{eq3.3}) with respect to the entry
$W_{ijk}$ of the tensor noise $\textbf{W}$, we obtain the following system:
\begin{equation}
\left\{
\begin{aligned}
\textbf{T}(\bm{w}) \frac{\partial \bm{v}}{\partial W_{ijk}}
+ \textbf{T}(\bm{v}) \frac{\partial \bm{w}}{\partial W_{ijk}}
+ \frac{1}{\sqrt{N}} v_j w_k \bm{e}_i^{\,m}
&= \frac{\partial \lambda}{\partial W_{ijk}} \bm{u}
+ \lambda \frac{\partial \bm{u}}{\partial W_{ijk}}, \\[4pt]
\textbf{T}(\bm{w})^{\top} \frac{\partial \bm{u}}{\partial W_{ijk}}
+ \textbf{T}(\bm{u}) \frac{\partial \bm{w}}{\partial W_{ijk}}
+ \frac{1}{\sqrt{N}} u_i w_k \bm{e}_j^{\,n}
&= \frac{\partial \lambda}{\partial W_{ijk}} \bm{v}
+ \lambda \frac{\partial \bm{v}}{\partial W_{ijk}}, \\[4pt]
\textbf{T}(\bm{v})^{\top} \frac{\partial \bm{u}}{\partial W_{ijk}}
+ \textbf{T}(\bm{u})^{\top} \frac{\partial \bm{v}}{\partial W_{ijk}}
+ \frac{1}{\sqrt{N}} u_i v_j \bm{e}_k^{\,p}
&= \frac{\partial \lambda}{\partial W_{ijk}} \bm{w}
+ \lambda \frac{\partial \bm{w}}{\partial W_{ijk}}, \\[4pt]
\frac{\partial \lambda}{\partial W_{ijk}}
= \textbf{T}\!\left(\frac{\partial \bm{u}}{\partial W_{ijk}},
\bm{v}, \bm{w}\right)
+ \textbf{T}\!\left(\bm{u},\frac{\partial \bm{v}}{\partial W_{ijk}}, \bm{w}\right)
&+ \textbf{T}\!\left(\bm{u}, \bm{v},
\frac{\partial \bm{w}}{\partial W_{ijk}}\right)
+ \frac{1}{\sqrt{N}} u_i v_j w_k .
\end{aligned}
\right.
\end{equation}

Writing
\[
\textbf{T}\!\left(\frac{\partial \bm{u}}{\partial W_{ijk}},
\bm{v}, \bm{w}\right)
=
\left(\frac{\partial \bm{u}}{\partial W_{ijk}}\right)^{\!\top}
\textbf{T}(\bm{v}) \bm{w},
\]
and using again the identities in Eq.~(11), we obtain
\[
\textbf{T}\!\left(\frac{\partial \bm{u}}{\partial W_{ijk}},
\bm{v}, \bm{w}\right)
=
\lambda \left(\frac{\partial \bm{u}}{\partial W_{ijk}}\right)^{\!\top}
\bm{u}.
\]
Proceeding similarly for
$\textbf{T}(\bm{u}, \frac{\partial \bm{v}}{\partial W_{ijk}}, \bm{w})$
and
$\textbf{T}(\bm{u}, \bm{v}, \frac{\partial \bm{w}}{\partial W_{ijk}})$,
we obtain
\begin{equation}
\frac{\partial \lambda}{\partial W_{ijk}}
=
\lambda \left(
\left(\frac{\partial \bm{u}}{\partial W_{ijk}}\right)^{\!\top} \bm{u}
+
\left(\frac{\partial \bm{v}}{\partial W_{ijk}}\right)^{\!\top} \bm{v}
+
\left(\frac{\partial \bm{w}}{\partial W_{ijk}}\right)^{\!\top} \bm{w}
\right)
+ \frac{1}{\sqrt{N}} u_i v_j w_k .
\end{equation}

Furthermore, since
$\bm{u}^{\top} \bm{u}
= \bm{v}^{\top} \bm{v}
= \bm{w}^{\top} \bm{w} = 1$,
we have
\[
\left(\frac{\partial \bm{u}}{\partial W_{ijk}}\right)^{\!\top} \bm{u}
=
\left(\frac{\partial \bm{v}}{\partial W_{ijk}}\right)^{\!\top} \bm{v}
=
\left(\frac{\partial \bm{w}}{\partial W_{ijk}}\right)^{\!\top} \bm{w}
= 0 .
\]

Thus, the derivative of $\lambda$ simplifies to
\begin{equation}
\frac{\partial \lambda}{\partial W_{ijk}}
= \frac{1}{\sqrt{N}} u_i v_j w_k .
\end{equation}

Hence, we find that
\begin{equation}
\lambda
\begin{bmatrix}
\dfrac{\partial \bm{u}}{\partial W_{ijk}} \\[4pt]
\dfrac{\partial \bm{v}}{\partial W_{ijk}} \\[4pt]
\dfrac{\partial \bm{w}}{\partial W_{ijk}}
\end{bmatrix}
=
\frac{1}{\sqrt{N}}
\begin{bmatrix}
v_j w_k (\bm{e}_i^{\,m} - u_i \bm{u}) \\[4pt]
u_i w_k (\bm{e}_j^{\,n} - v_j \bm{v}) \\[4pt]
u_i v_j (\bm{e}_k^{\,p} - w_k \bm{w})
\end{bmatrix}
+
\Phi_3(\textbf{T}, \bm{u}, \bm{v}, \bm{w})
\begin{bmatrix}
\dfrac{\partial \bm{u}}{\partial W_{ijk}} \\[4pt]
\dfrac{\partial \bm{v}}{\partial W_{ijk}} \\[4pt]
\dfrac{\partial \bm{w}}{\partial W_{ijk}}
\end{bmatrix}.
\end{equation}

\subsection{Proof of Theorem \ref{thm3.2}}\label{appA.2}
Firstly we assume \(z \in \mathbb C^+_{\eta_0}\)(see (\ref{ccc})).
Denote the matrix model as
\begin{equation}
    \bm N \equiv \frac{1}{\sqrt{N}}\Phi_3(\textbf{W}, \bm{u}, \bm{v}, \bm{w}),
\end{equation}
where we recall that $\textbf{W} \sim \textbf{T}_{m,n,p}(\bm W)$ and
\(
(\bm{u}, \bm{v}, \bm{w})
\in \mathbb{S}^{m-1} \times \mathbb{S}^{n-1} \times \mathbb{S}^{p-1}
\)
are independent of $\bm{W}$.
We further denote the resolvent matrix of $\bm{N}$ as
\[
\bm{Q}(z)
\equiv (\bm{N} - z \bm{I}_N)^{-1}
=
\begin{bmatrix}
\bm{Q}^{11}(z) & \bm{Q}^{12}(z) & \bm{Q}^{13}(z) \\
\bm{Q}^{12}(z)^{\top} & \bm{Q}^{22}(z) & \bm{Q}^{23}(z) \\
\bm{Q}^{13}(z)^{\top} & \bm{Q}^{23}(z)^{\top} & \bm{Q}^{33}(z)
\end{bmatrix}.
\]

In order to characterize the limiting Stieltjes transform $g(z)$ of $\bm{N}$,
we need to estimate the quantity
\[
\frac{1}{N} \operatorname{tr} \bm{Q}(z)
\xrightarrow{\text{a.s.}} g(z).
\]
We further introduce the following limits:
\[
\frac{1}{N} \operatorname{tr} \bm{Q}^{11}(z)
\xrightarrow{\text{a.s.}} g_1(z),
\qquad
\frac{1}{N} \operatorname{tr} \bm{Q}^{22}(z)
\xrightarrow{\text{a.s.}} g_2(z),
\qquad
\frac{1}{N} \operatorname{tr} \bm{Q}^{33}(z)
\xrightarrow{\text{a.s.}} g_3(z).
\]

From the identity in Eq.~(7), we have
\[
\bm{N}\bm{Q}(z) - z \bm{Q}(z) = \bm{I}_N,
\]
from which we particularly obtain
\[
\frac{1}{\sqrt{N}}
\bigl[\textbf{W}(\bm{w}) \bm{Q}^{12}(z)^{\top}\bigr]_{ii}
+
\frac{1}{\sqrt{N}}
\bigl[\textbf{W}(\bm{v}) \bm{Q}^{13}(z)^{\top}\bigr]_{ii}
-
z\, \bm{Q}^{11}_{ii}(z)
= 1,
\]
or equivalently,
\begin{equation}
\frac{1}{N\sqrt{N}}
\sum_{i=1}^{m}
\bigl[\textbf{W}(\bm{w}) \bm{Q}^{12}(z)^{\top}\bigr]_{ii}
+
\frac{1}{N\sqrt{N}}
\sum_{i=1}^{m}
\bigl[\textbf{W}(\bm{v}) \bm{Q}^{13}(z)^{\top}\bigr]_{ii}
-
\frac{z}{N} \operatorname{tr} \bm{Q}^{11}(z)
=
\frac{m}{N}.
\label{eqA.6}
\end{equation}

We thus need to compute the expectations of
\[
\frac{1}{N\sqrt{N}}
\sum_{i=1}^{m}
\bigl[\textbf{W}(\bm{w}) \bm{Q}^{12}(z)^{\top}\bigr]_{ii},
\qquad
\frac{1}{N\sqrt{N}}
\sum_{i=1}^{m}
\bigl[\textbf{W}(\bm{v}) \bm{Q}^{13}(z)^{\top}\bigr]_{ii}.
\]

We first rewrite the quantity of interest as an explicit sum over the tensor
entries. Note that
\[
\bigl[\textbf{W}(\bm{w}) \bm{Q}^{12}(z)^{\top}\bigr]_{ii}
=
\sum_{j=1}^{n}\sum_{k=1}^{p}
W_{ijk}\, w_k \, \bm{Q}^{12}_{ij}(z).
\]
Therefore,
\begin{align}
A_1(z)
&:= \frac{1}{N\sqrt{N}}
\sum_{i=1}^{m}
\bigl[\textbf{W}(\bm{w}) \bm{Q}^{12}(z)^{\top}\bigr]_{ii} \notag \\
&= \frac{1}{N\sqrt{N}}
\sum_{i=1}^{m}\sum_{j=1}^{n}\sum_{k=1}^{p}
w_k\, W_{ijk}\, \bm{Q}^{12}_{ij}(z).
\end{align}
Taking expectation yields
\begin{equation}
\mathbb{E} A_1(z)
=
\frac{1}{N\sqrt{N}}
\sum_{i,j,k}
w_k\, \mathbb{E}\!\left[
W_{ijk}\, \bm{Q}^{12}_{ij}(z)
\right].
\end{equation}

Fix indices $(i,j,k)$ and denote $\xi := W_{ijk}$. Let
\[
g(\xi) := \bm{Q}^{12}_{ij}(z),
\]
where all other tensor entries are kept fixed. Applying the cumulant expansion,
we obtain
\begin{equation}
\mathbb{E}\!\left[
W_{ijk}\, \bm{Q}^{12}_{ij}(z)
\right]
=
\mathbb{E}\!\left[
\frac{\partial}{\partial W_{ijk}}
\bm{Q}^{12}_{ij}(z)
\right]
+ \varepsilon_{ijk}^{(2)}(z),
\end{equation}
where \(\varepsilon_{ijk}^{(2)}(z) \leq C_1 \, \sup_{z \in \mathbb C^+_{\eta_0}} |g^{(2)}(z)| \, \mathbb{E}[|W_{ijk}|^{3}] = C_1^* \sup_{z \in \mathbb C^+_{\eta_0}} |\frac{\partial^2}{(\partial W_{ijk})^2}\bm{Q}_{ij}^{12}(z)|\).

Firstly we consider the first derivation. Using the resolvent derivative identity,
\[
\frac{\partial \bm{Q}(z)}{\partial W_{ijk}}
=
-\bm{Q}(z)\Bigl(\frac{\partial \bm{N}}{\partial W_{ijk}}\Bigr)\bm{Q}(z),
\]
and the block structure of $\bm{N}$, we have
\[
\Bigl(\frac{\partial \bm{N}}{\partial W_{ijk}}\Bigr)^{12}
=
\frac{1}{\sqrt{N}}\, w_k\, \bm{e}^{\,m}_i(\bm{e}^{\,n}_j)^{\top},
\qquad
\Bigl(\frac{\partial \bm{N}}{\partial W_{ijk}}\Bigr)^{21}
=
\frac{1}{\sqrt{N}}\, w_k\, \bm{e}^{\,n}_j(\bm{e}^{\,m}_i)^{\top},
\]
while the remaining nonzero blocks correspond to $(1,3)$ and $(2,3)$ and will be
grouped into a remainder term.

Consequently,
\begin{align}
\frac{\partial \bm{Q}^{12}_{ij}}{\partial W_{ijk}}
=
-\frac{1}{\sqrt{N}}
\Big[
u_i\!\left( Q^{12}_{ij}Q^{23}_{jk}
+ Q^{13}_{ik}Q^{22}_{jj} \right)
+ v_j\!\left( Q^{11}_{ii}Q^{23}_{jk}
+ Q^{13}_{ik}Q^{12}_{ij} \right)
+ w_k\!\left( Q^{11}_{ii}Q^{22}_{jj}
+ Q^{12}_{ij}Q^{12}_{ij} \right)
\Big].
\label{eq:dQ12}
\end{align}

So we have
\[
\begin{aligned}
&\mathbb{E}\!\left[\frac{\partial}{\partial W_{ijk}}\bm{Q}^{12}_{ij}(z)\right]\\
=&
-\frac{1}{N^2}
\sum_{i,j,k}
w_k\,
\mathbb{E}\!\Big[
u_i\!\left(
Q^{12}_{ij}Q^{23}_{jk}
+ Q^{13}_{ik}Q^{22}_{jj}
\right)
+ v_j\!\left(
Q^{11}_{ii}Q^{23}_{jk}
+ Q^{13}_{ik}Q^{12}_{ij}
\right)
+ w_k\!\left(
Q^{11}_{ii}Q^{22}_{jj}
+ Q^{12}_{ij}Q^{12}_{ij}
\right)
\Big],\\
=&
-\frac{1}{N^2}\,
\mathbb{E}\!\Big[
\bm{u}^{\top}\bm{Q}^{12}\bm{Q}^{23}\bm{w}
+ \bm{u}^{\top}\bm{Q}^{13}\bm{w}\,\operatorname{tr}\bm{Q}^{22}
+ \operatorname{tr}\bm{Q}^{11}\,\bm{v}^{\top}\bm{Q}^{23}\bm{w}
+ \bm{v}^{\top}(\bm{Q}^{12})^{\top}\bm{Q}^{13}\bm{w}\\
&+ \operatorname{tr}\bm{Q}^{11}\operatorname{tr}\bm{Q}^{22}
+ \operatorname{tr}\!\bigl(\bm{Q}^{12}(\bm{Q}^{12})^{\top}\bigr)
\Big].
\end{aligned}
\]

Now, note that the vectors $\bm{u}, \bm{v}, \bm{w}$ are of bounded norms and assume
that $\bm{Q}(z)$ has bounded spectral norm since \(\|\bm{Q}(z)\| \le \frac{1}{dist(z,  \mathcal{S}(\bm{N}))}< \eta_0\).
Under Assumption~\ref{assumption3.2} (as $N\to\infty$), the terms
\[
\frac{1}{N^2}\,\bm{u}^{\top}\bm{Q}^{12}\bm{Q}^{23}\bm{w},\quad
\frac{1}{N^2}\,\bm{u}^{\top}\bm{Q}^{13}\bm{w}\,\operatorname{tr}\bm{Q}^{22},\quad
\frac{1}{N^2}\,\operatorname{tr}\bm{Q}^{11}\,\bm{v}^{\top}\bm{Q}^{23}\bm{w},
\]
\[
\frac{1}{N^2}\,\bm{v}^{\top}(\bm{Q}^{12})^{\top}\bm{Q}^{13}\bm{w},
\quad\text{and}\quad
\frac{1}{N^2}\,\operatorname{tr}\!\bigl(\bm{Q}^{12}(\bm{Q}^{12})^{\top}\bigr)
\]
all vanish almost surely. As a result, we obtain that
\[
\frac{1}{N\sqrt{N}}\mathbb{E}\!\left[
\frac{\partial}{\partial W_{ijk}}
\bm{Q}^{12}_{ij}(z)\right] = \mathbb{E}[-\frac{1}{N}\tr\bm{Q}^{11}(z)\frac{1}{N}\tr\bm{Q}^{22}(z)] + \mathcal{O}(N^{-1}).
\]
We prove directly that for each fixed block index $i\in\{1,2,3\}$ and each
$z\in\mathbb C^+$ with $\eta:=\Im z\ge \eta_0>0$,
\begin{equation}\label{eq:Var_goal_original}
\Var\!\left(\frac1N\Tr \bm Q^{ii}(z)\right)\ \lesssim\ \frac{1}{N\,\eta^4}.
\end{equation}

%------------------------------------------------------------
%\paragraph{Variance bound for $\frac1N\Tr \bm Q^{ii}(z)$.}
\begin{lemma} [Variance bound for $\frac1N\Tr \bm Q^{ii}(z)$] 
\label{lem:variance-bound}
Let
\[
f(\textbf{W}):=\frac1N\Tr \bm Q^{ii}(z),\qquad 
\bm Q(z)=(\bm N-zI)^{-1},
\]
where
\[
\bm N=\frac1{\sqrt N}\Phi_3(\textbf{W},u,v,w),
\]
and assume that the entries $\{W_{abc}\}$ are i.i.d.\ with
$\mathbb E W_{abc}=0$ , $\mathbb E W_{abc}^2=1$ and \(\mathbb E |W_{abc}|^{4} < \infty\).
We have that for any fixed $i\in\{1,2,3\}$ and $z\in\mathbb C^+$ with
$\eta:=\Im z\ge\eta_0>0$,
\begin{equation}\label{eq:Var_goal}
\Var\!\left(\frac1N\Tr \bm Q^{ii}(z)\right)\ \lesssim\ \frac{1}{N\eta^4}.
\end{equation}
\end{lemma}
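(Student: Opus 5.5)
We will use the Efron--Stein inequality in its resampling form, which needs only independence of the entries and therefore suits the finite-fourth-moment setting. Here $(\bm u,\bm v,\bm w)$ are deterministic unit vectors independent of $\textbf{W}$, as in the proof of Theorem~\ref{thm3.2}. Let $\textbf{W}^{(abc)}$ be $\textbf{W}$ with the single entry $W_{abc}$ replaced by an independent copy $W'_{abc}$. Then
\[
\Var f(\textbf{W})\le \frac12\sum_{a,b,c}\mathbb E\bigl|f(\textbf{W})-f(\textbf{W}^{(abc)})\bigr|^2 .
\]

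\textbf{Effect of resampling one entry.} Since $\Phi_3$ is linear in $\textbf{W}$, changing $W_{abc}$ perturbs $\bm N$ by
\[
\bm\Delta=\frac{W_{abc}-W'_{abc}}{\sqrt N}\,\bm E_{abc},
\]
where $\bm E_{abc}$ is the symmetric matrix with entries $w_c$ at positions $(a,b)$ in block $(1,2)$, $v_b$ at $(a,c)$ in block $(1,3)$, $u_a$ at $(b,c)$ in block $(2,3)$, and their transposes. Thus $\bm E_{abc}=\sum_{\ell=1}^{3} s_\ell(\bm p_\ell\bm q_\ell^\top+\bm q_\ell\bm p_\ell^\top)$ has rank at most $6$. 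Its scalar weights are $s_1=w_c$, $s_2=v_b$, $s_3=u_a$, and the $\bm p_\ell,\bm q_\ell$ are canonical basis vectors of $\mathbb R^N$.

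Let $\bm Q'$ denote the resolvent after resampling. The second resolvent identity gives $\bm Q-\bm Q'=-\bm Q\bm\Delta\bm Q'$, so
\[
\bigl|f-f'\bigr|=\frac1N\bigl|\Tr\bigl(\bm P_i\bm Q\bm\Delta\bm Q'\bigr)\bigr| ,
\]
where $\bm P_i$ is the projection onto block $i$. Expanding $\bm\Delta$ into its six rank-one pieces, each piece contributes a term of the form $\bm q^\top\bm Q'\bm P_i\bm Q\bm p$. Each such term is bounded in modulus by $\|\bm Q'\|\,\|\bm Q\|\le\eta^{-2}$. This yields the pointwise bound
\[
|f-f'|\le \frac{C}{N^{3/2}\eta^2}\,|W_{abc}-W'_{abc}|\,\bigl(|u_a|+|v_b|+|w_c|\bigr).
\]

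\textbf{Summation.} Squaring, taking expectations with $\mathbb E|W_{abc}-W'_{abc}|^2=2$, and summing over $(a,b,c)$ gives
\[
\Var f\le\frac{C}{N^3\eta^4}\sum_{a,b,c}\bigl(u_a^2+v_b^2+w_c^2\bigr).
\]
Since $\|\bm u\|=\|\bm v\|=\|\bm w\|=1$, the sum equals $np+mp+mn\le N^2$. Hence $\Var f\lesssim \frac{1}{N\eta^4}$, which is \eqref{eq:Var_goal}.

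\textbf{Main obstacle.} The only delicate point is that the weight of each rank-one piece of $\bm E_{abc}$ is a single vector coordinate rather than $O(1)$. Keeping these weights and summing their squares is exactly what turns the crude count $N^3\cdot N^{-3}$ into the $N^2\cdot N^{-3}$ that gives the $1/N$ rate. No moment beyond the second is needed for this step. A cruder bound through $\|\bm\Delta\|$, which involves $\max(|u_a|,|v_b|,|w_c|)\le1$, would lose this factor and yield only an $O(1)$ bound.

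If almost-sure convergence is needed later, this $1/N$ variance bound is not summable by itself. In that case I would instead bound the fourth central moment via a Burkholder/martingale-difference version of the same argument, using $\mathbb E|W|^4<\infty$ to get an $N^{-2}$ rate, and conclude with Borel--Cantelli.
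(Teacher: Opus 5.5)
Your proposal is correct and follows essentially the same route as the paper. Both use the Efron--Stein inequality with single-entry resampling and the resolvent identity for the rank-$\le 6$ perturbation. Both obtain an $\eta^{-2}$ bound per rank-one piece that keeps the coordinate weights $|u_a|,|v_b|,|w_c|$, and then sum using $\|\bm u\|=\|\bm v\|=\|\bm w\|=1$. The only cosmetic difference is that you bound each term $\bm q^\top\bm Q'\bm P_i\bm Q\bm p$ directly by $\|\bm Q'\|\,\|\bm Q\|$. The paper instead writes it as $\sum_p Q^{i1}_{pa}Q^{(\alpha)2i}_{bp}$ and applies Cauchy--Schwarz with the Ward identity; both give the same $\eta^{-2}$.
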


\begin{proof} 
The proof includes four steps as follows. 

%\medskip
\noindent\textbf{Step 1: Efron--Stein inequality.}
For each $\alpha=(a,b,c)$, let $\bm W^{(\alpha)}$ be obtained from $\bm W$ by
replacing $W_{abc}$ with an independent copy $W'_{abc}$.
Denote by $\bm N^{(\alpha)}$ and $\bm Q^{(\alpha)}(z)$ the corresponding matrices,
and set
\[
f^{(\alpha)}:=\frac1N\Tr (\bm Q^{(\alpha)})^{ii}(z).
\]
It follows from the Efron--Stein inequality that
\begin{equation}\label{eq:ES}
\Var(f)\ \le\ \frac12\sum_{\alpha}\mathbb E\bigl[(f-f^{(\alpha)})^2\bigr].
\end{equation}

%\medskip
\noindent\textbf{Step 2: Single-entry replacement.}
Let $\Delta_\alpha:=W_{abc}-W'_{abc}$. Since $\Phi_3$ is linear in $\textbf{W}$,
\[
\bm N-\bm N^{(\alpha)}=\frac{\Delta_\alpha}{\sqrt N}\bm M_\alpha,
\]
where
\[
\bm M_\alpha
=
w_c\,(E^{12}_{ab}+E^{21}_{ba})
+
v_b\,(E^{13}_{ac}+E^{31}_{ca})
+
u_a\,(E^{23}_{bc}+E^{32}_{cb}).
\]
Here $E^{rs}_{pq}$ denotes the elementary matrix supported on block $(r,s)$ at
position $(p,q)$.
The resolvent identity gives the following: 
\[
\bm Q-\bm Q^{(\alpha)}
=
-\frac{\Delta_\alpha}{\sqrt N}\bm Q\bm M_\alpha\bm Q^{(\alpha)},
\]
hence
\begin{equation}\label{eq:fdiff}
f-f^{(\alpha)}
=
-\frac{\Delta_\alpha}{N\sqrt N}
\Tr\big((\bm Q\bm M_\alpha\bm Q^{(\alpha)})^{ii}\big).
\end{equation}

%\medskip
\noindent\textbf{Step 3: Expansion of the trace and local Ward bounds.}
We treat each term in $\bm M_\alpha$ separately. Consider first the contribution
of $w_c E^{12}_{ab}$. By block multiplication, we have
\[
(\bm Q E^{12}_{ab}\bm Q^{(\alpha)})^{ii}
=
\bm Q^{i1}E^{12}_{ab}\bm Q^{(\alpha)2i},
\]
and therefore
\[
\Tr\big((\bm Q E^{12}_{ab}\bm Q^{(\alpha)})^{ii}\big)
=
\sum_{p}\bm Q^{i1}_{pa}\,\bm Q^{(\alpha)2i}_{bp}.
\]
Applying the Cauchy--Schwarz inequality, we have
\[
\Big|\sum_{p}\bm Q^{i1}_{pa}\bm Q^{(\alpha)2i}_{bp}\Big|
\le
\Big(\sum_{p}|\bm Q^{i1}_{pa}|^2\Big)^{1/2}
\Big(\sum_{p}|\bm Q^{(\alpha)2i}_{bp}|^2\Big)^{1/2}.
\]
Using the local Ward identity, we have
\[
\sum_{p}|\bm Q^{i1}_{pa}|^2\le \sum_p|\bm Q_{pa}|^2
=\frac1\eta\,\Im \bm Q_{aa}\le \frac1{\eta^2},
\]
and similarly $\sum_p|\bm Q^{(\alpha)2i}_{bp}|^2\le \eta^{-2}$, we obtain
\[
\Big|\Tr\big((\bm Q E^{12}_{ab}\bm Q^{(\alpha)})^{ii}\big)\Big|
\ \lesssim\ \frac1{\eta^2}.
\]
Multiplying by $|w_c|$ yields
\[
\Big|\Tr\big((\bm Q\,w_c E^{12}_{ab}\bm Q^{(\alpha)})^{ii}\big)\Big|
\ \lesssim\ \frac{|w_c|}{\eta^2}.
\]
The remaining terms in $\bm M_\alpha$ are treated in the same way, leading to
\begin{equation}\label{eq:trace_M_bound}
\Big|\Tr\big((\bm Q\bm M_\alpha\bm Q^{(\alpha)})^{ii}\big)\Big|
\ \lesssim\ 
\frac{1}{\eta^2}\big(|u_a|+|v_b|+|w_c|\big).
\end{equation}

%\medskip
\noindent\textbf{Step 4: Squaring and summation over $\alpha$.}
Combining (\ref{eq:fdiff}) and (\ref{eq:trace_M_bound}), we obtain 
\[
|f-f^{(\alpha)}|
\ \lesssim\
\frac{|\Delta_\alpha|}{N\sqrt N}\cdot
\frac{1}{\eta^2}\big(|u_a|+|v_b|+|w_c|\big).
\]
Squaring and taking expectations, using $\mathbb E|\Delta_\alpha|^2\lesssim1$, we have
\[
\mathbb E[(f-f^{(\alpha)})^2]
\ \lesssim\
\frac{1}{N^3\eta^4}
\big(|u_a|^2+|v_b|^2+|w_c|^2\big).
\]
Summing over all $\alpha=(a,b,c)$, we have
\[
\sum_{\alpha}\mathbb E[(f-f^{(\alpha)})^2]
\ \lesssim\
\frac{1}{N^3\eta^4}
\Big(
\sum_{a,b,c}|u_a|^2
+
\sum_{a,b,c}|v_b|^2
+
\sum_{a,b,c}|w_c|^2
\Big).
\]
Since $\sum_a|u_a|^2=\sum_b|v_b|^2=\sum_c|w_c|^2=1$, the right-hand side equals
$O(N^{-1}\eta^{-4})$.
Applying (\ref{eq:ES}) proves (\ref{eq:Var_goal}).
\end{proof}

Then we have
\begin{equation}
    \frac{1}{N\sqrt{N}}\mathbb{E}\!\left[
\frac{\partial}{\partial W_{ijk}}
\bm{Q}^{12}_{ij}(z)\right] = \mathbb{E}[-\frac{1}{N}\tr\bm{Q}^{11}(z)\frac{1}{N}\tr\bm{Q}^{22}(z)] + \mathcal{O}(N^{-1}) \xrightarrow{a.s.} -g_1(z)g_2(z).
\end{equation}
Now we consider  \(\varepsilon_{ijk}^{(2)}(z)\). Our aim is to bound \( \frac{1}{N\sqrt{N}}\sum_{ijk}\sup_{z \in \mathbb C^+_{\eta_0}} |\frac{\partial^2}{(\partial W_{ijk})^2}\bm{Q}_{ij}^{12}(z)|\).

Recall that $\bm Q(z)=(\bm N-zI)^{-1}$ and denote
\[
\bm A_{ijk}:=\frac{\partial \bm N}{\partial W_{ijk}} .
\]
Since $\bm N$ depends linearly on each $W_{ijk}$, we have
\(
\partial^2 \bm N/(\partial W_{ijk})^2=0
\).
By the resolvent identity,
\[
\partial_{W_{ijk}} \bm Q(z)
=
- \bm Q(z) \bm A_{ijk} \bm Q(z),
\]
and differentiating once more yields
\[
\partial_{W_{ijk}}^2 \bm Q(z)
=
2\,\bm Q(z) \bm A_{ijk} \bm Q(z) \bm A_{ijk} \bm Q(z).
\]
Hence, for any $z\in\mathbb C^+_{\eta_0}$,
\[
\begin{aligned}
\left|
\partial_{W_{ijk}}^2 Q^{12}_{ij}(z)
\right|
&\le
2\,
\bigl\|
\bm Q(z) \bm A_{ijk} \bm Q(z) \bm A_{ijk} \bm Q(z)
\bigr\|_2 \\
&\le
2\,\|\bm Q(z)\|_2^3\,\|\bm A_{ijk}\|_2^2 .
\end{aligned}
\]
Since $\|\bm Q(z)\|_2\le\eta_0^{-1}$ for $z\in\mathbb C^+_{\eta_0}$, it follows that
\[
\sup_{z\in\mathbb C^+_{\eta_0}}
\left|
\partial_{W_{ijk}}^2 Q^{12}_{ij}(z)
\right|
\le
2\,\eta_0^{-3}\,\|\bm A_{ijk}\|_2^2 .
\]

Moreover, by the structure of $\Phi_3(W,u,v,w)$ and the normalization of the noise,
\[
\|\bm A_{ijk}\|_2
=
\left\|
\frac{\partial N}{\partial W_{ijk}}
\right\|_2
\le
\frac{C}{\sqrt N}
\bigl(|u_i|+|v_j|+|w_k|\bigr),
\]
so that
\[
\|\bm A_{ijk}\|_2^2
\le
\frac{C}{N}
\left(u_i^2+v_j^2+w_k^2\right).
\]
Summing over $(i,j,k)$ and using $\|u\|_2=\|v\|_2=\|w\|_2=1$, we obtain
\[
\sum_{i,j,k}\|\bm A_{ijk}\|_2^2
\le
\frac{C}{N}\bigl(np+mp+mn\bigr)
=
\mathcal{O}(N),
\]
where we used $m+n+p=N$.

Combining the above estimates, we conclude that
\[
\begin{aligned}
\frac{1}{N\sqrt N}
\sum_{i,j,k}
\sup_{z\in\mathbb C^+_{\eta_0}}
\left|
\frac{\partial^2}{(\partial W_{ijk})^2}Q^{12}_{ij}(z)
\right|
&\le
\frac{2\eta_0^{-3}}{N\sqrt N}
\sum_{i,j,k}\|\bm A_{ijk}\|_2^2 \\
&\le
C\,\eta_0^{-3}\,N^{-1/2}
\;\xrightarrow{N\to\infty}\;0 .
\end{aligned}
\]

Finally we have
\begin{equation}
    \frac{1}{N\sqrt{N}}
\sum_{i,j,k}
w_k\, \mathbb{E}\!\left[
W_{ijk}\, \bm{Q}^{12}_{ij}(z)
\right]\xrightarrow{a.s.}-g_1(z)g_2(z).
\end{equation}
It's similar to get
\begin{equation}
    \frac{1}{N\sqrt{N}}
\sum_{i,j,k}
v_j\, \mathbb{E}\!\left[
W_{ijk}\, \bm{Q}^{13}_{ij}(z)
\right]\xrightarrow{a.s.}-g_1(z)g_3(z).
\end{equation}
From Eq.~(\ref{eqA.6}), taking the expectation and taking the limit on both sides of the equation, we have
\begin{equation}
    -g_1(z)g_2(z)-g_1(z)g_3(z) -zg_1(z) = c_1.
\end{equation}
Similarly, we have
\[
\begin{cases}
-\,g_1(z)\bigl(g_2(z)+g_3(z)\bigr) - z\,g_1(z) = c_1,\\
-\,g_2(z)\bigl(g_1(z)+g_3(z)\bigr) - z\,g_2(z) = c_2,\\
-\,g_3(z)\bigl(g_1(z)+g_2(z)\bigr) - z\,g_3(z) = c_3,
\end{cases}
\]
which yields
\[
g_i(z)
=
\frac{g(z)+z}{2}
-
\frac{\sqrt{4c_i+\bigl(g(z)+z\bigr)^2}}{2},
\]
with $g(z)$ solution of the equation
\[
g(z)=\sum_{i=1}^3 g_i(z),
\]
satisfying \(z \in \mathbb C^+_{\eta_0}\).

The system admits a unique solution $(g_1(z),g_2(z),g_3(z))\in(\mathbb C^+)^3$
for every $z\in\mathbb C^+$, which is the analytic continuation of the solution
constructed on $\mathbb C^+_{\eta_0}$. In particular, the representation
\[
g_i(z)=\frac{g(z)+z-\sqrt{(g(z)+z)^2+4c_i}}{2}
\]
holds for all $z\in\mathbb C^+$, where the square root is taken with the branch
that ensures $\Im g_i(z)>0$ for $\Im z>0$.
\subsection{Proof of Corollary \ref{cor3.1}}\label{appA.3}
Given the result of Theorem~\ref{thm3.2}, setting $c_1=c_2=c_3=\frac13$, we have for all
$i\in[3]$
\[
g_i(z)
=
\frac{g(z)+z}{2}
-
\frac{\sqrt{\tfrac{4}{3}+\bigl(g(z)+z\bigr)^2}}{2},
\]
where $g(z)$ satisfies $g(z)=\sum_{i=1}^3 g_i(z)$. Thus $g(z)$ is the solution to
\[
z+\frac{g(z)+z}{2}
-
\frac{3\sqrt{\tfrac{4}{3}+\bigl(g(z)+z\bigr)^2}}{2}
=0.
\]
Solving in $g(z)$ yields
\[
g(z)\in
\left\{
-\frac{3z}{4}-\frac{\sqrt{3}\sqrt{3z^2-8}}{4},
\;
-\frac{3z}{4}+\frac{\sqrt{3}\sqrt{3z^2-8}}{4}
\right\},
\]
and the limiting Stieltjes transform (with $\Im g(z)>0$ for $z$

with $\Im z>0$
 is therefore
\[
g(z)
=
-\frac{3z}{4}
+
\frac{\sqrt{3}\sqrt{3z^2-8}}{4}.
\]
\subsection{Proof of Theorem~\ref{thm3.3}}\label{appA.4}

Similarly we assume \(z \in \mathbb C^+_{\eta_0}\). Given the random tensor model in Eq.~(\ref{eq3.1}) and its singular vectors characterized
by Eq.~(\ref{eq3.2}), we denote the associated random matrix model as
\[
\bm T \equiv \Phi_3(\textbf{T},\bm u,\bm v,\bm w)
= \beta \bm V \bm B \bm V^{\top} + \bm N,
\]
where
\[
\bm N
=
\frac{1}{\sqrt N}\,\Phi_3(\textbf{W},\bm u,\bm v,\bm w),
\qquad
\bm B
\equiv
\begin{bmatrix}
0 & \langle \bm z,\bm w\rangle & \langle \bm y,\bm v\rangle \\
\langle \bm z,\bm w\rangle & 0 & \langle \bm x,\bm u\rangle \\
\langle \bm y,\bm v\rangle & \langle \bm x,\bm u\rangle & 0
\end{bmatrix}
\in \mathbb M_3,
\]
and
\[
\bm V
\equiv
\begin{bmatrix}
\bm x & \bm 0_m & \bm 0_m \\
\bm 0_n & \bm y & \bm 0_n \\
\bm 0_p & \bm 0_p & \bm z
\end{bmatrix}
\in \mathbb M_{N,3}.
\]

We further denote the resolvents of $\textbf{T}$ and $\bm N$ respectively as
\[
\bm R(z)
=
(\textbf{T} - z \bm I_N)^{-1}
=
\begin{bmatrix}
\bm R^{11}(z) & \bm R^{12}(z) & \bm R^{13}(z) \\
\bm R^{12}(z)^{\top} & \bm R^{22}(z) & \bm R^{23}(z) \\
\bm R^{13}(z)^{\top} & \bm R^{23}(z)^{\top} & \bm R^{33}(z)
\end{bmatrix},
\]
and
\[
\bm Q(z)
=
(\bm N - z \bm I_N)^{-1}
=
\begin{bmatrix}
\bm Q^{11}(z) & \bm Q^{12}(z) & \bm Q^{13}(z) \\
\bm Q^{12}(z)^{\top} & \bm Q^{22}(z) & \bm Q^{23}(z) \\
\bm Q^{13}(z)^{\top} & \bm Q^{23}(z)^{\top} & \bm Q^{33}(z)
\end{bmatrix}.
\]

By the Woodbury matrix identity, we have
\begin{equation}\label{eqA.24}
\bm R(z)
=
\bm Q(z)
-
\bm Q(z)\bm V
\left(
\frac{1}{\beta}\bm B^{-1}
+
\bm V^{\top}\bm Q(z)\bm V
\right)^{-1}
\bm V^{\top}\bm Q(z).
\end{equation}

In particular, taking the normalized trace operator, we obtain
\[
\frac{1}{N}\operatorname{tr}\bm R(z)
=
\frac{1}{N}\operatorname{tr}\bm Q(z)
-
\frac{1}{N}
\operatorname{tr}\!\left[
\left(
\frac{1}{\beta}\bm B^{-1}
+
\bm V^{\top}\bm Q(z)\bm V
\right)^{-1}
\bm V^{\top}\bm Q^2(z)\bm V
\right],
\]
and hence
\[
\frac{1}{N}\operatorname{tr}\bm R(z)
=
\frac{1}{N}\operatorname{tr}\bm Q(z)
+
\mathcal O(N^{-1}),
\]
since the matrix
\(
\left(
\frac{1}{\beta}\bm B^{-1}
+
\bm V^{\top}\bm Q(z)\bm V
\right)^{-1}
\bm V^{\top}\bm Q^2(z)\bm V
\)
has bounded spectral norm since $\|\bm Q(z)\|$ is bounded.

As such, the asymptotic spectral measure of $\textbf{T}$ is the same as the one
of $\bm N$, which can be estimated through $\frac{1}{N}\operatorname{tr}\bm Q(z)$.
Comparing to the result from Appendix~\ref{appA.2}, now the singular vectors
$\bm u,\bm v,\bm w$ depend statistically on the tensor noise
$\bm W$, which needs to be handled.

From \ref{appA.2}, we have
\(
\bm N\bm Q(z)-z\bm Q(z)=\bm I_N,
\)
from which
\begin{equation}\label{eq:A.25}
\frac{1}{N\sqrt N}\sum_{i=1}^m\bigl[\textbf{W}(\bm w)\bm Q^{12}(z)^{\top}\bigr]_{ii}
+
\frac{1}{N\sqrt N}\sum_{i=1}^m\bigl[\textbf{W}(\bm v)\bm Q^{13}(z)^{\top}\bigr]_{ii}
-
\frac{z}{N}\operatorname{tr}\bm Q^{11}(z)
=
\frac{m}{N}.
\end{equation}

We thus need to compute the expectations of
\(
\frac{1}{N\sqrt N}\sum_{i=1}^m[\textbf{W}(\bm w)\bm Q^{12}(z)^{\top}]_{ii}
\)
and
\(
\frac{1}{N\sqrt N}\sum_{i=1}^m[\textbf{W}(\bm v)\bm Q^{13}(z)^{\top}]_{ii}.
\)
In particular,
\[
A
\equiv
\frac{1}{N\sqrt N}\sum_{i=1}^m
\mathbb E\!\left[\textbf{W}(\bm w)\bm Q^{12}(z)^{\top}\right]_{ii}
=
\frac{1}{N\sqrt N}\sum_{ijk}\mathbb E\!\left[W_{ijk}w_kQ^{12}_{ij}\right].
\]
From lemma~\ref{lem2.3} we have
\begin{equation}
\frac{1}{N\sqrt N}\mathbb{E}\!\left[
W_{ijk}\, w_k\bm{Q}^{12}_{ij}(z)
\right]
=
\frac{1}{N\sqrt N}\mathbb{E}\!\left[
\frac{\partial}{\partial W_{ijk}}
(w_k\bm{Q}^{12}_{ij}(z))
\right]
+ \frac{1}{N\sqrt N}\varepsilon_{ijk}^{(2)}(z),
\label{eqA.26}
\end{equation}
where \(\varepsilon_{ijk}^{(2)}(z) \leq C_1 \, \sup_{z \in \mathbb C^+_{\eta_0}} |g^{(2)}(z)| \, \mathbb{E}[|W_{ijk}|^{3}] = C_1^* \sup_{z \in \mathbb C^+_{\eta_0}} |\frac{\partial^2}{(\partial W_{ijk})^2}(w_k\bm{Q}_{ij}^{12}(z))|\).

Firstly we consider the first derivative. Due to the statistical dependency between $\bm w$ and $\textbf{W}$,
the above sum decomposes into two terms:
\[
A
=
\frac{1}{N\sqrt N}\sum_{ijk}
\mathbb E\!\left[w_k\frac{\partial Q^{12}_{ij}}{\partial W_{ijk}}\right]
+
\frac{1}{N\sqrt N}\sum_{ijk}
\mathbb E\!\left[\frac{\partial w_k}{\partial W_{ijk}}Q^{12}_{ij}\right]
\equiv A_1+A_2.
\]

The first term $A_1$ has already been handled in the previous subsection
(replacing $\bm c$ with $\bm w$).
We now show that the second term $A_2$ is asymptotically vanishing under
Assumption~\ref{assumption3.2}.
Indeed, by Eq.~(\ref{eq3.4}), we have
\begin{equation}
\begin{aligned}
\frac{\partial w_k}{\partial W_{ijk}}
=&
-\frac{1}{\sqrt N}
\Bigl(
v_jw_k(R^{13}_{ik}(\lambda)-u_i\bm u^{\top}\bm R^{13}_{\cdot,k}(\lambda))
\Bigr) \\
&-\frac{1}{\sqrt N}
\Bigl(
u_iw_k(R^{23}_{jk}(\lambda)-v_j\bm v^{\top}\bm R^{23}_{\cdot,k}(\lambda))
\Bigr)
-\frac{1}{\sqrt N}
\Bigl(
u_iv_j(R^{33}_{kk}(\lambda)-w_k\bm w^{\top}\bm R^{33}_{\cdot,k}(\lambda))
\Bigr).
\end{aligned}
\label{eqA.27}
\end{equation}

As such, $A_2$ decomposes into three terms $A_2=A_{21}+A_{22}+A_{23}$, where
\[
\begin{aligned}
A_{21}
&=
-\frac{1}{N^2}\sum_{ijk}
\mathbb E\!\left[v_jw_kR^{13}_{ik}(\lambda)Q^{12}_{ij}\right]
+
\frac{1}{N^2}\sum_{ijkl}
\mathbb E\!\left[u_iv_jw_ku_lR^{13}_{lk}(\lambda)Q^{12}_{ij}\right] \\
&=
-\frac{1}{N^2}\mathbb E\!\left[\bm v^{\top}\bm Q^{12}(z)^{\top}\bm R^{13}(\lambda)\bm w\right]
+
\frac{1}{N^2}\mathbb E\!\left[\bm u^{\top}\bm Q^{12}(z)\bm v\,
\bm u^{\top}\bm R^{13}(\lambda)\bm w\right]
\;\xrightarrow[N\to\infty]{}\;0,
\end{aligned}
\]
since the singular vectors $\bm u,\bm v,\bm w$ are of bounded norms
and assuming the resolvents $\bm Q(z)$ and $\bm R(\lambda)$ are of bounded
spectral norms.

Similarly,
\[
\begin{aligned}
A_{22}
&=
-\frac{1}{N^2}\sum_{ijk}
\mathbb E\!\left[u_iw_kR^{23}_{jk}(\lambda)Q^{12}_{ij}\right]
+
\frac{1}{N^2}\sum_{ijkl}
\mathbb E\!\left[u_iv_jw_kv_lR^{23}_{lk}(\lambda)Q^{12}_{ij}\right] \\
&=
-\frac{1}{N^2}\mathbb E\!\left[\bm u^{\top}\bm Q^{12}(z)\bm R^{23}(\lambda)\bm w\right]
+
\frac{1}{N^2}\mathbb E\!\left[\bm u^{\top}\bm Q^{12}(z)\bm v\,
\bm v^{\top}\bm R^{23}(\lambda)\bm w\right]
\;\xrightarrow[N\to\infty]{}\;0.
\end{aligned}
\]

And finally,
\[
\begin{aligned}
A_{23}
&=
-\frac{1}{N^2}\sum_{ijk}
\mathbb E\!\left[u_iv_jR^{33}_{kk}(\lambda)Q^{12}_{ij}\right]
+
\frac{1}{N^2}\sum_{ijkl}
\mathbb E\!\left[u_iv_jw_kw_lR^{33}_{lk}(\lambda)Q^{12}_{ij}\right] \\
&=
-\frac{1}{N}
\mathbb E\!\left[
\bm u^{\top}\bm Q^{12}(z)\bm v\,
\frac{1}{N}\operatorname{tr}\bm R^{33}(\lambda)
\right]
+
\frac{1}{N^2}
\mathbb E\!\left[
\bm u^{\top}\bm Q^{12}(z)\bm v\,
\bm w^{\top}\bm R^{33}(\lambda)\bm w
\right]
\;\xrightarrow[N\to\infty]{}\;0.
\end{aligned}
\]

Therefore,
\[
A
=
\frac{1}{N\sqrt N}
\sum_{ijk}
\mathbb E\!\left[
w_k\,\frac{\partial Q^{12}_{ij}}{\partial W_{ijk}}
\right]
+
\mathcal O(N^{-1}).
\]

As in the previous subsection, the derivative of $\bm Q(z)$ with respect to
the entry $W_{ijk}$ expresses as
\[
\frac{\partial \bm Q(z)}{\partial W_{ijk}}
=
-\bm Q(z)\,
\frac{\partial \bm N}{\partial W_{ijk}}\,
\bm Q(z),
\]
but now with
\[
\frac{\partial \bm N}{\partial W_{ijk}}
=
\frac{1}{\sqrt N}
\begin{bmatrix}
\bm 0_{m\times m}
&
w_k\,\bm e_i^{m}(\bm e_j^{n})^{\top}
&
v_j\,\bm e_i^{m}(\bm e_k^{p})^{\top}
\\
w_k\,\bm e_j^{n}(\bm e_i^{m})^{\top}
&
\bm 0_{n\times n}
&
u_i\,\bm e_j^{n}(\bm e_k^{p})^{\top}
\\
v_j\,\bm e_k^{p}(\bm e_i^{m})^{\top}
&
u_i\,\bm e_k^{p}(\bm e_j^{n})^{\top}
&
\bm 0_{p\times p}
\end{bmatrix}
+
\frac{1}{\sqrt N}\,
\Phi_3\!\left(
\textbf{W},
\frac{\partial \bm u}{\partial W_{ijk}},
\frac{\partial \bm v}{\partial W_{ijk}},
\frac{\partial \bm w}{\partial W_{ijk}}
\right),
\]
we need to consider the negligibility of the implicit term.

\begin{lemma}\label{lemA.1}
    Letting \(\bm O_{ijk}
=
\Phi_3\!\left(
\textbf{W},
\frac{\partial \bm u}{\partial W_{ijk}},
\frac{\partial \bm v}{\partial W_{ijk}},
\frac{\partial \bm w}{\partial W_{ijk}}
\right)\), we have 
    \begin{equation}
        -\frac{1}{N^2}\sum_{i,j,k}
\mathbb E\!\left[
w_k\,
(\bm Q\,\bm O_{ijk}\,\bm Q)^{12}_{ij}
\right] = \mathcal O(N^{-1}).
\qquad
\end{equation}
\end{lemma}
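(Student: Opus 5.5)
We will prove the bound by writing $\bm O_{ijk}$ explicitly through the eigenvector derivatives \eqref{eq3.4} and then summing over $(i,j,k)$ \emph{before} taking norms. The goal is that every term reduces to a bilinear or trace expression in bounded-norm resolvents and unit vectors, so that no operator-norm bound on the random matrices $\bm O_{ijk}$ is ever needed.

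\textbf{Step 1: linearize $\bm O_{ijk}$ in the derivative directions.} Write $\bm h_{ijk}:=(\partial\bm u,\partial\bm v,\partial\bm w)$ with $\partial:=\partial/\partial W_{ijk}$. By \eqref{eq3.4},
\[
\bm h_{ijk}=-\tfrac{1}{\sqrt N}\bm R(\lambda)\bm P\bm g_{ijk},
\]
where $\bm g_{ijk}=(v_jw_k\bm e_i,\ u_iw_k\bm e_j,\ u_iv_j\bm e_k)$ and $\bm P=\mathrm{diag}(\bm I-\bm u\bm u^\top,\bm I-\bm v\bm v^\top,\bm I-\bm w\bm w^\top)$. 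The map $\Phi_3(\textbf{W},\cdot,\cdot,\cdot)$ is linear in each slot. Differentiating one slot at a time, the block $(1,2)$ of $\bm O_{ijk}$ equals $\textbf{W}(\cdot,\cdot,\partial\bm w)$; blocks $(1,3)$ and $(2,3)$ are analogous. We then substitute the formula for $\partial \bm w$. Its $k$-th coordinate involves $R^{3\cdot}_{k'\,\cdot}(\lambda)$ contracted with $\bm P\bm g_{ijk}$, which yields three kinds of contributions: those proportional to $v_jw_k$, to $u_iw_k$, and to $u_iv_j$.

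\textbf{Step 2: sum over the free indices.} The quantity $\sum_{i,j,k}w_k(\bm Q\bm O_{ijk}\bm Q)^{12}_{ij}$ is a sum of terms of the shape
\[
\sum_{i,j,k} w_k\,(\text{two unit-vector factors in } i,j,k)\,\bigl[\bm Q^{1a}\textbf{W}(\ldots)\bm Q^{b2}\bigr]_{ij}\,[\bm R\bm P]_{\cdot,\ell}.
\]
Performing the index sums turns each term into one of two forms. The first is a product of bilinear forms, for example $\bm u^\top\bm Q^{1a}\,\tfrac1{\sqrt N}\textbf{W}(\cdot,\cdot,\bm R^{3\cdot}\bm P\,\bm e)\,\bm Q^{b2}\bm v$. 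The second is a trace, which appears exactly when the index $i$ (or $j$) is repeated, e.g.\ the term $u_iv_j\bm e_k$ combined with $Q_{ij}$ produces $\operatorname{tr}$ of a block. The essential input is that $\tfrac1{\sqrt N}\textbf{W}(\cdot,\cdot,\bm a)$, for any vector $\bm a$ of norm $O(1)$ (possibly random), is bounded in operator norm by $\|\bm a\|$ times $\tfrac1{\sqrt N}\|\textbf{W}\|_{\mathrm{flat}}$. Here we use Lemma~\ref{fact:opnorm} applied to the flattening $m\times np$, which gives an $O(1)$ bound after the $1/\sqrt N$ normalization. Combined with $\|\bm Q\|\le\eta_0^{-1}$ and the resolvent bound at $\lambda$ from Corollary~\ref{cor:resolvent_hp}, each bilinear-form term is $O(1)$ and each trace term is $O(N)$.

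\textbf{Step 3: count powers of $N$.} The prefactor is $N^{-2}\cdot N^{-1/2}$ (from $\bm h$) $\cdot N^{-1/2}$ (from $\tfrac{1}{\sqrt N}\Phi_3$ inside $\bm N$), giving $N^{-3}$. To conclude $\mathcal O(N^{-1})$, every term must be at most $O(N^2)$ after summation. The bilinear terms are clearly fine. Each trace term, however, carries an extra sum over a full $N$-range of the flattened noise, which yields $\tfrac1{\sqrt N}\|\textbf{W}(\ldots)\|_F\lesssim\sqrt N$ rather than $O(1)$.

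\textbf{Main obstacle.} This is exactly the point noted in Remark~\ref{remark4}: $\|\bm O_{ijk}\|$ cannot be bounded per index at the order used in prior work. The plan is to never bound $\bm O_{ijk}$ individually. Instead, we sum first and apply Cauchy--Schwarz only once, pairing $\|\bm R\bm P\|_F\le\sqrt N\,\|\bm R\|$ with $\|\textbf{W}\|_{\mathrm{flat}}/\sqrt N=O(1)$. This should give at worst $O(N^{2})$ for the trace-type terms, hence $\mathcal O(N^{-1})$ overall.

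Finally, all these bounds hold on the high-probability event of Corollary~\ref{cor:resolvent_hp} together with Lemma~\ref{fact:opnorm}. The expectation on the complement is controlled using the a.s.\ regularity in Assumption~\ref{assumption3.1}(i) and a truncation, which we expect to be routine but technically fiddly.
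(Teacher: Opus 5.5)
There is a genuine gap in your power counting. Two errors do not cancel.

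First, the prefactor is wrong. The lemma's $\bm O_{ijk}$ is the \emph{unnormalized} $\Phi_3(\mathbf W,\partial\bm u,\partial\bm v,\partial\bm w)$. Remark~\ref{remark4} uses a different normalization, but the statement and its use in $A_1$ do not. The $-\frac{1}{N^2}$ already contains the $N^{-1/2}$ from $\partial\bm N$: it arises as $N^{-3/2}$ (from $A_1$) times the $N^{-1/2}$ in front of $\bm O_{ijk}$ in $\partial\bm N/\partial W_{ijk}$. After extracting the $N^{-1/2}$ of $\bm h_{ijk}$, you are left with $N^{-5/2}$ times contractions of the raw $\mathbf W$. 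In your normalization, with noise written as $\frac1{\sqrt N}\mathbf W$, that is $N^{-2}$, not $N^{-3}$. So each summed term must be $O(N)$, and your concluding ``at worst $O(N^2)$'' does not give $\mathcal O(N^{-1})$.

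Second, your key input is false. Lemma~\ref{fact:opnorm} concerns square $N\times N$ matrices. An $m\times np$ (or $n\times mp$) flattening of $\mathbf W$ has operator norm $\asymp\sqrt m+\sqrt{np}\asymp N$, so $\|\mathbf W\|_{\mathrm{flat}}/\sqrt N\asymp\sqrt N$, not $O(1)$. An $O(\|\bm a\|)$ bound on $\frac1{\sqrt N}\|\mathbf W(\cdot,\cdot,\bm a)\|$ uniformly over random $\bm a$ would be the injective-norm bound $\|\mathbf W\|_{\mathrm{inj}}\lesssim\sqrt N$. That is not available under a fourth moment alone; Proposition~\ref{prop:high_signal_order3} itself only uses $\|W\|_{\mathrm{inj}}\lesssim N^{3/4}$.

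Your example of a trace term is also misattributed. The component $u_iv_j(\bm e_k-w_k\bm w)$ contributes nothing, since $\sum_k w_k(\bm e_k-w_k\bm w)=0$. The trace terms come from $v_jw_k(\bm e_i-u_i\bm u)$ and $u_iw_k(\bm e_j-v_j\bm v)$, after $\sum_k w_k^2=1$.

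With both corrections your strategy does close, but with no slack. Keeping the projection inside, every surviving term has the form $N^{-5/2}\,\bm y^\top\mathrm{Mat}(\mathbf W)\,\mathrm{vec}(\bm M)$ with $\|\bm y\|\lesssim 1$ and $\|\bm M\|_F\lesssim\sqrt N$. For example, for block $(1,2)$ and the $v_jw_k$ component of $\partial\bm w$,
\[
N^{-5/2}\Bigl|\sum_{\alpha,\beta,\gamma}W_{\alpha\beta\gamma}\,M_{\gamma\alpha}\,y_\beta\Bigr|
\le N^{-5/2}\,\|\mathrm{Mat}_2(\mathbf W)\|\,\|\bm M\|_F\,\|\bm y\|
\lesssim N^{-5/2}\cdot N\cdot N^{1/2}=N^{-1},
\]
with $\bm M=\bm R^{31}(\lambda)(\bm I-\bm u\bm u^\top)\bm Q^{11}(z)$, $\bm y=\bm Q^{22}(z)\bm v$ and $\mathrm{Mat}_2(\mathbf W)\in\mathbb R^{n\times mp}$.

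To make this rigorous you still need two things:
\begin{enumerate}
\item A high-probability bound $\|\mathrm{Mat}(\mathbf W)\|\lesssim N$ for the rectangular flattenings. This is true under four moments, e.g.\ by splitting $\mathrm{Mat}\,\mathrm{Mat}^\top$ into diagonal and off-diagonal parts, but it is not Lemma~\ref{fact:opnorm}.
\item A passage from the good event to expectations. Assumption~\ref{assumption3.1}(i) is qualitative and gives no integrable control of $\|\bm R(\lambda)\|$, so that truncation is not routine. The paper sidesteps this by using $\|\bm R(\lambda)\|\lesssim 1$ throughout.
\end{enumerate}

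Once repaired, your route genuinely differs from the paper's. The paper never bounds the noise deterministically. It performs a second cumulant expansion in the entry $W_{\alpha\beta\gamma}$ inside $\bm O_{ijk}$, which removes the explicit noise. It then bounds the four product-rule terms (I)--(IV) by Frobenius pairings such as $\langle\bm X^{(k,\gamma)},\bm Q^{11}\bm A^{(k,\gamma)}\bm Q^{22}\rangle_F$, with $\|\bm X^{(k,\gamma)}\|_F,\|\bm A^{(k,\gamma)}\|_F\lesssim N^{-1/2}$. Your version avoids that second expansion and its remainder, which the paper only asserts is of higher order. The price is the flattening-norm estimate.
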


\begin{remark} \label{remark4}
The spectral norm estimate for
\[
\bm O_{ijk}
\;=\;
\frac{1}{\sqrt N}\,
\Phi_3\!\left(
\mathbf X,
\frac{\partial \bm u}{\partial X_{ijk}},
\frac{\partial \bm v}{\partial X_{ijk}},
\frac{\partial \bm w}{\partial X_{ijk}}
\right)
\]
given in the proof of Theorem~4 of \cite{seddik2024whentrandomtensorsmeet} is not justified correctly.

Indeed, although it holds that
\(
\bigl\|\partial \bm u/\partial X_{ijk}\bigr\|_2,
\bigl\|\partial \bm v/\partial X_{ijk}\bigr\|_2,
\bigl\|\partial \bm w/\partial X_{ijk}\bigr\|_2
\lesssim N^{-1/2},
\)
these vector $\ell^2$-norm bounds alone do not imply a corresponding decay of
the operator norm of $\Phi_3(\mathbf X,\cdot,\cdot,\cdot)$.
The contraction $\Phi_3$ produces an $N\times N$ matrix whose entries involve
high-dimensional summations over the tensor indices of $\mathbf X$.
Without a careful analysis of index overlaps or an explicit use of
Ward-type identities, these summations introduce a combinatorial amplification
that offsets the apparent $N^{-3/2}$ scaling. In fact, \(\|\bm O_{ijk}\|\) is of order \(\mathcal O(N^{-1/2})\). Consequently, bounding its entries individually and then lifting such bounds to the spectral norm necessarily yields an \(\mathcal O(1)\) contribution from \(\|\bm O_{ijk}\|\), and no smaller order can be obtained by this approach.

Moreover, controlling the spectral norm of $\bm O$ requires mechanisms
that suppress worst-case directional growth, such as resolvent--vector
absorption, Frobenius-to-operator norm conversion, or cancellation after
taking expectations.
None of these mechanisms is invoked in the aforementioned estimate, where
vector norm bounds are implicitly treated as operator norm bounds.

As a result, the conclusion that $\|\bm O\|=\mathcal O(N^{-3/2})$ does not follow
from the stated assumptions, and the implicit term cannot be dismissed at the
operator-norm level without additional structural arguments.  We here use a new proof method and make correction in Lemma~\ref{lemA.1}
\end{remark}

\begin{proof}[The proof of Lemma~\ref{lemA.1}]
Let
\[
A_{\mathrm{imp}}
:=
-\frac{1}{N^2}
\sum_{i,j,k}
\mathbb E\!\left[
w_k\,
(\bm Q\,\bm O_{ijk}\,\bm Q)^{12}_{ij}
\right].
\]

By block multiplication, $(\bm Q\bm O_{ijk}\bm Q)^{12}_{ij}$ is a sum of finitely
many terms, each of which is of the generic form
\[
\sum_{\alpha,\beta,\gamma}
Q^{\cdot\cdot}_{i\alpha}\,
W_{\alpha\beta\gamma}\,
\dot \xi^{(ijk)}_{\gamma}\,
Q^{\cdot\cdot}_{\beta j},
\]
where $\dot\xi^{(ijk)}$ stands for one of
$\partial\bm u/\partial W_{ijk}$,
$\partial\bm v/\partial W_{ijk}$ or
$\partial\bm w/\partial W_{ijk}$.
It therefore suffices to estimate a single such contribution.

Fix indices $i,j,k$ and consider (the others are the same)
\[
\mathbb E\!\left[
w_k\,
Q^{11}_{i\alpha}\,
W_{\alpha\beta\gamma}\,
\dot \xi^{(ijk)}_{\gamma}\,
Q^{22}_{\beta j}
\right].
\]
Using a cumulant expansion with respect to
$W_{\alpha\beta\gamma}$ and the assumption that the entries of $\textbf{W}$
have zero mean, unit variance and finite fourth-moment, we obtain
\[
\mathbb E\!\left[
W_{\alpha\beta\gamma} F
\right]
=
\mathbb E\!\left[
\frac{\partial F}{\partial W_{\alpha\beta\gamma}}
\right]
+\varepsilon_{\alpha\beta\gamma}^{(2)},
\qquad
F:=w_k\,Q^{11}_{i\alpha}\,\dot \xi^{(ijk)}_{\gamma}\,Q^{22}_{\beta j},
\]
where the error term $\varepsilon_{\alpha\beta\gamma}^{(2)}$ is of higher order and can be estimated
analogously.

Now we consider
\[
\mathbb E\!\left[
\frac{\partial F}{\partial W_{\alpha\beta\gamma}}
\right],
\qquad
F
=
w_k\,Q^{11}_{i\alpha}\,\dot \xi^{(ijk)}_{\gamma}\,Q^{22}_{\beta j}.
\]
By the chain rule,
\[
\frac{\partial F}{\partial W_{\alpha\beta\gamma}}
=
(\mathrm{I})+(\mathrm{II})+(\mathrm{III})+(\mathrm{IV}),
\]
where
\[
\begin{aligned}
(\mathrm{I})\;&=\;
\Big(\frac{\partial w_k}{\partial W_{\alpha\beta\gamma}}\Big)
\,Q^{11}_{i\alpha}\,\dot \xi^{(ijk)}_{\gamma}\,Q^{22}_{\beta j},
\\[0.5ex]
(\mathrm{II})\;&=\;
w_k\Big(\frac{\partial Q^{11}_{i\alpha}}{\partial W_{\alpha\beta\gamma}}\Big)
\,\dot \xi^{(ijk)}_{\gamma}\,Q^{22}_{\beta j},
\\[0.5ex]
(\mathrm{III})\;&=\;
w_k\,Q^{11}_{i\alpha}
\Big(\frac{\partial \dot \xi^{(ijk)}_{\gamma}}{\partial W_{\alpha\beta\gamma}}\Big)
\,Q^{22}_{\beta j},
\\[0.5ex]
(\mathrm{IV})\;&=\;
w_k\,Q^{11}_{i\alpha}\,\dot \xi^{(ijk)}_{\gamma}
\Big(\frac{\partial Q^{22}_{\beta j}}{\partial W_{\alpha\beta\gamma}}\Big).
\end{aligned}
\]

\subsubsection{Estimate of (I)--(IV) after summation}

Fix $(k,\gamma)$ and define the $m\times n$ matrices
\[
\bm A^{(k,\gamma)}:=\Big(A^{(k,\gamma)}_{ab}\Big)_{a\le m,b\le n},
\qquad
A^{(k,\gamma)}_{ab}:=\frac{\partial w_k}{\partial W_{ab\gamma}},
\]
\[
\bm X^{(k,\gamma)}:=\Big(X^{(k,\gamma)}_{ij}\Big)_{i\le m,j\le n},
\qquad
X^{(k,\gamma)}_{ij}:=\dot w^{(ijk)}_\gamma=\frac{\partial w_\gamma}{\partial W_{ijk}}.
\]
Then the contribution of term \emph{(I)} to $\partial F_{ab\gamma}/\partial W_{ab\gamma}$ summed over $(i,j,a,b)$ can be rewritten exactly as a Frobenius pairing:
\begin{align*}
S^{(\mathrm{I})}_{k\gamma}
&:=
\sum_{i,j}\sum_{a,b}
Q^{11}_{ia}\,A^{(k,\gamma)}_{ab}\,X^{(k,\gamma)}_{ij}\,\bm Q^{22}_{bj}
=
\sum_{i,j} \bm X^{(k,\gamma)}_{ij}\,(\bm Q^{11}\bm A^{(k,\gamma)}\bm Q^{22})_{ij} \\
&=
\big\langle \bm X^{(k,\gamma)},\, \bm Q^{11}\bm A^{(k,\gamma)}\bm Q^{22}\big\rangle_F .
\end{align*}
Hence, taking absolute values \emph{after} summation and using the Cauchy--Schwarz inequality in the Frobenius norm,
\begin{equation}\label{eq:I_frob_bd}
\big|S^{(\mathrm{I})}_{k\gamma}\big|
\le
\|\bm X^{(k,\gamma)}\|_F\ \|\bm Q^{11}\bm A^{(k,\gamma)}\bm Q^{22}\|_F
\le
\|\bm Q\|^2\ \|\bm A^{(k,\gamma)}\|_F\ \|\bm X^{(k,\gamma)}\|_F .
\end{equation}

By the derivative formula (\ref{eq3.4}), for each fixed $(k,\gamma)$ there exist vectors
$\bm \alpha^{(k,\gamma)}\in\mathbb R^{p}$ and $\bm \beta^{(k,\gamma)}\in\mathbb R^{p}$ with
$\|\bm \alpha^{(k,\gamma)}\|_2+\|\bm \beta^{(k,\gamma)}\|_2\lesssim 1$ such that
\[
A^{(k,\gamma)}_{ab}
=
\frac{1}{\sqrt N}\,u_a v_b\,\bm \alpha^{(k,\gamma)}_k
\qquad\text{and}\qquad
X^{(k,\gamma)}_{ij}
=
\frac{1}{\sqrt N}\,u_iv_j\,\bm \beta^{(k,\gamma)}_\gamma ,
\]
so that, using $\|\bm u\|_2=\|\bm v\|_2=\|\bm w\|_2=1$,
\begin{equation}\label{eq:AX_F}
\|\bm A^{(k,\gamma)}\|_F
\le
\frac{C}{\sqrt N},
\qquad
\|\bm X^{(k,\gamma)}\|_F
\le
\frac{C}{\sqrt N},
\end{equation}
uniformly in $(k,\gamma)$. Plugging (\ref{eq:AX_F}) into (\ref{eq:I_frob_bd}) yields
\[
|S^{(\mathrm{I})}_{k\gamma}|
\le
\|\bm Q\|^2\cdot \frac{C}{\sqrt N}\cdot \frac{C}{\sqrt N}
\lesssim
\frac{1}{N}.
\]
Therefore, recalling the prefactor $N^{-2}$ in $A_{\mathrm{imp}}$ and summing over $(k,\gamma)$, we have
\[
\Bigg|
\frac{1}{N^2}\sum_{k,\gamma}\mathbb E\big[S^{(\mathrm{I})}_{k\gamma}\big]
\Bigg|
\le
\frac{1}{N^2}\sum_{k,\gamma}\mathbb E\big[|S^{(\mathrm{I})}_{k\gamma}|\big]
\lesssim
\frac{1}{N^2}\cdot N^2\cdot \frac{1}{N}
=
O(N^{-1}).
\]

\medskip
\noindent
For \emph{(II)} and \emph{(IV)}, define
\[
\bm B^{(b,\gamma)}:=\Big(\frac{\partial Q^{11}_{ia}}{\partial W_{ab\gamma}}\Big)_{i,a},
\qquad
\bm C^{(a,\gamma)}:=\Big(\frac{\partial Q^{22}_{bj}}{\partial W_{ab\gamma}}\Big)_{b,j}.
\]
Using $\frac{\partial \bm Q}{\partial W_{ab\gamma}}=-\bm Q(\partial \bm N/\partial W_{ab\gamma})\bm Q$ and
$\|\partial \bm N/\partial W_{ab\gamma}\| \lesssim \frac{1}{\sqrt{N}}$ by Lemma.~\ref{lem2.3}, we have
\[
\|\bm B^{(b,\gamma)}\| \lesssim \frac{1}{\sqrt{N}}\|Q\|^2,
\qquad
\|\bm C^{(a,\gamma)}\| \lesssim \frac{1}{\sqrt{N}}\|Q\|^2.
\]
Writing the $(i,j)$-summed contributions again as Frobenius pairings,
\[
S^{(\mathrm{II})}_{k\gamma}
=
\big\langle \bm X^{(k,\gamma)},\, \bm B^{(b,\gamma)} \bm A_0^{(k)} \bm Q^{22}\big\rangle_F,
\qquad
S^{(\mathrm{IV})}_{k\gamma}
=
\big\langle \bm X^{(k,\gamma)},\, \bm Q^{11} \bm A_0^{(k)} \bm C^{(a,\gamma)}\big\rangle_F,
\]
where $A_0^{(k)}$ denotes the corresponding rank-one block coming from
$\partial N/\partial W_{ab\gamma}$ (hence $\|A_0^{(k)}\|_F\lesssim 1$), the
Cauchy--Schwarz inequality gives
\[
|S^{(\mathrm{II})}_{k\gamma}|+|S^{(\mathrm{IV})}_{k\gamma}|
\lesssim
\|\bm X^{(k,\gamma)}\|_F\cdot \frac{1}{\sqrt{N}}\|\bm Q\|^3
\lesssim
\frac{1}{\sqrt{N}}\frac{1}{\sqrt N}
=\frac{1}{N},
\]
where we used $\|\bm X^{(k,\gamma)}\|_F\lesssim N^{-1/2}$ from (\ref{eq:AX_F}).
Hence their total contributions to $A_{\mathrm{imp}}$ are $\mathcal O(N^{-1})$ after the
prefactor $N^{-2}$ and the $(k,\gamma)$-summation.

\medskip
\noindent
For \emph{(III)}, differentiating (\ref{eq3.4}) once more yields
$\|\partial \dot{\bm w}^{(ijk)}/\partial W_{ab\gamma}\|_2\lesssim N^{-1}$, hence
\[
\Big\|\Big(\frac{\partial \dot w^{(ijk)}_\gamma}{\partial W_{ab\gamma}}\Big)_{i,j}\Big\|_F
\lesssim
\frac{1}{N}.
\]
Thus, with the same Frobenius pairing as above,
\[
|S^{(\mathrm{III})}_{k\gamma}|
\le
\|\bm Q\|^2\cdot \Big\|\Big(\frac{\partial \dot w^{(ijk)}_\gamma}{\partial W_{ab\gamma}}\Big)_{i,j}\Big\|_F
\lesssim
\frac{1}{N},
\]
and again the total contribution to $A_{\mathrm{imp}}$ is $O(N^{-1})$.
\end{proof}
\medskip
\noindent
Combining (I)--(IV) yields
\[
A_{\mathrm{imp}}=\mathcal{O}(N^{-1}).
\]

Therefore, we find that
\[
A
\longrightarrow
-\,g_1(z)\,g_2(z)
+
\mathcal O(N^{-1}),
\]

Now we need to bound \(N^{-3/2}\varepsilon_{ijk}^{(2)}(z)\), actually we only need to bound 
\begin{equation}
    N^{-3/2}\sum_{ijk}\sup_{z \in \mathbb C^+_{\eta_0}} |\frac{\partial^2}{(\partial W_{ijk})^2}(w_k\bm{Q}_{ij}^{12}(z))|.
\end{equation}

We split it into three parts:
\begin{align}
\mathrm{I}
&:= N^{-3/2}\sum_{i,j,k}
\sup_{z \in \mathbb C^+_{\eta_0}}
\left|
w_k\,\frac{\partial^2}{(\partial W_{ijk})^2}
\bm Q_{ij}^{12}(z)
\right|,
\\[0.5em]
\mathrm{II}
&:= 2N^{-3/2}\sum_{i,j,k}
\sup_{z \in \mathbb C^+_{\eta_0}}
\left|
\frac{\partial w_k}{\partial W_{ijk}}\,
\frac{\partial}{\partial W_{ijk}}
\bm Q_{ij}^{12}(z)
\right|,
\\[0.5em]
\mathrm{III}
&:= N^{-3/2}\sum_{i,j,k}
\sup_{z \in \mathbb C^+_{\eta_0}}
\left|
\bm Q_{ij}^{12}(z)\,
\frac{\partial^2}{(\partial W_{ijk})^2}
w_k
\right|.
\end{align}

\subsubsection{Bounding the first part}\label{secA.4.1}

Fix $(i,j,k)$ and denote $\partial\equiv \partial_{W_{ijk}}$. Recall that
\[
\bm N=\frac1{\sqrt N}\Phi_3(\textbf{W},\bm u,\bm v,\bm w),\qquad 
\bm Q(z)=(\bm N-z\bm I)^{-1},\qquad z\in\mathbb C^+_{\eta_0},
\]
so that $\|\bm Q(z)\|\le \eta_0^{-1}$ on $\mathbb C^+_{\eta_0}$. By the resolvent identities,
\begin{equation}\label{eq:dQ}
\partial \bm Q(z)=-\bm Q(z)\,(\partial\bm N)\,\bm Q(z),
\qquad
\partial^2\bm Q(z)=2\bm Q(\partial\bm N)\bm Q(\partial\bm N)\bm Q-\bm Q(\partial^2\bm N)\bm Q .
\end{equation}
Denote\(\bm A_{ijk}:=\partial\bm N\). In the control of the second cumulant remainder, the leading contribution comes from the
\emph{quadratic resolvent term}
\[
\bm Q(z)\bm A_{ijk}\bm Q(z)\bm A_{ijk}\bm Q(z),
\]
hence it suffices to prove that
\begin{equation}\label{eq:goal-QAQAQ}
\frac{1}{N\sqrt N}\sum_{i,j,k}\ \sup_{z\in\mathbb C^+_{\eta_0}}
\Bigl|\,w_k\bigl[\bm Q(z)\bm A_{ijk}\bm Q(z)\bm A_{ijk}\bm Q(z)\bigr]^{12}_{ij}\Bigr|
=\mathcal O(\frac{1}{\sqrt{N}}).
\end{equation}

From $\bm N=\frac1{\sqrt N}\Phi_3(\textbf{W},\bm u,\bm v,\bm w)$ we have
\begin{equation}\label{eq:A-decomp}
\bm A_{ijk}
=
\frac{1}{\sqrt N}\bm A^{(0)}_{ijk}
+\bm A^{(1)}_{ijk},
\end{equation}
where the \emph{explicit} part $\bm A^{(0)}_{ijk}$ is the sparse/rank--one block matrix
\begin{equation}\label{eq:A0}
\bm A^{(0)}_{ijk}
=
\begin{bmatrix}
\bm 0_{m\times m} &
w_k\,\bm e_i^{m}(\bm e_j^{n})^{\top} &
v_j\,\bm e_i^{m}(\bm e_k^{p})^{\top} \\
w_k\,\bm e_j^{n}(\bm e_i^{m})^{\top} &
\bm 0_{n\times n} &
u_i\,\bm e_j^{n}(\bm e_k^{p})^{\top} \\
v_j\,\bm e_k^{p}(\bm e_i^{m})^{\top} &
u_i\,\bm e_k^{p}(\bm e_j^{n})^{\top} &
\bm 0_{p\times p}
\end{bmatrix},
\end{equation}
and the \emph{implicit} part $\bm A^{(1)}_{ijk}$ comes from the dependence of $(\bm u,\bm v,\bm w)$ on $\textbf{W}$:
\[
\bm A^{(1)}_{ijk}
=\frac{1}{\sqrt N}\Phi_3\!\left(
\textbf{W},\frac{\partial \bm u}{\partial W_{ijk}},\frac{\partial \bm v}{\partial W_{ijk}},\frac{\partial \bm w}{\partial W_{ijk}}
\right).
\]
Similar to Lemma~\ref{lemA.1}, it yields an extra $\mathcal O(N^{-1/2})$ factor compared to $\bm A^{(0)}_{ijk}$.
Consequently, it suffices to treat $\bm A_{ijk}$ replaced by $\frac1{\sqrt N}\bm A^{(0)}_{ijk}$.

For any $z\in\mathbb C^+_{\eta_0}$ we have $\|\bm Q(z)\|\le \eta_0^{-1}$. Moreover, for any canonical basis vector $\bm e_\alpha$,
\begin{equation}\label{eq:col-l2}
\|\bm Q(z)\bm e_\alpha\|_2^2
=
\sum_{r=1}^{N}|Q_{r\alpha}(z)|^2
=
\frac{\Im Q_{\alpha\alpha}(z)}{\Im z}
\le
\frac{|Q_{\alpha\alpha}(z)|}{\Im z}
\le \eta_0^{-2},
\qquad z\in\mathbb C^+_{\eta_0}.
\end{equation}
In particular,
\begin{equation}\label{eq:col-l1}
\sum_{r=1}^{N}|Q_{r\alpha}(z)|
\le \sqrt N\,\|\bm Q(z)\bm e_\alpha\|_2
\le \sqrt N\,\eta_0^{-1}.
\end{equation}

Because $\bm A^{(0)}_{ijk}$ has only three rank--one off--diagonal blocks (and their transposes),
each entry $\bigl[\bm Q\bm A^{(0)}\bm Q\bm A^{(0)}\bm Q\bigr]^{12}_{ij}$ is a sum of finitely many terms.
We illustrate the bound on the three representative classes; all remaining classes are controlled identically.

\medskip
\noindent\textbf{Class I (two spikes of type $12$ and $21$).}
Using (\ref{eq:A0}), the contribution where both factors $\bm A^{(0)}_{ijk}$ act through the $(1,2)$ and $(2,1)$ blocks yields
\[
\bigl[\bm Q\bm A^{(0)}\bm Q\bm A^{(0)}\bm Q\bigr]^{12}_{ij}
\ \supset\
w_k^2\,\bigl[\bm Q^{11}\bm e_i^m(\bm e_j^n)^{\top}\bm Q^{22}\bm e_j^n(\bm e_i^m)^{\top}\bm Q^{12}\bigr]_{ij}.
\]
Since $(\bm e_j^n)^{\top}\bm Q^{22}\bm e_j^n=Q^{22}_{jj}$ and $(\bm e_i^m)^{\top}\bm Q^{11}\bm e_i^m=Q^{11}_{ii}$, we obtain
\begin{equation}\label{eq:classI}
\bigl|\bigl[\bm Q\bm A^{(0)}\bm Q\bm A^{(0)}\bm Q\bigr]^{12}_{ij}\bigr|
\ \lesssim\
|w_k|^2\,|Q^{11}_{ii}|\cdot|Q^{22}_{jj}|\cdot \|\bm Q^{12}\bm e_j^n\|_2,
\end{equation}
where the last factor comes from bounding the remaining $\bm Q^{12}$-column by the Cauchy--Schwarz inequality.
Using (\ref{eq:col-l2}) and $|Q^{11}_{ii}|,|Q^{22}_{jj}|\le \eta_0^{-1}$), we get
\[
\sup_{z\in\mathbb C^+_{\eta_0}}
\bigl|\bigl[\bm Q\bm A^{(0)}\bm Q\bm A^{(0)}\bm Q\bigr]^{12}_{ij}\bigr|
\ \lesssim\
|w_k|^2\,\eta_0^{-3}.
\]
Therefore,
\begin{align}\label{eq:classI-sum}
\frac{1}{N\sqrt N}\sum_{i,j,k}\sup_{z\in\mathbb C^+_{\eta_0}}
\Bigl|w_k\bigl[\bm Q\bm A\bm Q\bm A\bm Q\bigr]^{12}_{ij}\Bigr|
&\lesssim
\frac{1}{N\sqrt N}\sum_{k=1}^{p}|w_k|
\frac{1}{N}\sup_{z\in\mathbb C^+_{\eta_0}}
\sum_{i,j}\bigl|\bigl[\bm Q\bm A^{(0)}\bm Q\bm A^{(0)}\bm Q\bigr]^{12}_{ij}\bigr| \notag\\
&\lesssim
\frac{1}{N\sqrt N}\sum_{i,j,k}\frac{|w_k|^3}{N}\eta_0^{-3}\\
&\lesssim \eta_0^{-3}N^{-1/2}.
\end{align}

\medskip
\noindent\textbf{Class II (one spike of type $12$ and one spike of type $23$).}
Consider the mixed contribution where the first $\bm A^{(0)}_{ijk}$ uses the $(1,2)$--block and the second uses the $(2,3)$--block.
A direct multiplication (as in the detailed $QAQAQ$ computations previously) yields a term of the form
\begin{equation}\label{eq:classII-form}
\bigl[\bm Q\bm A^{(0)}\bm Q\bm A^{(0)}\bm Q\bigr]^{12}_{ij}
\ \supset\
w_k u_i\,
Q^{11}_{ii}\,Q^{22}_{jj}\,Q^{32}_{kj}.
\end{equation}
Hence, using $|Q^{11}_{ii}|,|Q^{22}_{jj}|\le \eta_0^{-1} and \sup_{z}|Q^{32}_{kj}(z)| \le \|\bm Q^{32}\bm e_j^n\|_2 \le \eta_0^{-1}$,
\[
\sup_{z}\bigl|\bigl[\bm Q\bm A^{(0)}\bm Q\bm A^{(0)}\bm Q\bigr]^{12}_{ij}\bigr|
\ \lesssim\
|w_k||u_i|\,\eta_0^{-3}.
\]
Therefore,
\begin{align}\label{eq:classII-sum}
\frac{1}{N\sqrt N}\sum_{i,j,k}\sup_{z\in\mathbb C^+_{\eta_0}}
\Bigl|w_k\bigl[\bm Q\bm A\bm Q\bm A\bm Q\bigr]^{12}_{ij}\Bigr|
&\lesssim
\frac{1}{N\sqrt N}\sum_{k=1}^{p}|w_k|
\frac{1}{N}\sup_{z\in\mathbb C^+_{\eta_0}}
\sum_{i,j}\bigl|\bigl[\bm Q\bm A^{(0)}\bm Q\bm A^{(0)}\bm Q\bigr]^{12}_{ij}\bigr| \notag\\
&\lesssim
\frac{1}{N^2\sqrt N}\sum_{i,j,k}w_k^2|u_i|\eta_0^{-3}.
\end{align}
Using $\sum_i|u_i|\le \sqrt m\|\bm u\|_2\lesssim \sqrt N$ yields
\[
\text{Class II contribution}
\ \lesssim\
\frac{\eta_0^{-3}}{N^2\sqrt N}\,(n)\,(\sqrt m)
\ \lesssim\ \eta_0^{-3}N^{-1},
\]

\medskip
\noindent\textbf{Class III (one spike of type $13$ and one spike of type $23$).}
We now bound the Class~III terms where one factor $\bm A^{(0)}_{ijk}$ acts through the
$(1,3)$--block and the other through the $(2,3)$--block.
A direct multiplication (as in the detailed $QAQAQ$ computations previously) yields
representative contributions of the form
\begin{equation}\label{eq:classIII-rep}
\bigl[\bm Q\bm A^{(0)}\bm Q\bm A^{(0)}\bm Q\bigr]^{12}_{ij}(z)
\ \supset\
\,u_i v_j\,
Q^{11}_{ii}(z)\,Q^{22}_{jj}(z)\,Q^{32}_{kj}(z),
\end{equation}
and similarly finitely many terms where the last factor is replaced by an entry of a column of
$\bm Q^{13}(z)$ or $\bm Q^{33}(z)$. We treat (\ref{eq:classIII-rep}); the remaining ones are identical.

Fix $z\in\mathbb C^+_{\eta_0}$. Taking absolute values and using
$|Q^{11}_{ii}(z)|,|Q^{22}_{jj}(z)|\le \eta_0^{-1}$ gives
\[
\Bigl|w_k\bigl[\bm Q\bm A^{(0)}\bm Q\bm A^{(0)}\bm Q\bigr]^{12}_{ij}(z)\Bigr|
\ \lesssim\
|u_i||v_j|\,\eta_0^{-2}\,|w_k|\cdot |Q^{32}_{kj}(z)|.
\]
Summing first over $k$ \emph{with the weight $w_k$} and using the Cauchy--Schwarz inequality together with
the column $\ell^2$ bound (\ref{eq:col-l2}), we obtain
\begin{align}\label{eq:classIII-k-sum}
\sum_{k=1}^{p}|w_k|\cdot |Q^{32}_{kj}(z)|
&\le
\|\bm w\|_2\,\|\bm Q^{32}(z)\bm e_j^n\|_2
\ \le\ \eta_0^{-1}.
\end{align}
Therefore, for each fixed $(i,j)$ and $z\in\mathbb C^+_{\eta_0}$,
\[
\sum_{k=1}^{p}
\Bigl|w_k\bigl[\bm Q\bm A^{(0)}\bm Q\bm A^{(0)}\bm Q\bigr]^{12}_{ij}(z)\Bigr|
\ \lesssim\
|u_i||v_j|\,\eta_0^{-3}.
\]
Summing further over $(i,j)$ and using $\sum_i|u_i|\le \sqrt m\|\bm u\|_2$ and
$\sum_j|v_j|\le \sqrt n\|\bm v\|_2$, we deduce
\begin{align*}
\sum_{i,j,k}
\Bigl|w_k\bigl[\bm Q\bm A^{(0)}\bm Q\bm A^{(0)}\bm Q\bigr]^{12}_{ij}(z)\Bigr|
&\lesssim
\eta_0^{-3}\Bigl(\sum_{i=1}^{m}|u_i|\Bigr)\Bigl(\sum_{j=1}^{n}|v_j|\Bigr)\\
&\le
\eta_0^{-3}\cdot (\sqrt m\|\bm u\|_2)\cdot (\sqrt n\|\bm v\|_2)
\ \lesssim\ \eta_0^{-3}N,
\end{align*}
where we used $mn\asymp N^2$ and $\|\bm u\|_2=\|\bm v\|_2=\|\bm w\|_2=1$.

Therefore,
\[
\frac{1}{N^2\sqrt N}\sup_{z\in\mathbb C^+_{\eta_0}}
\sum_{i,j,k}\Bigl|w_k\bigl[\bm Q\bm A^{(0)}\bm Q\bm A^{(0)}\bm Q\bigr]^{12}_{ij}(z)\Bigr|
\ \lesssim\
\frac{1}{N^2\sqrt N}\cdot \eta_0^{-3}\,N
\ =\ \eta_0^{-3}\,N^{-3/2}.
\]
Therefore, all Class~III contributions are negligible after the normalized summation in
(\ref{eq:goal-QAQAQ}).

\medskip
\noindent\textbf{To Control  $\bm Q(\partial^2\bm N)\bm Q$ term.\;}
Fix $(i,j,k)$ and write $\partial:=\partial_{W_{ijk}}$.  
Recall that for $z\in\mathbb C^+_{\eta_0}$,
\[
\bm Q(z)=(\bm N-z\bm I)^{-1},
\qquad
\|\bm Q(z)\|\le \eta_0^{-1}.
\]
Using the explicit second--order expansion,
\begin{equation}\label{eq:d2N-explicit}
\partial^2 \bm N
=
\frac{1}{\sqrt N}\,\Phi_3\!\bigl(\textbf{W},\partial^2\bm u,\partial^2\bm v,\partial^2\bm w\bigr)
+\frac{2}{\sqrt N}\,\bm S_{ijk},
\end{equation}
where $\bm S_{ijk}$ is the sparse block matrix
\[
\bm S_{ijk}
=
\begin{bmatrix}
\bm 0 &
\partial w_k\,\bm e_i^{m}(\bm e_j^{n})^{\top} &
\partial v_j\,\bm e_i^{m}(\bm e_k^{p})^{\top} \\
\partial w_k\,\bm e_j^{n}(\bm e_i^{m})^{\top} &
\bm 0 &
\partial u_i\,\bm e_j^{n}(\bm e_k^{p})^{\top} \\
\partial v_j\,\bm e_k^{p}(\bm e_i^{m})^{\top} &
\partial u_i\,\bm e_k^{p}(\bm e_j^{n})^{\top} &
\bm 0
\end{bmatrix},
\]
we decompose
\[
\bm Q(\partial^2\bm N)\bm Q
=
\frac{1}{\sqrt N}\bm Q\Phi_3(\textbf{W},\partial^2\bm u,\partial^2\bm v,\partial^2\bm w)\bm Q
+\frac{2}{\sqrt N}\bm Q\bm S_{ijk}\bm Q.
\]

\medskip
\noindent\textbf{Step 1: the sparse spike term.}
Consider first the contribution of $\bm Q\bm S_{ijk}\bm Q$.  
For instance, the $(1,2)$--block of $\bm S_{ijk}$ equals
\[
(\bm S_{ijk})^{12}
=
\partial w_k\,\bm e_i^{m}(\bm e_j^{n})^{\top},
\]
hence
\[
\bigl[\bm Q\bm S_{ijk}\bm Q\bigr]^{12}_{ij}
=
\partial w_k\,
\bigl[\bm Q^{11}\bm e_i^{m}(\bm e_j^{n})^{\top}\bm Q^{22}\bigr]_{ij}
=
\partial w_k\,\bm Q^{11}_{ii}\bm Q^{22}_{jj}.
\]
Using $|\bm Q^{11}_{ii}|,|\bm Q^{22}_{jj}|\le \eta_0^{-1}$, we obtain
\begin{equation}\label{eq:QSQ-bound}
\sup_{z\in\mathbb C^+_{\eta_0}}
\bigl|\bigl[\bm Q(z)\bm S_{ijk}\bm Q(z)\bigr]^{12}_{ij}\bigr|
\le
|\partial w_k|\,\eta_0^{-2}.
\end{equation}
Therefore,
\begin{align}\label{eqA.41}
&\frac{1}{N\sqrt N}\sum_{i,j,k}
\sup_{z\in\mathbb C^+_{\eta_0}}
\Bigl|\,w_k\frac{2}{\sqrt N}
\bigl[\bm Q(z)\bm S_{ijk}\bm Q(z)\bigr]^{12}_{ij}\Bigr|
\nonumber\\
&\qquad\le
\frac{2\eta_0^{-2}}{N\sqrt N}
\sum_{i,j,k}\frac{|w_k||\partial w_k|}{\sqrt N}
=
\frac{2\eta_0^{-2}}{N^2}
\Bigl(\sum_{i=1}^{m}1\Bigr)
\Bigl(\sum_{j=1}^{n}1\Bigr)
\sum_{k=1}^{p}|w_k||\partial w_k|.
\end{align}
Since $\sum_k |w_k||\partial w_k|
\le \|\bm w\|_2\|\partial\bm w\|_2
=\|\partial\bm w\|_2$
and $\|\partial\bm w\|_2\lesssim N^{-1/2}$, then Eq. (\ref{eqA.41}) \( = \mathcal{O}(N^{-1/2})\).

The other blocks of $\bm S_{ijk}$ are treated identically, replacing
$\partial w_k$ by $\partial v_j$ or $\partial u_i$.

\medskip
\noindent\textbf{Step 2: the $\Phi_3(\textbf{W},\partial^2\bm u,\partial^2\bm v,\partial^2\bm w)$ term.}
We next consider
\[
\frac{1}{\sqrt N}\bm Q\Phi_3(\textbf{W},\partial^2\bm u,\partial^2\bm v,\partial^2\bm w)\bm Q.
\]
Recall that $\textbf{W}$ has independent entries with zero mean, unit variance and finite fourth moment. For any vector $\xi\in\mathbb R^N$, the normalized contraction
matrix $W(\xi)/\|\xi\|_2$ is an $N\times N$ random matrix with i.i.d.\ entries
satisfying the assumptions of Lemma~\ref{fact:opnorm}, and hence Lemma~\ref{fact:opnorm} applies
\[
\frac{\|\textbf{W}(\xi)\|}{\|\xi\|_2}\lesssim \sqrt N.
\]
Together with the bounds
\[
\|\partial^2\bm u\|_2+\|\partial^2\bm v\|_2+\|\partial^2\bm w\|_2\lesssim N^{-1},
\]
it follows that
\[
\|\Phi_3(\textbf{W},\partial^2\bm u,\partial^2\bm v,\partial^2\bm w)\|
\le
C\Bigl(
\|\textbf{W}(\partial^2\bm u)\|
+\|\textbf{W}(\partial^2\bm v)\|
+\|\textbf{W}(\partial^2\bm w)\|
\Bigr)
\lesssim
\mathcal{O}(\sqrt N)\cdot N^{-1}
=
\mathcal O(\frac{1}{\sqrt{N}}).
\] 

Proceeding with the normalized $(i,j,k)$--sum, we move the supremum outside and apply the
Cauchy--Schwarz inequality:
\begin{align*}
\frac{1}{N\sqrt N}\sum_{i,j,k}\sup_{z\in\mathbb C^+_{\eta_0}}
\Bigl|w_k\Bigl[\frac{1}{\sqrt N}\bm Q(z)\Phi_3(\cdot)\bm Q(z)\Bigr]^{12}_{ij}\Bigr|
&\le
\frac{1}{N\sqrt N}\sup_{z}\sum_{i,j,k}|w_k|\,
\Bigl|\Bigl[\frac{1}{\sqrt N}\bm Q\Phi_3(\cdot)\bm Q\Bigr]^{12}_{ij}\Bigr|\\
&\le
\frac{1}{N\sqrt N}\sup_{z}
\Bigl(\sum_{i,j,k}w_k^2\Bigr)^{1/2}
\Bigl(\sum_{i,j,k}
\Bigl|\Bigl[\frac{1}{\sqrt N}\bm Q\Phi_3(\cdot)\bm Q\Bigr]^{12}_{ij}\Bigr|^2
\Bigr)^{1/2}\\
&\lesssim
\sup_{z}\Bigl\|\frac{1}{\sqrt N}
\bigl[\bm Q(z)\Phi_3(\cdot)\bm Q(z)\bigr]^{12}\Bigr\|_F.
\end{align*}

Using $\|\bm Q(z)\|_F\lesssim \sqrt N\,\eta_0^{-1}$ and $\|\bm Q(z)\|\le \eta_0^{-1}$, we obtain
\[
\sup_{z}\Bigl\|\frac{1}{\sqrt N}
\bigl[\bm Q(z)\Phi_3(\cdot)\bm Q(z)\bigr]^{12}\Bigr\|_F
\le
\sup_{z}\frac{1}{\sqrt N}\|\bm Q(z)\|_F\,
\|\Phi_3(\cdot)\|\,\|\bm Q(z)\|
\lesssim
\eta_0^{-2}\,\|\Phi_3(\cdot)\|
=
\mathcal O(\frac{1}{\sqrt{N}}).
\]
Hence this contribution is negligible.

\medskip
\noindent\textbf{Conclusion.}
Combining the above estimates, we conclude that
\[
\frac{1}{N\sqrt N}\sum_{i,j,k}
\sup_{z\in\mathbb C^+_{\eta_0}}
\Bigl|\,w_k\bigl[\bm Q(z)(\partial^2\bm N)\bm Q(z)\bigr]^{12}_{ij}\Bigr|
=\mathcal O(\frac{1}{\sqrt{N}}),
\]
so the $\bm Q(\partial^2\bm N)\bm Q$ term is negligible. Then we complete the proof of the first part.

\subsubsection{Bounding the second part}\label{secA.4.2}
We next bound the mixed derivative term
\[
2\sum_{i,j,k}\sup_{z\in\mathbb C^+_{\eta_0}}
\Bigl|
\frac{\partial w_k}{\partial W_{ijk}}\,
\frac{\partial}{\partial W_{ijk}}Q^{12}_{ij}(z)
\Bigr|.
\]

%===========================================================
% All cross terms: use only the “pressed-back” bounds
%===========================================================

Recall Eq.~(\ref{eqA.27})
\[
\frac{\partial w_k}{\partial W_{ijk}}
=-\frac{1}{\sqrt N}\Bigl(A_{ijk}+B_{ijk}+C_{ijk}\Bigr),
\]
where
\[
\begin{aligned}
A_{ijk}&:= v_j w_k\Bigl(R^{13}_{ik}(\lambda)-u_i\,\bm u^\top \bm R^{13}_{\cdot,k}(\lambda)\Bigr),\\
B_{ijk}&:= u_i w_k\Bigl(R^{23}_{jk}(\lambda)-v_j\,\bm v^\top \bm R^{23}_{\cdot,k}(\lambda)\Bigr),\\
C_{ijk}&:= u_i v_j\Bigl(R^{33}_{kk}(\lambda)-w_k\,\bm w^\top \bm R^{33}_{\cdot,k}(\lambda)\Bigr).
\end{aligned}
\]
Set the projected columns
\[
\bm a_k:=(\bm I-\bm u\bm u^\top)\bm R^{13}_{\cdot,k},\qquad
\bm b_k:=(\bm I-\bm v\bm v^\top)\bm R^{23}_{\cdot,k},
\]
and the scalar residual
\[
\gamma_k:=R^{33}_{kk}(\lambda)-w_k\,\bm w^\top \bm R^{33}_{\cdot,k}(\lambda).
\]
Then $|A_{ijk}|=|v_j||w_k||a_{ik}|$, $|B_{ijk}|=|u_i||w_k||b_{jk}|$, and
$|C_{ijk}|=|u_i||v_j||\gamma_k|$.

Moreover
\[
\frac{\partial}{\partial W_{ijk}}Q^{12}_{ij}(z)
=-\frac{1}{\sqrt N}\sum_{\ell=1}^{6} S^{(\ell)}_{ijk}(z),
\]
with
\[
\begin{aligned}
S^{(1)}_{ijk}&:= w_k\,Q^{11}_{ii}(z)Q^{22}_{jj}(z),&
S^{(2)}_{ijk}&:= w_k\,(Q^{12}_{ij}(z))^2,\\
S^{(3)}_{ijk}&:= v_j\,Q^{11}_{ii}(z)Q^{32}_{kj}(z),&
S^{(4)}_{ijk}&:= v_j\,Q^{13}_{ik}(z)Q^{12}_{ij}(z),\\
S^{(5)}_{ijk}&:= u_i\,Q^{12}_{ij}(z)Q^{32}_{kj}(z),&
S^{(6)}_{ijk}&:= u_i\,Q^{13}_{ik}(z)Q^{22}_{jj}(z).
\end{aligned}
\]
Hence
\[
\frac{\partial w_k}{\partial W_{ijk}}\,
\frac{\partial}{\partial W_{ijk}}Q^{12}_{ij}(z)
=\frac{1}{N}\sum_{W\in\{A,B,C\}}\sum_{\ell=1}^{6} W_{ijk}\,S^{(\ell)}_{ijk}(z).
\]
\medskip
\noindent\textbf{Basic ``pressed-back'' bounds (used repeatedly).}
For $z,\lambda\in\mathbb C^+_{\eta_0}$,
\[
\|\bm Q(z)\|\le \eta_0^{-1},\qquad \|\bm R(\lambda)\|\le \eta_0^{-1},
\qquad
\|\bm Q(z)\bm e_\alpha\|_2\le \eta_0^{-1},\qquad
\|\bm R(\lambda)\bm e_\alpha\|_2\le \eta_0^{-1}.
\]
Ward/Frobenius:
\[
\|\bm Q(z)\|_F^2\lesssim N\eta_0^{-2},\qquad
\sum_{\alpha}|Q_{\alpha\alpha}(z)|^2\le \|\bm Q(z)\|_F^2\lesssim N\eta_0^{-2}.
\]
Column pressed-back (no $\sqrt N$ loss):
\[
\sum_{k}|w_k|\,|Q^{32}_{kj}(z)|
\le \|\bm w\|_2\,\|\bm Q^{32}(z)\bm e_j\|_2\le \eta_0^{-1},
\qquad
\sum_{i}|u_i|\,|Q^{13}_{ik}(z)|
\le \|\bm u\|_2\,\|\bm Q^{13}(z)\bm e_k\|_2\le \eta_0^{-1}.
\]
Also $\|\bm a_k\|_2\le \|\bm R^{13}_{\cdot,k}\|_2\le \eta_0^{-1}$,
$\|\bm b_k\|_2\le \eta_0^{-1}$, and $|\gamma_k|\le |R^{33}_{kk}|+|w_k|\|\bm R^{33}_{\cdot,k}\|_2\le 2\eta_0^{-1}$.

\medskip
\noindent\textbf{(I) Terms $A\times S^{(\ell)}$.}
For $\ell=1,\dots,6$, define
\[
\mathcal S_{A,\ell}:=\sum_{i,j,k}\sup_{z\in\mathbb C^+_{\eta_0}}
\bigl|A_{ijk}S^{(\ell)}_{ijk}(z)\bigr|.
\]

\smallskip
\noindent\emph{Term $\ell=1$.}
\begin{align*}
\mathcal S_{A,1}
&=\sum_{i,j,k}\sup_z |v_j||w_k|^2|a_{ik}|\,|Q^{11}_{ii}||Q^{22}_{jj}| \\
&\le \sum_k |w_k|^2\Big(\sum_i |a_{ik}|\sup_z|Q^{11}_{ii}|\Big)
     \Big(\sum_j |v_j|\sup_z|Q^{22}_{jj}|\Big)\\
&\le \sum_k |w_k|^2\|\bm a_k\|_2
     \Big(\sum_i \sup_z|Q^{11}_{ii}|^2\Big)^{1/2}
     \|\bm v\|_2
     \Big(\sum_j \sup_z|Q^{22}_{jj}|^2\Big)^{1/2}
 \ \lesssim\ N\,\eta_0^{-3}.
\end{align*}

\smallskip
\noindent\emph{Term $\ell=2$.}
\begin{align*}
\mathcal S_{A,2}
&=\sum_{i,j,k}\sup_z |v_j||w_k|^2|a_{ik}|\,|Q^{12}_{ij}|^2 \\
&\le \sum_k |w_k|^2\|\bm a_k\|_2
\sum_j |v_j|\Big(\sum_i \sup_z|Q^{12}_{ij}|^4\Big)^{1/2} \\
&\le \sum_k |w_k|^2\|\bm a_k\|_2
\sum_j |v_j|
\Big(\eta_0^{-2}\cdot \|\bm Q^{12}(z)\bm e_j\|_2^2\Big)^{1/2}
\ \lesssim\ \sqrt N\,\eta_0^{-3}.
\end{align*}

\smallskip
\noindent\emph{Term $\ell=3$.}
\begin{align*}
\mathcal S_{A,3}
&=\sum_{i,j,k}\sup_z |v_j|^2|w_k||a_{ik}|\,
|Q^{11}_{ii}||Q^{32}_{kj}| \\
&\le \eta_0^{-1}\sum_j |v_j|^2\|\bm a_k\|_2
\Big(\sum_i \sup_z|Q^{11}_{ii}|^2\Big)^{1/2}
\ \lesssim\ \sqrt N\,\eta_0^{-3}.
\end{align*}

\smallskip
\noindent\emph{Term $\ell=4$.}
\begin{align*}
\mathcal S_{A,4}
&=\sum_{i,j,k}\sup_z |v_j|^2|w_k||a_{ik}|\,
|Q^{13}_{ik}||Q^{12}_{ij}| \\
&\lesssim
\Big(\sum_j |v_j|^2\Big)\cdot
(\eta_0^{-1}\eta_0^{-1})\cdot
\sqrt N\,\eta_0^{-1}
\ \lesssim\ \sqrt N\,\eta_0^{-3}.
\end{align*}

\smallskip
\noindent\emph{Term $\ell=5$.}
\begin{align*}
\mathcal S_{A,5}
&=\sum_{i,j,k}\sup_z |v_j||u_i||w_k||a_{ik}|\,
|Q^{12}_{ij}||Q^{32}_{kj}| \\
&\le \eta_0^{-1}\sum_j
\|\bm a_k\|_2
\Big(\sum_i \sup_z|Q^{12}_{ij}|^2\Big)^{1/2}
\ \lesssim\ \sqrt N\,\eta_0^{-3}.
\end{align*}

\smallskip
\noindent\emph{Term $\ell=6$.}
\begin{align*}
\mathcal S_{A,6}
&=\sum_{i,j,k}\sup_z |v_j||u_i||w_k||a_{ik}|\,
|Q^{13}_{ik}||Q^{22}_{jj}| \\
&\lesssim
(\sqrt N\,\eta_0^{-1})\cdot(\eta_0^{-1}\eta_0^{-1})
\ \lesssim\ \sqrt N\,\eta_0^{-3}.
\end{align*}

\medskip
\noindent\textbf{(II) Terms $B\times S^{(\ell)}$ (symmetric bounds).}
Define
\(
\mathcal S_{B,\ell}:=\sum_{i,j,k}\sup_{z} |B_{ijk}S^{(\ell)}_{ijk}(z)|.
\)
By symmetry (swap $(i,u,13)\leftrightarrow (j,v,23)$), the same arguments yield
\[
\mathcal S_{B,1}\lesssim N\eta_0^{-3},\quad
\mathcal S_{B,2}\lesssim \sqrt N\,\eta_0^{-3},\quad
\mathcal S_{B,3}\lesssim \sqrt N\,\eta_0^{-3},\quad
\mathcal S_{B,4}\lesssim \sqrt N\,\eta_0^{-3},\quad
\mathcal S_{B,5}\lesssim \sqrt N\,\eta_0^{-3},\quad
\mathcal S_{B,6}\lesssim \sqrt N\,\eta_0^{-3}.
\]

\medskip
\noindent\textbf{(III) Terms $C\times S^{(\ell)}$.}
Since $|\gamma_k|\le 2\eta_0^{-1}$, we have $|C_{ijk}|\le 2\eta_0^{-1}|u_i||v_j|$.
Let
\(
\mathcal S_{C,\ell}:=\sum_{i,j,k}\sup_{z} |C_{ijk}S^{(\ell)}_{ijk}(z)|.
\)
Then, using only the same pressed-back column bounds as above,
\[
\mathcal S_{C,1}\lesssim N\eta_0^{-3},\quad
\mathcal S_{C,2}\lesssim \sqrt N\,\eta_0^{-3},\quad
\mathcal S_{C,3}\lesssim \sqrt N\,\eta_0^{-3},\quad
\mathcal S_{C,4}\lesssim \sqrt N\,\eta_0^{-3},\quad
\mathcal S_{C,5}\lesssim \sqrt N\,\eta_0^{-3},\quad
\mathcal S_{C,6}\lesssim \sqrt N\,\eta_0^{-3}.
\]

\medskip
\noindent\textbf{Conclusion.}
Combining all $18$ cross terms and recalling the prefactor $1/N$ in the product,
\[
2\sum_{i,j,k}\sup_{z\in\mathbb C^+_{\eta_0}}
\Bigl|
\frac{\partial w_k}{\partial W_{ijk}}\,
\frac{\partial}{\partial W_{ijk}}Q^{12}_{ij}(z)
\Bigr|
\ \le\ \frac{2}{N}\sum_{W\in\{A,B,C\}}\sum_{\ell=1}^{6}\mathcal S_{W,\ell}
\ \lesssim\ \eta_0^{-3},
\]
which means
\begin{equation}
N^{-3/2}2\sum_{i,j,k}\sup_{z\in\mathbb C^+_{\eta_0}}
\Bigl|
\frac{\partial w_k}{\partial W_{ijk}}\,
\frac{\partial}{\partial W_{ijk}}Q^{12}_{ij}(z)
\Bigr| \xrightarrow{N \to \infty} 0
\end{equation}

\subsubsection{Bounding the third part}\label{secA.4.3}
\paragraph*{Goal}
We show that
\[
N^{-3/2}\sum_{i,j,k}\sup_{z\in\mathbb C^+_{\eta_0}}
\Bigl|Q^{12}_{ij}(z)\,\partial_{ijk}^2 w_k\Bigr|
=\mathcal{O}(\frac{1}{\sqrt N}),
\qquad \partial_{ijk}:=\frac{\partial}{\partial W_{ijk}}.
\]

\paragraph*{Step 1: $\ell^2$ bounds for first derivatives from Eq.~(\ref{eq3.4})}
From (\ref{eq3.4}) we have, for each fixed $(i,j,k)$,
\[
\begin{bmatrix}\partial \bm u\\ \partial \bm v\\ \partial \bm w\end{bmatrix}
=
-\frac{1}{\sqrt N}\bm R(\lambda)
\begin{bmatrix}
v_j w_k(\bm e_i^m-u_i\bm u)\\
u_i w_k(\bm e_j^n-v_j\bm v)\\
u_i v_j(\bm e_k^p-w_k\bm w)
\end{bmatrix},
\qquad \|\bm R(\lambda)\|\le \eta_0^{-1}.
\]
Since $\|\bm e_i^m-u_i\bm u\|_2\le \sqrt2$ (and similarly for the other two),
and $|u_i|,|v_j|,|w_k|\le 1$, we get the uniform bound
\begin{equation}\label{eq:first-der-norm}
\|\partial \bm u\|_2+\|\partial \bm v\|_2+\|\partial \bm w\|_2
\ \lesssim\ \frac{1}{\sqrt N}\,\eta_0^{-1}.
\end{equation}

We will repeatedly use the Ward/column bound for $Q(z)$:
\begin{equation}\label{eq:Ward}
\|Q(z)\bm e_\alpha\|_2\le \eta_0^{-1},
\qquad
\|Q^{12}(z)\bm e_j^n\|_2\le \eta_0^{-1},
\qquad z\in\mathbb C^+_{\eta_0}.
\end{equation}
Moreover, using $\partial Q(z)=-Q(z)(\partial N)Q(z)$ and $\|\partial N\|\lesssim N^{-1/2}$,
\begin{equation}\label{eq:dQop}
\|\partial Q(z)\|\lesssim N^{-1/2}\eta_0^{-2},
\qquad
\|\partial \bm R(\lambda)\|\lesssim N^{-1/2}\eta_0^{-2}.
\end{equation}

\paragraph*{Step 2: full expansion of $\partial^2 w_k$}
Write (as in the main text)
\[
\partial w_k
=-\frac{1}{\sqrt N}\Bigl(
v_jw_k\,\Delta^{13}_{ik}
+u_iw_k\,\Delta^{23}_{jk}
+u_iv_j\,\Delta^{33}_{k}
\Bigr),
\]
where $\Delta^{13}_{ik}:=R^{13}_{ik}-u_i\,\bm u^\top\bm R^{13}_{\cdot,k}$,
$\Delta^{23}_{jk}:=R^{23}_{jk}-v_j\,\bm v^\top\bm R^{23}_{\cdot,k}$,
$\Delta^{33}_{k}:=R^{33}_{kk}-w_k\,\bm w^\top\bm R^{33}_{\cdot,k}$.
Differentiating again w.r.t.\ the same $W_{ijk}$ gives the \emph{fully expanded} identity
\begin{equation}\label{eq:d2w-9}
\begin{aligned}
\partial^2 w_k
=-\frac{1}{\sqrt N}\Bigl[
&(\partial v_j)w_k\Delta^{13}_{ik}
+v_j(\partial w_k)\Delta^{13}_{ik}
+v_jw_k(\partial \Delta^{13}_{ik}) \\
&+(\partial u_i)w_k\Delta^{23}_{jk}
+u_i(\partial w_k)\Delta^{23}_{jk}
+u_iw_k(\partial \Delta^{23}_{jk}) \\
&+(\partial u_i)v_j\Delta^{33}_{k}
+u_i(\partial v_j)\Delta^{33}_{k}
+u_iv_j(\partial \Delta^{33}_{k})
\Bigr].
\end{aligned}
\end{equation}
Hence, for each $z\in\mathbb C^+_{\eta_0}$,
\[
\sum_{i,j,k}\bigl|Q^{12}_{ij}(z)\,\partial^2 w_k\bigr|
\le \frac{1}{\sqrt N}\sum_{\ell=1}^9 S_\ell(z),
\]
where the $S_\ell$ correspond to the nine terms in (\ref{eq:d2w-9}).
We bound $\sup_z S_\ell(z)$ one by one by ``pushing back'' sums using (\ref{eq:first-der-norm})-(\ref{eq:dQop}).

\paragraph*{Step 3: bounds for the nine terms}
Below all suprema are over $z\in\mathbb C^+_{\eta_0}$.

\emph{(Term 1) } $S_1:=\sum_{i,j,k}|Q^{12}_{ij}|\ |\partial v_j|\ |w_k|\ |\Delta^{13}_{ik}|$.
First sum in $j$ by Cauchy--Schwarz:
\[
\sum_j |Q^{12}_{ij}|\,|\partial v_j|
\le \| (Q^{12})^\top \bm e_i^m\|_2\ \|\partial \bm v\|_2
=\|Q^{21}\bm e_i^m\|_2\ \|\partial \bm v\|_2
\le \eta_0^{-1}\|\partial\bm v\|_2.
\]
Then sum in $(i,k)$ using $\sum_i|\Delta^{13}_{ik}|\le \sqrt m\,\|\Delta^{13}_{\cdot k}\|_2\le \sqrt N\,\eta_0^{-1}$
and $\sum_k|w_k|\le \sqrt p\|\bm w\|_2\lesssim \sqrt N$:
\[
S_1\lesssim \eta_0^{-1}\|\partial\bm v\|_2\cdot
\sum_k|w_k|\sum_i|\Delta^{13}_{ik}|
\lesssim \eta_0^{-1}\|\partial\bm v\|_2\cdot (\sqrt N)(\sqrt N\eta_0^{-1})
= N\,\eta_0^{-2}\|\partial\bm v\|_2.
\]
Using (\ref{eq:first-der-norm}), $\|\partial\bm v\|_2\lesssim N^{-1/2}\eta_0^{-1}$, hence
\begin{equation}\label{eq:S1}
S_1\lesssim \sqrt N\,\eta_0^{-3}.
\end{equation}

\emph{(Term 2) } $S_2:=\sum_{i,j,k}|Q^{12}_{ij}|\ |v_j|\ |\partial w_k|\ |\Delta^{13}_{ik}|$.
Sum in $k$ by the Cauchy--Schwarz inequality:
\[
\sum_k |\partial w_k|\,|\Delta^{13}_{ik}|
\le \|\partial\bm w\|_2\,\|\Delta^{13}_{i,\cdot}\|_2
\le \|\partial\bm w\|_2\,\|\Delta^{13}\|_F
\le \|\partial\bm w\|_2\,\|\bm R\|_F
\lesssim \|\partial\bm w\|_2\cdot \sqrt N\,\eta_0^{-1}.
\]
Therefore
\[
S_2\lesssim (\|\partial\bm w\|_2\sqrt N\,\eta_0^{-1})
\sum_{i,j}|Q^{12}_{ij}|\,|v_j|.
\]
Now $\sum_{i,j}|Q^{12}_{ij}|\,|v_j|
=\sum_j |v_j|\sum_i |Q^{12}_{ij}|
\le \sum_j |v_j|\,\sqrt m\,\|Q^{12}\bm e_j\|_2
\le (\sum_j|v_j|)\sqrt N\,\eta_0^{-1}
\le \sqrt N\cdot \sqrt N\,\eta_0^{-1}
= N\,\eta_0^{-1}.$

Combining and using (\ref{eq:first-der-norm}), $\|\partial\bm w\|_2\lesssim N^{-1/2}\eta_0^{-1}$, yields
\begin{equation}\label{eq:S2}
S_2\lesssim \sqrt N\,\eta_0^{-3}.
\end{equation}

\emph{(Term 3) } $S_3:=\sum_{i,j,k}|Q^{12}_{ij}|\ |v_j|\ |w_k|\ |\partial\Delta^{13}_{ik}|$.
Push back in $k$:
\[
\sum_k |w_k|\,|\partial\Delta^{13}_{ik}|
\le \|\partial\Delta^{13}_{i,\cdot}\|_2
\le \|\partial\Delta^{13}\|_F.
\]
We claim $\|\partial\Delta^{13}\|_F\lesssim \eta_0^{-2}$.
Indeed, $\Delta^{13}$ is a linear combination of entries of $\bm R$, and thus
$\|\partial\Delta^{13}\|_F\lesssim \|\partial\bm R\|_F + \|\partial\bm u\|_2\,\|\bm R\|_F$.
Using (\ref{eq:dQop}), $\|\partial\bm R\|_F\le \sqrt N\|\partial\bm R\|\lesssim \eta_0^{-2}$,
and (\ref{eq:first-der-norm}), $\|\partial\bm u\|_2\|\bm R\|_F\lesssim (N^{-1/2}\eta_0^{-1})(\sqrt N\eta_0^{-1})=\eta_0^{-2}$,
hence $\|\partial\Delta^{13}\|_F\lesssim \eta_0^{-2}$.
Therefore
\[
S_3\lesssim \eta_0^{-2}\sum_{i,j}|Q^{12}_{ij}|\,|v_j|
\lesssim \eta_0^{-2}\cdot N\,\eta_0^{-1}
= N\,\eta_0^{-3}.
\]
So
\begin{equation}\label{eq:S3}
S_3\lesssim N\,\eta_0^{-3}.
\end{equation}

\emph{(Terms 4--6)} (the $23$-block terms) are handled identically to Terms 1-3 by symmetry, giving
\begin{equation}\label{eq:S456}
S_4\lesssim \sqrt N\,\eta_0^{-3},\qquad
S_5\lesssim \sqrt N\,\eta_0^{-3},\qquad
S_6\lesssim N\,\eta_0^{-3}.
\end{equation}

\emph{(Term 7)} $S_7:=\sum_{i,j,k}|Q^{12}_{ij}|\ |\partial u_i|\ |v_j|\ |\Delta^{33}_{kk}|$.
First sum in $(i,j)$ via operator norm:
\[
\sum_{i,j}|Q^{12}_{ij}|\,|\partial u_i|\,|v_j|
\le \|\partial\bm u\|_2\,\|Q^{12}\bm v\|_2
\le \|\partial\bm u\|_2\,\|Q\|\,\|\bm v\|_2
\le \eta_0^{-1}\|\partial\bm u\|_2.
\]
Next sum in $k$ by the Cauchy--Schwarz inequality:
\[
\sum_k |\Delta^{33}_{kk}|
\le \sqrt p\Big(\sum_k|\Delta^{33}_{kk}|^2\Big)^{1/2}
\le \sqrt N\,\|\bm R^{33}\|_F
\le \sqrt N\,\|\bm R\|_F
\lesssim \sqrt N\cdot \sqrt N\,\eta_0^{-1}
= N\,\eta_0^{-1}.
\]
Thus
\[
S_7\lesssim (\eta_0^{-1}\|\partial\bm u\|_2)\cdot (N\eta_0^{-1})
= N\,\eta_0^{-2}\|\partial\bm u\|_2
\lesssim \sqrt N\,\eta_0^{-3}.
\]
So
\begin{equation}\label{eq:S7}
S_7\lesssim \sqrt N\,\eta_0^{-3}.
\end{equation}

\emph{(Term 8)} $S_8:=\sum_{i,j,k}|Q^{12}_{ij}|\ |u_i|\ |\partial v_j|\ |\Delta^{33}_{kk}|$
is identical to $S_7$, giving
\begin{equation}\label{eq:S8}
S_8\lesssim \sqrt N\,\eta_0^{-3}.
\end{equation}

\emph{(Term 9)} 
$S_9:=\sum_{i,j,k}|Q^{12}_{ij}|\ |u_i|\ |v_j|\ |\partial\Delta^{33}_{kk}|$.
First sum in $(i,j)$:
\[
\sum_{i,j}|Q^{12}_{ij}|\,|u_i|\,|v_j|
\le \|\bm u\|_2\,\|Q^{12}\bm v\|_2
\le \|Q\|\,\|\bm u\|_2\,\|\bm v\|_2
\le \eta_0^{-1}.
\]
Next sum over $k$. Using the Cauchy--Schwarz inequality and the Frobenius norm bound for diagonal entries, we have
\[
\sum_{k=1}^p |\partial\Delta^{33}_{kk}|
\le \sqrt p\Big(\sum_{k=1}^p|\partial\Delta^{33}_{kk}|^2\Big)^{1/2}
\le \sqrt p\,\|\partial\Delta^{33}\|_F .
\]
Recall that
\(
\Delta^{33}_{kk}
=R^{33}_{kk}-w_k\,\bm w^\top \bm R^{33}_{\cdot,k}.
\)
Differentiating and applying the triangle inequality yields
\[
\|\partial\Delta^{33}\|_F
\le
\|\partial\bm R^{33}\|_F
+2\|\partial\bm w\|_2\,\|\bm R^{33}\|_F
+\|\bm w\|_2\,\|\partial\bm R^{33}\|_F.
\]

Because $\|\bm w\|_2=1$, $\|\bm R(\lambda)\|\le \eta_0^{-1}$,
the resolvent derivative identity
\(
\partial\bm R(\lambda)=-\bm R(\lambda)(\partial\textbf{T})\bm R(\lambda)
\)
together with $\|\partial\textbf{T}\|\lesssim N^{-1/2}$, we obtain
\[
\|\partial\bm R^{33}\|_F
\le \|\partial\bm R(\lambda)\|_F
\le \sqrt N\,\|\partial\bm R(\lambda)\|
\lesssim \sqrt N\cdot N^{-1/2}\eta_0^{-2}
=\eta_0^{-2}.
\]
Moreover, $\|\bm R^{33}\|_F\le \|\bm R\|_F\lesssim \sqrt N\,\eta_0^{-1}$
and $\|\partial\bm w\|_2\lesssim N^{-1/2}\eta_0^{-1}$ imply
\[
\|\partial\bm w\|_2\,\|\bm R^{33}\|_F
\lesssim
(N^{-1/2}\eta_0^{-1})(\sqrt N\,\eta_0^{-1})
=\eta_0^{-2}.
\]
Combining the above estimates gives
\[
\|\partial\Delta^{33}\|_F\lesssim \eta_0^{-2}.
\]
Therefore, using $p\asymp N$,
\[
\sum_{k=1}^p |\partial\Delta^{33}_{kk}|
\lesssim
\sqrt N\,\eta_0^{-2},
\]
and hence
\begin{equation}\label{eq:S9}
S_9
\lesssim
\eta_0^{-1}\cdot \sqrt N\,\eta_0^{-2}
=
\sqrt N\,\eta_0^{-3}.
\end{equation}

\paragraph*{Step 4: conclude}
Combining (\ref{eq:S1})-(\ref{eq:S3}), (\ref{eq:S456}), (\ref{eq:S7})-(\ref{eq:S9}), we obtain
\[
\sup_{z}\sum_{i,j,k}|Q^{12}_{ij}(z)\,\partial^2 w_k|
\ \le\ \frac{1}{\sqrt N}\sum_{\ell=1}^9 \sup_z S_\ell(z)
\ \lesssim\ \frac{1}{\sqrt N}\bigl(\sqrt N\,\eta_0^{-3}+N\,\eta_0^{-3}\bigr)
\ \lesssim\ \eta_0^{-3}\sqrt N.
\]
Therefore,
\[
N^{-3/2}\sum_{i,j,k}\sup_{z\in\mathbb C^+_{\eta_0}}
|Q^{12}_{ij}(z)\,\partial^2 w_k|
\ \lesssim\ N^{-1}\eta_0^{-3}=\mathcal{O}(N^{-1}).
\]

\subsubsection{Conclusion}
From the above analysis and Eq.~(\ref{eqA.26}), taking \(N \to \infty\) we have 
\begin{equation}
\frac{1}{N\sqrt N}\mathbb{E}\!\left[
W_{ijk}\, w_k\bm{Q}^{12}_{ij}(z)
\right]
\xrightarrow{a.s.} -g_1(z)g_2(z)
\end{equation}
It is similar to get
\begin{equation}
    \frac{1}{N\sqrt{N}}
\sum_{i,j,k}
\mathbb{E}\!\left[
W_{ijk}\, v_j\, \bm{Q}^{13}_{ij}(z)
\right]\xrightarrow{a.s.}-g_1(z)g_3(z).
\end{equation}
Then from Eq.~(\ref{eq:A.25}) we have
\begin{equation}
    -g_1(z)g_2(z)-g_1(z)g_3(z) -zg_1(z) = c_1.
\end{equation}
Similarly, we have
\[
\begin{cases}
-\,g_1(z)\bigl(g_2(z)+g_3(z)\bigr) - z\,g_1(z) = c_1,\\
-\,g_2(z)\bigl(g_1(z)+g_3(z)\bigr) - z\,g_2(z) = c_2,\\
-\,g_3(z)\bigl(g_1(z)+g_2(z)\bigr) - z\,g_3(z) = c_3,
\end{cases}
\]
which implies 
\[
g_i(z)=\frac{g(z)+z}{2}- \frac{\sqrt{4c_i+\bigl(g(z)+z\bigr)^2}}{2},
\]
with $g(z)$ being the solution of the equation
\[
g(z)=\sum_{i=1}^3 g_i(z),
\]
for \(z \in \mathbb C^+_{\eta_0}\).

The system admits a unique solution $(g_1(z),g_2(z),g_3(z))\in(\mathbb C^+)^3$
for every $z\in\mathbb C^+$, which is the analytic continuation of the solution
constructed on $\mathbb C^+_{\eta_0}$. In particular, the representation
\[
g_i(z)=\frac{g(z)+z-\sqrt{(g(z)+z)^2+4c_i}}{2}
\]
holds for all $z\in\mathbb C^+$, where the square root is taken with the branch
that ensures $\Im g_i(z)>0$ for $\Im z>0$.

\subsection{Proof of Theorem \ref{thm3.4}}\label{appA.5}

Given the identities in Eq.~(\ref{eq3.2}), we have
\begin{equation}
\label{eq:42}
\lambda\langle u,x\rangle
= \bm x^{\top}\textbf{T}(\bm v)\bm w
= \beta\langle \bm v,\bm y\rangle\langle \bm w,\bm z\rangle
+ \frac{1}{\sqrt{N}}\,\bm x^{\top}\textbf{W}(\bm v)\bm w, 
\end{equation}
with $\lambda$, $\langle \bm u,\bm x\rangle$, $\langle \bm v,\bm y\rangle$ and $\langle \bm w,\bm z\rangle$
converging almost surely to their asymptotic limits
$\lambda^\infty(\beta)$, $a_x^\infty(\beta)$, $a_y^\infty(\beta)$ and
$a_z^\infty(\beta)$, respectively.
To characterize such limits we need to evaluate the expectation of
$\frac{1}{\sqrt{N}}\bm x^{\top}\textbf{W}(\bm v)\bm w$.

Indeed,
\begin{align*}
\frac{1}{\sqrt{N}}\mathbb{E}\big[\bm x^{\top}\textbf{W}(\bm v)\bm w\big]
&= \frac{1}{\sqrt{N}}\sum_{ijk} x_i \mathbb{E}\big[v_j w_k W_{ijk}\big] \\
&=\frac{1}{\sqrt{N}}\sum_{ijk}x_i\mathbb{E}[\frac{\partial}{\partial W_{ijk}}v_jw_k] + \frac{1}{\sqrt{N}}\sum_{ijk}x_i\varepsilon_{ijk}^{(2)}\\
&= \frac{1}{\sqrt{N}}\sum_{ijk} x_i \mathbb{E}\!\left[
\frac{\partial v_j}{\partial W_{ijk}} w_k \right]
+ \frac{1}{\sqrt{N}}\sum_{ijk} x_i \mathbb{E}\!\left[
v_j \frac{\partial w_k}{\partial W_{ijk}} \right] +\frac{1}{\sqrt{N}}\sum_{ijk}x_i\varepsilon_{ijk}^{(2)},
\end{align*}
where \(\varepsilon_{ijk}^{(2)}(z) \leq C_1 \, \sup_{z \in \mathbb C^+_{\eta_0}} |g^{(2)}(z)| \, \mathbb{E}[|W_{ijk}|^{3}] = C_1^* \sup_{z \in \mathbb C^+_{\eta_0}} |\frac{\partial^2}{(\partial W_{ijk})^2}v_jw_k|\) and the second equality is obtained by Lemma~\ref{lem2.3}.

From Eq.~(\ref{eq3.4}), we have
\begin{align*}
\frac{\partial v_j}{\partial W_{ijk}}
&= -\frac{1}{\sqrt{N}}
\big(v_j w_k (R^{12}_{ij}(\lambda)-u_i u^{\top}\bm R^{12}_{\cdot j}(\lambda))\big) \\
&\quad -\frac{1}{\sqrt{N}}
\big(u_i w_k (R^{22}_{jj}(\lambda)-v_j v^{\top}\bm R^{22}_{\cdot j}(\lambda))\big)
-\frac{1}{\sqrt{N}}
\big(u_i v_j (R^{23}_{jk}(\lambda)-w_k w^{\top}\bm R^{23}_{\cdot k}(\lambda))\big).
\end{align*}

Hence, defining
\[
A \equiv \frac{1}{\sqrt{N}}\sum_{ijk} x_i \mathbb{E}\!\left[
\frac{\partial v_j}{\partial W_{ijk}} w_k \right]
= A_1 + A_2 + A_3 ,
\]
we decompose the expression into three terms $A_1,A_2,A_3$.
The terms $A_1$ and $A_3$ vanish asymptotically, and only $A_2$ contains
non-vanishing contributions.

Indeed,
\begin{align*}
A_1
&= -\frac{1}{N}\sum_{ijk}\mathbb{E}\big[x_i w_k v_j w_k R^{12}_{ij}(\lambda)\big]
+ \frac{1}{N}\sum_{ijkl}\mathbb{E}\big[x_i w_k v_j w_k u_i u_l R^{12}_{lj}(\lambda)\big] \\
&= -\frac{1}{N}\mathbb{E}\big[\bm x^{\top}\bm R^{12}(\lambda)\bm v\big]
+ \frac{1}{N}\mathbb{E}\big[\langle \bm x,\bm u\rangle\,\bm u^{\top}\bm R^{12}(\lambda)\bm v\big]
\;\xrightarrow[N\to\infty]{}\; 0 ,
\end{align*}
since $x,u,v$ have bounded norms and $\bm R(\lambda)$ has bounded spectral norm
for $\lambda$ outside the support of $\frac{1}{\sqrt{N}}\Phi_3(\textbf{W},\bm u,\bm v,\bm w)$
(through the identity in Eq.~(\ref{eqA.24})).

Similarly, for $A_3$ we have
\begin{align*}
A_3
&= -\frac{1}{N}\sum_{ijk}\mathbb{E}\big[x_i w_k u_i v_j R^{23}_{jk}(\lambda)\big]
+ \frac{1}{N}\sum_{ijkl}\mathbb{E}\big[x_i w_k u_i v_j w_k w_l R^{23}_{jl}(\lambda)\big] \\
&= -\frac{1}{N}\mathbb{E}\big[\langle \bm x,\bm u\rangle \bm v^{\top}\bm R^{23}(\lambda)\bm w\big]
+ \frac{1}{N}\mathbb{E}\big[\langle \bm x,\bm u\rangle \bm v^{\top}\bm R^{23}(\lambda)\bm w\big]
\;\xrightarrow[N\to\infty]{}\; 0 .
\end{align*}

Now $A_2$ does not vanish. Precisely,
\begin{align*}
A_2
&= -\frac{1}{N}\sum_{ijk}\mathbb{E}\big[x_i w_k u_i w_k R^{22}_{jj}(\lambda)\big]
+ \frac{1}{N}\sum_{ijkl}\mathbb{E}\big[x_i w_k u_i w_k v_j v_l R^{22}_{jl}(\lambda)\big] \\
&= -\frac{1}{N}\mathbb{E}\big[\langle \bm x,\bm u\rangle\,\mathrm{tr}\,\bm R^{22}(\lambda)\big]
+ \frac{1}{N}\mathbb{E}\big[\langle \bm x,\bm u\rangle \bm v^{\top}\bm R^{22}(\lambda)\bm v\big] \\
&\xrightarrow[N\to\infty]{}
- g_2(\lambda)\mathbb{E}[\langle \bm x,\bm u\rangle] + \mathcal{O}(N^{-1}),
\end{align*}
where the last line follows from the fact that
$\frac{1}{N}\mathrm{tr}\,\bm R^{22}(\lambda)\xrightarrow{\text{a.s.}} g_2(\lambda)$.

Similarly, we find that
\[
B
= \frac{1}{\sqrt{N}}\sum_{ijk} x_i
\mathbb{E}\!\left[ v_j \frac{\partial w_k}{\partial W_{ijk}} \right]
\xrightarrow[N\to\infty]{}
- g_3(\lambda)\mathbb{E}[\langle \bm x,\bm u\rangle] + \mathcal{O}(N^{-1}).
\]
Now we need to bound \(\varepsilon_{ijk}^2\). It's enough to bound \(\frac{1}{\sqrt{N}}\sum_{ijk}|x_i|\sup_{z \in \mathbb C^+_{\eta_0}} |\frac{\partial^2}{\partial W_{ijk}^2}v_jw_k|\), we need to prove that 
\begin{equation}\label{eqA.63}
    \frac{1}{\sqrt{N}}\sum_{ijk}|x_i|\sup_{z \in \mathbb C^+_{\eta_0}} |\frac{\partial^2}{\partial W_{ijk}^2}v_jw_k| = \mathcal O(\frac{1}{\sqrt{N}}).
\end{equation}

We proceed as in the handwritten argument.
By the product rule,
\begin{equation}
\label{eq:prod2}
\frac{\partial^2}{\partial W_{ijk}^2}(v_j w_k)
=
w_k\,\frac{\partial^2 v_j}{\partial W_{ijk}^2}
+2\,\frac{\partial v_j}{\partial W_{ijk}}
\frac{\partial w_k}{\partial W_{ijk}}
+v_j\,\frac{\partial^2 w_k}{\partial W_{ijk}^2}.
\end{equation}
Hence the left-hand side of Eq.~(\ref{eqA.63}) is bounded by the sum of three terms,
denoted by $I_1+2I_2+I_3$, corresponding to the three contributions in
(\ref{eq:prod2}).

\medskip
\noindent\textbf{Step 1: bound of the mixed term $I_2$.}
From Eq.~(\ref{eq3.4}), each first derivative has the structure
\[
\frac{\partial v_j}{\partial W_{ijk}}
= -\frac{1}{\sqrt{N}}
\Bigl(
v_j w_k\,\Delta^{(12)}_{ij}
+u_i w_k\,\Delta^{(22)}_{jj}
+u_i v_j\,\Delta^{(32)}_{kj}
\Bigr),
\]
where the quantities $\Delta^{(\cdot)}$ are linear forms in resolvent blocks.
Using the resolvent bound
\(
\sup_{z\in\mathbb C^+_{\eta_0}}\|R(z)\|\le \eta_0^{-1}
\)
and $\|u\|_2=\|v\|_2=\|w\|_2=1$, we obtain uniformly in $z\in\mathbb C^+_{\eta_0}$,
\[
\Bigl|\frac{\partial v_j}{\partial W_{ijk}}\Bigr|
\;\lesssim\;
\frac{1}{\sqrt{N}}\eta_0^{-1}
\bigl(|v_j||w_k|+|u_i||w_k|+|u_i||v_j|\bigr),
\]
and an analogous bound for $\partial w_k/\partial W_{ijk}$.
Therefore,
\begin{align*}
I_2
&\lesssim
\frac{1}{\sqrt{N}}\sum_{i,j,k}|x_i|
\frac{1}{N}\eta_0^{-2}
\Bigl(
|v_j|^2|w_k|^2
+|u_i|^2|w_k|^2
+|u_i|^2|v_j|^2
\Bigr).
\end{align*}
Using $\sum_j v_j^2=\sum_k w_k^2=\sum_i u_i^2=1$ and
$\sum_i |x_i|\le \sqrt{N}\|x\|_2=\sqrt{N}$, we conclude that
\[
I_2 \;\lesssim\; \eta_0^{-2} N^{-1/2}=\mathcal O(\frac{1}{\sqrt{N}}).
\]

\medskip
\noindent\textbf{Step 2: bounds for $I_1$ and $I_3$.}
We only treat $I_1$ since $I_3$ is analogous.
Recall
\[
I_1:=\frac{1}{\sqrt N}\sum_{i,j,k}|x_i|
\sup_{z\in\mathbb C^+_{\eta_0}}
\Bigl|w_k\,\frac{\partial^2 v_j}{\partial W_{ijk}^2}\Bigr|.
\]
We start from Eq.~(\ref{eq3.4}), which can be written schematically as
\begin{equation}
\label{eq:dv-struct}
\frac{\partial v_j}{\partial W_{ijk}}
=-\frac{1}{\sqrt N}\Bigl(
v_j w_k\,\Delta^{(12)}_{ij}
+u_i w_k\,\Delta^{(22)}_{jj}
+u_i v_j\,\Delta^{(32)}_{kj}
\Bigr),
\end{equation}
where the quantities $\Delta^{(12)}_{ij},\Delta^{(22)}_{jj},\Delta^{(32)}_{kj}$ are linear forms
in resolvent blocks (for instance $\Delta^{(12)}_{ij}=R^{12}_{ij}-u_i(u^\top \bm R^{12}_{\cdot j})$,
and similarly for the others).

\smallskip
\noindent\emph{Second derivative expansion.}
Differentiating (\ref{eq:dv-struct}) once more w.r.t.\ $W_{ijk}$ gives
\begin{align}
\label{eq:d2v-expand}
\frac{\partial^2 v_j}{\partial W_{ijk}^2}
&=-\frac{1}{\sqrt N}\Bigl[
\frac{\partial (v_j w_k)}{\partial W_{ijk}}\Delta^{(12)}_{ij}
+v_j w_k\,\frac{\partial \Delta^{(12)}_{ij}}{\partial W_{ijk}}
\Bigr] \notag\\
&\quad -\frac{1}{\sqrt N}\Bigl[
\frac{\partial (u_i w_k)}{\partial W_{ijk}}\Delta^{(22)}_{jj}
+u_i w_k\,\frac{\partial \Delta^{(22)}_{jj}}{\partial W_{ijk}}
\Bigr] \notag\\
&\quad -\frac{1}{\sqrt N}\Bigl[
\frac{\partial (u_i v_j)}{\partial W_{ijk}}\Delta^{(32)}_{kj}
+u_i v_j\,\frac{\partial \Delta^{(32)}_{kj}}{\partial W_{ijk}}
\Bigr].
\end{align}

\smallskip
\noindent\emph{Bounds on the coefficients.}
Using $\sup_{z\in\mathbb C^+_{\eta_0}}\|\bm R(z)\|\lesssim 1$ and (\ref{eq:dv-struct}),
we have uniformly for $z\in\mathbb C^+_{\eta_0}$,
\begin{equation}
\label{eq:dv-bound}
\Bigl|\frac{\partial v_j}{\partial W_{ijk}}\Bigr|
\lesssim \frac{1}{\sqrt N}
\bigl(|v_j||w_k|+|u_i||w_k|+|u_i||v_j|\bigr),
\end{equation}
and similarly
\begin{equation}
\label{eq:dw-bound}
\Bigl|\frac{\partial w_k}{\partial W_{ijk}}\Bigr|
\lesssim \frac{1}{\sqrt N}
\bigl(|w_k||v_j|+|u_i||v_j|+|u_i||w_k|\bigr).
\end{equation}
Therefore,
\begin{align}
\label{eq:dcoeff-bound}
\Bigl|\frac{\partial (v_j w_k)}{\partial W_{ijk}}\Bigr|
&\le |w_k|\Bigl|\frac{\partial v_j}{\partial W_{ijk}}\Bigr|
+|v_j|\Bigl|\frac{\partial w_k}{\partial W_{ijk}}\Bigr|
\notag\\
&\lesssim \frac{1}{\sqrt N}
\bigl(|v_j||w_k|+|u_i||w_k|+|u_i||v_j|\bigr),
\end{align}
and analogously,
\begin{align*}
\Bigl|\frac{\partial (u_i w_k)}{\partial W_{ijk}}\Bigr|
& \lesssim \frac{1}{\sqrt N}\bigl(|u_i||w_k|+|v_j||w_k|+|u_i||v_j|\bigr), \\
\Bigl|\frac{\partial (u_i v_j)}{\partial W_{ijk}}\Bigr|
& \lesssim \frac{1}{\sqrt N}\bigl(|u_i||v_j|+|u_i||w_k|+|v_j||w_k|\bigr).
\end{align*}

By the resolvent identity $\partial R=-R(\partial M)R$ (where $M$ is the linearized
block matrix depending on $W$ and $\|\partial M\|=\mathcal{O}(N^{-1/2})$), we have
\begin{equation}
\label{eq:dR-bound}
\sup_{z\in\mathbb C^+_{\eta_0}}\|\partial R(z)\|
\lesssim N^{-1/2}.
\end{equation}
Consequently, since each $\Delta^{(\cdot)}$ is a linear form in resolvent blocks,
\begin{equation}
\label{eq:dWi-bound}
\sup_{z\in\mathbb C^+_{\eta_0}}
\Bigl|\frac{\partial \Delta^{(12)}_{ij}}{\partial W_{ijk}}\Bigr|
+\sup_{z\in\mathbb C^+_{\eta_0}}
\Bigl|\frac{\partial \Delta^{(22)}_{jj}}{\partial W_{ijk}}\Bigr|
+\sup_{z\in\mathbb C^+_{\eta_0}}
\Bigl|\frac{\partial \Delta^{(32)}_{kj}}{\partial W_{ijk}}\Bigr|
\lesssim N^{-1/2}.
\end{equation}

Combining (\ref{eq:d2v-expand})-(\ref{eq:dWi-bound}) and using
$\sup|\Delta^{(\cdot)}|\lesssim \|R\|\lesssim 1$, we obtain
\begin{align}
\label{eq:d2v-bound}
\sup_{z\in\mathbb C^+_{\eta_0}}
\Bigl|\frac{\partial^2 v_j}{\partial W_{ijk}^2}\Bigr|
&\lesssim
\frac{1}{\sqrt N}\Bigl[
\Bigl|\frac{\partial (v_j w_k)}{\partial W_{ijk}}\Bigr|
+|v_j||w_k|\,N^{-1/2}
\Bigr] \notag\\
&\quad +\frac{1}{\sqrt N}\Bigl[
\Bigl|\frac{\partial (u_i w_k)}{\partial W_{ijk}}\Bigr|
+|u_i||w_k|\,N^{-1/2}
\Bigr] \notag\\
&\quad +\frac{1}{\sqrt N}\Bigl[
\Bigl|\frac{\partial (u_i v_j)}{\partial W_{ijk}}\Bigr|
+|u_i||v_j|\,N^{-1/2}
\Bigr] \notag\\
&\lesssim
\frac{1}{N}\bigl(|v_j||w_k|+|u_i||w_k|+|u_i||v_j|\bigr).
\end{align}

Plugging (\ref{eq:d2v-bound}) into the definition of $I_1$ gives
\begin{align}
\label{eq:I1-sum}
I_1
&\lesssim
\frac{1}{\sqrt N}\sum_{i,j,k}|x_i||w_k|
\frac{1}{N}\eta_0^{-2}\bigl(|v_j||w_k|+|u_i||w_k|+|u_i||v_j|\bigr)
\notag\\
&\le \frac{\eta_0^{-2}}{N\sqrt N}\Bigl[
\underbrace{\sum_i|x_i|\sum_j|v_j|\sum_k|w_k|^2}_{(1)}
+\underbrace{\sum_i|x_i||u_i|\sum_j1\sum_k|w_k|^2}_{(2)}
+\underbrace{\sum_i|x_i||u_i|\sum_j|v_j|\sum_k|w_k|}_{(3)}
\Bigr].
\end{align}
Using $\|x\|_2=\|u\|_2=\|v\|_2=\|w\|_2=1$, we have
$\sum_i|x_i|\le \sqrt N$, $\sum_j|v_j|\le \sqrt N$, $\sum_k|w_k|\le \sqrt N$,
and $\sum_k|w_k|^2=1$, as well as $\sum_i|x_i||u_i|\le \|x\|_2\|u\|_2=1$.
Since $n\asymp N$, the three brackets in (\ref{eq:I1-sum}) satisfy
\[
(1)\lesssim \sqrt N\cdot \sqrt N\cdot 1 = N,\qquad
(2)\lesssim 1\cdot N\cdot 1 = N,\qquad
(3)\lesssim 1\cdot \sqrt N\cdot \sqrt N = N.
\]
Therefore,
\begin{equation}
\label{eq:I1-final}
I_1 \lesssim N^{-1/2}=\mathcal{O}(\frac{1}{\sqrt{N}}).
\end{equation}
The same argument yields $I_3=\mathcal{O}(\frac{1}{\sqrt{N}})$, which completes Step~2.

Combining the bounds on $I_1$, $I_2$ and $I_3$, we obtain the remainder term $\frac{1}{\sqrt{N}}\sum_{ijk}x_i\varepsilon_{ijk}^{(2)}$ is negligible.

Therefore, by Eq.~(\ref{eq:42}), the almost sure limits
$\lambda^\infty(\beta)$, $a_x^\infty(\beta)$, $a_y^\infty(\beta)$ and
$a_z^\infty(\beta)$ satisfy
\[
\lambda^\infty(\beta)a_x^\infty(\beta)
= \beta a_y^\infty(\beta)a_z^\infty(\beta)
- \big(g_2(\lambda^\infty(\beta))+g_3(\lambda^\infty(\beta))\big)
a_x^\infty(\beta).
\]

Hence,
\[
a_x^\infty(\beta)
= \alpha_1(\lambda^\infty(\beta))\,a_y^\infty(\beta)a_z^\infty(\beta),
\qquad
\alpha_1(z)=\frac{\beta}{z+g(z)-g_1(z)}.
\]

Similarly, we obtain
\[
\left\{
\begin{aligned}
a_y^\infty(\beta)
&= \alpha_2(\lambda^\infty(\beta))\,a_x^\infty(\beta)a_z^\infty(\beta),\\
a_z^\infty(\beta)
&= \alpha_3(\lambda^\infty(\beta))\,a_x^\infty(\beta)a_y^\infty(\beta),
\end{aligned}
\right.
\qquad
\alpha_i(z)=\frac{\beta}{z+g(z)-g_i(z)}.
\]

Solving the above system yields the final asymptotic alignments.
Proceeding similarly to Eq.~(\ref{eq3.3}), we obtain an estimate of the
asymptotic singular value $\lambda^\infty(\beta)$, thereby completing
the proof.

\subsection{Proof of Corollary~\ref{cor3.2}}\label{appA.6}

By Corollary~\ref{cor3.1}, the limiting Stieltjes transform is given by
\[
g(z)=-\frac{3z}{4}+\frac{\sqrt{3}\sqrt{3z^2-8}}{4},
\]
and since $g(z)=\sum_{i=1}^3 g_i(z)$ with all $g_i(z)$ equal, then
$g_i(z)=\frac{g(z)}{3}$ for all $i\in[3]$. Hence, for each $i\in[3]$,
$\alpha_i(z)$ defined in Theorem~5 writes as
\[
\alpha_i(z)=\frac{\beta}{\frac{z}{2}+\frac{\sqrt{3}\sqrt{3z^2-8}}{6}},
\]
and $\lambda^\infty(\beta)$ is solution to the equation
\[
\frac{z}{4}+\frac{\sqrt{3}\sqrt{3z^2-8}}{4}
-\frac{\left(\frac{z}{2}+\frac{\sqrt{3}\sqrt{3z^2-8}}{6}\right)^3}{\beta^2}
=0.
\]

First we compute the critical value of $\beta$ by solving the above equation in
$\beta$ and taking the limit when $z$ tends to the right edge of the semi-circle
law (i.e., $z\to 2\sqrt{\frac{2}{3}}^{+}$). Indeed, solving the above equation in
$\beta$ yields
\[
\beta(z)=
\sqrt{
\frac{
2z^3+\frac{2z^2\sqrt{9z^2-24}}{3}-4z-\frac{4\sqrt{9z^2-24}}{9}
}{
z+\sqrt{3}\sqrt{3z^2-8}
}
},
\]
hence
\[
\beta_s=\lim_{z\to 2\sqrt{\frac{2}{3}}^{+}}\beta(z)=\frac{2\sqrt{3}}{3}.
\]

Now to express $\lambda^\infty(\beta)$ in terms of $\beta$, we solve the equation
$\beta-\beta(z)=0$ in $z$ and choose the unique non-decreasing (in $\beta$) and
positive solution, which yields
\[
\lambda^\infty(\beta)
=
\sqrt{
\frac{\beta^2}{2}+2+\frac{\sqrt{3}\sqrt{(3\beta^2-4)^3}}{18\beta}
}.
\]

Plugging the above expression of $\lambda^\infty(\beta)$ into the expressions of
the asymptotic alignments in Theorem~\ref{thm3.4}, we obtain for all $i\in[3]$
\[
\alpha_i\bigl(\lambda^\infty(\beta)\bigr)=
\frac{6\sqrt{2}\,\beta}{
\sqrt{\,9\beta^2-12+\frac{\sqrt{3}\sqrt{(3\beta^2-4)^3}}{\beta}}
+
\sqrt{\,9\beta^2+36+\frac{\sqrt{3}\sqrt{(3\beta^2-4)^3}}{\beta}}
},
\]
yielding the final result.

\section{Proof of Theorem \ref{thm4.1} and Corollary \ref{cor4.1}}\label{app:B}

\subsection{Proof of Theorem \ref{thm4.1}}\label{appA.7}

Denote the matrix model as
\[
\bm N \equiv \frac{1}{\sqrt N}\,
\Phi_d\bigl(\textbf{W},\bm a^{(1)},\ldots,\bm a^{(d)}\bigr),
\]
where $\textbf{W}\sim\textbf{T}_{n_1,\ldots,n_d}(\bm W)$ with \(\bm W\) satisfies (\ref{eq:moment-assumption}) and
$(\bm a^{(1)},\ldots,\bm a^{(d)})\in
\mathbb S^{n_1-1}\times\cdots\times\mathbb S^{n_d-1}$ are independent of
$\textbf{W}$.
We further denote the resolvent of $\bm N$ as
\[
\bm Q(z)\equiv(\bm N-z\bm I_N)^{-1}
=
\begin{bmatrix}
\bm Q^{11}(z) & \bm Q^{12}(z) & \bm Q^{13}(z) & \cdots & \bm Q^{1d}(z)\\
\bm Q^{21}(z) & \bm Q^{22}(z) & \bm Q^{23}(z) & \cdots & \bm Q^{2d}(z)\\
\bm Q^{31}(z) & \bm Q^{32}(z) & \bm Q^{33}(z) & \cdots & \bm Q^{3d}(z)\\
\vdots & \vdots & \vdots & \ddots & \vdots\\
\bm Q^{d1}(z) & \bm Q^{d2}(z) & \bm Q^{d3}(z) & \cdots & \bm Q^{dd}(z)
\end{bmatrix}.
\]

By the Borel--Cantelli lemma, we have
\[
\frac{1}{N}\operatorname{tr}\bm Q(z)
\xrightarrow{\mathrm{a.s.}} g(z),
\qquad
\text{and for all } i\in[d],\quad
\frac{1}{N}\operatorname{tr}Q^{ii}(z)
\xrightarrow{\mathrm{a.s.}} g_i(z).
\]

Recall that \(\bm N\bm Q(z) - z\bm Q(z)=\bm I_N\), from which we particularly obtain
\[
\frac{1}{\sqrt N}\sum_{j=2}^d
\bigl[\textbf{W}^{1j}\bm Q^{1j}(z)^\top\bigr]_{i_1 i_1}
- z Q^{11}_{i_1 i_1}(z)
= 1,
\]
or equivalently,
\begin{equation}
\label{eq:43}
\frac{1}{N\sqrt N}
\sum_{j=2}^d\sum_{i_j=1}^{n_j}
\bigl[\textbf{W}^{1j}\bm Q^{1j}(z)^\top\bigr]_{i_1 i_1}
-\frac{z}{N}\operatorname{tr}\bm Q^{11}(z)
= \frac{n_1}{N},
\end{equation}
where we recall that
\[
\textbf{W}^{ij}
=
\textbf{W}\bigl(\bm a^{(1)},\ldots,\bm a^{(i-1)},\cdot,
\bm a^{(i+1)},\ldots,\bm a^{(j-1)},\cdot,
\bm a^{(j+1)},\ldots,\bm a^{(d)}\bigr)
\in\mathbb M_{n_i,n_j}.
\]

We thus need to compute the expectation of
\[
\frac{1}{N\sqrt N}\sum_{i_1=1}^{n_1}
\bigl[\textbf{W}^{1j}\bm Q^{1j}(z)^\top\bigr]_{i_1 i_1},
\]
which develops as
\begin{align*}
A_j
&\equiv
\frac{1}{N\sqrt N}\sum_{i_1=1}^{n_1}
\mathbb E\!\left[
\bigl[\textbf{W}^{1j}\bm Q^{1j}(z)^\top\bigr]_{i_1 i_1}
\right]
=
\frac{1}{N\sqrt N}\sum_{i_1,i_j}
\mathbb E\!\left[
W^{1j}_{i_1 i_j} Q^{1j}_{i_1 i_j}
\right] \\
&=
\frac{1}{N\sqrt N}
\sum_{i_1,\ldots,i_d}
\prod_{k\neq 1,j} a^{(k)}_{i_k}\,
\mathbb E\!\left[
W_{i_1,\ldots,i_d}\, Q^{1j}_{i_1 i_j}
\right] \\
&=
\frac{1}{N\sqrt N}
\sum_{i_1,\ldots,i_d}
\prod_{k\neq 1,j} a^{(k)}_{i_k}\,
(\mathbb E\!\left[
\frac{\partial Q^{1j}_{i_1 i_j}}{\partial W_{i_1,\ldots,i_d}}
\right]
+\varepsilon_{i_1\cdots i_d}^{(2)}),
\end{align*}
where the last equality follows from the cumulant expansion \ref{lem2.3},
and $\varepsilon_{i_1\cdots i_d}^{(2)}$ collects the higher-order cumulant remainders.

In particular, as in Appendix~\ref{appA.2} for the $3$-order case, it turns out that
the only contributing term in the derivative
$\partial Q^{1j}_{i_1 i_j}/\partial W_{i_1,\ldots,i_d}$
is
\[
-\frac{1}{\sqrt N}
\prod_{k\neq 1,j} a^{(k)}_{i_k}\,
Q^{11}_{i_1 i_1}\,Q^{jj}_{i_j i_j},
\]
with the other terms yielding quantities of order $\mathcal{O}(N^{-1})$.

Since \(\varepsilon_{i_1 \cdots i_d}^{(2)} \le \sup_{z\in \mathbb C^+_{\eta_0}}|\frac{\partial^2}{\partial W_{i_i \cdots i_d}^2}Q_{i_1i_j}^{1j}|\) , we need to prove 
\[
\frac{1}{N\sqrt N}
\sum_{i_1,\ldots,i_d}
\prod_{k\neq 1,j} a^{(k)}_{i_k}\sup_{z\in \mathbb C^+_{\eta_0}}|\frac{\partial^2}{\partial W_{i_i \cdots i_d}^2}Q_{i_1i_j}^{1j}| = \mathcal{O}(N^{-1/2}).
\]
Recall that $\bm Q(z)=(\bm N-z\bm I_N)^{-1}$ and denote
\[
\bm A_{i_1,\ldots,i_d}
:=
\frac{\partial \bm N}{\partial W_{i_1,\ldots,i_d}} .
\]
Since $\bm N$ depends linearly on each entry $W_{i_1,\ldots,i_d}$, we have
\[
\frac{\partial^2 \bm N}{\partial W_{i_1,\ldots,i_d}^2}=0 .
\]
By the resolvent identity,
\[
\partial_{W_{i_1,\ldots,i_d}} \bm Q(z)
=
- \bm Q(z)\, \bm A_{i_1,\ldots,i_d}\, \bm Q(z),
\]
and differentiating once more yields
\[
\partial_{W_{i_1,\ldots,i_d}}^2 \bm Q(z)
=
2\,\bm Q(z)\, \bm A_{i_1,\ldots,i_d}\, \bm Q(z)\,
\bm A_{i_1,\ldots,i_d}\, \bm Q(z).
\]
Hence, for any $z\in\mathbb C^+_{\eta_0}$ and any block indices $(a,b)$,
\[
\begin{aligned}
\left|
\partial_{W_{i_1,\ldots,i_d}}^2 Q^{ab}_{i_a i_b}(z)
\right|
&\le
2\,
\bigl\|
\bm Q(z)\, A_{i_1,\ldots,i_d}\, \bm Q(z)\,
\bm A_{i_1,\ldots,i_d}\, \bm Q(z)
\bigr\|_2 \\
&\le
2\,\|\bm Q(z)\|_2^3\,\|A_{i_1,\ldots,i_d}\|_2^2 .
\end{aligned}
\]
Since $\|\bm Q(z)\|_2\le\eta_0^{-1}$ for all $z\in\mathbb C^+_{\eta_0}$, it follows that
\[
\sup_{z\in\mathbb C^+_{\eta_0}}
\left|
\partial_{W_{i_1,\ldots,i_d}}^2 Q^{ab}_{i_a i_b}(z)
\right|
\le
2\,\eta_0^{-3}\,\|\bm A_{i_1,\ldots,i_d}\|_2^2 .
\]

Moreover, by the structure of $\Phi_d(\textbf{W},a^{(1)},\ldots,a^{(d)})$
and the normalization of the noise,
\[
\|\bm A_{i_1,\ldots,i_d}\|_2
=
\left\|
\frac{\partial \bm N}{\partial W_{i_1,\ldots,i_d}}
\right\|_2
\le
\frac{C}{\sqrt N}
    \sum_{a\neq b} \prod_{\ell \neq a,b}|a_{i_\ell}^{(\ell)}|,
\]
so that
\[
\|\bm A_{i_1,\ldots,i_d}\|_2^2
\le
\frac{C}{N}
\sum_{a\neq b} \prod_{\ell \neq a,b}|a_{i_\ell}^{(\ell)}|^2 .
\]

Summing over all multi-indices $(i_1,\ldots,i_d)$ and using
$\|\bm a^{(\ell)}\|_2=1$ for all $\ell\in[d]$, we obtain
\[
\begin{aligned}
&\frac{1}{N\sqrt N}
\sum_{i_1,\ldots,i_d}
\prod_{k\neq 1,j} a^{(k)}_{i_k}\sup_{z\in \mathbb C^+_{\eta_0}}|\frac{\partial^2}{\partial W_{i_i \cdots i_d}^2}Q_{i_1i_j}^{1j}|\\
\le &\frac{1}{N\sqrt N}
\sum_{i_1,\ldots,i_d}
\prod_{k\neq 1,j} a^{(k)}_{i_k}\cdot2\eta_0^{-3}\frac{C}{N}\sum_{a\neq b} \prod_{\ell \neq a,b}|a_{i_\ell}^{(\ell)}|^2\\
\le & C^*N^{-5/2}\sum_{i_1,\ldots,i_d}
\prod_{k\neq 1,j} a^{(k)}_{i_k}\sup_{a,b}\prod_{\ell \neq a,b}|a_{i_\ell}^{(\ell)}|^2
\end{aligned}
\]
We bound the term by distinguishing the relative position of $(a,b)$ with
respect to $(1,j)$. Writing
\[
S_{ab}
=
\sum_{i_1,\ldots,i_d}
\Bigl(\prod_{k\neq 1,j}|a^{(k)}_{i_k}|\Bigr)
\prod_{\ell\neq a,b}|a^{(\ell)}_{i_\ell}|^2
=
\prod_{\ell=1}^d
\Bigl(\sum_{i_\ell}|a^{(\ell)}_{i_\ell}|^{e_\ell}\Bigr),
\qquad
e_\ell=\bm 1_{\ell\neq 1,j}+2\cdot\bm 1_{\ell\neq a,b},
\]
we consider the following cases.

\emph{Case 1:} $(a,b)=(1,j)$. Then $e_1=e_j=0$ and $e_\ell=3$ for $\ell\neq 1,j$,
so that
\[
S_{1j}\le n_1n_j\le C N^2.
\]

\emph{Case 2:} $\{a,b\}\cap\{1,j\}$ contains exactly one element. Without loss of
generality $a=1$, $b\neq j$. Then
\[
S_{1b}\le n_1\sqrt{n_b}\le C N^{3/2}.
\]

\emph{Case 3:} $\{a,b\}\cap\{1,j\}=\varnothing$. Then
\[
S_{ab}\le \sqrt{n_a n_b}\le C N.
\]

In all cases, $S_{ab}\le C N^2$, hence
\[
N^{-5/2}\sum_{a\neq b}S_{ab}\le C N^{-1/2}\to 0.
\]

Therefore, we find that
\begin{align*}
A_j
&=
-\frac{1}{N\sqrt N}
\sum_{i_1,\ldots,i_d}
\prod_{k\neq 1,j} a^{(k)}_{i_k}
(\mathbb E\!\left[
\frac{\partial Q^{1j}_{i_1 i_j}}{\partial W_{i_1,\ldots,i_d}}
\right] + \varepsilon_{i_1 \cdots i_d}^{(2)} )\\
&=
-\frac{1}{N^2}
\sum_{i_1,i_j}
\mathbb E\!\left[
Q^{11}_{i_1 i_1}\,Q^{jj}_{i_j i_j}
\right]
+\mathcal{O}(N^{-1/2}) \\
&=
-\mathbb{E}\bigl[\frac{1}{N}\operatorname{tr}\bm Q^{11}(z)\,
\frac{1}{N}\operatorname{tr}\bm Q^{jj}(z)\bigr]
+\mathcal{O}(N^{-1/2})
\xrightarrow{\mathrm{a.s.}}
- g_1(z)g_j(z)+\mathcal{O}(N^{-1/2}).
\end{align*}

From Eq.~(\ref{eq:43}), $g_1(z)$ satisfies
\[
- g_1(z)\sum_{j\neq 1} g_j(z) - z g_1(z) = c_1,
\qquad
c_1=\lim\frac{n_1}{N}.
\]
Similarly, for all $i\in[d]$, $g_i(z)$ satisfies
\[
- g_i(z)\sum_{j\neq i} g_j(z) - z g_i(z) = c_i,
\qquad
c_i=\lim\frac{n_i}{N}.
\]
Since $g(z)=\sum_{i=1}^d g_i(z)$, we have for each $i\in[d]$,
\[
g_i^2(z)-(g(z)+z)g_i(z)+c_i=0,
\]
yielding
\[
g_i(z)
=
\frac{g(z)+z}{2}
-\frac{\sqrt{\,4c_i+(g(z)+z)^2\,}}{2},
\]
with $g(z)$ solution to the equation
\[
g(z)=\sum_{i=1}^d g_i(z)
\quad\text{such that}\quad
\Im g(z)>0\ \text{for }\Im z>0.
\]

\subsection{Proof of Corollary \ref{cor4.1}}\label{appA.8}

Given Theorem~\ref{thm4.1} and setting $c_i=\frac{1}{d}$ for all $i\in[d]$, we have
\[
g_i(z)=\frac{g(z)}{d}, \qquad \forall i\in[d].
\]
Therefore, the Stieltjes transform $g(z)$ satisfies
\begin{equation}
\label{eq:B10_self_consistent}
\frac{g(z)}{d}
=
\frac{g(z)+z}{2}
-
\frac{1}{2}
\sqrt{\frac{4}{d}+\bigl(g(z)+z\bigr)^2}.
\end{equation}

Solving Eqn.~(\ref{eq:B10_self_consistent}) for $g(z)$ yields the two branches
\[
g(z)\in
\left\{
-\frac{dz}{2(d-1)}-\frac{\sqrt{d\,(dz^2-4d+4)}}{2(d-1)},
\;
-\frac{dz}{2(d-1)}+\frac{\sqrt{d\,(dz^2-4d+4)}}{2(d-1)}
\right\}.
\]

Imposing the condition $\Im g(z)\ge 0$ for $\Im z>0$, we select the physical
branch of the Stieltjes transform, namely
\[
g(z)
=
-\frac{dz}{2(d-1)}
+
\frac{\sqrt{d\,(dz^2-4d+4)}}{2(d-1)}.
\]

This concludes the proof.
\hfill $\square$

\section{Proof of Theorem \ref{thm4.2}}\label{appA.9}

Given the random tensor model in Eq.~(\ref{eq1.2}) and its singular vectors characterized by
Eq.~(\ref{eq1.6}), we denote the associated random matrix model as
\[
\textbf{T} \equiv \Phi_d\bigl(\textbf{T}, \bm u^{(1)},\ldots,\bm u^{(d)}\bigr)
= \beta \bm V \bm B \bm V^\top + \bm N,
\]
where
\[
\bm N=\frac{1}{\sqrt N}\Phi_d\bigl(\textbf{W},\bm u^{(1)},\ldots,\bm u^{(d)}\bigr),
\quad
\bm B\in\mathbb M_d \; \text{ with entries }\; 
B_{ij}=(1-\delta_{ij})\prod_{k\neq i,j}\langle \bm u^{(k)},\bm x^{(k)}\rangle,
\]
and
\[
\bm V=
\begin{pmatrix}
\bm x^{(1)} & 0_{n_1} & \cdots & 0_{n_1}\\
0_{n_2} & \bm x^{(2)} & \cdots & 0_{n_2}\\
\vdots & \vdots & \ddots & \vdots\\
0_{n_d} & 0_{n_d} & \cdots & \bm x^{(d)}
\end{pmatrix}
\in \mathbb M_{N,d}.
\]

\medskip

We further denote the resolvent of $\textbf{T}$ and $\bm N$ respectively as
\[
\bm R(z)=(\textbf{T}-z\bm I_N)^{-1}
=
\begin{pmatrix}
\bm R^{11}(z) & \bm R^{12}(z) & \bm R^{13}(z) & \cdots & \bm R^{1d}(z)\\
\bm R^{21}(z) & \bm R^{22}(z) & \bm R^{23}(z) & \cdots & \bm R^{2d}(z)\\
\bm R^{31}(z) & \bm R^{32}(z) & \bm R^{33}(z) & \cdots & \bm R^{3d}(z)\\
\vdots & \vdots & \vdots & \ddots & \vdots\\
\bm R^{d1}(z) & \bm R^{d2}(z) & \bm R^{d3}(z) & \cdots & \bm R^{dd}(z)
\end{pmatrix},
\]
\[
\bm Q(z)=(\bm N-z\bm I_N)^{-1}
=
\begin{pmatrix}
\bm Q^{11}(z) & \bm Q^{12}(z) & \bm Q^{13}(z) & \cdots & \bm Q^{1d}(z)\\
\bm Q^{21}(z) & \bm Q^{22}(z) & \bm Q^{23}(z) & \cdots & \bm Q^{2d}(z)\\
\bm Q^{31}(z) & \bm Q^{32}(z) & \bm Q^{33}(z) & \cdots & \bm Q^{3d}(z)\\
\vdots & \vdots & \vdots & \ddots & \vdots\\
\bm Q^{d1}(z) & \bm Q^{d2}(z) & \bm Q^{d3}(z) & \cdots & \bm Q^{dd}(z)
\end{pmatrix}.
\]

Similarly as in the $3$-order case, by Woodbury matrix identity (Lemma~1) we have
\[
\frac{1}{N}\operatorname{tr}\bm R(z)
=
\frac{1}{N}\operatorname{tr}\bm Q(z)
-
\frac{1}{N}\operatorname{tr}
\Bigl[
\bigl(\tfrac{1}{\beta}\bm B^{-1}+\bm V^\top \bm Q(z)\bm V\bigr)^{-1}
\bm V^\top \bm Q^2(z)\bm V
\Bigr],
\]
since the perturbation matrix
\(
\bigl(\tfrac{1}{\beta}\bm B^{-1}+\bm V^\top \bm Q(z)\bm V\bigr)^{-1}\bm V^\top \bm Q^2(z)\bm V
\)
is of bounded spectral norm if $\|\bm Q(z)\|$ is bounded.
Hence for the characterization of the spectrum of $\textbf{T}$ boils down to the estimation of
$\frac{1}{N}\operatorname{tr}\bm Q(z)$.

Now we are left to handle the statistical dependency between the tensor noise
$\textbf{W}$ and the singular vectors $\bm u_\ast$.
Recalling the proof of Appendix~\ref{appA.7}, we have
\[
\frac{1}{N\sqrt N}\sum_{i_1=1}^{n_1}
\bigl[\textbf{W}^{1j}\bm Q^{1j}(z)^\top\bigr]_{i_1 i_1}
-\frac{z}{N}\operatorname{tr}\bm Q^{11}(z)
=\frac{n_1}{N},
\]
with
\(
\bm{W}^{ij}= \bm{W}(u^{(1)},\ldots,u^{(i-1)},\cdot,u^{(i+1)},\ldots,u^{(j-1)},\cdot,u^{(j+1)},\ldots,u^{(d)})
\in\mathbb M_{n_i,n_j}.
\)

Taking the expectation of
\(
\frac{1}{N\sqrt N}\sum_{i_1=1}^{n_1}
\bigl[\textbf{W}^{1j}\bm Q^{1j}(z)^\top\bigr]_{i_1 i_1}
\)
yields
\begin{align*}
A_j =& \frac{1}{N\sqrt N}\sum_{i_1=1}^{n_1}\mathbb{E}
\bigl[\textbf{W}^{1j}\bm Q^{1j}(z)^\top\bigr]_{i_1 i_1}\\
=&
\frac{1}{N\sqrt N}
\sum_{i_1,\ldots,i_d}
\mathbb E\!\left[
\frac{\partial Q^{1j}_{i_1 i_j}}{\partial W_{i_1,\ldots,i_d}}
\prod_{k\neq1,j}u^{(k)}_{i_k}\right]\\
+&
\frac{1}{N\sqrt N}
\sum_{i_1,\ldots,i_d}
\mathbb E\!\left[
Q^{1j}_{i_1 i_j}
\sum_{\ell \neq 1,j}\frac{\partial u^{(\ell)}_{i_\ell}}{\partial W_{i_1,\ldots,i_d}}\prod_{k\neq1,j,\ell}u_{i_k}^{(k)}
\right] + \frac{1}{N\sqrt{N}}\sum_{i_1,\ldots,i_d}\varepsilon_{i_1 \cdots i_d}^{(2)}\\
=& A_{j1}+A_{j2}+\frac{1}{N\sqrt{N}}\sum_{i_1,\ldots,i_d}\varepsilon_{i_1 \cdots i_d}^{(2)},
\end{align*}
where the second equation used Lemma \ref{lem2.3}.

We already computed the first term $A_{j1}$ in Appendix~\ref{appA.8}.
Now we show that $A_{j2}$ is asymptotically vanishing under Assumption~\ref{assumption4.1}.
Indeed, by Eq.~(\ref{eq4.3}), the higher order terms arise from the term
\(
-\frac{1}{\sqrt{N}}\prod_{\ell \neq k}u^{(\ell)}_{i_\ell}R^{kk}_{i_k i_k}(\lambda)
\),
thus we only show that the contribution of this term is also vanishing.
Precisely,
\begin{align*}
A_{j2}
&=
-\frac{1}{N^2}
\sum_{i_1,\ldots,i_d}
\mathbb E\!\left[
Q^{1j}_{i_1 i_j}
\sum_{\substack{\ell\neq 1\\ \ell\neq j}}
\prod_{t\neq \ell}
u^{(t)}_{i_t}\,
R^{\ell\ell}_{i_\ell i_\ell}(\lambda)\prod_{k\neq1,j,\ell}u_{i_k}^{(k)}
\right]
+ \mathcal O(N^{-1}) \\[1ex]
&=
-\frac{1}{N^2}
\sum_{i_1,\ldots,i_d}
\mathbb E\!\left[u_{i_1}^{(1)}
Q^{1j}_{i_1 i_j}u_{i_j}^{(j)}
\sum_{\substack{\ell\neq 1\\ \ell\neq j}}
R^{\ell\ell}_{i_\ell i_\ell}(\lambda)\prod_{k\neq1,j,\ell}(u_{i_k}^{(k)})^2
\right]
+ \mathcal O(N^{-1}) \\[1ex]
\end{align*}
Interchanging the order of summation and using $\|u^{(k)}\|_2=1$ for all
$k\in[d]$, we obtain
\begin{align*}
A_{j2}
&=
-\frac{1}{N^2}
\mathbb E\!\left[
(\bm u^{(1)})^\top \bm Q^{1j}(z)\,\bm u^{(j)}
\sum_{\substack{\ell\neq 1\\ \ell\neq j}}
\operatorname{tr} \bm R^{\ell\ell}(\lambda)
\right]
+\mathcal O(N^{-1}) .
\end{align*}

Since $\|\bm{Q}^{1j}(z)\|_2\le \eta_0^{-1}$ and
$\sum_{\ell\neq1,j}\operatorname{tr} \bm{R}^{\ell\ell}(\lambda)=\mathcal O(N)$,
it follows that
\[
A_{j2}=\mathcal O(N^{-1}),
\]
and therefore $A_{j2}$ vanishes asymptotically.

Now we need to prove \(\frac{1}{N\sqrt{N}}\sum_{i_1,\ldots,i_d}\varepsilon_{i_1 \cdots i_d}^{(2)}\) is vanishing, it is sufficient to prove 
\[
\frac{1}{N\sqrt{N}}\sum_{i_1,\ldots,i_d}\sup_{z \in \mathbb C^+_{\eta_0}}|\frac{\partial^2}{\partial W_{i_1 \cdots i_d}^2}(Q_{i_1i_j}^{1j}\prod_{k \neq 1,j}u_{i_k}^{(k)})| = \mathcal O(N^{-1/2}).
\]

\subsection{Notation and goal}
Fix a multi-index $(i_1,\ldots,i_d)$ and define
\[
D := \frac{\partial}{\partial W_{i_1\cdots i_d}},
\qquad
U := \prod_{k\neq 1,j} u^{(k)}_{i_k},
\qquad
F(z) := Q^{1j}_{i_1 i_j}(z)\, U ,
\]
where $\bm Q(z)=(\bm N-z\bm I)^{-1}$ and $z\in\mathbb C^+_{\eta_0}$.

Our goal is to show that
\begin{equation}\label{eq:Goal}
\frac{1}{N\sqrt N}
\sum_{i_1,\ldots,i_d}
\sup_{z\in\mathbb C^+_{\eta_0}}
\bigl| D^2 F(z) \bigr|
= \mathcal O\!\left(N^{-1/2}\right).
\end{equation}
Equivalently, it suffices to prove the following.
\begin{equation}\label{eq:Goal-equivalent}
\sum_{i_1,\ldots,i_d}
\sup_{z\in\mathbb C^+_{\eta_0}}
\bigl| D^2 F(z) \bigr|
= \mathcal O(N).
\end{equation}
The resolvent identities give
\begin{equation}\label{eq:DQ}
D\bm Q=-\bm Q(D\bm N)\bm Q,\qquad
D^2\bm Q=2\bm Q(D\bm N)\bm Q(D\bm N)\bm Q-\bm Q(D^2\bm N)\bm Q .
\end{equation}
For the $d$-dimensional contraction matrix
\(
\bm N=\frac1{\sqrt N}\Phi_d(\textbf{W},\bm u^{(1)},\ldots,\bm u^{(d)}),
\)
its off-diagonal blocks satisfy, for any $a\neq b$,
\[
\bm N^{ab}_{i_ai_b}
=\frac1{\sqrt N}\sum_{i_k:\,k\neq a,b}W_{i_1\cdots i_d}\prod_{k\neq a,b}u^{(k)}_{i_k},
\]
hence the main part of first derivative with respect to $W_{i_1\cdots i_d}$ is (like the 3-order) 
\begin{equation}\label{eq:DN}
D\bm N
=\frac{1}{\sqrt N}\sum_{a\neq b}
\Bigl(\prod_{k\neq a,b}u^{(k)}_{i_k}\Bigr)\,\bm E^{ab}_{i_a i_b},
\end{equation}
where $\bm E^{ab}_{i_a i_b}$ denotes the elementary matrix supported on the
$(a,b)$ block at position $(i_a,i_b)$.
Moreover, on $\mathbb C^+_{\eta_0}$ we have the uniform bound
\begin{equation}\label{eq:Qop}
\|\bm Q(z)\|\le \eta_0^{-1}.
\end{equation}
We expand $D^2F$ by the product rule. Recall
\[
F(z)=Q^{1j}_{i_1 i_j}(z)\,U,
\qquad
U=\prod_{k\neq 1,j}u^{(k)}_{i_k},
\qquad
D=\frac{\partial}{\partial W_{i_1\cdots i_d}}.
\]
Then
\begin{equation}\label{eq:D2F-expand}
D^2F(z)
=
\bigl(D^2 Q^{1j}_{i_1 i_j}(z)\bigr)\,U
\;+\;
2\bigl(D Q^{1j}_{i_1 i_j}(z)\bigr)\,(DU)
\;+\;
Q^{1j}_{i_1 i_j}(z)\,(D^2U).
\end{equation}
Then we divide the proof into three parts:
\begin{align*}
    \sum_{i_1,\ldots,i_d}
\sup_{z\in\mathbb C^+_{\eta_0}}
\bigl| D^2 F(z) \bigr| =& \sum_{i_1,\ldots,i_d}
\sup_{z\in\mathbb C^+_{\eta_0}}
|\bigl(D^2 Q^{1j}_{i_1 i_j}(z)\bigr)U| + \sum_{i_1,\ldots,i_d}
\sup_{z\in\mathbb C^+_{\eta_0}}
|2\bigl(D Q^{1j}_{i_1 i_j}(z)\bigr)\,(DU)
|\\
+&\sum_{i_1,\ldots,i_d}
\sup_{z\in\mathbb C^+_{\eta_0}}
|Q^{1j}_{i_1 i_j}(z)\,(D^2U)|\\
=& S_1 + S_2 + S_3,
\end{align*}
we prove \(S_!,S_2,S_3 = \mathcal{O}(N)\).

\subsection{Proof of \(S_1\)}\label{secS1}

Recall
\[
S_1
:=\sum_{i_1,\ldots,i_d}
\sup_{z\in\mathbb C^+_{\eta_0}}
\bigl|
(D^2 Q^{1j}_{i_1 i_j})(z)
\bigr|
\prod_{k\neq 1,j}\bigl|u^{(k)}_{i_k}\bigr|.
\]
Using (\ref{eq:DQ}) we split
\[
(D^2 Q^{1j}_{i_1 i_j})
=
\bigl(2\bm Q(D\bm N)\bm Q(D\bm N)\bm Q\bigr)^{1j}_{i_1 i_j}
-
\bigl(\bm Q(D^2\bm N)\bm Q\bigr)^{1j}_{i_1 i_j}.
\]
Accordingly, we have 
\[
S_1\le S_{1,\mathrm{main}}+S_{1,\mathrm{rem}},
\]
where
\[
S_{1,\mathrm{main}}
:=\sum_{i_1,\ldots,i_d}\sup_{z\in\mathbb C^+_{\eta_0}}
\bigl|
\bigl(2\bm Q(D\bm N)\bm Q(D\bm N)\bm Q\bigr)^{1j}_{i_1 i_j}
\bigr|
\prod_{k\neq 1,j}|u^{(k)}_{i_k}|,
\]
and
\[
S_{1,\mathrm{rem}}
:=\sum_{i_1,\ldots,i_d}\sup_{z\in\mathbb C^+_{\eta_0}}
\bigl|
\bigl(\bm Q(D^2\bm N)\bm Q\bigr)^{1j}_{i_1 i_j}
\bigr|
\prod_{k\neq 1,j}|u^{(k)}_{i_k}|.
\]

\subsubsection*{Reduction to the main term}

Substituting (\ref{eq:DN}) into $Q(D\bm N)Q(D\bm N)Q$ yields
\[
\bm Q(D\bm N)\bm Q(D\bm N)\bm Q
=\frac{1}{N}
\sum_{\substack{a\neq b\\ c\neq e}}
\Bigl(\prod_{k\neq a,b}u^{(k)}_{i_k}\Bigr)
\Bigl(\prod_{k\neq c,e}u^{(k)}_{i_k}\Bigr)\,
\bm Q E^{ab}_{i_a i_b} \bm Q E^{ce}_{i_c i_e} \bm Q .
\]
Therefore, for $z\in\mathbb C^+_{\eta_0}$,
\begin{align}
\sup_z
\bigl|
\bigl(2\bm Q(D\bm N)\bm Q(D\bm N)\bm Q\bigr)^{1j}_{i_1 i_j}
\bigr|
&\lesssim
\frac{1}{N}\sum_{\substack{a\neq b\\ c\neq e}}
\Bigl(\prod_{k\neq a,b}|u^{(k)}_{i_k}|\Bigr)
\Bigl(\prod_{k\neq c,e}|u^{(k)}_{i_k}|\Bigr)
\|\bm Q\|^3.
\label{eq:D2Qmain-bound}
\end{align}
Plugging (\ref{eq:D2Qmain-bound}) into the definition of $S_{1,\mathrm{main}}$ gives
\begin{align}
S_{1,\mathrm{main}}
&\lesssim
\frac{1}{N}
\sum_{\substack{a\neq b\\ c\neq e}}
\sum_{i_1,\ldots,i_d}
\prod_{k\neq 1,j}|u^{(k)}_{i_k}|
\prod_{k\neq a,b}|u^{(k)}_{i_k}|
\prod_{k\neq c,e}|u^{(k)}_{i_k}|.
\label{eq:S1main-expand}
\end{align}

For each coordinate $r\in[d]$, let $\alpha_r\in\{0,1,2,3\}$ denote the total power
of $|u^{(r)}_{i_r}|$ appearing in the integrand.
Since each product deletes exactly two indices, we always have $\alpha_r\le 3$.
The sum over indices factorizes as
\[
\sum_{i_1,\ldots,i_d}\prod_{r=1}^d |u^{(r)}_{i_r}|^{\alpha_r}
=\prod_{r=1}^d \sum_{i_r}|u^{(r)}_{i_r}|^{\alpha_r}.
\]

Note that only the cases $\alpha_r=0$ and $\alpha_r=1$ contribute powers of $N$,
while $\alpha_r\ge 2$ yields a bounded contribution:
\[
\sum_{i_r}1 \sim N \quad (\alpha_r=0),\qquad
\sum_{i_r}|u^{(r)}_{i_r}|\lesssim \sqrt N \quad (\alpha_r=1),
\]
and
\[
\sum_{i_r}|u^{(r)}_{i_r}|^2=1,\qquad
\sum_{i_r}|u^{(r)}_{i_r}|^3=\|u^{(r)}\|_3^3\le \|u^{(r)}\|_2^3=1.
\]

Consequently, the order of the inner sum is completely determined by how the
three deleted index pairs $\{1,j\}$, $\{a,b\}$ and $\{c,e\}$ overlap.
A direct inspection shows:
\begin{itemize}
\item The maximal contribution arises when all three products delete the same
pair $\{1,j\}$, i.e.\ $\{a,b\}=\{1,j\}$ and $\{c,e\}=\{1,j\}$.
In this case $\alpha_1=\alpha_j=0$ and $\alpha_r=3$ for all $r\neq 1,j$,
so the inner sum is of order $N^2$, which yields $\mathcal O(N)$ after
multiplication by the prefactor $1/N$.
\item If the deleted pairs are not all identical but share exactly one index
(e.g.\ configurations of the form $(1,j)$, $(j,i)$, $(i,1)$),
then one obtains at most one factor $N$ and two factors $\sqrt N$,
leading to an inner sum of order $N^{3/2}$ and hence a contribution
$\mathcal O(\sqrt N)$ after the prefactor $1/N$.
\item All remaining configurations produce smaller orders.
\end{itemize}

Since the number of pairs $(a,b),(c,e)$ is finite (with $d$ fixed), we conclude
that
\[
S_{1,\mathrm{main}}=\mathcal O(N),
\]
and in fact most terms are of lower order $\mathcal O(\sqrt N)$ or smaller.

\subsubsection*{Reduction to the remain term}

We estimate the remainder part
\[
S_{1,\mathrm{rem}}
:=\sum_{i_1,\ldots,i_d}\sup_{z\in\mathbb C^+_{\eta_0}}
\Bigl|\bigl(\bm Q(D^2\bm N)\bm Q\bigr)^{1j}_{i_1i_j}(z)\Bigr|
\prod_{k\neq 1,j}|u^{(k)}_{i_k}|.
\]
Using $\|\bm{Q}(z)\|\le \eta_0^{-1}$ on $\mathbb C^+_{\eta_0}$ and
$|(\bm Q\bm A\bm Q)^{1j}_{i_1i_j}|\le \|\bm Q\|^2\|\bm A\|$, we get
\begin{equation}\label{eq:S1rem-1}
S_{1,\mathrm{rem}}
\;\lesssim\;
\sum_{i_1,\ldots,i_d}\|D^2\bm N\|\prod_{k\neq 1,j}|u^{(k)}_{i_k}|.
\end{equation}

Fix $(i_1,\ldots,i_d)$ and write $D:=\partial/\partial W_{i_1,\cdots, i_d}$.
Since $\bm N$ is linear in $W$, the second derivative $D^2\bm N$ comes only from the
dependence of $u^{(k)}$ on $W$.
For any $a\neq b$,
\[
(D^2\bm N)^{ab}_{i_ai_b}
=
\frac{1}{\sqrt N}\,
D^2\!\Bigl(\prod_{k\neq a,b}u^{(k)}_{i_k}\Bigr),
\qquad (D^2\bm N)^{aa}\equiv 0,
\]
and by the product rule,
\begin{align}\label{eq:D2prod}
D^2\!\Bigl(\prod_{k\neq a,b}u^{(k)}_{i_k}\Bigr)
&=
\sum_{\ell\neq a,b}\Bigl(D^2u^{(\ell)}_{i_\ell}\prod_{k\neq a,b,\ell}u^{(k)}_{i_k}\Bigr)
+\sum_{\substack{\ell\neq m\\ \ell,m\neq a,b}}
\Bigl(Du^{(\ell)}_{i_\ell}Du^{(m)}_{i_m}\prod_{k\neq a,b,\ell,m}u^{(k)}_{i_k}\Bigr).
\end{align}
Using $\|D^2\bm N\|\le \|D^2\bm N\|_{\mathrm{HS}}$ and the fact that, for fixed
$(i_1,\ldots,i_d)$, each off-diagonal block $(a,b)$ has at most one potentially
nonzero entry at $(i_a,i_b)$, we have
\[
\|D^2\bm N\|_{\mathrm{HS}}^2
=
\sum_{a\neq b}\bigl|(D^2\bm N)^{ab}_{i_ai_b}\bigr|^2
=
\frac{1}{N}\sum_{a\neq b}
\Bigl|D^2\!\Bigl(\prod_{k\neq a,b}u^{(k)}_{i_k}\Bigr)\Bigr|^2,
\]
hence (using that $d$ is fixed, so $\ell^2\le C\,\ell^1$)
\begin{equation}\label{eq:D2N-bound}
\|D^2\bm N\|
\;\lesssim\;
\frac{1}{\sqrt N}\sum_{a\neq b}
\Bigl|D^2\!\Bigl(\prod_{k\neq a,b}u^{(k)}_{i_k}\Bigr)\Bigr|.
\end{equation}
Plugging (\ref{eq:D2N-bound}) into (\ref{eq:S1rem-1}) and using (\ref{eq:D2prod})
give the decomposition
\begin{equation}\label{eq:S1rem-T12}
S_{1,\mathrm{rem}}\;\lesssim\;T_1+T_2,
\end{equation}
where
\begin{align}
T_1
&:=
\frac{1}{\sqrt N}\sum_{a\neq b}\sum_{\ell\neq a,b}
\sum_{i_1,\ldots,i_d}
\bigl|D^2u^{(\ell)}_{i_\ell}\bigr|
\prod_{k\neq a,b,\ell}\bigl|u^{(k)}_{i_k}\bigr|
\prod_{k\neq 1,j}\bigl|u^{(k)}_{i_k}\bigr|,
\label{eq:T1-def}
\\
T_2
&:=
\frac{1}{\sqrt N}\sum_{a\neq b}\sum_{\substack{\ell\neq m\\ \ell,m\neq a,b}}
\sum_{i_1,\ldots,i_d}
\bigl|Du^{(\ell)}_{i_\ell}\bigr|\ \bigl|Du^{(m)}_{i_m}\bigr|
\prod_{k\neq a,b,\ell,m}\bigl|u^{(k)}_{i_k}\bigr|
\prod_{k\neq 1,j}\bigl|u^{(k)}_{i_k}\bigr|.
\label{eq:T2-def}
\end{align}
Importantly, we keep the full sum over $(i_1,\ldots,i_d)$ and do \emph{not} peel
off the indices $i_1,i_j$ in advance; instead we estimate the whole product
coordinate-wise.

\paragraph*{Bound on $T_1$}
Fix $(a,b,\ell)$ with $\ell\neq a,b$.
For each coordinate $r\in[d]\setminus\{\ell\}$, the factor $|u^{(r)}_{i_r}|$
appears with exponent $\alpha_r\in\{0,1,2\}$ in the product
\[
\prod_{k\neq a,b,\ell}|u^{(k)}_{i_k}|\ \prod_{k\neq 1,j}|u^{(k)}_{i_k}|,
\]
namely $\alpha_r=0$ if $r\in\{a,b,\ell\}\cap\{1,j\}$, $\alpha_r=2$ if
$r\notin\{a,b,\ell,1,j\}$, and $\alpha_r=1$ otherwise.
Hence the sum factorizes as
\[
\sum_{i_1,\ldots,i_d}
|D^2u^{(\ell)}_{i_\ell}|
\prod_{k\neq a,b,\ell}|u^{(k)}_{i_k}|
\prod_{k\neq 1,j}|u^{(k)}_{i_k}|
=
\Big(\sum_{i_\ell}|D^2u^{(\ell)}_{i_\ell}||u_{i_\ell}^{(\ell)}|^{\alpha_\ell}\Big)
\prod_{r\neq \ell}\Big(\sum_{i_r}|u^{(r)}_{i_r}|^{\alpha_r}\Big).
\]
We bound
\[
\sum_{i_\ell}|D^2u^{(\ell)}_{i_\ell}||u_{i_\ell}^{(\ell)}|^{\alpha_\ell}
\le \,\|D^2\bm u^{(\ell)}\|_2\||u_{i_\ell}^{(\ell)}|^{\alpha_\ell}\|_2,
\]
and for the $u$-sums we use
\[
\sum_{i_r}|u^{(r)}_{i_r}|^2=1,
\qquad
\sum_{i_r}|u^{(r)}_{i_r}| \le \sqrt N\,\|u^{(r)}\|_2=\sqrt N,
\qquad
\sum_{i_r}1 = N.
\]
Therefore, for each fixed $(a,b,\ell)$,
\begin{equation}
\sum_{i_1,\ldots,i_d}
|D^2u^{(\ell)}_{i_\ell}|
\prod_{k\neq a,b,\ell}|u^{(k)}_{i_k}|
\prod_{k\neq 1,j}|u^{(k)}_{i_k}|
\;\lesssim\;
\bigl(\|D^2\bm u^{(\ell)}\|_2N^{\frac{1}{2}\bm 1_{\ell\in\{1,j\}}}\bigr)\cdot N^{\#\{r\neq \ell:\alpha_r=0\}}\cdot N^{\frac12\#\{r\neq \ell:\alpha_r=1\}}.
\label{eqA.90}
\end{equation}
We now enumerate all possibilities for the overlaps of $(a,b,\ell)$ with
$\{1,j\}$ and count
\(
\#\{r\neq\ell:\alpha_r=0\}
\)
and
\(
\#\{r\neq\ell:\alpha_r=1\}.
\)
Throughout, recall $\ell\neq a,b$.

\medskip
\noindent\textbf{Case I: $\ell\in\{1,j\}$.}
Let $s$ be the other element of $\{1,j\}$, i.e.,\ $\{1,j\}=\{\ell,s\}$.

\smallskip
\noindent\emph{(I.1) $\ell\in\{1,j\}$ and $a,b\notin\{1,j\}$.}
Then $\bm 1_{\ell\in\{1,j\}}=1$, $\#\{r\neq\ell:\alpha_r=0\}=0$, and
$\#\{r\neq\ell:\alpha_r=1\}=3$ (coming from $r=s,a,b$).  Hence,
(\ref{eqA.90}) gives $\Sigma_{a,b,\ell}\lesssim \|D^2\bm u^{(\ell)}\|_2\,N^2$.

\smallskip
\noindent\emph{(I.2) $\ell\in\{1,j\}$ and $\{a,b\}\cap\{1,j\}=\{s\}$.}
Then $\bm 1_{\ell\in\{1,j\}}=1$, $\#\{r\neq\ell:\alpha_r=0\}=1$ (at $r=s$),
and $\#\{r\neq\ell:\alpha_r=1\}=1$ (at the other index among $\{a,b\}$).  Hence,
(\ref{eqA.90}) gives $\Sigma_{a,b,\ell}\lesssim \|D^2\bm u^{(\ell)}\|_2\,N^2$.

\medskip
\noindent\textbf{Case II: $\ell\notin\{1,j\}$.}
Then $\bm 1_{\ell\in\{1,j\}}=0$ and both $r=1$ and $r=j$ belong to the
counting set $r\neq\ell$.

\smallskip
\noindent\emph{(II.1) $\ell\notin\{1,j\}$ and $a,b\notin\{1,j\}$.}
Then $\#\{r\neq\ell:\alpha_r=0\}=0$ and $\#\{r\neq\ell:\alpha_r=1\}=4$
(coming from $r=1,j,a,b$). Hence, (\ref{eqA.90}) gives
$\Sigma_{a,b,\ell}\lesssim \|D^2\bm u^{(\ell)}\|_2\,N^2$.

\smallskip
\noindent\emph{(II.2) $\ell\notin\{1,j\}$ and $|\{a,b\}\cap\{1,j\}|=1$.}
Then $\#\{r\neq\ell:\alpha_r=0\}=1$ (at the intersecting element in $\{1,j\}$),
and $\#\{r\neq\ell:\alpha_r=1\}=2$ (at the remaining element of $\{1,j\}$ and at
the other index among $\{a,b\}$). Hence, (\ref{eqA.90}) gives
$\Sigma_{a,b,\ell}\lesssim \|D^2\bm u^{(\ell)}\|_2\,N^2$.

\smallskip
\noindent\emph{(II.3) $\ell\notin\{1,j\}$ and $\{a,b\}=\{1,j\}$.}
Then $\#\{r\neq\ell:\alpha_r=0\}=2$ (at $r=1$ and $r=j$), and
$\#\{r\neq\ell:\alpha_r=1\}=0$. Hence, (\ref{eqA.90}) gives
$\Sigma_{a,b,\ell}\lesssim \|D^2\bm u^{(\ell)}\|_2\,N^2$.

\medskip
\noindent
In all the cases we have shown the uniform bound
\[
\Sigma_{a,b,\ell}\;\lesssim\; \|D^2\bm u^{(\ell)}\|_2\,N^2 .
\]
Plugging this into (\ref{eq:T1-def}) yields
\[
T_1
\lesssim
\frac{1}{\sqrt N}\sum_{a\neq b}\sum_{\ell\neq a,b}
\|D^2\bm u^{(\ell)}\|_2\,N^2
\;\lesssim\;
\frac{N^2}{\sqrt N}\sum_{\ell=1}^d \|D^2\bm u^{(\ell)}\|_2,
\]
and using the bound $\|D^2\bm u^{(\ell)}\|_2\lesssim N^{-1}$ (proved as in the
$d=3$ case) we conclude $T_1=\mathcal O(\sqrt{N})$.

\paragraph*{Bound on $T_2$}
We treat the term $T_2$ by the same coordinate-wise counting argument as in the
estimate of $T_1$.  Fix $(a,b,\ell,m)$ with $\ell\neq m$ and $\ell,m\neq a,b$, and
set
\[
\Sigma^{(2)}_{a,b,\ell,m}
:=
\sum_{i_1,\ldots,i_d}
\bigl|Du^{(\ell)}_{i_\ell}\bigr|\,
\bigl|Du^{(m)}_{i_m}\bigr|
\prod_{k\neq a,b,\ell,m}\bigl|u^{(k)}_{i_k}\bigr|
\prod_{k\neq 1,j}\bigl|u^{(k)}_{i_k}\bigr|.
\]

As before, for each coordinate $r\in[d]$ we denote by $\alpha_r\in\{0,1,2\}$ the
total exponent of $\bigl|u^{(r)}_{i_r}\bigr|$ appearing in the product
\(
\prod_{k\neq a,b,\ell,m}|u^{(k)}_{i_k}|\prod_{k\neq 1,j}|u^{(k)}_{i_k}|
\),
with the understanding that the factors corresponding to $r=\ell,m$ are treated
separately.  For $r=\ell,m$ we apply the Cauchy--Schwarz inequality: 
\[
\sum_{i_\ell}\bigl|Du^{(\ell)}_{i_\ell}\bigr|\bigl|u_{i_\ell}^{(\ell)}\bigr|^{\alpha_\ell}
\le \|D\bm u^{(\ell)}\|_2\|\bigl|u_{i_\ell}^{(\ell)}\bigr|^{\alpha_\ell}\|_2,
\qquad
\sum_{i_m}\bigl|Du^{(m)}_{i_m}\bigr|^{\alpha_m}
\le \|D\bm u^{(m)}\|_2\|\bigl|u_{i_m}^{(m)}\bigr|^{\alpha_m}\|_2.
\]
For the remaining coordinates $r\neq \ell,m$ we use
\[
\sum_{i_r}1=N,\qquad
\sum_{i_r}\bigl|u^{(r)}_{i_r}\bigr|\le \sqrt N,\qquad
\sum_{i_r}\bigl|u^{(r)}_{i_r}\bigr|^2=1.
\]
Combining these bounds yields the general estimate
\begin{equation}\label{eqA.91}
\Sigma^{(2)}_{a,b,\ell,m}
\;\lesssim\;
\bigl(\|D\bm u^{(\ell)}\|_2N^{\frac{1}{2}\bm 1_{\alpha_\ell = 0}}\bigr)
\bigl(\|D\bm u^{(m)}\|_2N^{\frac{1}{2}\bm 1_{\alpha_m = 0}}\bigr)\cdot
N^{\#\{r\notin\{\ell,m\}:\alpha_r=0\}}\cdot
N^{\frac12\#\{r\notin\{\ell,m\}:\alpha_r=1\}} .
\end{equation}

We now classify (\ref{eqA.91}) case by case in the same way as for
(\ref{eqA.90}).  Recall that $\ell\neq m$ and $\ell,m\neq a,b$.  For any
$r\notin\{\ell,m\}$, the exponent $\alpha_r$ of $|u^{(r)}_{i_r}|$ in
\[
\prod_{k\neq a,b,\ell,m}|u^{(k)}_{i_k}|\ \prod_{k\neq 1,j}|u^{(k)}_{i_k}|
\]
satisfies $\alpha_r\in\{0,1,2\}$, and in fact
\[
\alpha_r=0 \ \Longrightarrow\ r\in\{1,j\}\cap\{a,b\},
\qquad
\alpha_r=2 \ \Longrightarrow\ r\notin\{1,j\}\cup\{a,b\}.
\]
Moreover, for the derivative coordinates we have
\[
\bm 1_{\alpha_\ell=0}= \bm 1_{\{\ell\in\{1,j\}\}},
\qquad
\bm 1_{\alpha_m=0}= \bm 1_{\{m\in\{1,j\}\}},
\]
since the first product always excludes $\ell,m$ while the second product
excludes exactly $\{1,j\}$.

\medskip
\noindent\textbf{Case I: $\{\ell,m\}=\{1,j\}$.}
Then $\bm 1_{\alpha_\ell=0}=\bm 1_{\alpha_m=0}=1$ and
$a,b\notin\{1,j\}$. Hence,
\[
\#\{r\notin\{\ell,m\}:\alpha_r=0\}=0,\qquad
\#\{r\notin\{\ell,m\}:\alpha_r=1\}=2,
\]
and (\ref{eqA.91}) gives $\Sigma^{(2)}_{a,b,\ell,m}\lesssim
\|D\bm u^{(\ell)}\|_2\|D\bm u^{(m)}\|_2\,N^2$.

\medskip
\noindent\textbf{Case II: exactly one of $\ell,m$ lies in $\{1,j\}$.}
By symmetry assume $\ell\in\{1,j\}$ and $m\notin\{1,j\}$, and let
$s:=\{1,j\}\setminus\{\ell\}$.

\smallskip
\noindent\emph{(II.1) $\ell\in\{1,j\}$, $m\notin\{1,j\}$, and $a,b\notin\{1,j\}$.}
Then
\[
\bm 1_{\alpha_\ell=0}=1,\quad \bm 1_{\alpha_m=0}=0,\quad
\#\{r\notin\{\ell,m\}:\alpha_r=0\}=0,\quad
\#\{r\notin\{\ell,m\}:\alpha_r=1\}=3,
\]
(where the $\alpha_r=1$ coordinates are $r=s,a,b$). Thus, (\ref{eqA.91}) yields
$\Sigma^{(2)}_{a,b,\ell,m}\lesssim \|D\bm u^{(\ell)}\|_2\|D\bm u^{(m)}\|_2\,N^2$.

\smallskip
\noindent\emph{(II.2) $\ell\in\{1,j\}$, $m\notin\{1,j\}$, and $\{a,b\}\cap\{1,j\}=\{s\}$.}
Then
\[
\bm 1_{\alpha_\ell=0}=1,\quad \bm 1_{\alpha_m=0}=0,\quad
\#\{r\notin\{\ell,m\}:\alpha_r=0\}=1,\quad
\#\{r\notin\{\ell,m\}:\alpha_r=1\}=1,
\]
(the $\alpha_r=0$ coordinate is $r=s$, and the remaining $\alpha_r=1$ coordinate
is the other element of $\{a,b\}$). Thus, (\ref{eqA.91}) yields
$\Sigma^{(2)}_{a,b,\ell,m}\lesssim \|D\bm u^{(\ell)}\|_2\|D\bm u^{(m)}\|_2\,N^2$.

\medskip
\noindent\textbf{Case III: $\ell,m\notin\{1,j\}$.}
Then $\bm 1_{\alpha_\ell=0}=\bm 1_{\alpha_m=0}=0$.  We split according
to the overlap of $\{a,b\}$ with $\{1,j\}$.

\smallskip
\noindent\emph{(III.1) $\{a,b\}\cap\{1,j\}=\emptyset$.}
Then $\alpha_1=\alpha_j=1$, so
\[
\#\{r\notin\{\ell,m\}:\alpha_r=0\}=0,\qquad
\#\{r\notin\{\ell,m\}:\alpha_r=1\}=4
\quad(\text{from }r=1,j,a,b),
\]
and (\ref{eqA.91}) yields $\Sigma^{(2)}_{a,b,\ell,m}\lesssim
\|D\bm u^{(\ell)}\|_2\|D\bm u^{(m)}\|_2\,N^2$.

\smallskip
\noindent\emph{(III.2) $|\{a,b\}\cap\{1,j\}|=1$.}
Then exactly one of $\{1,j\}$ (say $1$) lies in $\{a,b\}$, hence $\alpha_1=0$
and $\alpha_j=1$. Moreover, the other element of $\{a,b\}$ lies outside
$\{1,j\}$ and contributes one more $\alpha_r=1$. Therefore
\[
\#\{r\notin\{\ell,m\}:\alpha_r=0\}=1,\qquad
\#\{r\notin\{\ell,m\}:\alpha_r=1\}=2,
\]
and (\ref{eqA.91}) yields $\Sigma^{(2)}_{a,b,\ell,m}\lesssim
\|D\bm u^{(\ell)}\|_2\|D\bm u^{(m)}\|_2\,N^2$.

\smallskip
\noindent\emph{(III.3) $\{a,b\}=\{1,j\}$.}
Then $\alpha_1=\alpha_j=0$, so
\[
\#\{r\notin\{\ell,m\}:\alpha_r=0\}=2,\qquad
\#\{r\notin\{\ell,m\}:\alpha_r=1\}=0,
\]
and (\ref{eqA.91}) yields $\Sigma^{(2)}_{a,b,\ell,m}\lesssim
\|D\bm u^{(\ell)}\|_2\|D\bm u^{(m)}\|_2\,N^2$.

\medskip
\noindent
In all the cases we have the uniform bound
\[
\Sigma^{(2)}_{a,b,\ell,m}\ \lesssim\ \|D\bm u^{(\ell)}\|_2\|D\bm u^{(m)}\|_2\,N^2.
\]
Plugging this into (\ref{eq:T2-def}) gives
\[
T_2
\lesssim
\frac{1}{\sqrt N}\sum_{a\neq b}\sum_{\substack{\ell\neq m\\ \ell,m\neq a,b}}
\|D\bm u^{(\ell)}\|_2\|D\bm u^{(m)}\|_2\,N^2
\;\lesssim\;
N^{3/2}\sum_{\ell\neq m}\|D\bm u^{(\ell)}\|_2\|D\bm u^{(m)}\|_2,
\]
and using $\|D\bm u^{(\ell)}\|_2\lesssim N^{-1/2}$ and that $d$ is fixed, we obtain
$T_2=\mathcal O(\sqrt{N})$.

\subsubsection*{Conclusion}
Since \(S \le S_{1,main}+S_{1,rem} = \mathcal{O}(\sqrt{N})\), we complete the proof.

\subsection{Proof of \(S_2\)}\label{secS2}
Recall that $S_2$ corresponds to the cross term $(DQ^{1j}_{i_1 i_j})(DU)$ in the
expansion of $D^2F$. By definition,
\[
S_2
\;\lesssim\;
\sum_{i_1,\ldots,i_d}\sup_{z\in\mathbb C^+_{\eta_0}}
\bigl|DQ^{1j}_{i_1 i_j}(z)\bigr|\cdot |DU|.
\]

\subsubsection*{Expansion of $DQ^{1j}$ and $DU$}
Using the resolvent identity $DQ=-Q(DN)Q$ and the explicit form of $DN$, we have
\begin{equation}\label{eq:S2-DQ}
DQ^{1j}_{i_1 i_j}(z)
=
-\frac{1}{N}\sum_{a\neq b}
\Big(\prod_{k\neq a,b}u^{(k)}_{i_k}\Big)
Q^{1a}_{i_1 i_a}(z)\,Q^{bj}_{i_b i_j}(z).
\end{equation}
Moreover,
\begin{equation}\label{eq:S2-DU}
DU
=
\sum_{\ell\neq 1,j}
\bigl(Du^{(\ell)}_{i_\ell}\bigr)
\prod_{k\neq 1,j,\ell}u^{(k)}_{i_k}.
\end{equation}
Taking absolute values, the supremum over $z$, and bounding resolvent entries by
the operator norm yields
\[
\sup_{z\in\mathbb C^+_{\eta_0}}\bigl|Q^{1a}_{i_1 i_a}(z)\bigr|
\le \sup_{z\in\cal S_{\eta_0}}\|\bm Q(z)\|,
\qquad
\sup_{z\in\mathbb C^+_{\eta_0}}\bigl|Q^{bj}_{i_b i_j}(z)\bigr|
\le \sup_{z\in\mathbb C^+_{\eta_0}}\|\bm Q(z)\|.
\]
Hence,
\begin{equation}\label{eq:S2-bound1}
\sup_{z}\bigl|DQ^{1j}_{i_1 i_j}(z)\bigr|
\;\lesssim\;
\frac{\sup_{z}\|\bm Q(z)\|^2}{N}
\sum_{a\neq b}
\prod_{k\neq a,b}\bigl|u^{(k)}_{i_k}\bigr|.
\end{equation}

\subsubsection*{Insertion into $S_2$}
Combining (\ref{eq:S2-DU}) and (\ref{eq:S2-bound1}) and exchanging sums (all terms
are nonnegative), we obtain
\begin{align}
S_2
&\lesssim
\frac{\sup_{z}\|\bm Q(z)\|^2}{N}
\sum_{a\neq b}\sum_{\ell\neq 1,j}
\sum_{i_1,\ldots,i_d}
\bigl|Du^{(\ell)}_{i_\ell}\bigr|
\Big(\prod_{k\neq a,b}\bigl|u^{(k)}_{i_k}\bigr|\Big)
\Big(\prod_{k\neq 1,j,\ell}\bigl|u^{(k)}_{i_k}\bigr|\Big).
\label{eq:S2-main}
\end{align}

\subsubsection*{Summation over $i_\ell$}
Fix $(a,b,\ell)$. The factors involving the $\ell$th coordinate take the form
\[
|Du^{(\ell)}_{i_\ell}|\cdot |u^{(\ell)}_{i_\ell}|^{\beta_\ell},
\qquad
\beta_\ell=
\begin{cases}
0, & \ell\in\{a,b\},\\
1, & \ell\notin\{a,b\}.
\end{cases}
\]
Applying Cauchy--Schwarz in $i_\ell$ gives
\[
\sum_{i_\ell}|Du^{(\ell)}_{i_\ell}|\ |u^{(\ell)}_{i_\ell}|^{\beta_\ell}
\le
\|D\bm u^{(\ell)}\|_2
\Big(\sum_{i_\ell}|u^{(\ell)}_{i_\ell}|^{2\beta_\ell}\Big)^{1/2}
\lesssim
\|D\bm u^{(\ell)}\|_2\,N^{\frac12\bm 1_{\ell\in\{a,b\}}}.
\]

\subsubsection*{Summation in full detail: exhaustive case discussion}
We now justify the uniform bound
\[
\sum_{i_1,\ldots,i_d}
\bigl|Du^{(\ell)}_{i_\ell}\bigr|
\Big(\prod_{k\neq a,b}\bigl|u^{(k)}_{i_k}\bigr|\Big)
\Big(\prod_{k\neq 1,j,\ell}\bigl|u^{(k)}_{i_k}\bigr|\Big)
\ \lesssim\ 
\|D\bm u^{(\ell)}\|_2\,N^2,
\]
by an explicit classification of all cases.

\medskip
\noindent\textbf{Setup.}
Fix $(a,b,\ell)$ with $a\neq b$ and $\ell\neq 1,j$ (since $\ell$ comes from
$DU=\sum_{\ell\neq 1,j}\cdots$). Define
\[
\Sigma_{a,b,\ell}
:=
\sum_{i_1,\ldots,i_d}
\bigl|Du^{(\ell)}_{i_\ell}\bigr|
\Big(\prod_{k\neq a,b}\bigl|u^{(k)}_{i_k}\bigr|\Big)
\Big(\prod_{k\neq 1,j,\ell}\bigl|u^{(k)}_{i_k}\bigr|\Big).
\]
For each coordinate $r\neq \ell$, let $\alpha_r\in\{0,1,2\}$ be the exponent of
$\bigl|u^{(r)}_{i_r}\bigr|$ contributed by the two products
\(
\prod_{k\neq a,b}|u^{(k)}_{i_k}|
\)
and
\(
\prod_{k\neq 1,j,\ell}|u^{(k)}_{i_k}|.
\)
Then for $r\neq \ell$ one checks:
\[
\alpha_r=
\bm 1\{r\neq a,b\}+\bm 1\{r\neq 1,j,\ell\}
=
\begin{cases}
2,& r\notin \{a,b\}\cup\{1,j\},\\
1,& r\in (\{a,b\}\triangle\{1,j\}),\\
0,& r\in \{a,b\}\cap\{1,j\},
\end{cases}
\quad (r\neq \ell),
\]
where we used $r\neq\ell$ to simplify $\bm 1\{r\neq 1,j,\ell\}$.
Hence, coordinate-wise summation gives
\[
\sum_{i_r}\bigl|u^{(r)}_{i_r}\bigr|^{\alpha_r}
\le
\begin{cases}
N,& \alpha_r=0,\\
\sqrt N,& \alpha_r=1,\\
1,& \alpha_r=2,
\end{cases}
\qquad\text{since } \sum_i|u_i|=\|u\|_1\le\sqrt N\|u\|_2=\sqrt N,\ \sum_i|u_i|^2=1.
\]

For the $\ell$-coordinate, the total factor in $\Sigma_{a,b,\ell}$ is
\[
|Du^{(\ell)}_{i_\ell}|\cdot |u^{(\ell)}_{i_\ell}|^{\beta_\ell},
\qquad
\beta_\ell=
\begin{cases}
0,& \ell\in\{a,b\},\\
1,& \ell\notin\{a,b\},
\end{cases}
\]
because the second product always excludes $\ell$, while the first product
includes $\ell$ iff $\ell\notin\{a,b\}$. By the Cauchy--Schwarz inequality, we obtain
\[
\sum_{i_\ell}|Du^{(\ell)}_{i_\ell}|\ |u^{(\ell)}_{i_\ell}|^{\beta_\ell}
\le
\|D\bm u^{(\ell)}\|_2\Big(\sum_{i_\ell}|u^{(\ell)}_{i_\ell}|^{2\beta_\ell}\Big)^{1/2}
\lesssim
\|D\bm u^{(\ell)}\|_2\,N^{\frac12\bm 1_{\ell\in\{a,b\}}}.
\]

Therefore, it remains to compute, for each configuration of $(a,b,\ell)$,
the two counts
\[
n_0:=\#\{r\neq \ell:\alpha_r=0\}=\#\bigl((\{a,b\}\cap\{1,j\})\setminus\{\ell\}\bigr),
\qquad
n_1:=\#\{r\neq \ell:\alpha_r=1\},
\]
and then we will have
\begin{equation}\label{eq:S2-case-template}
\Sigma_{a,b,\ell}
\ \lesssim\
\|Du^{(\ell)}\|_2\,N^{\frac12\bm 1_{\ell\in\{a,b\}}}\cdot N^{n_0}\cdot N^{\frac12 n_1}.
\end{equation}

\medskip
\noindent\textbf{Main split according to $\{a,b\}\cap\{1,j\}$.}
Let $t:=|\{a,b\}\cap\{1,j\}|\in\{0,1,2\}$. We treat $t=2,1,0$ separately, and in
each case split according to whether $\ell\in\{a,b\}$.

\bigskip
\noindent\textbf{Case 1: $t=2$, i.e.\ $\{a,b\}=\{1,j\}$.}
Since $\ell\neq 1,j$, we have $\ell\notin\{a,b\}$, hence
\[
\bm 1_{\ell\in\{a,b\}}=0,\qquad \beta_\ell=1.
\]
Moreover, for $r\neq \ell$:
\[
\alpha_1=\alpha_j=0,\qquad
\alpha_r=2 \ \text{ for all } r\notin\{1,j,\ell\}.
\]
Thus
\[
n_0=\#\{r\neq \ell:\alpha_r=0\}=2,\qquad
n_1=\#\{r\neq \ell:\alpha_r=1\}=0.
\]
Plugging into (\ref{eq:S2-case-template}) yields
\[
\Sigma_{a,b,\ell}\ \lesssim\ \|D\bm u^{(\ell)}\|_2\cdot N^{0}\cdot N^{2}\cdot N^{0}
=\|D\bm u^{(\ell)}\|_2\,N^{2}.
\]

\bigskip
\noindent\textbf{Case 2: $t=1$, i.e.,\ $|\{a,b\}\cap\{1,j\}|=1$.}
Without loss of generality, assume that $a=1$ and $b\notin\{1,j\}$
(the other choices are symmetric). Then for $r\neq \ell$:
\[
\alpha_1=0,\qquad
\alpha_j=1,\qquad
\alpha_b=1\ \text{ if } b\neq \ell,\qquad
\alpha_r=2\ \text{ for } r\notin\{1,j,b,\ell\}.
\]
We now make split depending on whether $\ell=b$.

\smallskip
\noindent\emph{(2.1) $\ell=b$ (so $\ell\in\{a,b\}$).}
Then $\bm 1_{\ell\in\{a,b\}}=1$ and $b=\ell$ is excluded from the counts
($r\neq\ell$). Hence
\[
n_0=1 \quad(\text{from } r=1),\qquad
n_1=1 \quad(\text{from } r=j).
\]
Thus, (\ref{eq:S2-case-template}) gives
\[
\Sigma_{a,b,\ell}\ \lesssim\ \|D\bm u^{(\ell)}\|_2\cdot N^{1/2}\cdot N^{1}\cdot N^{1/2}
=\|D\bm u^{(\ell)}\|_2\,N^{2}.
\]

\smallskip
\noindent\emph{(2.2) $\ell\neq b$ (so $\ell\notin\{a,b\}$).}
Then $\bm 1_{\ell\in\{a,b\}}=0$, and the two $\alpha=1$ coordinates are
$r=j$ and $r=b$. Hence
\[
n_0=1 \quad(\text{from } r=1),\qquad
n_1=2 \quad(\text{from } r=j,b).
\]
Therefore,
\[
\Sigma_{a,b,\ell}\ \lesssim\ \|D\bm u^{(\ell)}\|_2\cdot N^{0}\cdot N^{1}\cdot N^{1}
=\|D\bm u^{(\ell)}\|_2\,N^{2}.
\]

\bigskip
\noindent\textbf{Case 3: $t=0$, i.e.\ $\{a,b\}\cap\{1,j\}=\emptyset$.}
Then $1$ and $j$ are not removed by the first product, so for $r\neq\ell$ we have
\[
\alpha_1=1,\qquad \alpha_j=1.
\]
Also, since $a,b\notin\{1,j\}$, we have
\[
\alpha_a=
\begin{cases}
0,& a=\ell,\\
1,& a\neq\ell,
\end{cases}
\qquad
\alpha_b=
\begin{cases}
0,& b=\ell,\\
1,& b\neq\ell,
\end{cases}
\qquad
\alpha_r=2\ \text{ for } r\notin\{1,j,a,b,\ell\}.
\]
We split into two subcases depending on whether $\ell\in\{a,b\}$.

\smallskip
\noindent\emph{(3.1) $\ell\in\{a,b\}$.}
Then $\bm 1_{\ell\in\{a,b\}}=1$ and exactly one of $a,b$ equals $\ell$,
so among $r\neq\ell$ only the remaining one contributes $\alpha=1$.
Hence
\[
n_0=0,\qquad
n_1=3 \quad(\text{from } r=1,j,\text{ and the unique element of }\{a,b\}\setminus\{\ell\}).
\]
Therefore,
\[
\Sigma_{a,b,\ell}\ \lesssim\ \|D\bm u^{(\ell)}\|_2\cdot N^{1/2}\cdot N^{0}\cdot N^{3/2}
=\|D\bm u^{(\ell)}\|_2\,N^{2}.
\]

\smallskip
\noindent\emph{(3.2) $\ell\notin\{a,b\}$.}
Then $\bm 1\{\ell\in\{a,b\}\}=0$, and the $\alpha=1$ coordinates are
$r=1$, $r=j$, $r=a$, and $r=b$. Hence
\[
n_0=0,\qquad n_1=4.
\]
Thus
\[
\Sigma_{a,b,\ell}\ \lesssim\ \|D\bm u^{(\ell)}\|_2\cdot N^{0}\cdot N^{0}\cdot N^{2}
=\|D\bm u^{(\ell)}\|_2\,N^{2}.
\]

\bigskip
\noindent\textbf{Uniform conclusion for all cases.}
In every possible configuration of $(a,b,\ell)$ with $\ell\neq 1,j$, we have shown
\[
\Sigma_{a,b,\ell}\ \lesssim\ \|Du^{(\ell)}\|_2\,N^{2}.
\]
Returning to (\ref{eq:S2-main}), we obtain
\[
S_2
\ \lesssim\
\frac{\sup_{z}\|\bm Q(z)\|^2}{N}\sum_{a\neq b}\sum_{\ell\neq 1,j}\Sigma_{a,b,\ell}
\ \lesssim\
\sup_{z}\|\bm Q(z)\|^2\cdot N\sum_{\ell\neq 1,j}\|D\bm u^{(\ell)}\|_2.
\]
Using $\sup_{z\in\mathbb C^+_{\eta_0}}\|\bm Q(z)\|\lesssim 1$ and
$\|D\bm u^{(\ell)}\|_2=\mathcal O(N^{-1/2})$ (as in Theorem~\ref{thm3.3}), we conclude that
$S_2=\mathcal O(\sqrt{N})$.

\subsection{Proof of $S_3$}\label{secS3}

Recall
\[
U=\prod_{k\neq 1,j}u^{(k)}_{i_k},\qquad 
D=\frac{\partial}{\partial W_{i_1,\cdots, i_d}},
\qquad 
Q(z)=(N-zI)^{-1},\ z\in\mathbb C^+_{\eta_0}.
\]
We prove
\begin{equation}\label{eq:S3Goal}
S_3^\star:=\sum_{i_1,\ldots,i_d}\sup_{z\in\mathbb C^+_{\eta_0}}
\bigl|Q^{1j}_{i_1 i_j}(z)\,(D^2U)\bigr|
= \mathcal O(N),
\end{equation}
(and in fact we will obtain the sharper bound $\mathcal O(\sqrt N)$).

\subsubsection*{\texorpdfstring{Step 1: Expand $D^2U$ into ``$D^2\bm u$'' and ``$D\bm u\cdot D\bm u$'' parts}{}}
By the product rule,
\begin{equation}\label{eq:D2U-decomp}
D^2U
=
\sum_{\ell\neq 1,j}
\Bigl(D^2u^{(\ell)}_{i_\ell}\Bigr)\prod_{k\neq 1,j,\ell}u^{(k)}_{i_k}
+
\sum_{\substack{\ell\neq m\\ \ell,m\neq 1,j}}
\Bigl(Du^{(\ell)}_{i_\ell}\Bigr)\Bigl(Du^{(m)}_{i_m}\Bigr)
\prod_{k\neq 1,j,\ell,m}u^{(k)}_{i_k}.
\end{equation}
Hence, using $|a+b|\le |a|+|b|$,
\[
S_3^\star\le S_{3,A}^\star+S_{3,B}^\star,
\]
where
\begin{align}
S_{3,A}^\star
&:=
\sum_{i_1,\ldots,i_d}\sup_{z\in\mathbb C^+_{\eta_0}}
|Q^{1j}_{i_1 i_j}(z)|\,
\sum_{\ell\neq 1,j}
\Bigl|D^2u^{(\ell)}_{i_\ell}\Bigr|
\prod_{k\neq 1,j,\ell}\bigl|u^{(k)}_{i_k}\bigr|,
\label{eq:S3A-def}\\
S_{3,B}^\star
&:=
\sum_{i_1,\ldots,i_d}\sup_{z\in\mathbb C^+_{\eta_0}}
|Q^{1j}_{i_1 i_j}(z)|\,
\sum_{\substack{\ell\neq m\\ \ell,m\neq 1,j}}
\bigl|Du^{(\ell)}_{i_\ell}\bigr|\,
\bigl|Du^{(m)}_{i_m}\bigr|
\prod_{k\neq 1,j,\ell,m}\bigl|u^{(k)}_{i_k}\bigr|.
\label{eq:S3B-def}
\end{align}
Since $d$ is fixed, it suffices to show that each inner sum over $\ell$ (or
$\ell\neq m$) contributes at most $\mathcal O(N)$ uniformly; the finite sums in
$\ell,m$ only change constants.

\subsubsection*{Step 2: A basic resolvent--vector inequality to avoid the $N^2$ blow-up}
A key point is that we \emph{must not} bound
$|Q^{1j}_{i_1 i_j}|\le \|\bm Q\|$, because then the double free sum in $(i_1,i_j)$ could
produce an $N^2$ factor. Instead, whenever a factor $|u^{(1)}_{i_1}|$ or
$|u^{(j)}_{i_j}|$ is present, we use Cauchy--Schwarz to absorb the corresponding
index into $\|\bm Q\|$.

Fix $z\in\mathbb C^+_{\eta_0}$. For any fixed $i_j$,
\begin{equation}\label{eq:Q-absorb-i1}
\sum_{i_1}|Q^{1j}_{i_1 i_j}(z)|\,|u^{(1)}_{i_1}|
\le 
\Big(\sum_{i_1}|Q^{1j}_{i_1 i_j}(z)|^2\Big)^{1/2}\|\bm u^{(1)}\|_2
\le \|\bm Q(z)\|.
\end{equation}
Similarly, for any fixed $i_1$,
\begin{equation}\label{eq:Q-absorb-ij}
\sum_{i_j}|Q^{1j}_{i_1 i_j}(z)|\,|u^{(j)}_{i_j}|
\le \|\bm Q(z)\|.
\end{equation}
Taking $\sup_{z\in\mathbb C^+_{\eta_0}}$ and using $\sup_z\|\bm Q(z)\|\le \eta_0^{-1}$,
we may replace the troublesome $|Q^{1j}_{i_1i_j}(z)|$ by $\|\bm Q(z)\|$ \emph{at the cost
of one factor $|u^{(1)}_{i_1}|$ or $|u^{(j)}_{i_j}|$ in the summand}. This is the
mechanism preventing the $N^2$ blow-up.

\subsubsection*{\texorpdfstring{Step 3: Express $D\bm u$ and $D^2\bm u$ using (\ref{eq4.3}) and keep the 
product structure.}{}}
We use the derivative representation (Eq.~(\ref{eq4.3})) in component form:
\begin{equation}\label{eq:Du-structure}
D\bm u^{(r)}
=
-\frac1{\sqrt N}\sum_{s=1}^d R_{rs}(\lambda)\,\bm b^{(s)},
\qquad r\in[d],
\end{equation}
where for each $s$,
\begin{equation}\label{eq:b-structure}
\bm b^{(s)}
=
\Big(\prod_{t\neq s}u^{(t)}_{i_t}\Big)\Big(\bm e^{n_s}_{i_s}-u^{(s)}_{i_s}\bm u^{(s)}\Big).
\end{equation}
\emph{Crucial:} we will \emph{never} bound $\bm{b}^{(s)}$ by a constant; we keep the
multiplicative factor $\prod_{t\neq s}u^{(t)}_{i_t}$, because it supplies
$u^{(1)}_{i_1}$ or $u^{(j)}_{i_j}$ needed to absorb $Q^{1j}_{i_1i_j}$.

Differentiating (\ref{eq:Du-structure}) once more gives
\begin{equation}\label{eq:D2u-decomp}
D^2u^{(r)}
=
-\frac1{\sqrt N}\sum_{s=1}^d\Big[(DR_{rs}(\lambda))\,b^{(s)} + R_{rs}(\lambda)\,(Db^{(s)})\Big].
\end{equation}
Thus the $D^2\bm u$ part further splits into ``$(D\bm R)\cdot \bm b$'' and ``$\bm R\cdot D\bm b$''.

Next, expand $D\bm b^{(s)}$ explicitly. Let
\[
P_s:=\prod_{t\neq s}u^{(t)}_{i_t},
\qquad
\bm g^{(s)}:=\bm e^{n_s}_{i_s}-u^{(s)}_{i_s}\bm u^{(s)},
\qquad
\bm b^{(s)}=P_s\,\bm g^{(s)}.
\]
Then
\begin{equation}\label{eq:Db-expand1}
D\bm b^{(s)}=(DP_s)\,\bm g^{(s)} + P_s\,(D\bm g^{(s)}).
\end{equation}
We have
\begin{equation}\label{eq:DPs-expand}
DP_s=\sum_{t\neq s}\Big(Du^{(t)}_{i_t}\Big)\prod_{r\neq s,t}u^{(r)}_{i_r},
\end{equation}
and since $D(\bm e^{n_s}_{i_s})=0$,
\begin{equation}\label{eq:Dg-expand}
D\bm g^{(s)}= -\Big(Du^{(s)}_{i_s}\Big)\bm u^{(s)} - u^{(s)}_{i_s}\,D\bm u^{(s)}.
\end{equation}
Therefore, $D \bm{b}^{(s)}$ is the sum of three types of terms:
\begin{align}
\text{(I)}\quad &
\sum_{t\neq s}\Big(Du^{(t)}_{i_t}\Big)\Big(\prod_{r\neq s,t}u^{(r)}_{i_r}\Big) \bm g^{(s)},
\label{eq:Db-typeI}\\
\text{(II)}\quad &
-\,P_s\Big(Du^{(s)}_{i_s}\Big)\bm u^{(s)},
\label{eq:Db-typeII}\\
\text{(III)}\quad &
-\,P_s\,u^{(s)}_{i_s}\,D\bm u^{(s)}.
\label{eq:Db-typeIII}
\end{align}
Each term contains at least one factor $D\bm u^{(\cdot)}$. Hence, (by (\ref{eq:Du-structure}))
each contributes an additional factor $N^{-1/2}$ and an additional ``product of
$u$'' of the form $\prod_{t\neq q}u^{(t)}_{i_t}$.

\subsubsection*{Step 4: A deterministic counting lemma for sums of products of $u$'s}
We repeatedly use the bounds
\begin{equation}\label{eq:u-sum-bounds}
\sum_{i_r}1=N,\qquad 
\sum_{i_r}|u^{(r)}_{i_r}|\le \sqrt N\|\bm u^{(r)}\|_2=\sqrt N,\qquad
\sum_{i_r}|u^{(r)}_{i_r}|^2=1,\qquad
\sum_{i_r}|u^{(r)}_{i_r}|^3\le 1.
\end{equation}
Thus, for a factor $|u^{(r)}_{i_r}|^{\alpha_r}$ with $\alpha_r\in\{0,1,2,3\}$,
the contribution of summing in $i_r$ is at most $N$ if $\alpha_r=0$, at most
$\sqrt N$ if $\alpha_r=1$, and at most $1$ if $\alpha_r\ge 2$.

\medskip
\noindent\textbf{Important structural fact.}
In every term arising from (\ref{eq:D2U-decomp}) together with
(\ref{eq:Du-structure})-(\ref{eq:Db-typeIII}), after multiplying by the outside
product $\prod_{k\neq 1,j,\ell(\!,m)}|u^{(k)}_{i_k}|$, the pair of indices $(i_1,i_j)$
cannot both appear with exponent $0$. Concretely, at least one of
$|u^{(1)}_{i_1}|$ or $|u^{(j)}_{i_j}|$ is present in the product.
This is because each $\bm b^{(s)}$ (and hence each $D\bm u$ or $D\bm b$ term) misses
\emph{at most one} coordinate, so it cannot simultaneously remove both coordinates
$1$ and $j$. This guarantees that we can apply either (\ref{eq:Q-absorb-i1}) or
(\ref{eq:Q-absorb-ij}) to absorb $Q^{1j}_{i_1i_j}(z)$ without paying an $N^2$ cost.

We now implement this principle in the two parts $S_{3,B}^\star$ and
$S_{3,A}^\star$.

%========================================================
\subsubsection*{Step 5: Bound on $S_{3,B}^\star$ (the $Du\cdot Du$ part)}

Fix distinct $\ell,m\neq 1,j$. Consider the contribution
\[
\Sigma_{\ell,m}
:=
\sum_{i_1,\ldots,i_d}\sup_{z\in\mathbb C^+_{\eta_0}}
|Q^{1j}_{i_1i_j}(z)|\,
|Du^{(\ell)}_{i_\ell}|\,
|Du^{(m)}_{i_m}|\,
\prod_{k\neq 1,j,\ell,m}|u^{(k)}_{i_k}|.
\]
Then
\[
S_{3,B}^\star \le \sum_{\substack{\ell\neq m\\ \ell,m\neq1,j}} \Sigma_{\ell,m}.
\]

\paragraph*{Expand $Du^{(\ell)}_{i_\ell}$ and $Du^{(m)}_{i_m}$ while keeping products}
From (\ref{eq:Du-structure})-(\ref{eq:b-structure}),
\[
|Du^{(\ell)}_{i_\ell}|
\le \frac{1}{\sqrt N}\sum_{s=1}^d |R_{\ell s}(\lambda)|\,|b^{(s)}_{i_\ell}|,
\qquad
|Du^{(m)}_{i_m}|
\le \frac{1}{\sqrt N}\sum_{t=1}^d |R_{m t}(\lambda)|\,|b^{(t)}_{i_m}|.
\]
Using that $d$ is fixed and $\|R(\lambda)\|\lesssim 1$ (as in the earlier parts),
we may absorb the finite sums in $s,t$ into constants and focus on a typical term
with fixed $(s,t)$.
Each factor $b^{(s)}$ contributes the product $P_s=\prod_{r\neq s}u^{(r)}_{i_r}$.
Hence a typical summand in $\Sigma_{\ell,m}$ is bounded by
\begin{equation}\label{eq:S3B-typical}
\frac{1}{N}
\sum_{i_1,\ldots,i_d}\sup_{z}
|Q^{1j}_{i_1i_j}(z)|\,
\Big(\prod_{r\neq s}|u^{(r)}_{i_r}|\Big)\,
\Big(\prod_{r\neq t}|u^{(r)}_{i_r}|\Big)\,
\prod_{k\neq 1,j,\ell,m}|u^{(k)}_{i_k}|.
\end{equation}
Using \(\sup_z|Q_{i_1i_j}^{1j}(z)|\le \|\bm Q\| \lesssim1\), consider 
\begin{equation}
    \prod_{r\neq s}|u^{(r)}_{i_r}|
    \prod_{r\neq t}|u^{(r)}_{i_r}|
    \prod_{k\neq 1,j,\ell,m}|u^{(k)}_{i_k}|
\end{equation}
It is easy to check that the exponent of \(u_{i_k}^{(k)}\) satisfies 
\begin{enumerate}
  \item[] Case 1: One of them equal to 0 and other \(\ge 2\);
  \item[] Case 2: Two of them equal to 1 and other \(\ge 2\);
  \item[] Case 3: All of them \(\ge2\).
\end{enumerate}
Combining with Eq.~(\ref{eq:u-sum-bounds}), we obtain
\begin{equation}\label{eq:S3B-final}
S_{3,B}^\star=\mathcal O(1).
\end{equation}

%========================================================
\subsubsection*{Step 6: Bound on $S_{3,A}^\star$ (the $D^2u$ part)}

Fix $\ell\neq 1,j$. Consider
\[
\Sigma_{\ell}
:=
\sum_{i_1,\ldots,i_d}\sup_{z\in\mathbb C^+_{\eta_0}}
|Q^{1j}_{i_1i_j}(z)|\,
\Bigl|D^2u^{(\ell)}_{i_\ell}\Bigr|
\prod_{k\neq 1,j,\ell}|u^{(k)}_{i_k}|.
\]
Then $S_{3,A}^\star\le \sum_{\ell\neq 1,j}\Sigma_\ell$.

Using (\ref{eq:D2u-decomp}), split $\Sigma_\ell\le \Sigma_{\ell,1}+\Sigma_{\ell,2}$, where
\begin{align}
\Sigma_{\ell,1}
&:=\frac1{\sqrt N}\sum_{s=1}^d
\sum_{i_1,\ldots,i_d}\sup_z
|Q^{1j}_{i_1i_j}(z)|\,|DR_{\ell s}(\lambda)|\,|b^{(s)}_{i_\ell}|\,
\prod_{k\neq 1,j,\ell}|u^{(k)}_{i_k}|,
\label{eq:Sigma-l1}\\
\Sigma_{\ell,2}
&:=\frac1{\sqrt N}\sum_{s=1}^d
\sum_{i_1,\ldots,i_d}\sup_z
|Q^{1j}_{i_1i_j}(z)|\,|R_{\ell s}(\lambda)|\,|Db^{(s)}_{i_\ell}|\,
\prod_{k\neq 1,j,\ell}|u^{(k)}_{i_k}|.
\label{eq:Sigma-l2}
\end{align}
We treat these two terms separately; the second one is precisely the ``$R_{\ell s}Db^{(s)}$''
term you asked to be fully expanded.

\paragraph*{\texorpdfstring{Bound on $\Sigma_{\ell,1}$ (the $(D\bm R)\cdot \bm b$ term)}{}}
We use the resolvent identity for $R$ (the resolvent of $\textbf{T}$ at $\lambda$):
\[
D\bm R(\lambda)=-\bm R(\lambda)\,(D\textbf{T})\,\bm R(\lambda).
\]
The dominating contribution in $D\textbf{T}$ is the noise part $D\bm N$; as in the earlier sections,
\[
D\bm N=\frac1{\sqrt N}\sum_{a\neq b}\Big(\prod_{t\neq a,b}u^{(t)}_{i_t}\Big)\bm E^{ab}_{i_ai_b}.
\]
Hence, for a typical term with fixed $(a,b,s)$, the product of $u$'s in the summand of
$\Sigma_{\ell,1}$ contains
\[
\Big(\prod_{k\neq 1,j,\ell}|u^{(k)}_{i_k}|\Big)\cdot
\Big(\prod_{t\neq a,b}|u^{(t)}_{i_t}|\Big)\cdot
\Big(\prod_{t\neq s}|u^{(t)}_{i_t}|\Big),
\]
and the total prefactor is $1/N$ (one $1/\sqrt N$ from (\ref{eq:Sigma-l1}) and one
$1/\sqrt N$ from $D\bm N$).

It is easy to check that the exponent of \(u_{i_k}^{(k)}\) satisfies 
\begin{enumerate}
  \item[] Case 1: One of them equal to 0,  one of them equal to 1 and other \(\ge 2\);
  \item[] Case 2: Three of them equal to 1 and other \(\ge 2\);
  \item[] Case 3: Other cases that make the order of the product smaller.
\end{enumerate}
Combining with Eq.~(\ref{eq:u-sum-bounds}), we obtain
\begin{equation}\label{eq:Sigma}
\sum_{\ell\neq 1,j}\Sigma_{\ell,1}=\mathcal O(\sqrt N).
\end{equation}

\paragraph*{\texorpdfstring{Bound on $\Sigma_{\ell,2}$ (the $R_{\ell s}D\bm b^{(s)}$ term): full expansion}{}}
We now bound (\ref{eq:Sigma-l2}) using the explicit decomposition
(\ref{eq:Db-expand1})-(\ref{eq:Db-typeIII}). Since $\|\bm R(\lambda)\|\lesssim 1$ and $d$
is fixed, it suffices to bound each of the three types (I)--(III) appearing in
$Db^{(s)}$.

\paragraph*{Type (I): terms from (\ref{eq:Db-typeI})}
Fix $s$ and $t\neq s$. Consider a typical summand where we take the $t$-term in
(\ref{eq:Db-typeI}). In absolute value, its $u$-product is
\[
\Big(\prod_{k\neq 1,j,\ell}|u^{(k)}_{i_k}|\Big)\cdot
\Big(\prod_{r\neq s,t}|u^{(r)}_{i_r}|\Big)\cdot
|Du^{(t)}_{i_t}|,
\]
up to a factor $\|g^{(s)}\|\lesssim 1$. Now expand $Du^{(t)}_{i_t}$ using
(\ref{eq:Du-structure})-(\ref{eq:b-structure}): for fixed $q$,
\[
|Du^{(t)}_{i_t}|
\lesssim \frac1{\sqrt N}\,|b^{(q)}_{i_t}|
\lesssim \frac1{\sqrt N}\Big(\prod_{r\neq q}|u^{(r)}_{i_r}|\Big),
\]
where we keep the product structure $\prod_{r\neq q}u^{(r)}_{i_r}$.
Therefore the full prefactor in $\Sigma_{\ell,2}$ for type (I) is $1/N$
(one $1/\sqrt N$ in (\ref{eq:Sigma-l2}) and one $1/\sqrt N$ from $Du^{(t)}$), and
the combined $u$-product becomes
\[
\Big(\prod_{k\neq 1,j,\ell}|u^{(k)}_{i_k}|\Big)\cdot
\Big(\prod_{r\neq s,t}|u^{(r)}_{i_r}|\Big)\cdot
\Big(\prod_{r\neq q}|u^{(r)}_{i_r}|\Big).
\]
It is easy to check that the exponent of \(u_{i_k}^{(k)}\) satisfies 
\begin{enumerate}
  \item[] Case 1: One of them equal to 0,  one of them equal to 1 and other \(\ge 2\);
  \item[] Case 2: Three of them equal to 1 and other \(\ge 2\);
  \item[] Case 3: Other cases that make the order of the product smaller.
\end{enumerate}
Combining with Eq.~(\ref{eq:u-sum-bounds}), we obtain
\begin{equation}\label{eq:Sigma-l1-final}
\textbf{Type I}=\mathcal O(\sqrt N).
\end{equation}

\paragraph*{Type (II): terms from (\ref{eq:Db-typeII})}
A type (II) term has absolute value bounded by
\[
\Big(\prod_{k\neq 1,j,\ell}|u^{(k)}_{i_k}|\Big)\cdot
\Big(\prod_{r\neq s}|u^{(r)}_{i_r}|\Big)\cdot
|Du^{(s)}_{i_s}|,
\]
up to a constant factor $\|\bm{u}^{(s)}\|\le 1$.
Expanding $Du^{(s)}_{i_s}$ as above introduces another factor
$N^{-1/2}\prod_{r\neq q}|u^{(r)}_{i_r}|$. Hence the combined product is
\[
\Big(\prod_{k\neq 1,j,\ell}|u^{(k)}_{i_k}|\Big)\cdot
\Big(\prod_{r\neq s}|u^{(r)}_{i_r}|\Big)\cdot
\Big(\prod_{r\neq q}|u^{(r)}_{i_r}|\Big),
\]
with total prefactor $1/N$. 

It is easy to check that the exponent of \(u_{i_k}^{(k)}\) satisfies 
\begin{enumerate}
  \item[] Case 1: One of them equal to 0 and other \(\ge 2\);
  \item[] Case 2: Two of them equal to 1 and other \(\ge 2\);
  \item[] Case 3: All of them \(\ge2\).
\end{enumerate}
Combining with Eq.~(\ref{eq:u-sum-bounds}), we obtain
\begin{equation}\label{eq:S3B-final_2}
\textbf{Type II}=\mathcal O(1).
\end{equation}

\paragraph*{Type (III): terms from (\ref{eq:Db-typeIII})}
A type (III) term has absolute value bounded by
\[
\Big(\prod_{k\neq 1,j,\ell}|u^{(k)}_{i_k}|\Big)\cdot
\Big(\prod_{r\neq s}|u^{(r)}_{i_r}|\Big)\cdot
|u^{(s)}_{i_s}|\cdot |Du^{(s)}|,
\]
and after expanding $D \bm{u}^{(s)}$ we again pick up a factor $N^{-1/2}\prod_{r\neq q}|u^{(r)}_{i_r}|$.
Notice that $\big(\prod_{r\neq s}|u^{(r)}_{i_r}|\big)|u^{(s)}_{i_s}|=\prod_{r=1}^d|u^{(r)}_{i_r}|$,
so the combined $u$-product is
\[
\Big(\prod_{k\neq 1,j,\ell}|u^{(k)}_{i_k}|\Big)\cdot
\Big(\prod_{r=1}^d|u^{(r)}_{i_r}|\Big)\cdot
\Big(\prod_{r\neq q}|u^{(r)}_{i_r}|\Big),
\]
with prefactor $1/N$. 

It is easy to check that the exponent of \(u_{i_k}^{(k)}\) satisfies 
\begin{enumerate}
  \item[] Case 1: One of them equal to 1 and other \(\ge 2\);
  \item[] Case 2: All of them \(\ge2\).
\end{enumerate}
Combining with Eq.~(\ref{eq:u-sum-bounds}), we obtain
\begin{equation}\label{eq:S3B-final_3}
\textbf{Type III}=\mathcal O(\frac{1}{\sqrt N}).
\end{equation}
\medskip
\noindent\textbf{Conclusion for $\Sigma_{\ell,2}$.}
Combining the three types (I)--(III), we have shown that for each fixed $\ell$,
\[
\Sigma_{\ell,2}\lesssim \sqrt N,
\]
and hence (since $\ell$ ranges over a fixed set of size $d-2$)
\begin{equation}\label{eq:Sigma-l2-final}
\sum_{\ell\neq 1,j}\Sigma_{\ell,2}=\mathcal O(\sqrt N).
\end{equation}

\subsubsection*{Step 7: Combine all parts}
From  (\ref{eq:Sigma-l1-final}), (\ref{eq:S3B-final_2}) and (\ref{eq:S3B-final_3}), we obtain
\[
S_{3,B}^\star=\mathcal O(\sqrt N),\qquad
S_{3,A}^\star=\mathcal O(\sqrt N),
\]
hence
\[
S_3^\star\le S_{3,A}^\star+S_{3,B}^\star=\mathcal O(\sqrt N).
\]
In particular, this proves (\ref{eq:S3Goal}), i.e.,
\[
\sum_{i_1,\ldots,i_d}\sup_{z\in\mathbb C^+_{\eta_0}}
\bigl|Q^{1j}_{i_1 i_j}(z)\,(D^2U)\bigr|
=\mathcal O(\sqrt{N}).
\]
\qed

\subsection{Conclusion}

We finally need to check that the derivative of $\bm Q(z)$ w.r.t.\ the entry
$W_{i_1,\ldots,i_d}$ has the same expression asymptotically as the one in the
independent case of Appendix~\ref{appA.7}.
Indeed, we have
\[
\frac{\partial \bm Q(z)}{\partial W_{i_1,\ldots,i_d}}
=
- \bm Q(z)\frac{\partial\bm N}{\partial W_{i_1,\ldots,i_d}}\bm Q(z)
\]
with
\[
\frac{\partial\bm N}{\partial W_{i_1,\ldots,i_d}}
=\frac{1}{\sqrt{N}}
\begin{pmatrix}
\bm 0_{n_1\times n_1} &\bm  C_{12} & \cdots & \bm C_{1d}\\
\bm C_{12}^\top & \bm 0_{n_2\times n_2} & \cdots & \bm C_{2d}\\
\vdots & \vdots & \ddots & \vdots\\
\bm C_{1d}^\top & \bm C_{2d}^\top & \cdots & \bm 0_{n_d\times n_d}
\end{pmatrix}
+
\frac{1}{\sqrt N}\Phi_d
\Bigl(
\textbf{W},
\frac{\partial \bm u^{(1)}}{\partial W_{i_1,\ldots,i_d}},
\ldots,
\frac{\partial \bm u^{(d)}}{\partial W_{i_1,\ldots,i_d}}
\Bigr),
\]
where
\(
C_{ij}=\prod_{k\neq i,j}u^{(k)}_{i_k}(e_{i_i}e_{i_j}^\top)
\)
and the contribution of
\(
\bm O=\frac{1}{\sqrt N}\Phi_d(\textbf{W},\partial \bm u^{(1)},\ldots,\partial \bm u^{(d)})
\)
 to the original expression is negligible (like the Lemma~\ref{lemA.1}).

Finally,
\[
A_j=-g_1(z)g_j(z)+O(N^{-1})
\]
with $g_1(z),g_j(z)$ the almost sure limits of
$\frac{1}{N}\operatorname{tr}\bm Q^{11}(z)$ and
$\frac{1}{N}\operatorname{tr}\bm Q^{jj}(z)$ respectively, hence yielding the same limiting
Stieltjes transform as the one obtained in Appendix~\ref{appA.7}.

\section{Proof of Theorem \ref{thm4.3}}\label{appA.10}

Given the identities in Eq.~(\ref{eq:26}), we have for all $i \in [d]$
\[
\frac{1}{\sqrt{N}}
\sum_{j_1,\ldots,j_d}
x^{(i)}_{j_i}
\left[
\prod_{k\neq i} u^{(k)}_{j_k} W_{j_1,\ldots,j_d}
+ \beta \prod_{k\neq i} \langle \bm u^{(k)}, \bm x^{(k)} \rangle
\right]
= \lambda \langle \bm u^{(i)}, \bm x^{(i)} \rangle,
\]
with $\lambda$ and $\langle \bm u^{(i)}, \bm x^{(i)} \rangle$ concentrating almost surely
around their asymptotic denoted $\lambda^\infty(\beta)$ and
$a_{x^{(i)}}^\infty(\beta)$ respectively.
Taking the expectation of the first term and applying Lemma \ref{lem2.3},
we get
\begin{align*}
A_i
=&
\frac{1}{\sqrt{N}}
\sum_{j_1,\ldots,j_d}
x^{(i)}_{j_i}
\,
\bigl(\mathbb E\!\left[
\frac{\partial}{\partial W_{j_1\ldots j_d}}
\Big( \prod_{k\neq i} u^{(k)}_{j_k} \Big) 
\right] + \varepsilon_{j_1 \cdots j_d}^{(2)}\bigr)\\
=&
\frac{1}{\sqrt{N}}
\sum_{j_1,\ldots,j_d}
x^{(i)}_{j_i}
\sum_{k\neq i}
\mathbb E\!\left[
\frac{\partial u^{(k)}_{j_k}}{\partial W_{j_1\ldots j_d}}
\prod_{\ell\neq k,i} u^{(\ell)}_{j_\ell}
\right] + \frac{1}{\sqrt{N}}
\sum_{j_1,\ldots,j_d}
x^{(i)}_{j_i}\varepsilon_{j_1 \cdots j_d}^{(2)}\\
=&A_{i_1} + A_{i_2}.
\end{align*}
where the only contributing term in the expression of
$\partial u^{(k)}_{j_k} / \partial W_{j_1,\ldots,j_d}$
from Eq.~(\ref{eq4.3}) is
\[
-\frac{1}{\sqrt{N}}
\prod_{\ell\neq k,i} u^{(\ell)}_{j_\ell}
R^{kk}_{j_k j_k}(\lambda),
\]
which yields
\[
A_{i_1}
=
-\frac{1}{N}
\sum_{j_1,\ldots,j_d}
x^{(i)}_{j_i}
\sum_{k\neq i}
\mathbb E\!\left[
R^{kk}_{j_k j_k}(\lambda)
\prod_{\ell\neq k} u^{(\ell)}_{j_\ell}
\prod_{\ell\neq k,i} u^{(\ell)}_{j_\ell}
\right]
+ \mathcal O(N^{-1}).
\]

Therefore,
\[
A_{i_1}
=
-
\mathbb E\!\left[
\langle u^{(i)}, x^{(i)} \rangle
\frac{1}{N}
\sum_{k\neq i} \operatorname{Tr} R^{kk}(\lambda)
\right]
+ \mathcal O(N^{-1})
\;\longrightarrow\;
-
\mathbb E\!\left[
\langle u^{(i)}, x^{(i)} \rangle
\sum_{k\neq i} g_k(\lambda)
\right].
\]
Now we need to bound \(A_{i_2}\). It's enough to bound \(\frac{1}{\sqrt{N}}
\sum_{j_1,\ldots,j_d}
x^{(i)}_{j_i}\sup_{z\in \mathbb C^+_{\eta_0}}|\frac{\partial^2}{\partial W_{j_! \cdots j_d}^2}\prod_{k \neq i}u_{j_k}^{(k)}|\). 

Fix the multi-index $(j_1,\ldots,j_d)$ and denote
\[
D:=\frac{\partial}{\partial W_{j_1\cdots j_d}},
\qquad
U_i:=\prod_{k\neq i}u^{(k)}_{j_k}.
\]
Then we must bound
\begin{equation}\label{eq:A_i_target}
\mathcal A_i
:=
\frac{1}{\sqrt N}
\sum_{j_1,\ldots,j_d}|x^{(i)}_{j_i}|\,
\sup_{z\in\mathbb C^+_{\eta_0}}|D^2 U_i|.
\end{equation}

%%%%%%%%%%%%%%%%%%%%%%%%%%%%%%%%%%%%%%%%%%%%%%%%%%%%%%%%%%%%%%%%%%%%%%
\subsection*{Step 1: Expand $D^2U_i$ into $(D^2u)$ and $(Du)(Du)$ parts}
By the product rule,
\begin{equation}\label{eq:D2Ui_expand}
D^2U_i
=
\sum_{\ell\neq i}
\bigl(D^2 u^{(\ell)}_{j_\ell}\bigr)\prod_{k\neq i,\ell}u^{(k)}_{j_k}
\;+\;
\sum_{\substack{\ell\neq m\\ \ell,m\neq i}}
\bigl(Du^{(\ell)}_{j_\ell}\bigr)\bigl(Du^{(m)}_{j_m}\bigr)
\prod_{k\neq i,\ell,m}u^{(k)}_{j_k}.
\end{equation}
Hence, using $|a+b|\le |a|+|b|$ and that $d$ is fixed,
\[
\mathcal A_i \;\le\; \mathcal A_{i,1}+\mathcal A_{i,2},
\]
where
\begin{align}
\mathcal A_{i,1}
&:=
\frac{1}{\sqrt N}
\sum_{j_1,\ldots,j_d}|x^{(i)}_{j_i}|\,
\sum_{\ell\neq i}\Bigl|D^2 u^{(\ell)}_{j_\ell}\Bigr|
\prod_{k\neq i,\ell}\bigl|u^{(k)}_{j_k}\bigr|,
\label{eq:A_i1_def}
\\
\mathcal A_{i,2}
&:=
\frac{1}{\sqrt N}
\sum_{j_1,\ldots,j_d}|x^{(i)}_{j_i}|\,
\sum_{\substack{\ell\neq m\\ \ell,m\neq i}}
\bigl|Du^{(\ell)}_{j_\ell}\bigr|\,
\bigl|Du^{(m)}_{j_m}\bigr|
\prod_{k\neq i,\ell,m}\bigl|u^{(k)}_{j_k}\bigr|.
\label{eq:A_i2_def}
\end{align}

%%%%%%%%%%%%%%%%%%%%%%%%%%%%%%%%%%%%%%%%%%%%%%%%%%%%%%%%%%%%%%%%%%%%%%
\subsection*{Step 2: Use the derivative representation (\ref{eq4.3}) and \emph{keep the product structure}}

Recall Eq.~(\ref{eq4.3}): for each $r\in[d]$,
\begin{equation}\label{eq:Du_rep}
D\bm u^{(r)}
=
-\frac{1}{\sqrt N}\sum_{s=1}^d R_{rs}(\lambda)\,\bm b^{(s)},
\qquad
\bm b^{(s)}
=
\Bigl(\prod_{t\neq s}u^{(t)}_{j_t}\Bigr)\Bigl(\bm e^{n_s}_{j_s}-u^{(s)}_{j_s}\bm u^{(s)}\Bigr).
\end{equation}
We stress that we \emph{do not} bound $b^{(s)}$ by a constant; we keep the factor
$\prod_{t\neq s}u^{(t)}_{j_t}$, which is the only way to prevent dimension blow-up
when summing over $(j_1,\ldots,j_d)$.

Since $\|\bm e^{n_s}_{j_s}-u^{(s)}_{j_s}\bm u^{(s)}\|_2\le 2$ and $\|\bm R(\lambda)\|\lesssim 1$
(as in previous parts), there exists a constant $C$ (depending only on $d$ and
$\eta_0$) such that for each coordinate entry,
\begin{equation}\label{eq:Du_abs_bound}
|Du^{(r)}_{j_r}|
\le
\frac{C}{\sqrt N}\sum_{s=1}^d \prod_{t\neq s}\bigl|u^{(t)}_{j_t}\bigr|.
\end{equation}
Differentiating (\ref{eq:Du_rep}) once more yields
\begin{equation}\label{eq:D2u_split}
D^2\bm u^{(r)}
=
-\frac{1}{\sqrt N}\sum_{s=1}^d\Bigl[(DR_{rs}(\lambda))\,\bm b^{(s)} + R_{rs}(\lambda)\,(D\bm b^{(s)})\Bigr].
\end{equation}
Using $D\bm R=-\bm R(D\textbf{T})\bm R$ (resolvent identity for $\bm R$) and that the dominating part of
$D\textbf{T}$ is the noise derivative $D\bm N$ which carries another factor $N^{-1/2}$ and a
product of $u$'s, one obtains the schematic bound
\begin{equation}\label{eq:D2u_abs_bound_schematic}
|D^2u^{(r)}_{j_r}|
\;\le\;
\frac{C}{N}\sum_{\text{finitely many indices}}
\Bigl(\prod_{t\neq a} |u^{(t)}_{j_t}|\Bigr)
\Bigl(\prod_{t\neq b} |u^{(t)}_{j_t}|\Bigr),
\end{equation}
that is,  $D^2u$ has a prefactor $1/N$ and contains \emph{two} multiplicative products
of the form $\prod_{t\neq\cdot}|u^{(t)}_{j_t}|$. (This is exactly the same
structure we used in the $S_1$--$S_3$ bounds: each differentiation introduces a
factor $N^{-1/2}$ and a ``delete-one-index'' product.)

In what follows, we only use (\ref{eq:Du_abs_bound}) and
(\ref{eq:D2u_abs_bound_schematic}) together with coordinate-wise summation.

%%%%%%%%%%%%%%%%%%%%%%%%%%%%%%%%%%%%%%%%%%%%%%%%%%%%%%%%%%%%%%%%%%%%%%
\subsection*{Step 3: A coordinate-wise summation rule}

We use repeatedly
\begin{equation}\label{eq:u_sum_bounds_again}
\sum_{j_r}1=N,\qquad
\sum_{j_r}|u^{(r)}_{j_r}|\le \sqrt N,\qquad
\sum_{j_r}|u^{(r)}_{j_r}|^2=1,\qquad
\sum_{j_r}|u^{(r)}_{j_r}|^3\le 1,
\end{equation}
and also $\sum_{j_i}|x^{(i)}_{j_i}|\le \sqrt N\|x^{(i)}\|_2=\sqrt N$ since
$\|x^{(i)}\|_2=1$.

Thus, if in a product the exponent of $|u^{(r)}_{j_r}|$ is
$\alpha_r\in\{0,1,2,3\}$, then summation over $j_r$ contributes at most
$N$ if $\alpha_r=0$, at most $\sqrt N$ if $\alpha_r=1$, and at most $1$ if
$\alpha_r\ge 2$.

%%%%%%%%%%%%%%%%%%%%%%%%%%%%%%%%%%%%%%%%%%%%%%%%%%%%%%%%%%%%%%%%%%%%%%
\subsection*{Step 4: Bound $\mathcal A_{i,2}$ (the $(Du)(Du)$ part)}

Fix distinct $\ell,m\neq i$ and consider a typical term in (\ref{eq:A_i2_def}).
Using (\ref{eq:Du_abs_bound}) twice, we get a prefactor $1/N$ and two
``delete-one-index'' products. Hence, up to a constant factor and a finite sum
over $(s,t)$, a typical summand is bounded by
\begin{equation}\label{eq:A_i2_typical}
\frac{1}{\sqrt N}\cdot \frac{1}{N}
\sum_{j_1,\ldots,j_d}|x^{(i)}_{j_i}|\,
\Bigl(\prod_{r\neq s}|u^{(r)}_{j_r}|\Bigr)
\Bigl(\prod_{r\neq t}|u^{(r)}_{j_r}|\Bigr)
\prod_{k\neq i,\ell,m}|u^{(k)}_{j_k}|.
\end{equation}
Now examine the exponent $\alpha_k$ of $|u^{(k)}_{j_k}|$ in the product:
\[
\alpha_k
=
\bm 1_{\{k\neq s\}}+\bm 1_{\{k\neq t\}}+\bm 1_{\{k\neq i,\ell,m\}}.
\]
We consider 
\[
\sum_{j_1,\ldots,j_d}|x^{(i)}_{j_i}|\,
\Bigl(\prod_{k}|u^{(k)}_{j_k}|^{\alpha_k}\Bigr) = \sum_{j_i}|x^{(i)}_{j_i}||u_{j_i}^{(i)}|^{\alpha_i}\,
\prod_{k\neq i}\Bigl(\sum_{j_k}|u^{(k)}_{j_k}|^{\alpha_k}\Bigr)\lesssim N^{\frac{1}{2}\bm 1_{\alpha_i = 0}}\cdot N^{\#\{k \neq i:\alpha_k = 0\} + \frac{1}{2}\#\{k \neq i : \alpha_k = 1\}}.
\]
\begin{itemize}
    \item If \(s = t = i\): The order should be \(N^{\frac{1}{2} + 0 +0} = N^{\frac{1}{2}}\);
    \item If \(s=t\in\{\ell,m\}\): The order should be \(N^{0 + 1 + 0} = N\);
    \item If \(s = t \notin \{\ell,m,i\}\): The order should be \(N^{0 + 0 +\frac{1}{2}} = N^{\frac{1}{2}}\);
    \item If \(s \neq t\) : The order less than \(N^{0 + 0 + \frac{1}{2}*2} = N\).
\end{itemize}
Which means \(\mathcal A_{i,2} =\mathcal{O}(N^{-1/2})\).

%%%%%%%%%%%%%%%%%%%%%%%%%%%%%%%%%%%%%%%%%%%%%%%%%%%%%%%%%%%%%%%%%%%%%%
\subsection*{Step 5: Bound $\mathcal A_{i,1}$ (the $D^2u$ part)}
Using the schematic bound (\ref{eq:D2u_abs_bound_schematic}), a typical term in
(\ref{eq:A_i1_def}) has pre-factor $(1/\sqrt N)\cdot (1/N)$ and contains
\emph{two} delete-one-index products coming from $D^2u$, multiplied by the
outside product $\prod_{k\neq i,\ell}|u^{(k)}_{j_k}|$.
Thus, up to constants and finite sums over the auxiliary indices in
(\ref{eq:D2u_abs_bound_schematic}), a typical term is bounded by
\begin{equation}\label{eq:A_i1_typical}
\frac{1}{\sqrt N}\cdot\frac{1}{N}
\sum_{j_1,\ldots,j_d}|x^{(i)}_{j_i}|\,
\Bigl(\prod_{r\neq s}|u^{(r)}_{j_r}|\Bigr)
\Bigl(\prod_{r\neq t}|u^{(r)}_{j_r}|\Bigr)
\prod_{k\neq i,\ell}|u^{(k)}_{j_k}|.
\end{equation}
Now the exponent of $|u^{(k)}_{j_k}|$ equals
\[
\alpha_k=\bm 1_{\{k\neq s\}}+\bm 1_{\{k\neq t\}}+\bm 1_{\{k\neq i,\ell\}}.
\]
We consider 
\[
\sum_{j_1,\ldots,j_d}|x^{(i)}_{j_i}|\,
\Bigl(\prod_{k}|u^{(k)}_{j_k}|^{\alpha_k}\Bigr) = \sum_{j_i}|x^{(i)}_{j_i}||u_{j_i}^{(i)}|^{\alpha_i}\,
\prod_{k\neq i}\Bigl(\sum_{j_k}|u^{(k)}_{j_k}|^{\alpha_k}\Bigr)\lesssim N^{\frac{1}{2}\bm 1_{\alpha_i = 0}}\cdot N^{\#\{k \neq i:\alpha_k = 0\} + \frac{1}{2}\#\{k \neq i : \alpha_k = 1\}}.
\]
\begin{itemize}
    \item If \(s = t = i\): The order should be \(N^{\frac{1}{2} + 0 +0} = N^{\frac{1}{2}}\);
    \item If \(s=t=\ell\): The order should be \(N^{0 + 1 + 0} = N\);
    \item If \(s = t \neq \ell\): The order should be \(N^{0 + 0 +\frac{1}{2}} = N^{\frac{1}{2}}\);
    \item If \(s \neq t\) : The order less than \(N^{0 + 0 + \frac{1}{2}*2} = N\).
\end{itemize}
Which means \(\mathcal A_{i,1} =\mathcal{O}(N^{-1/2})\).

%%%%%%%%%%%%%%%%%%%%%%%%%%%%%%%%%%%%%%%%%%%%%%%%%%%%%%%%%%%%%%%%%%%%%%
\subsection*{Step 6: Conclusion}
Combining the proof above, we obtain
\[
\mathcal A_i\le \mathcal A_{i,1}+\mathcal A_{i,2}=\mathcal O(N^{-1/2}).
\]
In particular, this gives the desired bound for $A_{i_2}$ (and similarly for any $i$):
\[
\frac{1}{\sqrt{N}}
\sum_{j_1,\ldots,j_d}
|x^{(i)}_{j_i}|\sup_{z\in \mathbb C^+_{\eta_0}}
\Bigl|\frac{\partial^2}{\partial W_{j_1 \cdots j_d}^2}\prod_{k \neq i}u_{j_k}^{(k)}\Bigr|
\;=\;\mathcal O(N^{-1/2}).
\]

Therefore, the almost sure limits $\lambda^\infty$ and $a_{x^{(i)}}^\infty$
for each $i \in [d]$ satisfy
\[
\lambda^\infty a_{x^{(i)}}^\infty
=
\beta \prod_{k\neq i} a_{x^{(k)}}^\infty
-
a_{x^{(i)}}^\infty
\sum_{k\neq i} g_k(\lambda^\infty),
\]
hence
\[
a_{x^{(i)}}^\infty
=
\alpha_i(\lambda^\infty)
\prod_{k\neq i} a_{x^{(k)}}^\infty,
\qquad
\text{with }
\alpha_i(z)
=
\frac{\beta}{z + g(z) - g_i(z)}.
\]

\medskip

Since $g(z) = \sum_{k=1}^d g_k(z)$, to solve the above equation we simply write
$x_i = a_{x^{(i)}}^\infty$ and $\alpha_i = \alpha_i(z)$ by omitting the dependence
on $z$. We therefore have
\[
x_i
=
\alpha_i \prod_{k\neq i} x_k
\;\Rightarrow\;
x_i
=
\alpha_i x_j \prod_{k\neq i,j} x_k
=
\alpha_i x_j \prod_{k\neq i,j}
\left( \alpha_k \prod_{\ell\neq k} x_\ell \right).
\]

From which we have
\[
x_i
=
x_j
\left( \prod_{k\neq j} \alpha_k \right)
\left( \prod_{k\neq i,k\neq j}
\prod_{\ell\neq k} x_\ell \right)
=
x_j
\left( \prod_{k\neq j} \alpha_k \right)
\left( \prod_{k\neq i,k\neq j}
\prod_{\ell\neq k,\ell\neq i} x_\ell \right)
x_i^{\,d-2}.
\]

Thus,
\[
x_j
\left( \prod_{k\neq j} \alpha_k \right)
\left( \prod_{k\neq i,k\neq j}
\prod_{\ell\neq k,\ell\neq i} x_\ell \right)
x_i^{\,d-3}
= 1.
\]

We remark that
\[
\left( \prod_{k\neq i,k\neq j}
\prod_{\ell\neq k,\ell\neq i} x_\ell \right)
x_i^{\,d-3}
=
\left( \frac{x_j}{x_i} \right)^{d-3} x_i^{\,d-2},
\]
hence $x_j$ is given by
\[
x_j
=
\left(
\frac{\alpha_j^{\,d-3}}{\prod_{k\neq j} \alpha_k}
\right)^{\frac{1}{2d-4}},
\]
which completes the proof.

\medskip

\noindent\textbf{Alternative expression of $q_i(z)$.}
From the above, we have
\[
\lambda^\infty + g(\lambda^\infty)
=
\beta \prod_{i=1}^d x_i,
\]
and
$
\bigl(\lambda^\infty + g(\lambda^\infty) - g_i(\lambda^\infty)\bigr) x_i
=
\beta \prod_{j\neq i} x_j
\;\Rightarrow\;
\bigl(\lambda^\infty + g(\lambda^\infty) - g_i(\lambda^\infty)\bigr) x_i^2
=
\beta \prod_{i=1}^d x_i$.

Therefore,
\[
x_i
=
\sqrt{
\frac{\lambda^\infty + g(\lambda^\infty)}
{\lambda^\infty + g(\lambda^\infty) - g_i(\lambda^\infty)}
}
=
\sqrt{
1 + \frac{g_i(\lambda^\infty)}
{\lambda^\infty + g(\lambda^\infty) - g_i(\lambda^\infty)}
}.
\]

With $z + g(z) - g_i(z) = - \frac{c_i}{g_i(z)}$ and  since $g_i(z)$ satisfies
\[
g_i^2(z) - (g(z) + z) g_i(z) - c_i = 0,
\]
we finally find
$x_i=\sqrt{1 - \frac{g_i^2(\lambda^\infty)}{c_i}}$.

\newpage
\appendix

\end{document}